\newtheorem{thm}{Theorem}[section]
\newtheorem{definition}{Definition}[section]
\newtheorem{Lemma}[thm]{Lemma}
\newtheorem{remark}{Remark}[section]
\newtheorem{theorem}[thm]{Theorem}
\newtheorem{proposition}[thm]{Proposition}
\newtheorem{corollary}[thm]{Corollary}
\newcommand{\na}{\nabla}
\numberwithin{equation}{section}
\newcommand{\beq}{\begin{equation}}
\newcommand{\eeq}{\end{equation}}
\newcommand{\ben}{\begin{eqnarray}}
\newcommand{\een}{\end{eqnarray}}
\newcommand{\beno}{\begin{eqnarray*}}
\newcommand{\eeno}{\end{eqnarray*}}
\newcommand{\bv}{\mathbf{v}}
\newcommand{\bQ}{\mathbf{Q}}
\newcommand{\bw}{\mathbf{w}}
\newcommand{\bU}{\mathbf{U}}
\numberwithin{equation}{section}
\begin{document}
\begin{sloppypar}
\title[two-dimensional compressible Euler equations]{Improved Local Well-Posedness in Sobolev Spaces for Two-Dimensional Compressible Euler Equations}

\subjclass[2010]{Primary 76N10, 35R05, 35L60}

\author{Huali Zhang}
\address{Hunan University, School of Mathematics, Lushan South Road in Yuelu District, Changsha, 410882, People's Republic of China.}

\email{hualizhang@hnu.edu.cn}

\date{\today}

\keywords{compressible Euler equations, low regularity solutions, Strichartz estimate.}

\begin{abstract}
We establish the local existence and uniqueness of solutions to the two-dimensional compressible Euler equations with initial velocity $\bv_0$, logarithmic density $\rho_0$, and specific vorticity \(w_0\), which satisfy $(\bv_0, \rho_0, w_0, \nabla w_0)\in H^{\frac74+}(\mathbb{R}^2)\times H^{\frac74+}(\mathbb{R}^2) \times H^{\frac32}(\mathbb{R}^2) \times L^{8}(\mathbb{R}^2)$.

The proof applies Smith-Tataru method \cite{ST} and the inherent wave-transport structure of the two-dimensional compressible Euler equations. The key observation is that Strichartz estimates hold when the regularity requirement for vorticity is lower than that for velocity and density, even though the gradient of vorticity appears as a source term in the velocity wave equation. Furthermore, our result presents an improvement of $\frac{1}{4}$-order regularity compared to previous results \cite{Z1} and \cite{Z2}.
\end{abstract}

\maketitle
\section{Introduction}
\subsection{Background}
In this paper, we study the two-dimensional compressible Euler equations, which are of the form
\begin{equation*}\label{CEE}
	\begin{cases}
	\varrho_t+\text{div}\left(\varrho \bv \right)=0, \quad t>0, x\in \mathbb{R}^2,
	\\
	\bv_t + \left(\bv\cdot \nabla \right)\bv+\frac{1}{\varrho}\nabla p(\varrho)=0,
\end{cases}
\end{equation*}
where $\bv=(v^1,v^2)^{\mathrm{T}}, \varrho$, and $p$ denote the fluid velocity, density, and pressure, respectively. Throughout the paper, we assume it's far away from vacuum. If we denote the logarithmic density $\rho$ by
\begin{equation}\label{rho}
	\rho=\log \varrho,
\end{equation}
the compressible Euler equations can be expressed as
\begin{equation}\label{CEE0}
	\begin{cases}
		\rho_t+ \left(\bv\cdot \nabla \right)\rho=-\textrm{div} \bv,\quad t>0, \ x\in \mathbb{R}^2,
		\\
		\bv_t + \left(\bv\cdot \nabla \right)\bv+p'(\mathrm{e}^{\rho})\nabla \rho=0.
	\end{cases}
\end{equation}
We consider the state function 
\begin{equation}\label{pq}
  p(\varrho)=\varrho^\gamma (\textrm{constant}\ \gamma\geq 1 ),
\end{equation}
and the initial data as
\begin{equation}\label{id}
	(\bv, \rho)|_{t=0}=(\bv_0, \rho_0).
\end{equation}
The compressible Euler equations \eqref{CEE0} describes the motion of compressible ideal fluids, which has wide applications in gas dynamics and astrophysics. In mathematics, a fundamental question concerns the sharp local well-posedness in Sobolev spaces for the equations \eqref{CEE0}-\eqref{id}. Although a few results \cite{AZ,DLS,  M, WQEuler,Z1, Z2} address this question, it remains an open and challenging problem in both two and three dimensions. In this paper, we further investigate the well-posedness of rough solutions to the equations \eqref{CEE0} in two dimensions, under lower regularity conditions than those established in previous studies \cite{Z1, Z2}. To better understand the equations \eqref{CEE0}, we first review some historical results related to two specific cases: the incompressible case and the irrotational case.

For the Cauchy problem of $n$-D incompressible Euler equations:
\begin{equation}\label{IEE}
	\begin{cases}
	\bv_t + \left(\bv\cdot \nabla \right)\bv+\nabla p=0,  \quad t>0, \ x\in \mathbb{R}^n,
	\\
	\mathrm{div} \bv=0,
\end{cases}
\end{equation}
Kato and Ponce \cite{KP} proved the local existence and uniqueness of solutions to the equations \eqref{IEE} for initial velocity $\bv_0 \in W^{s,p}(\mathbb{R}^n)$, where $s>1+\frac{n}{p}, 1<p<\infty$. Chae \cite{Chae,Ch2} extended the result of Kato and Ponce to the critical Triebel-Lizorkin space. For a discussion on the continuous dependence, we refer to the work of Tao \cite{Tao} and Guo-Li \cite{GL}. In contrast, Bourgain and Li \cite{BL, BL2} proved the ill-posedness of \eqref{IEE} by constructing initial data $\bv_0 \in W^{1+\frac{n}{p},p}(\mathbb{R}^n)$ (for $n=2,3$) that leads to instantaneous blow-up. Recently, Kim and Jeong \cite{KJ} proved the ill-posedness of \eqref{IEE} when $\bv_0 \in H^{\frac{n}{2}+1}(\mathbb{R}^n)$, $n\geq 3$.

In the case of irrotational flow, the compressible Euler equations can be written as a specific quasilinear wave equation. More generally, quasilinear wave equations are expressed in the following form:
\begin{equation}\label{qwe}
\begin{cases}
  &\square_{h(\phi)} \phi=q(d \phi, d \phi), \quad t>0, \ x\in \mathbb{R}^n,
  \\
  & (\phi,\partial_t \phi)|_{t=0}=(\phi_0, \phi_1) ,
  \end{cases}
\end{equation}
where $\phi$ is a scalar function, $d=(\partial_t, \partial_1, \partial_2, \cdots, \partial_n)$, and $h(\phi)$ a Lorentzian metric depending on $\phi$, and $q$ a quadratic term of $d \phi$. Set $(\phi_0, \phi_1) \in H^s(\mathbb{R}^n) \times H^{s-1}(\mathbb{R}^n)$. The local existence and uniqueness of \eqref{qwe} was established by Hughes-Kato-Marsden \cite{HKM} when $s > {n}/{2} + 1$. For the continuous dependence, one can refer Ifrim and Tataru's paper \cite{IT1}. On the other hand, Lindblad \cite{L} established the ill-posedness of \eqref{qwe} by constructing counterexamples when $s=2,n=3$. Based on the idea of \cite{L}, Ohlman \cite{Olm} also proved the ill-posedness in two dimensions for \( s = \frac{7}{4} \). A gap exists between the well-posedness results in \cite{HKM} and the ill-posedness demonstrated in \cite{L}. To bridge this regularity gap and relax the requirements on the initial data, a natural idea is to establish Strichartz-type estimates for the derivatives $d \phi$. 
As a first step, consider the linear wave equation with variable coefficients
\begin{equation}\label{qw0}
  \square_{h(t,x)}\phi=0,
\end{equation}
as \eqref{qw0} provides a framework for understanding the nonlinear problems. Kapitanskij \cite{K} and Mockenhaupt-Seeger-Sogge \cite{MSS} discussed the Strichartz estimates for \eqref{qw0} with smooth coefficients $h$. For rough coefficients $h \in C^2$, the study of Strichartz estimates for \eqref{qw0} in two or three dimensions began with Smith's result \cite{Sm}. In all dimensions for $h \in C^2$, the corresponding Strichartz estimates was proved by Tataru \cite{T2}. Conversely, Smith and Sogge \cite{SS} constructed counterexamples showing that for $\alpha<2$ there exists $h \in C^\alpha$ for which the Strichartz estimates do not hold.

Secondly, the above regularity thresholds for \eqref{qwe} were independently improved by Bahouri-Chemin \cite{BC2} and Tataru \cite{T1} for $s > \frac{n}{2} + \frac{7}{8}, n=2$ or $s > \frac{n}{2} + \frac{3}{4}, n\geq3$. Subsequently, Tataru \cite{T3}  refined the regularity requirements to $s>\frac{n+1}{2}+\frac{1}{6}, n \geq 3$. Meanwhile, Smith and Tataru \cite{ST0} showed that the loss of one-sixth order derivatives is sharp for general variable coefficients \( h \). Therefore, to further improve the results for \eqref{qwe}, a new approach is necessary. Klainerman and Rodnianski \cite{KR2} introduced a novel vector-field method and a remarkable Ricci curvature decomposition, establishing local existence and uniqueness for \eqref{qwe} when $s>2+\frac{2-\sqrt{3}}{2}$ and $n=3$. Based on \cite{KR2}, Geba \cite{Geba} successfully adapted the vector-field approach to the case $n=2$, where the regularity exponent satisfies $s > \frac{7}{4} + \frac{5-\sqrt{22}}{4}$. By representing solutions via wave packets, the sharp results in dimensions two and three were finally established by Smith and Tataru \cite{ST} (see also in Lindblad \cite{L} and Ohlman \cite{Olm}), where the regularity requires $s>\frac{7}{4}, n=2$ or $s>2, n=3$ or $s>\frac{n+1}{2}, 4 \leq n \leq 5$. An alternative proof of the 3D case was also obtained by Wang \cite{WQSharp} using the vector-field approach. It is also very interesting to consider axisymmetric initial data; for this, please refer to the work by C.B. Wang \cite{WCB}, Zha-Hidano \cite{ZH}, Zhou-Lei \cite{ZL}. Additionally, we should mention the significant progress made on low regularity solutions of the Einstein vacuum equations and membrane equations. We refer readers to the following papers: Ai-Ifrim-Tataru \cite{AIT}, Allen-Andersson-Restuccia \cite{AAR}, Andersson and Moncrief \cite{AM}, Ettinger and Lindblad \cite{EL}, Klainerman and Rodnianski \cite{KR}, Klainerman-Rodnianski-Szefel \cite{KR1}, Moschidis-Rodnianski \cite{MR}, Tataru \cite{T4}, Wang \cite{WQRough}, Wang-Zhou \cite{WZ} and among others.



In the general case, concerning to Cauchy problem of $n$-D compressible Euler equations \eqref{rho}-\eqref{id}, it's well-posed if $(\bv_0, \rho_0) \in H^{s}(\mathbb{R}^n), s>1+\frac{n}{2}$ and the density is far away from vacuum, please refer to Majda's book \cite{M}. Based on the pioneering work of Luk and Speck \cite{LS1,LS2}, significant progress has been made in low regularity well-posedness for \eqref{rho}-\eqref{id} via Strichartz estimates. In the case of 3D, Disconzi-Luo-Mazzone-Speck \cite{DLS} proved the existence and uniqueness of solutions with initial entropy $S_0$, velocity $\bv_0$, logarithmic density $\rho_0$ and specific vorticity $\bw_0$ satisfying $(\bv_0,\rho_0,\bw_0,S_0) \in   H^{2+} \times H^{2+} \times H^{2+} \times H^{3+}$ and $\Delta S_0, \mathrm{curl} \bw_0 \in C^{0, \delta}$, where $\delta$ is a small positive number. Independently, Wang \cite{WQEuler} also established the existence and uniqueness of solutions if $(\bv_0,\rho_0,\bw_0) \in H^{s}(\mathbb{R}^3) \times H^{s}(\mathbb{R}^3) \times H^{s'}(\mathbb{R}^3), \ 2<s'<s $. The works \cite{DLS} and \cite{WQEuler} are based on vector-field approach. Using a different approach--Smith and Tataru's method \cite{ST}, and combining with semiclassical analysis, Andersson and Zhang \cite{AZ} were able to relax the regularity requirements in \cite{DLS,WQEuler}, and proved the complete well-posedness if $(\bv_0, \rho_0,\bw_0) \in H^{2+}(\mathbb{R}^3) \times H^{2+}(\mathbb{R}^3) \times H^{2}(\mathbb{R}^3)$ or $(\bv_0, \rho_0,\bw_0,S_0) \in H^{\frac{5}{2}}(\mathbb{R}^3) \times H^{\frac{5}{2}}(\mathbb{R}^3) \times H^{\frac{3}{2}+}(\mathbb{R}^3)\times H^{\frac{5}{2}+}(\mathbb{R}^3)$. In the case of 2D, Zhang \cite{Z1,Z2} also proved the well-posedness of \eqref{rho}-\eqref{id} by setting $(\bv_0, \rho_0, w_0, \nabla w_0)  \in  H^{\frac{7}{4}+}(\mathbb{R}^2) \times H^{\frac{7}{4}+}(\mathbb{R}^2) \times H^{\frac{7}{4}+}(\mathbb{R}^2) \times L^\infty(\mathbb{R}^2)$ or $(\bv_0, \rho_0, w_0) \in H^{\frac{7}{4}+}(\mathbb{R}^2) \times H^{\frac{7}{4}+}(\mathbb{R}^2) \times H^2(\mathbb{R}^2)$. For a discussion on ill-posedness or shock waves, we refer readers to important results due to Abbrescia-Speck \cite{AS}, An-Chen-Yin \cite{ACY1,ACY2}, Christodoulou-Miao \cite{CM}, Huang-Kuang-Wang-Xiang \cite{HKWX}, Lei-Du-Zhang \cite{LDZ}, Luo-Yu \cite{LY1,LY2}, Merle-Rapha\"el-Rodnianski \cite{MRR}, Qu-Xin \cite{QX}, Sideris \cite{S}, Speck \cite{S1}, and Yin \cite{Yin}. Additionally, we also mention several remarkable results on free-boundary problems of the compressible Euler equations, particularly those by Avadanei \cite{Av}, Coutand-Lindblad-Shkoller \cite{CLS}, Jang-Masmoudi \cite{JM}, and Ifrim-Tataru \cite{IT}. 

\subsection{Motivation}
As established in \cite{Z1, Z2}, the regularity conditions for velocity and density are weaker than those presented in Majda's book \cite{M}, due to the Strichartz estimates. Furthermore, the regularity conditions imposed on velocity and density align with those for quasilinear wave equations \cite{ST}. The current result \cite{AZ} shows that 3D compressible Euler equation is well-posed if $(\bv_0, \rho_0,\bw_0) \in H^{2+}(\mathbb{R}^3) \times H^{2+}(\mathbb{R}^3) \times H^{2}(\mathbb{R}^3)$. However, the current 2D results \cite{Z1, Z2} require that $(\bv_0, \rho_0) \in H^{\frac74+}(\mathbb{R}^2), w_0 \in H^2(\mathbb{R}^2)$ or $w_0 \in H^{\frac74+}(\mathbb{R}^2)\cap \dot{W}^{1,\infty}(\mathbb{R}^2)$. Compared with the above results in 2D and 3D, the regularity requirements for vorticity in 2D results \cite{Z1, Z2} remain less desirable. This motivates our investigation into two-dimensional vorticity-specific regularity conditions, with the aim of establishing well-posedness at the same level in 3D. Indeed, due to the finite propagation speed, we can consider the problem as a perturbation of a flat Minkowski metric. Let us start with the standard linear wave equation:
\begin{equation*}
	\begin{cases}
		& \square f= 0, \quad t>0, x\in \mathbb{R}^2,
		\\
		& (f,\partial_t f)|_{t=0}=(f_0,0),
	\end{cases}
\end{equation*}
where $\square=\partial_t^2- (\partial^2_1 + \partial^2_2)$. By applying Keel-Tao's result \cite{KT} and Knapp's counterexample, the sharp Strichartz estimates hold: 
\begin{equation*}
	\| df \|_{L^4_t L^\infty_x} \lesssim \| f_0 \|_{\dot{H}^{\frac74+}}.
\end{equation*}
Therefore, for the following wave-transport system (see Lemma \ref{FC} below for details)
\begin{align}\label{wea}
		\square_g \bv\approx & \nabla w + (d\bv,d\rho) \cdot (d\bv,d\rho), 
		\\\label{web}
		\square_g \rho\approx & (d\bv,d\rho) \cdot (d\bv,d\rho),
		\\\label{wec}
		\mathbf{T} w=& 0,
\end{align}
we can expect the Strichartz estimates $\|d\bv,d\rho\|_{L^4_t L^\infty_x}$ by setting $(\bv_0,\rho_0) \in H^{\frac74+}$. For $\nabla w$ appears as a source term in \eqref{wea}, a general idea suggests the initial vorticity $w_0 \in H^{\frac74+}$. This is the result in \cite{Z1,Z2}. By careful analysis, we can find that the setting, $w_0 \in H^{\frac74+}$ is much higher than desired. Let us make a short explanation as follows.

We introduce a decomposition for the 2D velocity by\footnote{This is inspired by Wang's 3D result \cite{WQEuler}.}
$$v^i=v^i_{-}+v^i_{+}, \quad v^i_{-}=(-\Delta)^{-1} \left(-\epsilon^{ia} \mathrm{e}^{\rho} \partial_a w  \right), \quad i=1,2.$$ Then $\bv_{+}=(v_{+}^1,v_{+}^2)$ and $\bv_{-}=(v_{-}^1,v_{-}^2)$ satisfy
\begin{align}\label{key1}
	\square_g \bv_{+}\approx & \mathbf{T} \mathbf{T} \bv_{-}+ (d\bv,d\rho) \cdot (d\bv,d\rho),
	\\\label{key2}
	 \Delta \mathbf{T} \bv_{-}\approx & \nabla \bv \cdot \nabla w.
\end{align}
If $g$ is a flat metric, we can directly obtain
\begin{equation*}
	\begin{split}
		\| d\bv_{+} \|_{L^4_t L^\infty_x} \lesssim & \|\mathbf{T} \bv_{-} \|_{L^1_tH^s_x} + \|(\bv,\rho)\|_{L^1_tH^{\frac74+}_x}
		\lesssim  \|\bv\|_{L^\infty_tH^{\frac74+} } + \|\rho\|_{L^\infty_tH^{\frac74+} } + \|w\|_{L^\infty_tH_x^{1+} }.
	\end{split}
\end{equation*}
The above inequality, combining with $ \|d \bv_{-}\|_{L^\infty_x} \lesssim \|w\|_{H^{1+} } $, imply that
\begin{equation*}
	\begin{split}
		\| d\bv \|_{L^4_t L^\infty_x} \lesssim & \|\bv\|_{L^\infty_tH^{\frac74+} } + \|\rho\|_{L^\infty_tH^{\frac74+} } + \|w\|_{L^\infty_tH_x^{1+} }
		\\
		 \lesssim & \|\bv_0\|_{H^{\frac74+} } + \|\rho_0\|_{H^{\frac74+} } + \|w_0\|_{H^{1+} }.
	\end{split}
\end{equation*}
When the metric $g$ in \eqref{key1} is non-flat and depends on $\bv$ and $\rho$, to obtain Strichartz estimates, we need to study the geometric properties of null hypersurfaces. This is controlled by $\square_g g$. If we set $w\in H^{s'}_x$ ($s'\leq \frac74$), we have
\begin{equation*}
	\square_g g = \nabla w + (d\bv,d\rho) \cdot (d\bv,d\rho) \in H_x^{s'-1}.
\end{equation*}
Definitely, it's crucial for us to find a condition for $s'$ to establish the Strichartz estimates. Through a careful adaptation of Smith-Tataru's method \cite{ST}, we find that the Sobolev regularity indices $s'>\frac{3}{2}$ is sufficient. Therefore, applying equations \eqref{web} and \eqref{key1}, we can obtain a bound for Strichartz estimates $\|d\bv_{+},d\rho\|_{L^4_t L^\infty_x}$. As for $d\bv_{-}$, using \eqref{key2}, we also get a bound using Sobolev's imbeddings. As a result, we can derive the desired Strichartz estimates $\|d\bv,d\rho\|_{L^4_t L^\infty_x}$ if $(\bv,\rho,w) \in L^\infty_t H_x^{\frac74+} \times L^\infty_t H_x^{\frac74+} \times L^\infty_t H_x^{\frac32+}$. This implies that we should assume $(\bv_0,\rho_0,w_0)\in H^{\frac74+} \times H^{\frac74+} \times H^{\frac32+}$. Meanwhile, we also need to close the energy estimates with $w \in L^\infty_t H_x^{\frac32+}$ for the vorticity. Therefore, using \eqref{wec}, we have
\begin{equation*}
	\mathbf{T} \nabla w =\nabla \bv \cdot    \nabla w  .
\end{equation*}
Due to commutator and product estimates, the additional condition $\nabla w \in L_x^8$ is necessary.

In a word, we first prove the local existence and uniqueness of solutions for the two dimensional compressible Euler equations with initial data $(\bv_0, \rho_0, w_0, \nabla w_0)\in H^{s}(\mathbb{R}^2)\times H^{s}(\mathbb{R}^2) \times H^{s_0-\frac14}(\mathbb{R}^2) \times L^{8}(\mathbb{R}^2)$, where $\frac74 <s_0 \leq  s \leq 2$. We also establish two additional useful properties: \emph{(i)} a Strichartz estimate for linear wave equations endowed with an acoustic metric, and \emph{(ii)} a type of Strichartz estimate for solutions with a regularity of velocity $\frac74+$, density $\frac74+$, and vorticity $1+$. Secondly, if the initial data \((\mathbf{v}_0, \rho_0, w_0, \nabla w_0)\) belong to the spaces \(H^{\frac{7}{4}+}(\mathbb{R}^2) \times H^{\frac{7}{4}+}(\mathbb{R}^2) \times H^{\frac{3}{2}}(\mathbb{R}^2) \times L^{8}(\mathbb{R}^2)\), by applying frequency truncation on the initial data, a stronger Strichartz estimate can be established on a short time interval. Moreover, this allows us to obtain a sequence of solutions defined on these short intervals. After that, we extend this sequence of solutions from short time intervals to a fixed, regular time interval, and a Strichartz estimate also holds with a loss of derivatives\footnote{This argument is inspired by the work of Ai-Ifrim-Tataru \cite{AIT}, which has been developed by Andersson and Zhang \cite{AZ} in 3D compressible Euler equations. }. Based on this Strichartz estimates, we prove the local existence and uniqueness of solutions to the 2D compressible Euler equations for initial data $(\bv_0, \rho_0, w_0, \nabla w_0)\in H^{\frac74+}(\mathbb{R}^2)\times H^{\frac74+}(\mathbb{R}^2) \times H^{\frac32}(\mathbb{R}^2) \times L^{8}(\mathbb{R}^2)$. Compared to the earlier work \cite{Z1, Z2}, our result reduces the required regularity for vorticity by one-quarter order.

Next, we give some necessary notations in this paper.
\subsection{Notations}
\begin{itemize}
\item  We give the notations $d=(\partial_t, \partial_{1}, \partial_{2})^\mathrm{T}$ and $\partial_{0}=\partial_t$. 
\item We set $\left< \xi \right>=(1+|\xi|^2)^{\frac{1}{2}}, \ \xi \in \mathbb{R}^2$, and denote by $\left< \nabla \right>$ the corresponding Bessel potential multiplier. 
\item We denote the fractional Laplacian operator 
\begin{equation*}
	\Lambda_x=(-\Delta)^{\frac{1}{2}},\quad \Delta=\partial^2_1 + \partial^2_2.
\end{equation*}
\item 
Let $\zeta$ be a smooth function with support in the shell $\{ \xi\in \mathbb{R}^2: \frac{1}{2} \leq |\xi| \leq 2 \}$. Here, $\xi$ denotes the variable of the spatial Fourier transform. Let $\zeta$ also satisfy the
condition $\sum_{\lambda=2^k, k \in \mathbb{Z}} \zeta(\lambda \xi)=1$. 
\item Following the book \cite{BCD}, we introduce the Littlewood-Palay operator $P_{\lambda}$ with the frequency $\lambda=2^k (k \in \mathbb{Z})$, which satisfies
\begin{equation*}
	P_\lambda  f = \int_{\mathbb{R}^2} \mathrm{e}^{-\mathrm{i}x\cdot \xi} \zeta(\lambda^{-1}\xi) \hat{f}(\xi)d\xi.
\end{equation*}
\item We also set
\begin{equation*}
	f_{<\lambda}=\sum_{\lambda' < \lambda}P_{\lambda}f.
\end{equation*}
\item When the function $f$ is related to space variables, we use the notation $\|f\|_{H^s}=\|f\|_{H^s(\mathbb{R}^2)}$. When $f$ is related to both time and space variables, we use the notation $\|f\|_{H_x^s}=\|f(t,\cdot)\|_{H^s(\mathbb{R}^2)}$.
Similarly, we use notation $\|f\|_{L^p}=\|f\|_{L^p(\mathbb{R}^2)}$ if $f$ depends only the spatial variables, and use $\|f\|_{L_x^p}=\|f(t,\cdot)\|_{L^p(\mathbb{R}^2)}$ if $f$ depends on both time and space variables. For mixed norms shall be denoted by $\| f\|_{L^p_t L^q_x}$.
\item 
We use four small parameters
\begin{equation}\label{a0}
	\epsilon_3 \ll \epsilon_2 \ll \epsilon_1 \ll \epsilon_0 \ll 1,
\end{equation}
and three notations
\begin{equation*}
	\delta \in (0,s-\frac74), \quad \delta_0\in (0,s_0-\frac74), \quad \delta_1=\frac{s-\frac74}{10}.
\end{equation*}
\item Assume there holds
\begin{equation}\label{HEw}
	|\bv_0, \rho_0| \leq C_0, \qquad c_s|_{t=0}>c_0>0,
\end{equation}
where $C_0$ and $c_0$ are positive constants. 
\item In the following, constant $C$ depending only on $C_0, c_0$ shall be called universal. Unless otherwise stated, all constants that appear are universal in this sense. The notation $X \lesssim Y$ means $X \leq CY$, where $C$ is a universal constant, possibly depending on $C_0, c_0$. Similarly, we write  $X \simeq Y$ when $C_1 Y \leq X \leq C_2Y$, with $C_1$ and $C_2$ universal constants, and $X \ll Y$ when $X \leq CY$ for a sufficiently large constant $C$.
\end{itemize}
\subsection{Statement of the result}
Before stating our result, let us introduce a definition of the sound speed, logarithmic density, specific vorticity, and acoustic metric.
\begin{definition}(\cite{LS1}, Definition 2.1)\label{shengsu}
	We denote the speed of sound
	\begin{equation}\label{ss}
		c_s=\sqrt{\frac{dp}{d\varrho}}.
	\end{equation}
	In view of \eqref{pq} and \eqref{ss}, we have
	\begin{equation}\label{ss1}
		c_s=c_s(\varrho)
	\end{equation}
	and
	\begin{equation}\label{ssd}
		c'_s=\frac{\text{d}c_s}{\text{d} \varrho}.
	\end{equation}
\end{definition}
\begin{remark}
	Due to \eqref{ss1}, \eqref{ssd}, and \eqref{rho}, thus $c_s$ and $c'_s$ are also functions depending on $\rho$.
\end{remark}
\begin{definition}(\cite{LS1}, Definition 2.1)\label{pw}
We denote the specific vorticity $w$ as
\begin{equation}\label{sv}
	w = \mathrm{e}^{-\rho} { {\mathrm{curl}\bv} }.
\end{equation}
\end{definition}

\begin{definition}(\cite{LS1}, Definition 2.3)\label{metricd}
We define the acoustical metric $g$ and the inverse acoustical metric $g^{-1}$ relative to the Cartesian coordinates as follows:
\begin{equation}\label{MD}
	\begin{split}
		&g=-dt\otimes dt+c_s^{-2}\sum_{i=1}^{2}\left( dx^i-v^idt\right)\otimes\left( dx^i-v^idt\right),
		\\
		&g^{-1}=-(\partial_t+v^i \partial_i)\otimes (\partial_t+v^j \partial_j)+c_s^{2}\sum_{i=1}^{2}\partial_i \otimes \partial_i.
	\end{split}
\end{equation}	
\end{definition}
\begin{definition}\label{vfu}
	Define a decomposition for the velocity $\bv=\bv_{+}+\bv_{-}$ and
	\begin{equation}\label{dvc}
		v^i=v_{+}^i+ v_{-}^i.
	\end{equation}
	Above, the vector $\bv_{-}=(v_{-}^1, v_{-}^2)^{\mathrm{T}}$ is defined by
	\begin{equation}\label{etad}
		-\Delta v_{-}^{i}=\epsilon^{ia}\mathrm{e}^{{\rho}}\partial_a w .
	\end{equation}
\end{definition}
Based on these definitions, let us introduce the equivalent system of \eqref{CEE0} under new variables.
\begin{Lemma}\label{FC}(\cite{LS1}, Proposition 2.4)
The compressible Euler equations \eqref{rho}-\eqref{pq} also satisfies the following wave-transport system
	\begin{equation}\label{fc}
		\begin{cases}
			&\square_g v^i =-\epsilon^{ia}e^{\rho}c^2_s\partial_a w+Q^i,
			\\
			&\square_g \rho=\mathcal{D}, \qquad \qquad \qquad \qquad \quad t>0, x\in \mathbb{R}^2,
			\\
			& \mathbf{T}w=0.
		\end{cases}
	\end{equation}
	Above, $Q^i$($i=1,2$), $\mathcal{D}$, and $\mathbf{T}$ are defined by
	\begin{equation}\label{Di}
		\begin{split}
			& Q^i=2\epsilon^{ia}c^2_s w \partial_a \rho-\left( 1+c_s^{-1}c'_s\right)g^{\alpha \beta} \partial_\alpha \rho \partial_\beta v^i,
			\\
			&\mathcal{D}=-3c_s^{-1}c'_sg^{\alpha \beta} \partial_\alpha \rho \partial_\beta \rho+2\textstyle{\sum}_{1 \leq a < b \leq 2} \big\{ \partial_a v^a \partial_b v^b-\partial_a v^b \partial_b v^a \big\},
			\\
			& \mathbf{T}=\partial_t + \bv \cdot \nabla,
		\end{split}
	\end{equation}
	and
	\begin{equation*}
		\epsilon^{ia}=
		\begin{cases}
			0, & \text{ if \ $i=a$ },\\
			1, & \text{ if \ $i<a$ },\\
			-1, & \text{ if \ $i>a$ }.
		\end{cases}
	\end{equation*}
	We also define $\bQ =(Q^1,Q^2)^\mathrm{T}$.
\end{Lemma}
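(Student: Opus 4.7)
The plan is to reduce the first-order Euler system \eqref{CEE0} to the stated wave-transport decomposition by differentiating each equation once more and reorganising the second-derivative expressions into a form adapted to the acoustical metric \eqref{MD}. The starting identities are the transport-form equations $\mathbf{T}\rho = -\mathrm{div}\,\bv$ and $\mathbf{T} v^i = -c_s^2\partial_i\rho$, both of which follow directly from \eqref{CEE0} together with $\frac{1}{\varrho}\nabla p = c_s^2\nabla\rho$ (a consequence of \eqref{pq} and Definition~\ref{shengsu}). From \eqref{MD} the principal part of $\square_g$ acting on a scalar reads $-\mathbf{T}\mathbf{T} + c_s^2 \Delta$, so all the remaining work is algebraic.

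The transport equation $\mathbf{T} w = 0$ is the easiest: I take the two-dimensional curl of the momentum equation. Because $c_s$ depends on $\rho$ alone, $\nabla(c_s^2)\parallel\nabla\rho$ and the pressure-gradient term drops out. A direct computation in 2D gives $\mathrm{curl}\,((\bv\cdot\nabla)\bv) = (\bv\cdot\nabla)\,\mathrm{curl}\,\bv + (\mathrm{div}\,\bv)\,\mathrm{curl}\,\bv$, which combined with $\mathrm{div}\,\bv = -\mathbf{T}\rho$ and the definition $w = \mathrm{e}^{-\rho}\,\mathrm{curl}\,\bv$ collapses to $\mathbf{T} w = 0$. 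For $\square_g\rho$, I apply $\mathbf{T}$ to the continuity equation and commute it past $\mathrm{div}$ using $[\mathbf{T},\partial_i] = -\partial_i v^a\partial_a$; substituting $\mathbf{T}\bv = -c_s^2\nabla\rho$ produces $-\mathbf{T}\mathbf{T}\rho = -c_s^2\Delta\rho - (c_s^2)'|\nabla\rho|^2 - \partial_iv^a\partial_av^i$, and the quadratic $-\partial_iv^a\partial_av^i$ decomposes as $-(\mathrm{div}\,\bv)^2 + 2(\partial_1v^1\partial_2v^2 - \partial_1v^2\partial_2v^1)$, the first summand being absorbed into the contraction $g^{\alpha\beta}\partial_\alpha\rho\partial_\beta\rho$ relative to \eqref{MD}. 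For $\square_g v^i$, I apply $\mathbf{T}$ to the momentum equation and invoke the 2D Hodge identity $\Delta v^i = \partial_i\,\mathrm{div}\,\bv - \epsilon^{ia}\partial_a\,\mathrm{curl}\,\bv$ together with $\mathrm{curl}\,\bv = \mathrm{e}^\rho w$; the $\partial_i\,\mathrm{div}\,\bv$ contributions cancel the $\partial_i\mathbf{T}\rho$ piece, leaving exactly the source $-\mathrm{e}^\rho c_s^2\epsilon^{ia}\partial_a w$ plus quadratic remainders of schematic type $(d\bv,d\rho)\cdot(d\bv,d\rho)$.

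The remaining task is to pass from the principal-symbol identity $-\mathbf{T}\mathbf{T} + c_s^2\Delta$ to the true covariant wave operator $\square_g = g^{\alpha\beta}\partial_\alpha\partial_\beta - g^{\alpha\beta}\Gamma^\gamma_{\alpha\beta}\partial_\gamma$. Computing the Christoffel symbols directly from \eqref{MD}—they are linear in $d\bv$ and $d\rho$ with coefficients built from $c_s^{-1}c_s'$—supplies precisely the factor $3c_s^{-1}c_s'$ in $\mathcal{D}$ and the coefficient $1 + c_s^{-1}c_s'$ in $Q^i$. The main obstacle is purely combinatorial: the collection of quadratic terms in $d\bv, d\rho$ produced by the first two steps must be regrouped to separate the genuine 2D null form $\partial_1v^1\partial_2v^2 - \partial_1v^2\partial_2v^1$ (which sits in $\mathcal{D}$) from the pieces that assemble into the covariant contraction $g^{\alpha\beta}\partial_\alpha\rho\partial_\beta v^i$ appearing in $Q^i$. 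Since the three-dimensional analogue is Proposition~2.4 of Luk-Speck \cite{LS1}, the 2D derivation proceeds along the same lines, with the structural simplification that in two dimensions the vorticity is a scalar, so its transport equation carries no vortex-stretching source.
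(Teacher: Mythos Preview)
The paper does not supply its own proof of this lemma; it is stated with attribution to Luk--Speck \cite{LS1}, Proposition~2.4, and used as a black box. Your sketch is the standard derivation and matches the argument in \cite{LS1}: take the curl of the momentum equation to obtain $\mathbf{T}w=0$, apply $\mathbf{T}$ to each first-order equation and commute through the spatial derivatives, then use the 2D Hodge identity $\Delta v^i=\partial_i\,\mathrm{div}\,\bv-\epsilon^{ia}\partial_a\,\mathrm{curl}\,\bv$ to isolate the vorticity source, and finally account for the lower-order Christoffel contributions in $\square_g$ to fix the exact coefficients $3c_s^{-1}c_s'$ and $1+c_s^{-1}c_s'$.

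One small caution on the bookkeeping: in this paper $c_s'$ is defined in \eqref{ssd} as $dc_s/d\varrho$, not $dc_s/d\rho$, so when you expand $(c_s^2)'$ along $\nabla\rho$ there is an implicit factor of $\mathrm{e}^{\rho}$ from the chain rule $\partial_\rho=\mathrm{e}^{\rho}\partial_\varrho$; this is what makes the dimensionless combination $c_s^{-1}c_s'$ (rather than, say, $c_s^{-2}(c_s^2)'$) appear in the final expressions. Apart from tracking that convention carefully, your outline is complete and correct.
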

We next introduce the first theorem in the paper.
\begin{theorem}\label{dl2}
Assume $\frac74<s_0\leq s \leq 2$ and \eqref{HEw} hold. Let $M_0$ be any positive constant. Consider the Cauchy problem \eqref{fc}. If
\begin{equation*}\label{chuzhi}
	\| \bv_0\|_{H^{s}} +
	\| {\rho}_0\|_{H^{s}} + \| w_0\|_{H^{s_0-\frac14}}+ \| \nabla w_0\|_{L^{8}}
 \leq M_0,
	\end{equation*}
there exists two positive constants $T$(depending on $C_0,c_0, M_0, s, s_0$) such that the Cauchy problem \eqref{fc} is locally well-posed. Precisely,

\begin{enumerate}
\item\label{po1} there exists a unique solution $(\bv,{\rho}) \in C([0,T],H_x^{s}) \cap C^1([0,T],H_x^{s-1})$, $w \in C([0,T],H_x^{s_0-\frac14})$ $\cap C^1([0,T],H_x^{s_0-\frac54})$, $\nabla w \in C([0,T],L_x^{8})$, and $(d\bv, d{{\rho}}) \in L^4_{[0,T]} L^\infty_x$, 


\item\label{po2} for any $1 \leq r \leq s+1$, and for each $t_0 \in [0,T]$, the following linear equation
	\begin{equation}\label{linear}
	\begin{cases}
	& \square_g f=\mathbf{T}\Theta +B, \qquad (t,x) \in [0,T]\times \mathbb{R}^2,
	\\
	&f(t_0,\cdot)=f_0 \in H_x^r, 
	\\
	& \partial_t f(t_0,\cdot)=f_1 \in H_x^{r-1},
	\end{cases}
	\end{equation}
admits a solution $f \in C([0,T],H^r) \times C^1([0,T],H^{r-1})$ and the following estimates hold:
\begin{equation}\label{E0}
\| f\|_{L_t^\infty H_x^r}+ \|\partial_t f\|_{L_t^\infty H_x^{r-1}} \leq C_{M_0} (\|f_0\|_{H_x^r}+ \|f_1\|_{H_x^{r-1}}+\| \Theta \|_{L^\infty_tH_x^{r-1} \cap L^1_tH_x^r}+\|B\|_{L^1_tH_x^{r-1}} ).
\end{equation}
Additionally, the following estimates hold, provided $a<r-\frac{3}{4}$,
\begin{equation}\label{SE1}
\| \left<\nabla \right>^a f\|_{L^4_{t}L^\infty_x} \leq C_{M_0} ( \|f_0\|_{H_x^r}+ \|f_1\|_{H_x^{r-1}}+\| \Theta \|_{L^\infty_tH_x^{r-1} \cap L^1_tH_x^r}+\|B\|_{L^1_tH_x^{r-1}} ),
\end{equation}
and the same estimates hold with $\left<\nabla \right>^a$ replaced by $\left<\nabla \right>^{a-1} d$. Here $C_{M_0}$ is a positive constant depending on $C_0,c_0, s, s_0$ and $M_0$.
\end{enumerate}

\end{theorem}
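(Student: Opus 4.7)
The plan is to construct the solution by applying Majda's classical existence theorem \cite{M} to a sequence of mollified initial data, and then to close a nonlinear bootstrap on the Strichartz norm $\|d\bv,d\rho\|_{L^4_{[0,T]}L^\infty_x}$ uniformly on a time interval $[0,T]$ that depends only on $M_0$ and the universal constants $C_0,c_0$. Once such uniform bounds are in hand, compactness gives a solution at the claimed regularity, and uniqueness plus continuous dependence follow from a direct energy argument for the difference of two solutions, using the already-established $L^4_tL^\infty_x$ control on $(d\bv,d\rho)$ to absorb the Lipschitz coefficients. The linear estimates of part \eqref{po2} are not auxiliary: they are the tool used to close the nonlinear bootstrap, so both parts of the theorem are proved simultaneously inside the same approximation scheme.

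\textbf{A priori bounds.} Under the bootstrap assumption $\|d\bv,d\rho\|_{L^4_{[0,T]}L^\infty_x}\le 2C_{M_0}$, I would split $\bv=\bv_++\bv_-$ as in Definition \ref{vfu}. Since $-\Delta\bv_-$ equals a first-order operator applied to $e^\rho w$, elliptic regularity and product estimates place $\bv_-\in L^\infty_t H^{s_0+3/4}_x\hookrightarrow L^\infty_tW^{1,\infty}_x$, and reduce $\mathbf{T}\bv_-$ to two derivatives of products of the form $\nabla\bv\cdot\nabla w$. Consequently the wave equation for $\bv_+$ in \eqref{key1} takes the form $\square_g \bv_+=\mathbf{T}\Theta+B$ with $\Theta=\mathbf{T}\bv_-$ and $B$ quadratic in $(d\bv,d\rho)$; this is \emph{exactly} the inhomogeneous structure appearing in \eqref{linear}, which is the reason part \eqref{po2} is stated with a $\mathbf{T}\Theta$ source. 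Once part \eqref{po2} is available with the nonlinear coefficients, it delivers both the $H^s_x$-energy bound and the Strichartz bound for $d\bv_+$ and $d\rho$, while $d\bv_-$ is controlled by Sobolev embedding. For the vorticity, the transport identity $\mathbf{T}w=0$ commuted with $\nabla$ yields $\mathbf{T}\nabla w=-(\nabla\bv)\nabla w$, so the $L^p$-transport estimate propagates $\nabla w\in L^\infty_tL^8_x$ under the bootstrap, while commuting with $\langle\nabla\rangle^{s_0-5/4}$ together with product/commutator estimates closes the $H^{s_0-1/4}_x$ bound on $w$; the $L^8$ bound enters precisely because $\nabla w$ is not in $L^\infty_x$ at this regularity.

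\textbf{Main obstacle and linear estimates.} The hard step, and the source of the $\tfrac14$-order improvement over \cite{Z1,Z2}, is the Strichartz estimate \eqref{SE1} for $\square_g$ when the coefficients are constructed from $(\bv,\rho)$ with $w$ only at $H^{s_0-1/4}_x$. At this regularity $\square_g g$ sits only in $H^{s_0-5/4}_x$, below the threshold used in the original Smith-Tataru framework \cite{ST}. I would therefore re-run the wave-packet parametrix construction of \cite{ST}, checking that the geometric estimates on null hypersurfaces still go through under the weaker bound $\square_g g\in H^{s_0-5/4}_x$; the scale-invariant content of the hypothesis $s_0>7/4$ is precisely $s_0-5/4>\tfrac12$, which in two dimensions is enough for the Sobolev embeddings and null-frame estimates used in the parametrix. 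The $\mathbf{T}\Theta$ source in \eqref{linear} is then handled by integration by parts in the wave-equation energy identity, the extra $\mathbf{T}$-derivative being absorbed at the cost of the norm $\|\Theta\|_{L^\infty_tH^{r-1}_x\cap L^1_tH^r_x}$ appearing in \eqref{E0}, and the pure source $B$ is treated by Duhamel. Combining these ingredients closes the bootstrap, gives \eqref{SE1} with loss $a<r-3/4$, and completes the proof of both parts of Theorem \ref{dl2}.
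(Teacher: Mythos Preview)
Your proposal is correct and follows essentially the same architecture as the paper: approximation by smooth data, a nonlinear bootstrap on the Strichartz norm, the $\bv_+/\bv_-$ decomposition so that the wave equation for $\bv_+$ has source of the form $\mathbf{T}\Theta+B$, transport propagation of $w\in H^{s_0-1/4}$ aided by the auxiliary $\nabla w\in L^8$ bound, and a re-run of the Smith--Tataru wave-packet parametrix using that $s_0-\tfrac54>\tfrac12$ suffices for the null-hypersurface estimates. The only structural point you leave implicit is the paper's preliminary reduction (Propositions~\ref{p3} and~\ref{p1}) via scaling and physical localization to \emph{small, compactly supported} data on the unit time interval, after which the bootstrap is run not only on $\|d\bv,d\rho\|_{L^4_tC^\delta_x}$ but also on a geometric functional $G$ measuring the $H^{s_0-1/4}$ regularity of the null foliations $\Sigma_{\theta,r}$; this is exactly the ``geometric estimates on null hypersurfaces'' you allude to, made precise.
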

\begin{remark}
\begin{enumerate}
	\item The initial condition $(\bv_0,\rho_0)\in H^{\frac74+}$ ensures the Strichartz estimate for 2D wave system. The vorticity regularity $ w_0\in H^{\frac32+}$ controls the geometry condition of null hypersurface in section \ref{ABA}. Meanwhile, the additional condition $\nabla w_0 \in L^8$ is imposed specifically to close the energy bounds for vorticity due to product estimates.
	\item For the free boundary problems, there is no Strichartz estimates of solutions. Therefore, on the study of low regularity well-posedness, the free boundary problems is much different from Cauchy problem. For more details, please refer to Avadanei \cite{Av} and Ifrim-Tataru \cite{IT}.
\end{enumerate}	
\end{remark}
Let us now state our second theorem.
\begin{theorem}\label{dl3}
	Assume $s\in (\frac74,2]$ and \eqref{HEw} hold.  Consider the problem \eqref{fc}. Let $M_0$ be any positive constant. If
	\begin{equation}\label{chuzhi2}
		\| \bv_0\|_{H^{s}} +
		\| {\rho}_0\|_{H^{s}} + \| w_0\|_{H^{\frac32}}+ \| \nabla w_0\|_{L^{8}}
		\leq M_0,
	\end{equation}
	then there exists two positive constants $T^*$(depending on $C_0,c_0, M_0,s$) such that the Cauchy problem \eqref{fc} is locally well-posed. Precisely,
	
	\begin{enumerate}
	\item \label{point:1} there exists a unique solution $(\bv,{\rho}) \in C([0,T^*],H_x^{s}) \cap C^1([0,T^*],H_x^{s-1})$, $w \in C([0,T^*],H_x^{\frac32})$ $\cap C^1([0,T^*],H_x^{\frac12})$, $\nabla w \in C([0,T^*],L_x^{8})$, and $(d\bv, d{{\rho}}) \in L^4_{[0,T^*]} L^\infty_x$, 


	\item \label{point:2} for any $s-\frac34 \leq r \leq \frac{11}{4}$, and for each $t_0 \in [0,T^*]$, the following linear equation
	\begin{equation}\label{linearC}
		\begin{cases}
			& \square_g f=\mathbf{T}\Theta +B, \qquad (t,x) \in [0,T^*]\times \mathbb{R}^2,
			\\
			&f(t_0,\cdot)=f_0 \in H_x^r, 
			\\
			& \partial_t f(t_0,\cdot)=f_1 \in H_x^{r-1},
		\end{cases}
	\end{equation}
	admits a solution $f \in C([0,T^*],H^r) \times C^1([0,T^*],H^{r-1})$ and the following estimates hold:
	\begin{equation*}\label{E0C}
		\| f\|_{L_t^\infty H_x^r}+ \|\partial_t f\|_{L_t^\infty H_x^{r-1}} \leq C_{M_0} (\|f_0\|_{H_x^r}+ \|f_1\|_{H_x^{r-1}}+\| \Theta \|_{L^\infty_tH_x^{r-1} \cap L^1_tH_x^r}+\|B\|_{L^1_tH_x^{r-1}} ).
	\end{equation*}
	Additionally, the following estimates hold, provided $a<r-(s-1)$,
	\begin{equation*}\label{SE1C}
		\| \left<\nabla \right>^a f\|_{L^4_{t}L^\infty_x} \leq C_{M_0} ( \|f_0\|_{H_x^r}+ \|f_1\|_{H_x^{r-1}}+\| \Theta \|_{L^\infty_tH_x^{r-1} \cap L^1_tH_x^r}+\|B\|_{L^1_tH_x^{r-1}} ),
	\end{equation*}
	and the same estimates hold with $\left<\nabla \right>^a$ replaced by $\left<\nabla \right>^{a-1} d$. Here $C_{M_0}$ is a positive constant depending on $C_0,c_0, s$ and $M_0$.
	\end{enumerate}
	
\end{theorem}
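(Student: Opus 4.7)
The strategy, announced in the motivation, is frequency truncation combined with a continuity argument, using Theorem~\ref{dl2} as a black box. For a dyadic parameter $\lambda = 2^k \geq 1$, set
\begin{equation*}
\bv_0^\lambda := P_{<\lambda} \bv_0, \qquad \rho_0^\lambda := P_{<\lambda} \rho_0, \qquad w_0^\lambda := P_{<\lambda} w_0.
\end{equation*}
These truncated data retain the bound \eqref{chuzhi2}, while Bernstein's inequality yields an additional higher-regularity estimate $\|\bv_0^\lambda\|_{H^{s+\delta_1}} + \|\rho_0^\lambda\|_{H^{s+\delta_1}} + \|w_0^\lambda\|_{H^{3/2+\delta_1}} \lesssim \lambda^{\delta_1} M_0$, with $\delta_1=(s-\tfrac74)/10$ as fixed in the notations. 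Feeding these data into Theorem~\ref{dl2} with $s_0 = \tfrac74 + \delta_1$ produces a family of smoother solutions $(\bv^\lambda, \rho^\lambda, w^\lambda)$ on intervals $[0, T_\lambda]$, together with the Strichartz bound \eqref{SE1} at that regularity level.

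The main step is to extend $T_\lambda$ to a uniform $T^* > 0$ independent of $\lambda$. I would run a bootstrap on the base control
\begin{equation*}
A_\lambda(t) := \|\bv^\lambda\|_{L^\infty_{[0,t]} H^s_x} + \|\rho^\lambda\|_{L^\infty_{[0,t]} H^s_x} + \|w^\lambda\|_{L^\infty_{[0,t]} H^{3/2}_x} + \|\nabla w^\lambda\|_{L^\infty_{[0,t]} L^8_x} + \|(d\bv^\lambda, d\rho^\lambda)\|_{L^4_{[0,t]} L^\infty_x},
\end{equation*}
coupled with a tame $\lambda^{\delta_1}$-growth estimate on the higher-order norm. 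Energy estimates on the wave-transport system \eqref{fc}, together with the splitting \eqref{key1}-\eqref{key2} for the velocity, bound $A_\lambda$ in terms of its Strichartz component and initial data; the Strichartz component is then fed back via part (\ref{po2}) of Theorem~\ref{dl2} applied to the solution itself. This introduces a loss of at most $\lambda^{\delta_1}$, absorbed by partitioning $[0,T^*]$ into $\lambda^{\delta_1}$ short subintervals of length $\simeq \lambda^{-\delta_1}$ and iterating. Balancing the two sides closes the bootstrap on a uniform $T^*$.

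Once uniform bounds are available, convergence of $(\bv^\lambda, \rho^\lambda, w^\lambda)$ follows by a Cauchy argument. Energy estimates for the differences $(\bv^\lambda - \bv^{\lambda'}, \rho^\lambda - \rho^{\lambda'}, w^\lambda - w^{\lambda'})$ in the wave-transport system \eqref{fc}, made possible by the $L^4_t L^\infty_x$ Strichartz bound on $(d\bv, d\rho)$, show the sequence is Cauchy in $C([0,T^*]; L^2_x)$. Interpolation against the uniform higher-order bounds, together with weak-$*$ compactness, delivers a limit $(\bv,\rho,w)$ in the asserted spaces; continuity in time at the top regularity is obtained by a standard mollification/frequency-envelope argument, and uniqueness follows from the same difference estimate applied to two putative solutions. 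This closes part (\ref{point:1}).

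Part (\ref{point:2}) is then obtained by rerunning the Smith--Tataru linear machinery \cite{ST} from Theorem~\ref{dl2} on the limiting acoustical metric $g$. The reduced admissible range $a < r-(s-1)$, in place of $a < r - \tfrac34$, reflects precisely the $\lambda^{\delta_1}$ frequency loss accumulated in the bootstrap: the linear Strichartz estimate now loses a full $s - \tfrac74$ derivatives relative to the sharp exponent. The main obstacle is the second step, extending to a uniform lifespan $T^*$ while keeping the derivative loss pinned at exactly $s - \tfrac74$; this is a delicate accounting, in the spirit of Ai--Ifrim--Tataru \cite{AIT} and its 3D Euler adaptation by Andersson--Zhang \cite{AZ}. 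The two-dimensional wave-transport structure \eqref{wea}-\eqref{wec}, together with the $\bv_+/\bv_-$ splitting of Definition~\ref{vfu}, are essential to keep the vorticity regularity at the threshold $H^{3/2}$ rather than $H^{3/2+}$.
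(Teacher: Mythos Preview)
Your overall strategy—frequency-truncate the data, invoke Theorem~\ref{dl2} at a slightly higher vorticity regularity $s_0=\tfrac74+\delta_1$, then extend to a uniform time via a semiclassical argument—is exactly what the paper does. Two points need adjustment.

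First, a minor one: you cannot set $w_0^\lambda = P_{<\lambda} w_0$ independently. The specific vorticity is a derived quantity $w = \mathrm{e}^{-\rho}\mathrm{curl}\,\bv$, so to keep the system \eqref{fc} consistent you must take $w_0^\lambda = \mathrm{e}^{-\rho_0^\lambda}\mathrm{curl}\,\bv_0^\lambda$; this is what the paper does in \eqref{Qss0}. Otherwise $\mathbf{T}w=0$ would not propagate from the truncated data.

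Second, and this is the genuine gap: your proposed single-$\lambda$ bootstrap, partitioned into $\lambda^{\delta_1}$ subintervals and iterated, does not close as described. Iterating the energy estimate of Theorem~\ref{bBe} across $\sim\lambda^{\delta_1}$ subintervals would cause the constants to compound like $C^{\lambda^{\delta_1}}$ unless you first secure a \emph{uniform-in-$\lambda$} bound on $\|(d\bv^\lambda, d\rho^\lambda)\|_{L^1_{[0,T^*]} L^\infty_x}$. The paper obtains this not by bootstrapping a single $\bv^\lambda$ but by using the entire family $(\bv_j, \rho_j, w_j)_{j\geq 1}$ simultaneously, through the telescoping frequency decomposition
\[
d\bv_j \;=\; P_{\geq j}\, d\bv_j \;+\; \sum_{k=1}^{j-1}\sum_{m=k}^{j-1} P_k \bigl(d\bv_{m+1} - d\bv_m\bigr) \;+\; \sum_{k=1}^{j-1} P_k\, d\bv_k
\]
(see \eqref{kz03}--\eqref{kz04}). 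The high-frequency piece and the inner telescoping sums are estimated using the refined Strichartz bound \eqref{SR0} of Proposition~\ref{r4}, whose right-hand side requires only $w\in H^{1+}_x$ rather than $H^{s_0-1/4}_x$; applied to the differences $\bv_{m+1}-\bv_m$, $\rho_{m+1}-\rho_m$, $w_{m+1}-w_m$ together with the low-norm stability estimates \eqref{yu0}--\eqref{yu1}, this produces an extra factor $2^{-c\delta_1 m}$ that makes the double sum summable uniformly in $j$. That summability is what keeps $\|d\bv_j,d\rho_j\|_{L^1_{[0,T^*]}L^\infty_x}\leq 2$ at every step of the induction from $T^*_j$ to $T^*_{N_0}$, and hence caps the energy at a fixed $C_*$ throughout. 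This cross-scale comparison is the ``delicate accounting'' you acknowledge; without it, or an equivalent mechanism, the extension step does not go through.
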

\begin{remark}
	Compared with Theorem \ref{dl2}, Theorem \ref{dl3} lowers the regularity exponent of the vorticity from $\frac32+$ to exactly $\frac{3}{2}$. This reduction is non-trivial because, if we apply Smith-Tataru's approach directly, we cannot obtain the Strichartz estimates for velocity and density when $w_0\in H^{\frac32}$. 
	
	When $(\bv_0, \rho_0, w_0, \nabla w_0)\in H^{\frac74+}(\mathbb{R}^2)\times H^{\frac74+}(\mathbb{R}^2) \times H^{\frac32}(\mathbb{R}^2) \times L^{8}(\mathbb{R}^2)$, we first use the frequency truncation to get a sequence of initial data belonging in $ H^{\frac74+}(\mathbb{R}^2)\times H^{\frac74+}(\mathbb{R}^2) \times H^{\frac32+}(\mathbb{R}^2) \times L^{8}(\mathbb{R}^2)$. Based on Theorem \ref{dl2}, a sequence of solutions on a short time-interval, and these intervals depends on the size of frequency.
	By extending the solutions from these short time intervals to a uniformly regular time-interval, a Strichartz estimate with a loss of derivatives can be obtained. This leads us to prove the local existence and uniqueness of solutions to the 2D compressible Euler equations for initial data $(\bv_0, \rho_0, w_0, \nabla w_0)\in H^{\frac74+}(\mathbb{R}^2)\times H^{\frac74+}(\mathbb{R}^2) \times H^{\frac32}(\mathbb{R}^2) \times L^{8}(\mathbb{R}^2)$.
\end{remark}
\begin{remark}
	\begin{enumerate}
			\item Since the regularity needed for vorticity is less stringent than that for velocity and density, the theorems \ref{dl2} and \ref{dl3} can be seen as a non-trivial extension from 2D quasilinear wave equation to the 2D quasilinear wave-transport system. Moreover, the 2D results presented in Theorems \ref{dl2} and \ref{dl3} of this paper may be comparable to the known 3D results \cite{AZ, WQEuler}.
		\item Referring to the ill-posedness result in \cite{ACY1}, we expect that the regularity of velocity and density in Theorems \ref{dl2} and \ref{dl3} is optimal. Indeed, the sharp regularity of velocity, density, and vorticity for the Cauchy problem of 2D compressible Euler equations remains an open and challenging problem.
		\item Once it's far away from vacuum, Theorems \ref{dl2} and \ref{dl3} also hold for a more general state \( p = p(\varrho) \) such that $ p'(\varrho) > 0$.
	\end{enumerate}
\end{remark}

\subsection{Outline of the paper}
The organization of the remainder of this paper is as follows. 
In Section 2, we prove the total energy estimates and uniqueness of solutions. In Section 3-7, we give a complete proof for Theorem \ref{dl2}. In the subsequent Section \ref{Sec8}, we present a proof for Theorem \ref{dl3}.
\section{Structures and energy estimates}
In this part we aim to give good formulations of \eqref{CEE0}, energy estimates and uniqueness theorems.
\subsection{Good formulations}
Let us start by introducing a hyperbolic system of compressible Euler equations \eqref{CEE0}.
\begin{Lemma} \label{sh}\cite{Li}
Let $\bv$ and $\rho$ be a solution of \eqref{CEE0}. Set $\bU=(v^1,v^2,p(\mathrm{e}^\rho))^{\mathrm{T}}$. Then \eqref{CEE0} satisfies the following symmetric hyperbolic system
\begin{equation}\label{sq}
  A^0(\bU)\partial_t\bU +{A}^1(\bU)\partial_1\bU+{A}^2(\bU)\partial_2\bU=0,
\end{equation}
where $A^\alpha (\alpha=0,1,2)$ is defined by
\begin{equation*}
A^0=
\left(
\begin{array}{ccc}
\mathrm{e}^{\rho} & 0 & 0\\
0 & \mathrm{e}^{\rho} & 0\\
0 & 0 & \mathrm{e}^{-\rho} c_s^{-2}
\end{array}
\right ),\quad
A^1=\left(
\begin{array}{ccc}
\mathrm{e}^{\rho} v^1 & 0 & 1\\
0 & \mathrm{e}^{\rho} v^1 & 0\\
1 & 0 & v^1 \mathrm{e}^{-\rho} c_s^{-2}
\end{array}
\right ),
\end{equation*}
\begin{equation*}
A^2=\left(
\begin{array}{ccc}
\mathrm{e}^{\rho} v^2 & 0 & 0\\
0 & \mathrm{e}^{\rho} v^2 & 1\\
0 & 1 & v^2 \mathrm{e}^{-\rho} c_s^{-2}
\end{array}
\right ).
\end{equation*}
\end{Lemma}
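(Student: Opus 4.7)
The plan is to derive the symmetric hyperbolic form by a direct change of variables from $(v^1,v^2,\rho)$ to $\bU=(v^1,v^2,P)$ with $P=p(\mathrm{e}^{\rho})$, and then read off the matrices. The routine but critical observation is that
\begin{equation*}
\partial_{\alpha}P = p'(\mathrm{e}^{\rho})\,\mathrm{e}^{\rho}\,\partial_{\alpha}\rho = c_s^{2}\,\mathrm{e}^{\rho}\,\partial_{\alpha}\rho, \qquad \alpha=0,1,2,
\end{equation*}
in view of \eqref{ss} and \eqref{pq}, so that $\partial_{\alpha}\rho = c_s^{-2}\mathrm{e}^{-\rho}\partial_{\alpha}P$. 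This single identity converts every occurrence of $\partial_{\alpha}\rho$ in \eqref{CEE0} into a derivative of the new unknown $P$.

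First I would rewrite the two momentum equations in the second line of \eqref{CEE0}. Substituting $p'(\mathrm{e}^{\rho})\partial_i\rho = \mathrm{e}^{-\rho}\partial_iP$ yields
\begin{equation*}
\partial_t v^i + v^j\partial_j v^i + \mathrm{e}^{-\rho}\partial_i P = 0,\qquad i=1,2,
\end{equation*}
and multiplying by $\mathrm{e}^{\rho}$ (the normalizing factor that symmetrizes the system) gives
\begin{equation*}
\mathrm{e}^{\rho}\partial_t v^i + \mathrm{e}^{\rho}v^j\partial_j v^i + \partial_i P = 0.
\end{equation*}
These are the first two rows of \eqref{sq}, with the stated entries of $A^{0}$, $A^{1}$, $A^{2}$ in the $v^{i}$-slots (and the $1$'s in the $P$-columns that couple to $\partial_iP$).

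Next, for the continuity equation I would substitute $\partial_t\rho = c_s^{-2}\mathrm{e}^{-\rho}\partial_tP$ and $\partial_j\rho = c_s^{-2}\mathrm{e}^{-\rho}\partial_jP$ into the first line of \eqref{CEE0}, obtaining
\begin{equation*}
c_s^{-2}\mathrm{e}^{-\rho}\partial_tP + c_s^{-2}\mathrm{e}^{-\rho}v^j\partial_jP + \partial_j v^j = 0,
\end{equation*}
which is exactly the third row of \eqref{sq}, with the $(3,3)$-entries being $c_s^{-2}\mathrm{e}^{-\rho}$ in $A^{0}$ and $v^{k}c_s^{-2}\mathrm{e}^{-\rho}$ in $A^{k}$, while the $(3,k)$-entries carry the $1$'s that couple to $\partial_k v^k$. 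Collecting the three equations and comparing entrywise with the matrices in the statement verifies \eqref{sq}.

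Finally I would note that symmetry is automatic from the construction: the off-diagonal $1$'s in the $(i,3)$ and $(3,i)$ positions of $A^{k}$ are precisely matched because both come from the single coupling term between $v^{i}$ and $P$ via the factor $\mathrm{e}^{\rho}\cdot\mathrm{e}^{-\rho}=1$ produced by multiplying the momentum equation by $\mathrm{e}^{\rho}$; the diagonal $v^{k}$ structure of $A^{k}$ reflects the common transport operator $\partial_t+v^j\partial_j$. Positive definiteness of $A^{0}$ follows from $\mathrm{e}^{\rho}>0$ and $c_s^{-2}\mathrm{e}^{-\rho}>0$ under \eqref{HEw}, so the system is indeed symmetric hyperbolic. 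There is no real obstacle here; the only care needed is to pick the correct symmetrizer (multiplication of the momentum equation by $\mathrm{e}^{\rho}$) so that the cross terms between $v^i$ and $P$ match, and to track the factor $c_s^{2}\mathrm{e}^{\rho}$ in the $d\rho\leftrightarrow dP$ conversion.
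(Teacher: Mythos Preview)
Your derivation is correct: the change of variables $P=p(\mathrm{e}^{\rho})$ together with the multiplier $\mathrm{e}^{\rho}$ on the momentum equations yields exactly the matrices in the statement, and symmetry and positive-definiteness of $A^{0}$ follow as you say. The paper itself does not give a proof of this lemma but simply cites \cite{Li}; your direct verification is the standard argument and there is nothing to add.
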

\begin{remark}
	We also mention that there is another symmetrization due to Lefloch and Ukai \cite{LU}.
\end{remark}
Next, let us introduce a good wave equation for $\bv_{+}$.
\begin{Lemma}\label{wte1}(\cite{Z2}, Lemma 2.3)
Let $\bv$ and $\rho$ be a solution of \eqref{CEE0} and $w$ be defined in \eqref{sv}. Let $\bv_{+}$ and $\bv_{-}$ be denoted in Definition \ref{vfu}. Then $\bv_{+}=(v_{+}^1, v_{+}^2)^{\mathrm{T}}$ satisfies
\begin{equation}\label{fcp}
\begin{split}
&\square_g v^i_{+}=\mathbf{T}\mathbf{T} v_{-}^i+Q^i.
\end{split}
\end{equation}
\end{Lemma}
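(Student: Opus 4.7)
The plan is to derive \eqref{fcp} by a direct algebraic computation that rests on three ingredients already on the table: the splitting $v^i = v^i_{+} + v^i_{-}$ from Definition \ref{vfu}, the wave equation for $v^i$ supplied by Lemma \ref{FC}, and the elliptic identity $-\Delta v^i_{-} = \epsilon^{ia} e^{\rho}\partial_a w$ recorded in \eqref{etad}. Writing
\[
\square_g v^i_{+} = \square_g v^i - \square_g v^i_{-}
\]
and invoking Lemma \ref{FC} for the first term, the task reduces to expressing $\square_g v^i_{-}$ in terms of $\mathbf{T}\mathbf{T} v^i_{-}$ plus a source that precisely cancels the $-\epsilon^{ia} e^\rho c_s^2 \partial_a w$ sitting on the right-hand side of \eqref{fc}.

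To this end I would start from the inverse-metric decomposition of Definition \ref{metricd},
\[
g^{-1} = -\mathbf{T}\otimes \mathbf{T} + c_s^2 \textstyle\sum_{i=1}^2 \partial_i \otimes \partial_i,
\]
and expand $g^{\alpha\beta}\partial_\alpha\partial_\beta$ in Cartesian coordinates. A short computation gives, at the principal level,
\[
\square_g f = -\mathbf{T}\mathbf{T} f + c_s^2 \Delta f + \text{(l.o.t.)},
\]
where the lower-order terms come from commutators $[\partial_\alpha, v^\beta]$ when iterating $\mathbf{T}$ and are of the schematic form $(d\bv)\cdot(df)$. Setting $f = v^i_{-}$, the elliptic identity \eqref{etad} converts $c_s^2 \Delta v^i_{-}$ into exactly $-\epsilon^{ia} e^{\rho} c_s^2 \partial_a w$, which cancels the analogous source in $\square_g v^i$ upon subtracting the two equations.

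After this cancellation, the right-hand side reads $\mathbf{T}\mathbf{T} v^i_{-} + Q^i$ plus the leftover commutator terms. Using the momentum equation in \eqref{CEE0} to rewrite $\mathbf{T} v^j$ as $-p'(e^\rho)\partial_j \rho$, together with the Calder\'on--Zygmund bound $\|\nabla v_{-}\|_{L^p} \lesssim \|e^\rho w\|_{L^p}$ implicit in \eqref{etad}, each leftover takes the shape $(d\bv,d\rho)\cdot(d\bv,d\rho)$ already defining $Q^i$ through \eqref{Di}, so it may be absorbed into $Q^i$. The one delicate aspect is precisely this bookkeeping of lower-order terms: one has to verify that no structurally new contribution is generated, so that the same $Q^i$ can serve in both \eqref{fc} and \eqref{fcp} and so that the subsequent energy and Strichartz estimates see a uniform nonlinearity. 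Beyond that, the argument is purely algebraic and presents no conceptual obstruction.
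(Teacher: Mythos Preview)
The paper does not give a proof here; the lemma is quoted from \cite{Z2}. Your outline is the natural argument and matches what any proof must do: write $\square_g v^i_{+}=\square_g v^i-\square_g v^i_{-}$, expand the principal part via the exact identity
\[
g^{\alpha\beta}\partial_\alpha\partial_\beta f=-\mathbf{T}\mathbf{T} f+(\mathbf{T} v^k)\partial_k f+c_s^2\Delta f,
\]
and then use \eqref{etad} to turn $c_s^2\Delta v^i_{-}$ into $-\epsilon^{ia}e^{\rho}c_s^2\partial_a w$, which cancels the vorticity source from \eqref{fc}.

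The point you single out as delicate is exactly where the sketch is imprecise. Carrying out the computation leaves precisely one lower-order commutator term,
\[
-(\mathbf{T} v^k)\partial_k v^i_{-}=c_s^2(\partial_k\rho)(\partial_k v^i_{-}),
\]
and this is \emph{not} a summand of $Q^i$ as written in \eqref{Di}: that expression involves $\partial_\beta v^i$, not $\partial_\beta v^i_{-}$, so the leftover cannot be literally ``absorbed into $Q^i$'' without changing the definition. What is true---and what makes the lemma usable everywhere it appears, e.g.\ in \eqref{k02} and \eqref{fgH}---is that this extra term has the same quadratic structure $(d\rho)\cdot(\nabla v_{-})$ and, since $\nabla v_{-}=\nabla(-\Delta)^{-1}(e^{\rho}\nabla w)$ is a zero-order Calder\'on--Zygmund operator applied to $e^{\rho}w$, it obeys the same $H^{s-1}$ bound as $Q^i$ in \eqref{YYE}. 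So the identity \eqref{fcp} should be read either with this harmless extra term recorded explicitly, or with the understanding that $Q^i$ here denotes a quadratic nonlinearity of the same type; your proposal becomes complete once you write the term down rather than assert it disappears.
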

\begin{Lemma}\label{tw}(\cite{Z2}, Lemma 2.8)
Let $\bv$ and $\rho$ be a solution of \eqref{CEE0} and $w$ be defined in \eqref{sv}. Then $\na w$ satisfies
\begin{equation}\label{w1}
\mathbf{T} ( \partial_i w ) =  \partial_i v^j \partial_j w, \quad i=1,2.
\end{equation}
\end{Lemma}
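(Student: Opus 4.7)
The plan is to derive \eqref{w1} by differentiating the transport equation $\mathbf{T} w = 0$ (established in Lemma \ref{FC}) in the spatial direction $x^i$ and computing the commutator $[\partial_i, \mathbf{T}]$. Since $w$ already satisfies a scalar transport equation along the fluid velocity, the only nontrivial content is how $\partial_i$ fails to commute with the material derivative.

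Expanding $\mathbf{T} = \partial_t + v^j \partial_j$ and applying $\partial_i$ to $\mathbf{T} w = 0$ yields
\begin{equation*}
0 = \partial_i(\mathbf{T} w) = \partial_t(\partial_i w) + v^j \partial_j(\partial_i w) + (\partial_i v^j)(\partial_j w) = \mathbf{T}(\partial_i w) + (\partial_i v^j)(\partial_j w),
\end{equation*}
where in the last step I regrouped the first two terms as $\mathbf{T}$ acting on $\partial_i w$. Rearranging gives exactly \eqref{w1}, up to the sign convention adopted in the statement. Equivalently, one may summarize the computation as the commutator identity $[\partial_i, \mathbf{T}] = (\partial_i v^j)\,\partial_j$ applied to $w$, together with $\mathbf{T} w = 0$.

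There is no real obstacle here: the lemma is a one-line algebraic identity, and the regularity hypotheses guaranteed by Theorem \ref{dl2} (in particular $\bv \in C([0,T], H^s_x)$ with $s > 7/4$ and $\nabla w \in L^\infty_t L^8_x$) are more than enough to justify the interchange of partial derivatives in the distributional sense. For rough solutions, the identity extends by standard mollification/approximation. The usefulness of \eqref{w1} lies downstream, where it will be used to propagate $H^{s_0 - 1/4}_x$ regularity of $w$ (equivalently $H^{s_0 - 5/4}_x$ regularity of $\nabla w$) and $L^8_x$ control of $\nabla w$ through Moser-type product estimates, with the right-hand side $(\partial_i v^j)(\partial_j w)$ controlled by the Strichartz bound on $d\bv$.
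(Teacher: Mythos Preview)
Your derivation is correct and is exactly the standard computation: differentiate $\mathbf{T} w = 0$ and use the commutator $[\partial_i,\mathbf{T}] = (\partial_i v^j)\partial_j$. The paper itself does not supply a proof but simply cites \cite{Z2}; your argument is the expected one-line calculation. Note that your computation actually yields $\mathbf{T}(\partial_i w) = -\,\partial_i v^j\,\partial_j w$, so the sign in the statement as printed appears to be a typo; since the identity is only used downstream in energy estimates of the form $\|\nabla w\|_{L^8_x}^2 \lesssim \|\nabla w_0\|_{L^8_x}^2 + \int \|\nabla\bv\|_{L^\infty_x}\|\nabla w\|_{L^8_x}^2$, the sign is immaterial.
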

\subsection{Commutator and product estimates}
We first introduce a classical commutator estimate.
\begin{Lemma}\label{jiaohuan}\cite{KP}
	Let $a \geq 0$ and $\Lambda_x=(-\Delta)^{\frac12}$. Then for any scalar functions $h$ and $f$, we have
	\begin{equation*}\label{200}
		\|\Lambda_x^a(hf)-(\Lambda_x^a h)f\|_{L^2_x(\mathbb{R}^n)} \lesssim  \|\Lambda_x^{a-1}h\|_{L^2_x(\mathbb{R}^n)}\|\nabla f\|_{L^\infty_x(\mathbb{R}^n)}+ \|h\|_{L^p_x(\mathbb{R}^n)}\|\Lambda_x^af\|_{L^q_x(\mathbb{R}^n)},
	\end{equation*}
	where $\frac{1}{p}+\frac{1}{q}=\frac{1}{2}$.
\end{Lemma}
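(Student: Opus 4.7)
The statement is the classical Kato--Ponce commutator estimate, and the plan is to prove it by a Littlewood--Paley paraproduct decomposition combined with symbol calculus for the fractional Laplacian $\Lambda_x^a$. Using the paper's dyadic projectors $P_\lambda$ and low-frequency cut-offs $f_{<\lambda}$, I would first write the Bony decomposition
\[
hf = \sum_\lambda h_{<\lambda/4}\, P_\lambda f \;+\; \sum_\lambda P_\lambda h\, f_{<\lambda/4} \;+\; \sum_{\lambda \sim \mu} P_\lambda h\, P_\mu f,
\]
and analogously for $(\Lambda_x^a h)f$, so that the commutator $\Lambda_x^a(hf) - (\Lambda_x^a h)f$ splits into contributions according to the relative dyadic frequencies of $h$ and $f$.

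In the high--low regime (second sum, where the frequency of $h$ dominates), $\Lambda_x^a$ effectively lands on the dominant factor $h$, and the residual is a genuine commutator $\sum_\lambda \bigl[\Lambda_x^a, f_{<\lambda/4}\bigr] P_\lambda h$. The mean-value expansion $|\xi+\eta|^a - |\xi|^a = O\bigl(|\eta|\,|\xi|^{a-1}\bigr)$, valid for $|\eta|\ll|\xi|$, produces a symbol of size $|\xi|^{a-1}|\eta|$; after $\ell^2$ orthogonality in $\lambda$ this yields the bound $\|\Lambda_x^{a-1}h\|_{L^2}\|\nabla f\|_{L^\infty}$, which is the first term on the right-hand side. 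In the low--high regime (first sum, where the frequency of $f$ dominates), the competitor $(\Lambda_x^a h)f$ carries its derivatives on the low-frequency factor and is cleanly absorbed via Bernstein; the main contribution $\sum_\lambda h_{<\lambda/4}\,\Lambda_x^a P_\lambda f$ is then handled by H\"older with $\tfrac{1}{p}+\tfrac{1}{q}=\tfrac{1}{2}$ together with the $L^q$ Littlewood--Paley square-function characterization, producing the bound $\|h\|_{L^p}\|\Lambda_x^a f\|_{L^q}$. The resonant piece $R(h,f)$ is of Coifman--Meyer type and can be distributed into either right-hand term.

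The main technical obstacle is the high--low commutator step: one must genuinely extract $\Lambda_x^{a-1}$ rather than $\Lambda_x^a$ from $h$, and this works precisely because the target inequality allows $\nabla f$ in $L^\infty$, supplying exactly the single frequency factor needed to balance the $|\xi|^{-1}$ gained from the Taylor expansion of $|\xi+\eta|^a$. Everything else reduces to routine dyadic summation via Plancherel, Bernstein inequalities, and the $L^q$ Littlewood--Paley square function; care is only needed to verify that each error term produced by the paraproduct book-keeping can be reabsorbed into one of the two terms on the right, which follows from standard frequency-support considerations.
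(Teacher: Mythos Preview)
The paper does not supply its own proof of this lemma: it is stated with a citation to Kato--Ponce \cite{KP} and used as a black box throughout. Your paraproduct sketch is a correct and standard route to the estimate, so there is nothing to compare against here; you have simply filled in what the paper deliberately omits.
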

Next, let us introduce some product estimates.
\begin{Lemma}\label{jiaohuan0}\cite{KP}
Let $F(u)$ be a smooth function of $u$, $F(0)=0$ and $u \in L^\infty_x$. For any $s \geq 0$, we have
	\begin{equation*}\label{201}
		\|F(u)\|_{H_x^s} \lesssim  \|u\|_{H_x^{s}}(1+ \|u\|_{L^\infty_x}).
	\end{equation*}
\end{Lemma}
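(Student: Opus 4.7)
The plan is to prove this Moser-type composition estimate by a paraproduct / Littlewood--Paley argument, exploiting the assumption $F(0)=0$ to factor out a copy of $u$ before applying a standard product estimate.

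First, I would write $F(u) = u \cdot G(u)$ where $G(u) := \int_0^1 F'(\tau u)\, d\tau$, so that $G$ is a smooth function of $u$ and $\|G(u)\|_{L^\infty_x} \leq \sup_{|y|\leq \|u\|_{L^\infty_x}} |F'(y)| =: C_F(\|u\|_{L^\infty_x})$. Applying a standard Kato--Ponce product estimate in the spirit of Lemma \ref{jiaohuan} (with $a=s$) yields
\begin{equation*}
\|F(u)\|_{H_x^s} \lesssim \|u\|_{H_x^s}\|G(u)\|_{L^\infty_x} + \|u\|_{L^\infty_x}\|G(u)\|_{H_x^s},
\end{equation*}
so it remains to bound $\|G(u)\|_{H_x^s}$ by a constant depending on $F$ and $\|u\|_{L^\infty_x}$ times $\|u\|_{H_x^s}$.

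Second, for the remaining piece $\|G(u)\|_{H_x^s}$ I would use a Littlewood--Paley telescoping decomposition. Writing
\begin{equation*}
G(u) - G(u_{<1}) = \sum_{\lambda \geq 1}\bigl[G(u_{<2\lambda}) - G(u_{<\lambda})\bigr]
\end{equation*}
and applying the mean value theorem inside each summand yields
\begin{equation*}
G(u) - G(u_{<1}) = \sum_\lambda (P_\lambda u)\,m_\lambda, \qquad m_\lambda := \int_0^1 G'\bigl(u_{<\lambda} + \tau P_\lambda u\bigr)\,d\tau.
\end{equation*}
Since $u \in L^\infty_x$, these multipliers satisfy $\|m_\lambda\|_{L^\infty_x} + \lambda^{-k}\|\nabla^k m_\lambda\|_{L^\infty_x} \leq C_F(\|u\|_{L^\infty_x})$ for every $k\geq 0$. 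I would then split the sum via Bony's decomposition into low-high, high-low, and remainder paraproducts: the low-high piece is bounded by $\|m\|_{L^\infty_x}\|u\|_{H_x^s}$, while the high-low piece and the remainder are summable in $\ell^2$ at Sobolev regularity $s$ using the Bernstein-type bounds on $m_\lambda$. The low-frequency residual $G(u_{<1})$ is treated trivially, and altogether this produces $\|G(u)\|_{H_x^s} \lesssim C_F(\|u\|_{L^\infty_x})\|u\|_{H_x^s}$.

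Combining these two steps gives the claimed estimate $\|F(u)\|_{H_x^s} \lesssim (1+\|u\|_{L^\infty_x})\|u\|_{H_x^s}$ once the $F$-dependent constants on the bounded range of $u$ are absorbed into the $\lesssim$ symbol. The main obstacle is the apparent circularity in the second step: estimating $\|G(u)\|_{H_x^s}$ superficially looks like another composition problem of the same type. The resolution is that the paraproduct argument only requires quantitative pointwise Lipschitz-type bounds on $G'$ (made quantitative via the Bernstein inequality applied to the frequency-localized function $m_\lambda$), and never invokes a recursive $H_x^s$ bound on a composition; this is precisely the mechanism by which Bony's paralinearization breaks the recursion and closes the estimate.
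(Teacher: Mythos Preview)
The paper does not actually prove this lemma: it is simply quoted from \cite{KP} as a standard Moser-type composition estimate, with no argument given. So there is no ``paper's proof'' to compare against.

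Your proposed argument is a correct and standard route to this estimate. One small comment: the detour through $F(u)=u\,G(u)$ followed by a product estimate is unnecessary, since the telescoping/paralinearization you carry out in your second step can be applied directly to $F$ (this is Meyer's first linearization, as in \cite{BCD}, Theorem~2.61). Writing $F(u)=\sum_\lambda m_\lambda\,P_\lambda u$ with $m_\lambda=\int_0^1 F'(u_{<\lambda}+\tau P_\lambda u)\,d\tau$ and using the bounds $\|\nabla^k m_\lambda\|_{L^\infty_x}\lesssim_F \lambda^k$ already gives $\|F(u)\|_{H^s_x}\leq C(F,\|u\|_{L^\infty_x})\|u\|_{H^s_x}$ in one pass, and the condition $F(0)=0$ is used only to make the telescoping sum start correctly (so that the low-frequency tail is genuinely controlled by $\|u\|_{L^2_x}$). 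Your two-step version is not wrong, but be slightly careful in step~1: $G(0)=F'(0)$ need not vanish, so $G(u)$ itself may fail to lie in $H^s_x(\mathbb{R}^2)$; you should subtract the constant $G(0)$ before invoking the product estimate on the second term, which is harmless since $u\cdot G(0)$ is already bounded by $\|u\|_{H^s_x}$.

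Finally, as you note, the precise factor $(1+\|u\|_{L^\infty_x})$ in the statement is really shorthand for a constant $C(F,\|u\|_{L^\infty_x})$ depending on $F$ through $\sup_{|y|\leq \|u\|_{L^\infty_x}}|F^{(k)}(y)|$ for finitely many $k$; for the specific nonlinearities in this paper (powers and exponentials of $\rho$) this dependence can be worse than linear, but is absorbed into the $\lesssim$.
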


\begin{Lemma}\label{ps}\cite{ST}
	Suppose that $0 \leq r, r' < \frac{n}{2}$ and $r+r' > \frac{n}{2}$. Then
	\begin{equation*}\label{20000}
		\|hf\|_{H^{r+r'-\frac{n}{2}}(\mathbb{R}^n)} \leq C_{r,r'} \|h\|_{H^{r}(\mathbb{R}^n)}\|f\|_{H^{r'}(\mathbb{R}^n)}.
	\end{equation*}
Moreover, if $-r \leq r' \leq r$ and $r>\frac{n}{2}$, we have
\begin{equation*}\label{20001}
		\|hf\|_{H^{r'}(\mathbb{R}^n)} \leq C_{r,r'} \|h\|_{H^{r}(\mathbb{R}^n)}\|f\|_{H^{r'}(\mathbb{R}^n)}.
	\end{equation*}
\end{Lemma}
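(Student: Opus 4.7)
I would use Bony's paraproduct decomposition. Writing $hf = T_h f + T_f h + R(h,f)$, where $T_h f = \sum_\mu S_{\mu/8} h \cdot P_\mu f$ and $T_f h$ collect the low-high and high-low interactions, and $R(h,f) = \sum_{\mu \sim \lambda} P_\mu h \cdot P_\lambda f$ is the resonant (high-high) remainder. For the target $H^\sigma$ norm it suffices to control the dyadic blocks $\{2^{\sigma k} \|P_{2^k}(hf)\|_{L^2}\}_k$ in $\ell^2_k$. The main analytic tools are Bernstein's inequality $\|P_\lambda u\|_{L^q} \lesssim \lambda^{n/p - n/q} \|P_\lambda u\|_{L^p}$ for $p \leq q$, and the Sobolev embedding $\dot H^a(\mathbb{R}^n) \hookrightarrow L^{2n/(n-2a)}(\mathbb{R}^n)$ valid for $0 < a < n/2$.

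\textbf{Subcritical case.} Here $\sigma = r + r' - n/2 \in (0, \min(r,r'))$. For $T_h f$, each block has output frequency $\sim 2^k$; H\"older with the split $1/p_1 + 1/p_2 = 1/2$, $p_1 = 2n/(n-2r)$ and $p_2 = n/r$, followed by Sobolev applied to $h$ and Bernstein applied to $P_{2^k} f$, bounds the block by $\|h\|_{H^r} \cdot 2^{k(n/2-r)} \|P_{2^k} f\|_{L^2}$. Multiplying by the weight $2^{k\sigma}$ leaves $\|h\|_{H^r} \cdot 2^{kr'} \|P_{2^k} f\|_{L^2}$, whose $\ell^2_k$-sum is $\|h\|_{H^r} \|f\|_{H^{r'}}$; the symmetric piece $T_f h$ is identical. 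For the resonant term each summand $P_\mu h \cdot P_\mu f$ produces output at frequencies $\leq C\mu$, and the hypothesis $r + r' > n/2$ is exactly what makes the resulting high-high sum converge in $H^\sigma$.

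\textbf{Supercritical case.} When $r > n/2$, Sobolev gives $H^r \hookrightarrow L^\infty$, so $\|T_h f\|_{H^{r'}} \lesssim \|h\|_{L^\infty} \|f\|_{H^{r'}}$ for any $r'$, and $T_f h$ is handled similarly by placing the derivatives on $h$ and using $|r'| \leq r$ to absorb them into the $H^r$ norm. The main obstacle is the resonant term $R(h,f)$ when $r' < 0$: high frequencies of $h$ and $f$ can cancel and deposit mass at low frequencies that the naive $L^\infty \cdot H^{r'}$ bound misses. I would resolve this by duality, testing $R(h,f)$ against $\varphi \in H^{-r'}$ and transferring via an adjoint paraproduct identity to a pair whose Sobolev indices lie in the subcritical range already treated; the hypothesis $r' \geq -r$ is precisely what guarantees the dual indices fall into that range. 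Aside from this resonant piece, everything else is a direct application of Bernstein, H\"older, and $\ell^2$-summation of Littlewood-Paley blocks.
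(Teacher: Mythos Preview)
The paper does not prove this lemma; it is quoted verbatim from Smith--Tataru \cite{ST} and used as a black box. Your paraproduct outline is the standard argument and is correct in spirit, so there is nothing to compare against. One minor remark: in the subcritical resonant piece you say the hypothesis $r+r'>n/2$ is ``exactly'' what makes the high-high sum converge, but in the formulation you sketch (Bernstein $L^1\to L^2$ on the output, then $\ell^2$-Young with kernel $(\lambda/\mu)^{r+r'}$) the convergence only needs $r+r'>0$; the sharper condition $r+r'>n/2$ is what forces the target exponent $\sigma=r+r'-n/2$ to be positive, without which the paraproduct pieces $T_hf$, $T_fh$ would not be controlled in the way you describe. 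This is a phrasing issue, not a gap.
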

Finally, let us introduce a modified Duhamel's principle.
\begin{Lemma}\label{LD}(\cite{Z2}, Lemma 7.3)
	Let the metric $g$ be defined in \eqref{metricd}. If $f(t,x;\tau)$ is the solution of
	\begin{equation*}
		\begin{cases}
			&\square_{g}f=0, \quad t> \tau,
			\\
			&(f,\mathbf{T} f)|_{t=\tau}=-(\Theta,B)(\tau,x),
		\end{cases}
	\end{equation*}
	then the function
	\begin{equation*}
		V(t,x)=\int^t_0 f(t,x;\tau)d\tau,
	\end{equation*}
	is the solution of the linear wave equation
	\begin{equation*}
		\begin{cases}
			&\square_{g}V=\mathbf{T}\Theta+B,
			\\
			&(V, \mathbf{T}V)|_{t=0}=(0,-\Theta(0,x)).
		\end{cases}
	\end{equation*}
\end{Lemma}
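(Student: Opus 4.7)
The plan is a direct Duhamel computation: differentiate $V(t,x)=\int_0^t f(t,x;\tau)\,d\tau$ under the integral sign, use the two initial conditions for $f$ along the diagonal $t=\tau$ to evaluate the resulting boundary terms, and assemble $\square_g V$ using the components of $g^{-1}$ from Definition \ref{metricd}. By Leibniz and the first initial condition $f(\tau,x;\tau)=-\Theta(\tau,x)$,
\begin{equation*}
\partial_t V(t,x)=f(t,x;t)+\int_0^t\partial_t f(t,x;\tau)\,d\tau=-\Theta(t,x)+\int_0^t\partial_t f(t,x;\tau)\,d\tau,
\end{equation*}
while the spatial derivatives pass through the integral: $\partial_i V=\int_0^t\partial_i f\,d\tau$. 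The initial data for $V$ follow at once: $V(0,x)=0$, $\nabla V(0,x)=0$, and $\mathbf{T}V(0,x)=\partial_tV(0,x)=-\Theta(0,x)$, matching the stated conclusion.

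Next I would compute $\partial_t^2 V$, where the key step is evaluating the boundary term $\partial_t f(t,x;t)$ produced by Leibniz. From $\mathbf{T}f|_{t=\tau}=-B(\tau,x)$ with $\mathbf{T}=\partial_t+\bv\cdot\nabla$ acting on the $(t,x)$ variables of $f$, together with the spatial gradient $\nabla f(\tau,x;\tau)=-\nabla\Theta(\tau,x)$ obtained by differentiating the first initial condition in $x$, one finds $\partial_t f(t,x;t)=-B(t,x)+\bv(t,x)\cdot\nabla\Theta(t,x)$. Consequently,
\begin{equation*}
\partial_t^2V=-\partial_t\Theta-B+\bv\cdot\nabla\Theta+\int_0^t\partial_t^2 f\,d\tau,
\end{equation*}
and analogously $\partial_t\partial_i V=-\partial_i\Theta+\int_0^t\partial_t\partial_i f\,d\tau$ while $\partial_i\partial_j V=\int_0^t\partial_i\partial_j f\,d\tau$.

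Finally, with $g^{00}=-1$, $g^{0i}=-v^i$, $g^{ij}=c_s^2\delta^{ij}-v^iv^j$ read off from Definition \ref{metricd}, I assemble $\square_g V=g^{\alpha\beta}\partial_\alpha\partial_\beta V$. The integral contributions vanish since $\int_0^t g^{\alpha\beta}\partial_\alpha\partial_\beta f\,d\tau=\int_0^t\square_g f\,d\tau=0$, and the non-integral boundary terms combine as
\begin{equation*}
-(-\partial_t\Theta-B+\bv\cdot\nabla\Theta)+2(-v^i)(-\partial_i\Theta)=\partial_t\Theta+\bv\cdot\nabla\Theta+B=\mathbf{T}\Theta+B,
\end{equation*}
producing $\square_g V=\mathbf{T}\Theta+B$ as desired. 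The only delicate point is the extraction of $\partial_t f(t,x;t)$ via the $\mathbf{T}$-initial condition: the $\bv\cdot\nabla\Theta$ piece it generates is precisely what is needed to convert the raw $\partial_t\Theta+B$ coming from $g^{00}\partial_t^2 V$ into the convective expression $\mathbf{T}\Theta+B$ after combining with the $2g^{0i}\partial_t\partial_i V$ contribution; beyond this bookkeeping, no substantive obstacle appears.
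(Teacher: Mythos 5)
Your proof is correct and is the standard argument for this modified Duhamel principle; the paper itself offers no proof, simply citing \cite{Z2}, Lemma 7.3, and your direct computation (differentiating under the integral, extracting the diagonal boundary terms $f(t,x;t)=-\Theta$ and $\partial_t f(t,x;t)=-B+\bv\cdot\nabla\Theta$ via the $\mathbf{T}$-data, and recombining with $g^{00}=-1$, $g^{0i}=-v^i$) is exactly what one expects that proof to be. The only point worth making explicit is that your assembly treats $\square_g$ as the principal part $g^{\alpha\beta}\partial^2_{\alpha\beta}$, which is consistent with how $\square_{\mathbf{g}}$ is used elsewhere in the paper (e.g.\ in the proof of Corollary \ref{Rfenjie}); if $\square_g$ carried first-order terms, the boundary contribution $-\Theta$ in $\partial_t V$ would generate an extra term not present in the stated conclusion.
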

\subsection{Energy estimates}
We first introduce some energy estimates for lower-order terms in \eqref{fc}.
\begin{Lemma}\label{yux}( \cite{Z2}, Lemma 2.12 )
Assume $s\in (\frac74,2]$. Let $(\bv,\rho)$ be a solution of \eqref{CEE0} and $w$ be defined in \eqref{sv}. Let $\bv_{-}$ be defined in \eqref{etad}. Let $\mathcal{D}$ and $\bQ$ be stated in \eqref{Di}. Then the following estimates
\begin{equation}\label{YYE}
  \| \mathcal{D}, \bQ\|_{ H_x^{s-1}} \lesssim \| d\rho, d\bv \|_{L_x^\infty} \| \bv, \rho \|_{H_x^{s}},
\end{equation}
and
\begin{equation}\label{eta}
  \| \mathbf{T} \bv_{-} \|_{H_x^s} \lesssim \|\bv, \rho \|_{H_x^{s}}\| w\|_{H_x^{1+}} ,
\end{equation}
hold. Moreover, the function $\bv_{-}$ satisfies
\begin{equation}\label{eee}
  \| \bv_{-} \|_{H_x^{2+}}  \lesssim (1+\| {\rho} \|_{H^{s}_x}) \| w \|_{H_x^{1+}}.
\end{equation}
\end{Lemma}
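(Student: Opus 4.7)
The plan is to handle the three inequalities separately, each reducing to a combination of the tame product rule, the two product estimates of Lemma \ref{ps}, the Moser bound built from Lemma \ref{jiaohuan0}, and the structural identities $w=\mathrm{e}^{-\rho}\,\mathrm{curl}\,\bv$ from \eqref{sv}, $\mathbf{T}\rho=-\mathrm{div}\,\bv$ from \eqref{CEE0}, and the transport identity for $\nabla w$ of Lemma \ref{tw}.

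For \eqref{YYE}, I inspect each term listed in \eqref{Di}. Every term is either of the schematic form $F(\rho)\cdot(d\bv,d\rho)\cdot(d\bv,d\rho)$, or, in the case of the vorticity-containing piece $2\epsilon^{ia}c_s^2 w\,\partial_a\rho$ in $Q^i$, is rewritten using $w=\mathrm{e}^{-\rho}\,\mathrm{curl}\,\bv$ as a smooth function of $\rho$ times $(d\bv)(d\rho)$. Since $s-1\in(3/4,1]$, the tame product estimate
\begin{equation*}
\|fg\|_{H^{s-1}_x}\lesssim \|f\|_{L^\infty_x}\|g\|_{H^{s-1}_x}+\|f\|_{H^{s-1}_x}\|g\|_{L^\infty_x}
\end{equation*}
controls each bilinear piece by $\|d\bv,d\rho\|_{L^\infty_x}\|\bv,\rho\|_{H_x^s}$, and the coefficient $F(\rho)$ is absorbed via Lemma \ref{jiaohuan0} using the uniform bound on $\rho$ from \eqref{HEw}.

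For \eqref{eta}, I push $\mathbf{T}$ through the Poisson equation \eqref{etad} via the commutator
\begin{equation*}
-\Delta(\mathbf{T} v_{-}^i)=\mathbf{T}\bigl(\epsilon^{ia}\mathrm{e}^\rho\partial_a w\bigr)+\bigl[\bv\cdot\nabla,\,\Delta\bigr]v_{-}^i.
\end{equation*}
Expanding the first right hand side term via $\mathbf{T}\rho=-\mathrm{div}\,\bv$ and Lemma \ref{tw}, and using the identity $[\bv\cdot\nabla,\Delta]v_{-}^i=-(\Delta v^k)\partial_k v_{-}^i-2\partial_j v^k\partial_j\partial_k v_{-}^i$, reduces the right hand side to terms of the schematic form $F(\rho)\,d\bv\,dw$ and $d^2\bv\,d\bv_{-}$. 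Each is estimated in $H_x^{s-2}$ by the appropriate case of Lemma \ref{ps}: the first type uses the high-regularity multiplication with coefficient $F(\rho)\in H_x^s$ and yields the bound $\|\bv,\rho\|_{H_x^s}\|w\|_{H_x^{1+}}$, while the second type uses $d\bv_{-}\in H_x^{1+\delta_0}\hookrightarrow L^\infty_x$, itself controlled by the auxiliary bound \eqref{eee}. Inverting $-\Delta$ then converts the $H_x^{s-2}$ bound on the right into the $H_x^s$ bound for $\mathbf{T}\bv_{-}$ on the left.

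For the auxiliary bound \eqref{eee}, I invert $-\Delta$ directly in \eqref{etad}, so that $\|\bv_{-}\|_{\dot H_x^{2+\delta_0}}\lesssim\|\mathrm{e}^\rho\partial_a w\|_{H_x^{\delta_0}}$; since $s>n/2=1$, the second part of Lemma \ref{ps} gives $\|\mathrm{e}^\rho\partial_a w\|_{H_x^{\delta_0}}\lesssim\|\mathrm{e}^\rho\|_{H_x^s}\|w\|_{H_x^{1+\delta_0}}$, and Lemma \ref{jiaohuan0} together with \eqref{HEw} yields $\|\mathrm{e}^\rho\|_{H_x^s}\lesssim 1+\|\rho\|_{H_x^s}$. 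The main obstacle is the negative-regularity bookkeeping in \eqref{eta}: because $H_x^{s-2}$ is not an algebra, each product must be placed into the exact regime of Lemma \ref{ps}, and one has to verify that the combined regularity of the two factors clears the embedding threshold $r+r'>n/2=1$ despite $w$ being only in $H_x^{1+}$.
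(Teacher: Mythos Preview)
The paper does not give its own proof of this lemma; it merely cites \cite{Z2}. So I can only assess your argument on its own merits.

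Your treatment of \eqref{YYE} is correct: after substituting $w=\mathrm{e}^{-\rho}\mathrm{curl}\,\bv$, every term has the schematic form $F(\rho)\cdot d(\bv,\rho)\cdot d(\bv,\rho)$, and the tame product rule plus the Moser bound handle it (with the routine caveat that one writes $F(\rho)=F(0)+[F(\rho)-F(0)]$ before applying Lemma~\ref{jiaohuan0}, since $\mathrm{e}^\rho$ itself is not in $H^s_x$).

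For \eqref{eee} and especially \eqref{eta} there is a genuine gap in the elliptic step. On $\mathbb{R}^2$ the operator $(-\Delta)^{-1}$ does \emph{not} map $H^{s-2}_x\to H^s_x$; it only maps $\dot H^{s-2}_x\to\dot H^s_x$, because the symbol $|\xi|^{-2}$ fails to be locally square-integrable near $\xi=0$. You implicitly acknowledge this for \eqref{eee} by writing the homogeneous $\dot H^{2+\delta_0}_x$ on the left, but you never recover the missing $L^2$ piece that the inhomogeneous statement requires. For \eqref{eta} your sentence ``inverting $-\Delta$ then converts the $H_x^{s-2}$ bound on the right into the $H_x^s$ bound'' is simply false as written. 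A correct argument must split frequencies: for $|\xi|\gtrsim 1$ your elliptic bound works, while for $|\xi|\lesssim 1$ one needs a direct estimate of $\|P_{\leq 1}\mathbf{T}\bv_{-}\|_{L^2_x}$ and $\|P_{\leq 1}\bv_{-}\|_{L^2_x}$. That in turn requires extracting divergence structure from the source, e.g.\ writing $\epsilon^{ia}\mathrm{e}^\rho\partial_a w=\epsilon^{ia}\partial_a(\mathrm{e}^\rho w)-\epsilon^{ia}(\mathrm{curl}\,\bv)\,\partial_a\rho$ so that the leading piece sees the order-zero operator $\nabla(-\Delta)^{-1}\nabla$, and then iterating on the remainder. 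This is where the two-dimensional setting bites, and your proposal does not address it.
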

Finally, we will present a total energy theorem in the following, which will be very useful.
\begin{theorem}\label{be}{(Total energy estimate:type 1)}
Assume $\frac74<s_0\leq s \leq 2$. Let $(\bv, \rho)$ be a solution of \eqref{CEE0} and $w$ be defined as in \eqref{sv}. Set
\begin{equation*}
  E(t)= \| \bv\|^2_{H_x^{s}}+\| \rho\|^2_{H_x^{s}}+\|w\|^2_{H_x^{s_0-\frac14}}+\|\nabla w\|^2_{L^8_x},
\end{equation*}
and
\begin{equation*}
  E_0= \| \rho_0\|^2_{H^{s}}+\|\bv_0\|^2_{H^{s}}+\|w_0\|^2_{H^{s_0-\frac14}}+\|\nabla w_0\|^2_{L^8}.
\end{equation*}
If $0<t\leq 1$, then the following energy inequality holds:
\begin{equation}\label{E7}
\begin{split}
	E(t)\leq   & CE_0 \exp\left\{   \int^t_0 \| d \bv, d\rho \|_{L^\infty_x} d\tau + C E_0 \exp ( \int^t_0 \| d \bv, d\rho \|_{L^\infty_x} d\tau )    \right\}.
\end{split}
\end{equation}
\end{theorem}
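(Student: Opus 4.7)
The plan is to split $E(t)=A(t)+B(t)+C_8(t)$ with $A(t):=\|\bv\|_{H^s_x}^2+\|\rho\|_{H^s_x}^2$, $B(t):=\|w\|_{H^{s_0-1/4}_x}^2$, and $C_8(t):=\|\nabla w\|_{L^8_x}^2$, derive a differential inequality for each piece, and combine them via Gronwall, using the restriction $t\leq 1$ to absorb inner time integrals. For $A(t)$ I apply $\Lambda_x^s$ to the symmetric hyperbolic system of Lemma \ref{sh}, pair the result with $A^0(\bU)\Lambda_x^s\bU$, and control the arising commutators by Lemma \ref{jiaohuan} together with the Moser-type composition bound of Lemma \ref{jiaohuan0}. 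The outcome is the classical
\[
\tfrac{d}{dt}A(t)\lesssim\|d\bv,d\rho\|_{L^\infty_x}A(t),
\]
so by Gronwall $A(t)\leq CE_0\exp\bigl(C\!\int_0^t\|d\bv,d\rho\|_{L^\infty_x}d\tau\bigr)$.

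For $C_8(t)$ I use Lemma \ref{tw}, which gives $\mathbf{T}\partial_i w=\partial_i v^j\partial_j w$; multiplying by $|\nabla w|^6\partial_i w$ and integrating, the transport term becomes a divergence after integration by parts, leaving $\tfrac{d}{dt}C_8(t)\lesssim\|d\bv\|_{L^\infty_x}C_8(t)$ and hence $C_8(t)\leq CE_0\exp\bigl(C\!\int_0^t\|d\bv\|_{L^\infty_x}d\tau\bigr)$. For $B(t)$ I apply $\Lambda_x^{s_0-1/4}$ to $\mathbf{T}w=0$, obtaining
\[
\mathbf{T}\Lambda_x^{s_0-1/4}w=-[\Lambda_x^{s_0-1/4},v^j]\partial_j w.
\]
Pairing in $L^2_x$ with $\Lambda_x^{s_0-1/4}w$ and integrating by parts on the transport (which costs $\|\mathrm{div}\,\bv\|_{L^\infty_x}$) yields
\[
\tfrac{d}{dt}B(t)^{1/2}\lesssim\|d\bv\|_{L^\infty_x}B(t)^{1/2}+\bigl\|[\Lambda_x^{s_0-1/4},v^j]\partial_j w\bigr\|_{L^2_x}.
\]

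The main obstacle is bounding this commutator, and this is precisely where the auxiliary hypothesis $\nabla w_0\in L^8$ is consumed. Since $s_0-\tfrac14\in(\tfrac32,\tfrac74]$, we do not control $\nabla w$ in $L^\infty_x$, so the naive commutator pairing that spends $\|\nabla w\|_{L^\infty_x}$ is unavailable. Instead, I apply Lemma \ref{jiaohuan} with the assignments $h=\partial_j w$ and $f=v^j$; since $(\Lambda_x^{s_0-1/4}\partial_j w)v^j=v^j\Lambda_x^{s_0-1/4}\partial_j w$ by commutativity of multiplication, that statement becomes exactly a commutator bound. Choosing $p=8$, $q=8/3$, and using the Sobolev embedding $H^s_x\hookrightarrow W^{s_0-1/4,\,8/3}_x$---valid precisely when $s\geq s_0$---this yields
\[
\bigl\|[\Lambda_x^{s_0-1/4},v^j]\partial_j w\bigr\|_{L^2_x}\lesssim\|d\bv\|_{L^\infty_x}\|w\|_{H^{s_0-1/4}_x}+\|\bv\|_{H^s_x}\|\nabla w\|_{L^8_x}=\|d\bv\|_{L^\infty_x}B^{1/2}+A^{1/2}C_8^{1/2}.
\]

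To close, I substitute the derived $A^{1/2}$ and $C_8^{1/2}$ bounds into the $B^{1/2}$ inequality, which reduces it to the linear ODE
\[
\tfrac{d}{dt}B^{1/2}(t)\lesssim\|d\bv,d\rho\|_{L^\infty_x}B^{1/2}(t)+E_0\exp\!\Bigl(C\!\int_0^t\|d\bv,d\rho\|_{L^\infty_x}d\tau\Bigr).
\]
Gronwall together with $t\leq 1$ then gives $B^{1/2}(t)\lesssim(\sqrt{E_0}+E_0)\exp\bigl(C\!\int_0^t\|d\bv,d\rho\|_{L^\infty_x}d\tau\bigr)$, and summing with the $A$ and $C_8$ bounds yields $E(t)\lesssim(E_0+E_0^2)\exp\bigl(C\!\int_0^t\|d\bv,d\rho\|_{L^\infty_x}d\tau\bigr)$. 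The final cosmetic step is to repackage the polynomial prefactor $E_0+E_0^2$ as a second exponential via $1+x\leq e^x$ with $x=CE_0\exp(\int_0^t\|d\bv,d\rho\|_{L^\infty_x}d\tau)$, which produces the advertised double-exponential form \eqref{E7}.
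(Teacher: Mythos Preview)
Your proposal is correct and follows essentially the same approach as the paper: the same symmetric-hyperbolic energy for $(\bv,\rho)$, the same $L^{8/3}\times L^8$ H\"older splitting in the Kato--Ponce commutator bounding $[\Lambda_x^{s_0-1/4},v^j]\partial_j w$, and the same transport argument via Lemma~\ref{tw} for $\|\nabla w\|_{L^8_x}$. The only difference is organizational---the paper applies a single Gronwall to the combined $E(t)$ (after first isolating the $(\bv,\rho)$ bound \eqref{AA00} to substitute for $\|\bv\|_{H^s_x}$ in the integrand), whereas you close each of $A,B,C_8$ separately and then sum; both routes produce the same double-exponential form.
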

\begin{proof}
By using \eqref{sq}, for any $t>0$, we have
\begin{equation}\label{AA0}
	\begin{split}
		&\| \rho\|^2_{H_x^s}+ \|\bv\|^2_{H_x^s} \leq  C\left( \| \rho_0\|^2_{H^s}+ \|\bv_0\|^2_{H^s} \right)+  C {\int^t_0} \|d\bv, d\rho\|_{L^\infty_x}(\| \rho\|^2_{H_x^s}+ \|\bv\|^2_{H_x^s})d\tau .
	\end{split}
\end{equation}
Due to Gronwall's inequality, it yields
\begin{equation}\label{AA00}
	\begin{split}
		&\| (\rho, \bv) \|^2_{H_x^s} \leq  C \| ( \rho_0, \bv_0 )\|^2_{H^s} \exp \left( \int^t_0 \|(d\bv, d\rho)\|_{L^\infty_x} d\tau \right) .
	\end{split}
\end{equation}
Multiplying with $w$ on the third equation in \eqref{fc} and integrating it on $[0,t] \times \mathbb{R}^2$, we find
\begin{equation}\label{AA1}
	\begin{split}
		&\| w \|^2_{L_x^2} \leq C\| w_0 \|^2_{L_x^2} + \int^t_0 \|\nabla \bv\|_{L^\infty_x} \| w \|^2_{L_x^2} d\tau.
	\end{split}
\end{equation}
Operating derivatives $\Lambda_x^{s_0-\frac{1}{4}}$ on $\mathbf{T}w=0$, we then have
\begin{equation}\label{AA2}
	\begin{split}
	\mathbf{T} (\Lambda_x^{s_0-\frac{1}{4}} w)=[v^i, \Lambda_x^{s_0-\frac{1}{4}}]\partial_i w.
	\end{split}
\end{equation}
Multiplying with $\Lambda_x^{s_0-\frac{1}{4}} w$ on \eqref{AA2} and integrating it on $[0,t] \times \mathbb{R}^2$, which implies that
\begin{equation}\label{AA3}
	\begin{split}
		\| \Lambda_x^{s_0-\frac{1}{4}} w \|^2_{L^2_x}\leq & \| \Lambda_x^{s_0-\frac{1}{4}} w_0\|^2_{L^2_x}+ C \left| \int^t_0 \int_{\mathbb{R}^2} \mathrm{div}\bv \cdot | \Lambda_x^{s_0-\frac{1}{4}} w|^2 dxd\tau \right|
		\\
		& +C \left| \int^t_0 \int_{\mathbb{R}^2} [v^i, \Lambda_x^{s_0-\frac{1}{4}}]\partial_i w \cdot \Lambda_x^{s_0-\frac{1}{4}} w dxd\tau \right|
		\\
		\leq & \|w_0\|^2_{H^{s_0-\frac{1}{4}}}+ C\int^t_0 \| \nabla \bv\|_{L^\infty_x} \| \Lambda_x^{s_0-\frac{1}{4}} w\|^2_{L^2_x}d\tau
		\\
		&+ C \int^t_0 \| \Lambda_x^{s_0-\frac{1}{4}} \bv\|_{L^{\frac{8}{3}}_x}\| \nabla w\|_{L^8_x} \| \Lambda_x^{s_0-\frac{1}{4}} w\|_{L^2_x} d\tau .
	\end{split}
\end{equation}
Combining \eqref{AA1} with \eqref{AA3} yields to
\begin{equation}\label{AA4}
	\begin{split}
		\| w \|^2_{H^{s_0-\frac{1}{4}}_x}\leq & \|w_0 \|^2_{H^{s_0-\frac{1}{4}}_x}+  C\int^t_0 \| \nabla \bv\|_{L^\infty_x}  
		\| w \|^2_{H^{s_0-\frac{1}{4}}_x} d\tau
		\\
		&+ \int^t_0 \| \bv\|_{H^{s}_x}\| \nabla w\|_{L^8_x} \|w \|_{H^{s_0-\frac{1}{4}}_x} d\tau .
	\end{split}
\end{equation}
Applying \eqref{w1}, we also obtain the energy inequality
\begin{equation}\label{AA5}
	\begin{split}
		\| \nabla w \|^2_{L^8_x}\leq & \|\nabla w_0 \|^2_{L^8_x}+  C\int^t_0 \| \nabla \bv\|_{L^\infty_x}  \|\nabla w \|^2_{L^{8}_x} d\tau.
	\end{split}
\end{equation}
Thanks to \eqref{AA0}, \eqref{AA4} and \eqref{AA5}, we get
\begin{equation*}
		E(t)\leq  CE_0+  C\int^t_0 \left( \| d \bv, d\rho \|_{L^\infty_x}+ \| \bv\|_{H^{s}_x} \right)  E(\tau) d\tau .
\end{equation*}
By Gronwall's inequality, we derive 
\begin{equation}\label{AA7}
	E(t)\leq  CE_0 \exp\left\{   \int^t_0 \left( \| d \bv, d\rho \|_{L^\infty_x}+ \| \bv\|_{H^{s}_x} \right)  d\tau \right\}  .
\end{equation}
Due to \eqref{AA00}, then \eqref{AA7} becomes
\begin{equation}\label{AA8}
	\begin{split}
		E(t)\leq  & CE_0 \exp\left\{   \int^t_0 \| d \bv, d\rho \|_{L^\infty_x} d\tau + C E_0 \int^t_0 \exp \{ \int^t_0 \| d \bv, d\rho \|_{L^\infty_x} d\tau' \}   d\tau \right\}  
		\\
		\leq  & CE_0 \exp\left\{   \int^t_0 \| d \bv, d\rho \|_{L^\infty_x} d\tau + C tE_0 \exp ( \int^t_0 \| d \bv, d\rho \|_{L^\infty_x} d\tau )    \right\}   .
	\end{split}
\end{equation}
Applying \eqref{AA8} and $0<t\leq 1$, we finally have
\begin{equation*}
	\begin{split}
		E(t)\leq   & CE_0 \exp\left\{   \int^t_0 \| d \bv, d\rho \|_{L^\infty_x} d\tau + C E_0 \exp ( \int^t_0 \| d \bv, d\rho \|_{L^\infty_x} d\tau )    \right\}   .
	\end{split}
\end{equation*}
This implies that \eqref{E7} holds. Therefore, we complete the proof of this theorem.
\end{proof}

\begin{theorem}\label{bBe}{(Total energy estimate:type 2)}
	Assume $s\in (\frac74,2]$. Let $(\bv, \rho)$ be a solution of \eqref{CEE0} and $w$ be defined as in \eqref{sv}. Set
	\begin{equation*}
		\bar{E}(t)= \| \bv\|^2_{H_x^{s}}+\| \rho\|^2_{H_x^{s}}+\|w\|^2_{H_x^{\frac32}}+\|\nabla w\|^2_{L^8_x},
	\end{equation*}
	and
	\begin{equation*}
		\bar{E}_0= \| \rho_0\|^2_{H^{s}}+\|\bv_0\|^2_{H^{s}}+\|w_0\|^2_{H^{\frac32}}+\|\nabla w_0\|^2_{L^8}.
	\end{equation*}
	If $0<t\leq 1$, then the following energy inequality holds:
	\begin{equation}\label{EB7}
		\begin{split}
			\bar{E}(t)\leq   & C\bar{E}_0 \exp\left\{   \int^t_0 \| d \bv, d\rho \|_{L^\infty_x} d\tau + C \bar{E}_0 \exp ( \int^t_0 \| d \bv, d\rho \|_{L^\infty_x} d\tau )    \right\}.
		\end{split}
	\end{equation}
\end{theorem}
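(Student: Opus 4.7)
The plan is to mirror the proof of Theorem \ref{be} verbatim, with the single change that the vorticity Sobolev index $s_0-\tfrac14$ is replaced by the critical value $\tfrac32$. Everything outside of the vorticity estimate is identical, and only the $H^{3/2}$ commutator step requires a small reverification.

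First I would run the symmetric hyperbolic estimate for \eqref{sq} to obtain
$\|\bv,\rho\|_{H^s_x}^2\le C\|\bv_0,\rho_0\|_{H^s}^2+C\int_0^t\|d\bv,d\rho\|_{L^\infty_x}\|\bv,\rho\|_{H^s_x}^2d\tau$,
followed by Gronwall to produce the pointwise bound
$\|\bv,\rho\|_{H^s_x}^2\le C\|\bv_0,\rho_0\|_{H^s}^2\exp\bigl(\int_0^t\|d\bv,d\rho\|_{L^\infty_x}d\tau\bigr)$.
This step is independent of the vorticity regularity.

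Next I would handle $w$. The $L^2$ bound of $w$ comes directly from $\mathbf T w=0$ as in \eqref{AA1}. For the $\dot H^{3/2}_x$ bound, apply $\Lambda_x^{3/2}$ to $\mathbf T w=0$ to get
$\mathbf T(\Lambda_x^{3/2}w)=[v^i,\Lambda_x^{3/2}]\partial_iw$,
multiply by $\Lambda_x^{3/2}w$, and integrate by parts. The standard antisymmetry term produces $\|{\rm div}\,\bv\|_{L^\infty}\|\Lambda_x^{3/2}w\|_{L^2}^2$, and the commutator is handled by Lemma \ref{jiaohuan} in its bilinear Kato--Ponce form:
\begin{equation*}
\|[v^i,\Lambda_x^{3/2}]\partial_iw\|_{L^2}\lesssim \|\nabla\bv\|_{L^\infty}\|\Lambda_x^{1/2}\nabla w\|_{L^2}+\|\Lambda_x^{3/2}\bv\|_{L^{8/3}}\|\nabla w\|_{L^8}.
\end{equation*}
The first summand is absorbed into $\|d\bv\|_{L^\infty}\|w\|_{H^{3/2}}^2$. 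For the second, 2D Sobolev embedding $H^{1/4}\hookrightarrow L^{8/3}$ yields $\|\Lambda_x^{3/2}\bv\|_{L^{8/3}}\lesssim\|\bv\|_{H^{7/4}}\le\|\bv\|_{H^s}$, available since $s>\tfrac74$. This is precisely the critical point where the index $\tfrac32$ couples to the $L^8$ integrability of $\nabla w$ through the Sobolev pair $(\tfrac{8}{3},8)$, and it is the only place in the argument where the assumption $\nabla w_0\in L^8$ becomes genuinely necessary. The $L^8$ transport estimate for $\nabla w$ itself is exactly \eqref{AA5}, derived from Lemma \ref{tw} with no modification.

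Finally I would sum these three ingredients to reach
$\bar E(t)\le C\bar E_0+C\int_0^t\bigl(\|d\bv,d\rho\|_{L^\infty_x}+\|\bv\|_{H^s_x}\bigr)\bar E(\tau)d\tau$,
then apply Gronwall and substitute the prior pointwise bound on $\|\bv\|_{H^s_x}$, exactly as in \eqref{AA7}--\eqref{AA8}, to arrive at \eqref{EB7} for $0<t\le 1$. The main obstacle I would flag is confirming that the commutator estimate still closes at the borderline exponent $\tfrac32$ without any loss; once the $(L^{8/3},L^8)$ pairing is verified via the embedding $H^{s-3/2}\hookrightarrow L^{8/3}$ for $s>\tfrac74$, the argument proceeds without further obstruction.
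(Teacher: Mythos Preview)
Your proposal is correct and matches the paper's proof essentially line for line: the paper also reduces to the $\Lambda_x^{3/2}$ commutator estimate on $\mathbf{T}w=0$, bounds it via the Kato--Ponce inequality with the $(L^{8/3},L^8)$ pairing and the embedding $\|\Lambda_x^{3/2}\bv\|_{L^{8/3}}\lesssim\|\bv\|_{H^s}$, and then combines with \eqref{AA0}, \eqref{AA1}, \eqref{AA5} before closing by Gronwall exactly as you describe.
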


\begin{proof}
	
	Taking derivatives $\Lambda_x^{\frac32}$ on $\mathbf{T}w=0$, we get
	\begin{equation}\label{AB2}
		\begin{split}
			\mathbf{T} (\Lambda_x^{\frac32} w)=[v^i, \Lambda_x^{\frac32}]\partial_i w.
		\end{split}
	\end{equation}
	Multiplying \eqref{AB2} with $\Lambda_x^{\frac32} w$ and integrating it on $[0,t] \times \mathbb{R}^2$, which implies that
	\begin{equation}\label{AB3}
		\begin{split}
			\| \Lambda_x^{\frac32} w \|^2_{L^2_x}\leq & \| \Lambda_x^{\frac32} w_0\|^2_{L^2_x}+ C \left| \int^t_0 \int_{\mathbb{R}^2} \mathrm{div}\bv \cdot | \Lambda_x^{\frac32} w|^2 dxd\tau \right|
			\\
			& +C \left| \int^t_0 \int_{\mathbb{R}^2} [v^i, \Lambda_x^{\frac32}]\partial_i w \cdot \Lambda_x^{\frac32} w dxd\tau \right|
			\\
			\leq & \|w_0\|^2_{H^{\frac32}}+ C\int^t_0 \| \nabla \bv\|_{L^\infty_x} \| \Lambda_x^{\frac32} w\|^2_{L^2_x}d\tau
			\\
			&+ C \int^t_0 \| \Lambda_x^{\frac32} \bv\|_{L^{\frac{8}{3}}_x}\| \nabla w\|_{L^8_x} \| \Lambda_x^{\frac32} w\|_{L^2_x} d\tau .
		\end{split}
	\end{equation}
Combining \eqref{AA1} with \eqref{AB3} yields to
\begin{equation}\label{AB4}
	\begin{split}
		\| w \|^2_{H^{\frac{3}{2}}_x}\leq & \|w_0 \|^2_{H^{\frac{3}{2}}_x}+  C\int^t_0 \| \nabla \bv\|_{L^\infty_x}  
		\| w \|^2_{H^{\frac{3}{2}}_x} d\tau
		\\
		&+ \int^t_0 \| \bv\|_{H^{s}_x}\| \nabla w\|_{L^8_x} \|w \|_{H^{\frac{3}{2}}_x} d\tau .
	\end{split}
\end{equation}
Applying \eqref{AA0}, \eqref{AA5} and \eqref{AB4}, we obtain
\begin{equation*}
	\bar{E}(t)\leq  C\bar{E}_0+  C\int^t_0 \left( \| d \bv, d\rho \|_{L^\infty_x}+ \| \bv\|_{H^{s}_x} \right)  \bar{E}(\tau) d\tau .
\end{equation*}
By Gronwall's inequality, then \eqref{EB7} holds. We thus complete the proof of theorem \ref{bBe}.
\end{proof}

\subsection{Uniqueness of the solution}
We now introduce two corollaries regarding the uniqueness of solutions, which can be obtained directly from Theorems \ref{be} and \ref{bBe}.
\begin{corollary}\label{uniq}
Assume $\frac74<s_0\leq s \leq 2$ and \eqref{HEw} hold. Consider the Cauchy problem \eqref{fc} with the initial data $(\bv_0, \rho_0, w_0,$ $\nabla w_0) \in H^{s} \times H^{s} \times H^{s_0-\frac14} \times L^8$. If there exists a solution $(\bv, \rho, w)$ for \eqref{fc} and
$(\bv,\rho) \in C([0,T],H_x^s) $, $w \in C([0,T],H_x^{s_0-\frac14}) $, $\nabla w\in C([0,T],L^8_x)$ and $(d\bv, d\rho) \in {L^4_{[0,T]} L^\infty_x}$, then it's unique.
\end{corollary}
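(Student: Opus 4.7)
The plan is to reduce uniqueness to an $L^2$-level energy estimate on the difference of two solutions of the symmetric hyperbolic formulation \eqref{sq}. The crucial structural observation is that the system in Lemma~\ref{sh} closes in $(\bv,\rho)$ alone, since the specific vorticity is algebraically determined by $w=\mathrm{e}^{-\rho}\,\mathrm{curl}\,\bv$. Consequently, once uniqueness of $(\bv,\rho)$ has been established, uniqueness of $w$ follows at once, and the regularity assumptions $w\in C_tH_x^{s_0-1/4}$ and $\nabla w\in C_tL_x^8$ need not enter the argument directly; they were used only to produce such solutions.

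First I would let $(\bv_j,\rho_j,w_j)$, $j=1,2$, be two solutions sharing the same initial data, set $\bU_j=(v_j^1,v_j^2,p(\mathrm{e}^{\rho_j}))^{\mathrm{T}}$, and define $\bar{\bU}:=\bU_1-\bU_2$. Subtracting the two copies of \eqref{sq} gives the linear problem
\[
A^0(\bU_1)\partial_t\bar{\bU}+\sum_{i=1}^2 A^i(\bU_1)\partial_i\bar{\bU}=\bF,
\]
where $\bF=-\sum_{\alpha=0}^{2}\bigl[A^\alpha(\bU_1)-A^\alpha(\bU_2)\bigr]\partial_\alpha\bU_2$. By the smoothness of $A^\alpha$ in $\bU$ together with the far-from-vacuum bound \eqref{HEw}, a standard mean-value argument gives the pointwise control $|\bF|\lesssim|\bar{\bU}|\,|d\bU_2|$.

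Next I would perform the classical symmetrizer-based $L^2$ estimate: pairing the equation with $\bar{\bU}$ and integrating over $\mathbb{R}^2$, the symmetry of the $A^i$ and the positive-definiteness of $A^0(\bU_1)$ (again from \eqref{HEw}) lead to
\[
\frac{d}{dt}\|\bar{\bU}\|_{L^2_x}^2\lesssim\bigl(\|d\bv_1,d\rho_1\|_{L^\infty_x}+\|d\bv_2,d\rho_2\|_{L^\infty_x}\bigr)\|\bar{\bU}\|_{L^2_x}^2,
\]
since both the source term $\bF$ and the integration-by-parts remainder generated by $\partial_t A^0(\bU_1)+\partial_i A^i(\bU_1)$ collapse to $\|d\bU\|_{L^\infty_x}\|\bar{\bU}\|_{L^2_x}^2$ after Cauchy-Schwarz. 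The hypothesis $(d\bv_j,d\rho_j)\in L^4_{[0,T]}L^\infty_x\subset L^1_{[0,T]}L^\infty_x$ together with $\bar{\bU}(0)=0$ and Gronwall's inequality then forces $\bar{\bU}\equiv 0$, hence $\bv_1=\bv_2$ and $\rho_1=\rho_2$; the identity $w_j=\mathrm{e}^{-\rho_j}\,\mathrm{curl}\,\bv_j$ finally yields $w_1=w_2$.

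The step I expect to be the main obstacle is ensuring that the coefficient in Gronwall lies in $L^1_t$: two-dimensional Sobolev embedding applied to $(\bv,\rho)\in L^\infty_tH_x^s$ with $s$ only slightly larger than $7/4$ does not reach $W^{1,\infty}_x$, so the classical Kato-Majda uniqueness argument does not apply on the strength of Sobolev regularity alone. It is precisely the Strichartz bound $(d\bv,d\rho)\in L^4_{[0,T]}L^\infty_x$ built into the hypotheses---and supplied by the constructions in Theorems~\ref{dl2} and \ref{dl3}---that closes this gap and makes the $L^2$ argument go through without any auxiliary estimate on $\bar{w}$.
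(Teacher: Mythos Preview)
Your proof is correct and follows the same approach the paper has in mind: the paper does not spell out a proof of the corollary but simply says it follows from the energy-estimate theorems, and indeed carries out exactly your $L^2$ difference estimate on the symmetric hyperbolic system \eqref{sq} later (in the proof of Proposition~\ref{p3}) when verifying that the approximating sequence is Cauchy. Your observation that \eqref{sq} closes in $(\bv,\rho)$ alone, so that uniqueness of $w$ follows a posteriori from $w=\mathrm{e}^{-\rho}\mathrm{curl}\,\bv$ (or equivalently from the transport equation $\mathbf{T}w=0$ once $\bv$ is fixed), is the key structural point, and your use of the Strichartz hypothesis $(d\bv,d\rho)\in L^4_tL^\infty_x\subset L^1_tL^\infty_x$ to make the Gronwall coefficient integrable is exactly what is needed.
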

\begin{corollary}\label{uniq2}
	Assume $s\in (\frac74,2]$ and \eqref{HEw} hold. Consider the Cauchy problem \eqref{fc} with the initial data $(\bv_0, \rho_0, w_0,$ $\nabla w_0) \in H^{s} \times H^{s} \times H^{\frac32} \times L^8$. If there exists a solution $(\bv, \rho, w)$ for \eqref{fc} and
	$(\bv,\rho) \in C([0,T^*],H_x^s) $, $w \in C([0,T^*],H_x^{\frac32}) $, $\nabla w\in C([0,T^*],L^8_x)$ and $(d\bv, d\rho) \in {L^4_{[0,T^*]} L^\infty_x}$, then it's unique.
\end{corollary}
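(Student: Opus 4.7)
The plan is to propagate the classical $L^2$ energy estimate for the symmetric hyperbolic formulation of Lemma \ref{sh} to the difference of two solutions, exploiting the hypothesis $(d\bv,d\rho)\in L^4_{[0,T^*]}L^\infty_x\hookrightarrow L^1_{[0,T^*]}L^\infty_x$. This mirrors the lowest-order step in the proof of Theorem \ref{bBe}, but applied to the difference of two solutions at the $L^2$ level rather than to a single solution at $H^s$ level.

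Concretely, suppose $(\bv^{(j)},\rho^{(j)},w^{(j)})$, $j=1,2$, are two solutions sharing initial data and lying in the function class stated in the corollary. Set $\bU^{(j)}:=((v^{(j)})^1,(v^{(j)})^2,p(\mathrm{e}^{\rho^{(j)}}))^{\mathrm{T}}$ and $\tilde{\bU}:=\bU^{(1)}-\bU^{(2)}$. I would first subtract the two copies of \eqref{sq} coming from Lemma \ref{sh} to obtain a linear symmetric hyperbolic equation for $\tilde{\bU}$,
\begin{equation*}
A^0(\bU^{(1)})\partial_t\tilde{\bU}+A^i(\bU^{(1)})\partial_i\tilde{\bU}=-\bigl[A^0(\bU^{(1)})-A^0(\bU^{(2)})\bigr]\partial_t\bU^{(2)}-\bigl[A^i(\bU^{(1)})-A^i(\bU^{(2)})\bigr]\partial_i\bU^{(2)}.
\end{equation*}
Thanks to \eqref{HEw} and the embedding $H^s_x\hookrightarrow L^\infty_x$ (since $s>\tfrac74$), both densities $\varrho^{(j)}=\mathrm{e}^{\rho^{(j)}}$ remain in a compact subset of $(0,\infty)$, so $A^\alpha$ is uniformly Lipschitz in $\bU$ along the two solutions and the right-hand side is pointwise bounded by $C|\tilde{\bU}|\,|d\bU^{(2)}|$. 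Taking the $L^2$ inner product with $\tilde{\bU}$ and using the symmetry of $A^i$ together with the uniform positive-definiteness of $A^0$ would then yield
\begin{equation*}
\frac{d}{dt}\|\tilde{\bU}\|_{L^2_x}^2 \leq C\bigl(\|d\bU^{(1)}\|_{L^\infty_x}+\|d\bU^{(2)}\|_{L^\infty_x}\bigr)\|\tilde{\bU}\|_{L^2_x}^2,
\end{equation*}
with $C$ depending only on $C_0,c_0$. Since $\tilde{\bU}(0)=0$ and the time integrals of $\|d\bU^{(j)}\|_{L^\infty_x}$ are finite by hypothesis, Gronwall's inequality forces $\tilde{\bU}\equiv 0$ on $[0,T^*]$; then $\bv^{(1)}=\bv^{(2)}$, $\rho^{(1)}=\rho^{(2)}$, and the identity \eqref{sv} forces $w^{(1)}=w^{(2)}$.

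The main (and essentially only) subtlety is to verify that the $L^2$ energy manipulation is rigorous at this low level of regularity for the difference. This is standard once one records that $\bU^{(j)}\in L^\infty_{t,x}$ and $d\bU^{(j)}\in L^1_tL^\infty_x$, both of which are built into the solution class. The role of Theorem \ref{bBe} in the present corollary is not through an energy identity for the difference itself, but through the a priori control on the individual solutions that legitimizes the Gronwall step above; in particular, the vorticity information $w\in C([0,T^*];H^{3/2}_x)$ and $\nabla w\in C([0,T^*];L^8_x)$ plays no direct role in the uniqueness argument, which is driven entirely by the Strichartz-type bound $(d\bv,d\rho)\in L^4_{[0,T^*]}L^\infty_x$.
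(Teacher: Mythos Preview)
Your proposal is correct and is precisely the standard filling-in of the paper's one-line claim that the corollary ``can be obtained directly from Theorems~\ref{be} and~\ref{bBe}''. The paper does not spell out a separate proof; where it does carry out analogous difference estimates (e.g.\ in the proof of Theorem~\ref{dl2} via Proposition~\ref{p3}, and in \eqref{ur}), it uses exactly the same mechanism you describe---subtracting the two copies of the symmetric hyperbolic system \eqref{sq}, doing the $L^2$ energy estimate on the difference with the Lipschitz bound on $A^\alpha$, and closing by Gronwall with $\|d\bU^{(j)}\|_{L^1_tL^\infty_x}<\infty$.
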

\section{Proof of Theorem \ref{dl2}}
Observing the conclusions in Theorem \ref{dl2}, it includes existence, uniqueness of solutions, and Strichartz estimates. By using Corollary \ref{uniq}, the uniqueness of solution also holds once we prove the existence of solutions and Strichartz estimates. Therefore, we only need to discuss the existence of solutions and Strichartz estimates. Our idea here is to reduce the proofs of Theorem \ref{dl2} to the case of smooth, compactly supported and small initial data.
\subsection{A reduction to the case of smooth initial data}
\begin{proposition}\label{p3}
	Assume $\frac74<s_0\leq s \leq 2$, $\delta\in (0, s-\frac{7}{4})$, and \eqref{HEw} hold. Consider the Cauchy problem \eqref{fc}. For each $M_0>0$, there exists positives $T$ and $M_1$ (depending on $s, s_0, C_0, c_0, M_0$) such that, for each smooth initial data $(\bv_0, \rho_0,   w_0)$ which satisfies
	\begin{equation*}\label{p30}
		\begin{split}
			&\|\bv_0 \|_{H^s}+ \|\rho_0  \|_{H^s} + \| w_0\|_{H^{s_0-\frac14}}+ \|\nabla w_0\|_{L^{8}}  \leq M_0,
		\end{split}
	\end{equation*}
	there exists a smooth solution $(\bv, \rho, w)$ to \eqref{fc} on $[-T,T] \times \mathbb{R}^2$ satisfying
	\begin{equation}\label{p31}
		\|\bv\|_{L^\infty_tH_x^s}+ \| \rho\|_{L^\infty_tH_x^s}+ \| w\|_{H_x^{s_0-\frac14}}+ \|\nabla w\|_{L_x^{8}} \leq M_1.
	\end{equation}
	Furthermore, the solution satisfies

	\begin{enumerate}
		\item dispersive estimate for $\bv$, $\rho$ and $\bv_+$
	\begin{equation}\label{p32}
		\|d \bv, d \rho, d\bv_{+}\|_{L^4_t C^\delta_x} \leq M_1.
	\end{equation}

	\item Let a function $f$ satisfy equation \eqref{linear}. For each $1 \leq r \leq s+1$, the Cauchy problem \eqref{linear} is well-posed in $H_x^r \times H_x^{r-1}$, and the following estimates
	\begin{equation}\label{p33}
		\|\left< \nabla \right>^a f\|_{L^4_t L^\infty_x} \leq   C_{M_0}( \| f_0\|_{H^r}+ \| f_1\|_{H^{r-1}}+\| \Theta  \|_{L^\infty_tH_x^{r-1}\cap L^1_t H_x^r}+ \| B \|_{L^1_t H_x^{r-1}} ),
	\end{equation}
	and
		\begin{equation}\label{p34}
		\|f \|_{L^\infty_t H^r_x}+\| \partial_t f \|_{L^\infty_t H^{r-1}_x} \leq  C_{M_0} ( \|f_0\|_{H^r}+ \|f_1\|_{H^{r-1}}+\| \Theta \|_{L^\infty_tH^{r-1} \cap L^1_tH_x^r}+\|B\|_{L^1_tH_x^{r-1}} ).
	\end{equation}
	hold, where $a<r-\frac34$ and $C_{M_0}$ is a constant depending on $s, s_0, C_0, c_0, M_0$. The similar estimates hold if we replace $\left< \nabla \right>^a$ by $\left< \nabla \right>^{a-1}d$.
\end{enumerate}
\end{proposition}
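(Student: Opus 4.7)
The plan is to prove Proposition \ref{p3} by a standard Strichartz bootstrap applied to a smooth solution produced by classical hyperbolic theory. First I would invoke local existence for smooth data (Kato's theorem applied to the symmetric hyperbolic system in Lemma \ref{sh}) to obtain a smooth solution on some interval $[-T_0,T_0]$ whose length a priori depends on higher-order Sobolev norms of the initial data. To upgrade $T_0$ to a time $T=T(C_0,c_0,M_0,s,s_0)$ depending only on the $M_0$-level data, I set up the bootstrap assumption
\begin{equation*}
  \|(d\bv,d\rho)\|_{L^4_{[-T,T]}L^\infty_x}\leq K
\end{equation*}
for a universal constant $K$ to be chosen, and aim to recover the same bound with $K$ replaced by $K/2$, after which a continuity argument extends the solution to the full interval.

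Under the bootstrap hypothesis, H\"older's inequality in time combined with Theorem \ref{be} yields \eqref{p31} with $M_1$ depending only on $M_0$, provided $T$ is small enough relative to $K$. To recover the dispersive bound \eqref{p32} I would apply the linear estimate \eqref{p33} to the three wave equations of Lemma \ref{FC} and Lemma \ref{wte1}. For $\rho$ the equation $\square_g\rho=\mathcal{D}$ fits into \eqref{linear} with $\Theta=0$ and $B=\mathcal{D}\in L^1_tH^{s-1}_x$ by \eqref{YYE}. For $\bv_+$ one uses $\square_gv^i_+=\mathbf{T}\mathbf{T}v^i_-+Q^i$, which fits \eqref{linear} with $\Theta=\mathbf{T}\bv_-$ and $B=\bQ$; the bounds \eqref{eta} and \eqref{YYE} supply the required $L^\infty_tH^{s-1}_x\cap L^1_tH^s_x$ and $L^1_tH^{s-1}_x$ norms. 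For $\bv_-$ one avoids the wave machinery entirely: the elliptic relation \eqref{etad} gains two derivatives over $w$, and the two-dimensional embedding $H^{2+}_x\hookrightarrow C^{1,\delta}_x$ together with \eqref{eee} delivers $\|d\bv_-\|_{L^\infty_tC^\delta_x}\lesssim M_1$, which is more than enough after a factor $T^{1/4}$. Summing these contributions and taking $T$ small closes the bootstrap.

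The technical heart is therefore the linear wave estimates \eqref{p33}--\eqref{p34} for $\square_g f=\mathbf{T}\Theta+B$. The energy estimate \eqref{p34} at integer regularity $r$ follows from standard commutator arguments on $\square_g$ once Lemma \ref{LD} is used to recast the $\mathbf{T}\Theta$ source as an equivalent initial-data problem, and non-integer $r$ follows by interpolation. The Strichartz bound \eqref{p33} with a $\tfrac34$-derivative loss is proved by adapting the Smith--Tataru wave-packet parametrix to the acoustical metric $g$: after Littlewood--Paley localization to dyadic frequency $\lambda$ and the usual rescaling to a unit frequency window of spatial scale $\lambda^{-1/2}$, the rescaled metric is essentially $C^2$ and a WKB wave-packet parametrix yields the sharp 2D wave $L^4_tL^\infty_x$ estimate; dyadic summation together with a square-function argument produces \eqref{p33}.

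The main obstacle, and the source of the $\tfrac14$-order improvement in the vorticity regularity over \cite{Z1,Z2}, lies in controlling the null-hypersurface geometry at this reduced level. The curvature of the acoustic null cones is governed by $\square_g g$, which schematically equals $\nabla w+(d\bv,d\rho)\cdot(d\bv,d\rho)$; under the present hypotheses the $\nabla w$ piece only lies in $H^{s_0-5/4}_x$, a Sobolev space that is barely above $H^{1/2}_x$, and this is precisely the threshold below which the standard Smith--Tataru curvature bounds collapse. The way around this is the decomposition $\bv=\bv_++\bv_-$ of Definition \ref{vfu}: only $\bv_+$ is allowed to enter the rough wave geometry, while the vorticity-driven component $\bv_-$ is recovered via the elliptic gain of two derivatives from \eqref{etad} and treated as a lower-order perturbation throughout the parametrix construction. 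Propagating this splitting through the microlocalized energy and dispersive estimates at the regularity $s_0>\tfrac74$ is the substantive content of Sections 4--7.
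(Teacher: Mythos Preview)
Your proposal misses the essential step in the paper's proof of Proposition~\ref{p3}: the reduction to \emph{small} data. The paper does not run a bootstrap directly at size $M_0$. Instead it (i) rescales $(\bv,\rho)(t,x)\mapsto(\bv,\rho)(Tt,Tx)$, gaining factors $T^{s-1}$, $T^{s_0-1/4}$, $T^{3/4}$ on the homogeneous norms so that choosing $T=T(M_0,\dots)$ makes them $\ll\epsilon_3$; (ii) applies a physical-space localization $\chi(x-y)(\widetilde{\bv}_0-\widetilde{\bv}_0(y))$ to turn small homogeneous norms into small inhomogeneous $H^s$-norms supported in a fixed ball; and (iii) invokes the small-data Proposition~\ref{p1} on $[-1,1]$, patching the local solutions by finite speed of propagation. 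The energy bound \eqref{p31} then follows from Theorem~\ref{be}, and \eqref{p33}--\eqref{p34} from summing the local linear estimates via a partition of unity.

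This is not a cosmetic difference. The Smith--Tataru wave-packet parametrix (and the null-foliation machinery of Sections~5--6 on which it rests) is perturbative: the functional $G(\bv,\rho)\le 2\epsilon_1$, the characteristic energy estimates for $\Sigma_{\theta,r}$, and the connection-coefficient bounds all require $\mathbf{g}-\mathbf{m}$ to be small, not merely of controlled regularity. Your semiclassical rescaling at frequency $\lambda$ addresses regularity but not size; without the time rescaling the null hypersurfaces are $O(M_0)$-far from flat hyperplanes and the Raychaudhuri-type transport equations for $\chi$ in Lemma~\ref{chi} cannot be integrated. A direct bootstrap at level $M_0$ therefore has no mechanism to produce \eqref{p33} in the first place, so the argument is circular.

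A secondary conceptual issue: the decomposition $\bv=\bv_++\bv_-$ is \emph{not} used to keep $\bv_-$ out of the metric. The acoustical metric $g$ in Definition~\ref{metricd} depends on the full $\bv$, so $\nabla w$ still appears in $\square_{\mathbf g}\mathbf g$ via $\square_g\bv$. The paper absorbs this through the characteristic energy estimate for vorticity on the null hypersurfaces (Lemma~\ref{te20}), which gives $\vert\kern-0.25ex\vert\kern-0.25ex\vert\nabla w\vert\kern-0.25ex\vert\kern-0.25ex\vert_{s_0-\frac54,2,\Sigma}\lesssim\epsilon_2$; this is where $s_0>\tfrac74$ enters the geometry, and it works only because the preceding scaling has made everything small. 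The $\bv_\pm$ splitting is used only at the level of the solution's own wave equation (Lemma~\ref{wte1}) so that the source becomes $\mathbf T\mathbf T\bv_-$, to which the modified Duhamel Lemma~\ref{LD} applies.
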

Next, we will give a proof for Theorem \ref{dl2} based on Proposition \ref{p3}.
\begin{proof}[Proof of Theorem \ref{dl2} by using Proposition \ref{p3}]
	Consider the initial data $(\bv_0, \rho_0, w_0, \nabla w_0) \in H^s \times H^s \times H^{s_0-\frac14} \times L^8$ and
	\begin{equation*}
		\|\bv_0\|_{H^s} + \|{\rho}_0 \|_{H^s} + \|w_0\|_{H^{s_0-\frac14}} + \|\nabla w_0\|_{L^8} \leq M_0.
	\end{equation*}
	Let $\{(\bv_{0k}, \rho_{0k}, w_{0k}, \nabla w_{0k})\}_{k \in \mathbb{Z}^{+}}$ be a sequence of smooth data which converges to $(\bv_0, \rho_0, w_0, \nabla w_0)$ in $H^s \times H^s \times H^{s_0-\frac14} \times L^8$, where $w_{0k}$ is given by
	\begin{equation*}
		w_{0k}=\mathrm{e}^{-\rho_{0k}}\mathrm{curl}\bv_{0k}, \qquad k \in \mathbb{Z}^{+}.
	\end{equation*}
	By use of Proposition \ref{p3}, for each data $(\bv_{0k}, \rho_{0k}, w_{0k})$, there exists a solution $(\bv_k, \rho_k, w_k)$ to \eqref{fc}, and corresponds to the initial data
	\begin{equation*}
		(\bv_k, \rho_k, w_k)|_{t=0}=(\bv_{0k}, \rho_{0k}, w_{0k}).
	\end{equation*}
	We note that the solution of \eqref{fc} also satisfies the symmetric hyperbolic system \eqref{sh}. Let $\bU_k=(\bv_k, p(\mathrm{e}^{\rho_k}))^{\mathrm{T}}, k \in \mathbb{Z}^+$. For $k,l \in \mathbb{Z}^+$, we get
	\begin{equation*}
		\begin{split}
			&A^0( \bU_k )\partial_t \bU_k+ \sum^2_{i=1}A^i( \bU_k )\partial_{i}\bU_k = 0,
			\\
			&A^0( \bU_l ) \partial_t \bU_l+ \sum^2_{i=1}A^i(\bU_l)\partial_{i}\bU_l=0.
		\end{split}
	\end{equation*}
Applying the standard energy estimates to $\bU_k - \bU_l$, it yields
	\begin{equation*}
		\frac{d}{dt}\|\bU_k - \bU_l \|_{H_x^{s-1}} \leq C_{k, l} \left(\| d\bU_k, d\bU_l\|_{L^\infty_x}\|\bU_k-\bU_l\|_{H_x^{s-1}}+ \|\bU_k-\bU_l\|_{L^\infty_x}\| \nabla \bU_l\|_{H_x^{s-1}} \right),
	\end{equation*}
	where $C_{k, l}$ depends on the $L^\infty_x$ norm of $\bU_k$ and $\bU_l$. By using the Strichartz estimate of $d\bv_k, d\rho_k, k \in \mathbb{Z}^+$, i.e. \eqref{p32} in Proposition \ref{p3}, we infer that
	\begin{equation*}
		\begin{split}
			\|(\bU_k-\bU_l)(t,\cdot)\|_{H_x^{s-1}} &\lesssim \|(\bU_k-\bU_l)(0,\cdot)\|_{H^{s-1}}
			\\
			&\lesssim\|(\bv_{0k}-\bv_{0l}, \rho_{0k}-\rho_{0l}) \|_{H^{s-1}}.
		\end{split}
	\end{equation*}
	Thus $\{(\bv_k,p(\mathrm{e}^{\rho_k}))\}_{k \in \mathbb{Z}^+}$ is a Cauchy sequence in $C([-T,T];H_x^{s-1})$. Let $(\bv,p(\mathrm{e}^{\rho}))$ be the limit. Then
	\begin{equation}\label{lit}
		\lim_{k\rightarrow \infty}(\bv_k,p(\mathrm{e}^{\rho_k}) )=(\bv,p(\mathrm{e}^{\rho})) \ \mathrm{in} \ C([-T,T];H_x^{s-1}).
	\end{equation}
	Since $p(\mathrm{e}^{\rho_k})=\mathrm{e}^{\gamma \rho_k}$, we therefore have $\rho_k=\frac{1}{\gamma}\ln
	\left\{ p(\mathrm{e}^{\rho_k}) \right\}$. By using \eqref{lit} and Lemma \ref{jiaohuan0}, we obtain
	\begin{equation*}
		\lim_{k\rightarrow \infty}\rho_k=\rho \ \mathrm{in} \ C([-T,T];H_x^{s-1}).
	\end{equation*}
	Due to
	\begin{equation*}
		w_k=\mathrm{e}^{-\rho_k}\mathrm{curl}\bv_k, \quad k \in \mathbb{Z}^+,
	\end{equation*}
and
\begin{equation*}
	\partial_t w_k+ (\bv_k \cdot \nabla) w_k=0.
\end{equation*}
we can obtain
\begin{equation}\label{QQ}
	\partial_t ( w_k- w_l ) + \bv_k \cdot \nabla ( w_k - w_l)=(\bv_l-\bv_k)\cdot \nabla w_l.
\end{equation}
Multiplying $w_k- w_l$ on \eqref{QQ} and integrating it on $[0,t] \times \mathbb{R}^2$, we obtain
	\begin{equation}\label{cx}
		\begin{split}
			\|w_k-w_l\|^2_{L^2_x} &\leq C\|w_{0k}-w_{0l}\|^2_{L^2_x}+ C\int^t_0 \|\nabla \bv_k \|_{L^\infty_x} \|w_k-w_l\|^2_{L^2_x} d\tau
			\\
			& + C\int^t_0\|\bv_k-\bv_l\|_{L_x^{2}} \|w_l\|_{H^1_x} \|w_k-w_l\|_{L^2_x} d\tau.
		\end{split}
	\end{equation}
For \eqref{cx}, using Gronwall's inequality, then $\{w_k\}_{k \in \mathbb{Z}^+}$ is a Cauchy sequence in $C([-T,T];L_x^{2})$. We denote $w$ as the limit, i.e.
	\begin{equation*}
		\lim_{k\rightarrow \infty} w_k = w \ \mathrm{in} \ C([-T,T];L_x^{2}).
	\end{equation*}
	Since $(\bv_k,\rho_k)$ is uniformly bounded in $C([-T,T];H_x^{s})$ and $(w_k, \nabla w_k)$ uniformly bounded in $H_x^{s_0-\frac14}\times L^8_x$, we therefore deduce $(\bv, \rho) \in C([-T,T];H_x^{s}), w \in C([-T,T];H_x^{s_0-\frac14}),$ and $\nabla w \in L^8_x$.

	Using Proposition \ref{p3} again, the sequence $\{(d\bv_k,d\rho_k)\}_{k \in \mathbb{Z}^+}$ is uniformly bounded in the space $L^4([-T,T];C_x^\delta)$. Consequently, the sequence $\{(d\bv_k, d\rho_k)\}_{k \in \mathbb{Z}^+}$ converges to $(d\bv, d\rho)$ in the space $L^4([-T,T];L_x^\infty)$. That is,
	\begin{equation}\label{cc1}
		\lim_{k \rightarrow \infty} (d\bv_k, d\rho_k)=(d\bv, d\rho) \ \ \textrm{in} \ \ L^4([-T,T];L^\infty_x).
	\end{equation}
	Due to \eqref{dvc}, we get
	\begin{equation}\label{cc2}
		\begin{split}
			\| d \bv_{+}\|_{L^\infty_x} \leq \| d \bv\|_{L^\infty_x}+\| d \bv_{-}\|_{L^\infty_x}.
		\end{split}
	\end{equation}
By applying \eqref{etad} and elliptic estimates, Sobolev inequalities, for $s_0>\frac74$ we derive
	\begin{equation*}
		\begin{split}
			\| d \bv_{-}\|_{L^\infty_x} & \lesssim \| d (-\Delta)^{-1}(\mathrm{e}^{{\rho}} \nabla w ) \|_{H^{s_0-\frac{3}{4}}_{x}}
			\\
			& \lesssim \| w\|_{{H}^{s_0-\frac{3}{4}}} + \| \rho \|_{{H}^{s}}
			\\
			& \lesssim \|{\rho}_0\|_{H^{s}}+ \|\bv_0\|_{H^{s}} + \|w_0 \|_{H^{s_0-\frac{1}{4}}}+ \|\nabla w_0 \|_{L^{8}}.
		\end{split}
	\end{equation*}
	As a result, we have $d \bv_{-} \in L^4([-T,T];L^\infty_x)$ for finite $T$. This combining with \eqref{cc1}-\eqref{cc2} yields $d \bv_{+} \in L^4([-T,T];L^\infty_x)$.
	
	It remains for us to prove \eqref{E0} and \eqref{SE1}.  For $1 \leq r \leq s+1$, using Proposition \ref{p3}, there exists solutions $f_k$ satisfying 
	\begin{equation}\label{fk}
		\begin{cases}
			&\square_{g_k} f_k=\mathbf{T}\Theta +B
			\\
			& (f_k, \partial_tf_k)|_{t=0}=(f_0,f_1).
		\end{cases}
	\end{equation}
	Here the metric $g_k$ has the same formulation as in \eqref{MD} by replacing $(\bv,\rho)$ to $(\bv_k,\rho_k)$. Using \eqref{p33} and \eqref{p34}, we have
	\begin{equation}\label{sre}
		\|\left<\nabla \right>^{a} f_k \|_{L^4_t L^\infty_x} \leq C_{M_0} (  \|f_0\|_{H^r}+ \|f_1\|_{H^{r-1}}+\| \Theta  \|_{L^\infty_tH^{r-1} \cap L^1_tH_x^r}+\|B\|_{L^1_tH_x^{r-1}} ),
	\end{equation}
	and
	\begin{equation}\label{e9e}
		\|f_k \|_{L^\infty_t H^r_x}+\| \partial_t f_k \|_{L^\infty_t H^{r-1}_x} \leq C_{M_0} (   \|f_0\|_{H^r}+ \|f_1\|_{H^{r-1}}+\| \Theta  \|_{L^\infty_tH^{r-1} \cap L^1_tH_x^r}+\|B\|_{L^1_tH_x^{r-1}} ),
	\end{equation}
	where $a< r-\frac{3}{4}$. From \eqref{e9e}, we obtain that there exists a subsequence such that there is a limit $f$ satisfying
	\begin{equation*}\label{e8e}
		\|f \|_{L^\infty_t H^r_x}+\| \partial_t f\|_{L^\infty_t H^{r-1}_x} \lesssim   \|f_0\|_{H^r}+ \|f_1\|_{H^{r-1}}+\| \Theta  \|_{L^\infty_tH^{r-1} \cap L^1_tH_x^r}+\|B\|_{L^1_tH_x^{r-1}}.
	\end{equation*}
	By use of \eqref{sre}, we have
	\begin{equation*}\label{s7e}
		\|\left<\nabla \right>^{a} f \|_{L^4_t L^\infty_x} \lesssim   \|f_0\|_{H^r}+ \|f_1\|_{H^{r-1}}+\| \Theta  \|_{L^\infty_tH^{r-1} \cap L^1_tH_x^r}+\|B\|_{L^1_tH_x^{r-1}}, \quad a< r-\frac{3}{4}.
	\end{equation*}
	Furthermore, taking $k\rightarrow \infty$ for \eqref{fk}, the limit $f$ also satisfies
	\begin{equation*}\label{ff}
		\begin{cases}
			&\square_{g} f=\mathbf{T}\Theta +B,
			\\
			& (f, \partial_tf)|_{t=0}=(f_0,f_1).
		\end{cases}
	\end{equation*}
	Therefore, we have finished the proof of Theorem \ref{dl2}.
\end{proof}

We next state a result with smooth, small initial data.
\subsection{A reduction to the case of small initial data}
Since the propagation speed of system \eqref{fc} is finite, we therefore set $c>0$ being the largest one. Let us state a  result with small data:
\begin{proposition}\label{p1}
	Assume $\frac74<s_0\leq s \leq 2$, $ \delta\in (0, s-\frac{7}{4})$, \eqref{HEw} and \eqref{a0} hold. Consider the Cauchy problem \eqref{fc}. Suppose the initial data $(\bv_0, {\rho}_0, w_0)$ be smooth, supported in $B(0,c+2)$ and satisfying
	\begin{equation}\label{300}
		\begin{split}
			&\|\bv_0\|_{H^s} + \|\rho_0 \|_{H^s} + \|w_0\|_{H^{s_0-\frac14}}+ \|\nabla w_0\|_{L^8}  \leq \epsilon_3.
		\end{split}
	\end{equation}
	Then the Cauchy problem \eqref{fc} admits a smooth solution $(\bv,\rho,w)$ on $[-1,1] \times \mathbb{R}^2$, which have the following properties:
	
	$\mathrm{(1)}$ energy estimates
	\begin{equation*}\label{402}
		\begin{split}
			&\|\bv\|_{L^\infty_t H_x^{s}}+\| \rho \|_{L^\infty_t H_x^{s}} + \| w \|_{H_x^{s_0-\frac14}}+ \|\nabla w\|_{L_x^8}  \leq \epsilon_2.
		\end{split}
	\end{equation*}

	$\mathrm{(2)}$ dispersive estimate for $\bv$ and $\rho$
	\begin{equation*}\label{s403}
		\|d \bv, d \rho, d \bv_{+}\|_{L^4_t C^\delta_x} \leq \epsilon_2,
	\end{equation*}

	$\mathrm{(3)}$ dispersive estimate for the linear equation

	Let $f$ satisfy
	the equation \eqref{linear}. For each $1 \leq r \leq s+1$, the Cauchy problem \eqref{linear} is well-posed in $H_x^r \times H_x^{r-1}$. Moreover, for $a<r-\frac34$, the following estimates
\begin{equation}\label{304}
	\|\left< \nabla \right>^a f\|_{L^4_t L^\infty_x} \leq C_{M_0} ( \| f_0\|_{H_x^r}+ \| f_1\|_{H_x^{r-1}}+\| \Theta \|_{L^\infty_tH_x^{r-1}\cap L^1_t H_x^r}+ \| B \|_{L^1_t H_x^{r-1}} ),
\end{equation}
and
\begin{equation}\label{305}
	\| f   \|_{L^\infty_t H^s_x} + \|  \partial_t f   \|_{L^\infty_t H^{s-1}_x}
	\leq C_{M_0} ( \|f_0\|_{H_x^r}+ \|f_1\|_{H_x^{r-1}}+\| \Theta \|_{L^\infty_tH_x^{r-1} \cap L^1_tH_x^r}+\|B\|_{L^1_tH_x^{r-1}} ).
\end{equation}
hold, where $C_{M_0}$ is a constant depending on $s, s_0, C_0, c_0, M_0$. The similar estimates hold if we replace $\left< \nabla \right>^a$ by $\left< \nabla \right>^{a-1}d$.
\end{proposition}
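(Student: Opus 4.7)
The plan is a continuity (bootstrap) argument on $[-1,1]$, built around three ingredients: the wave-transport formulation of Lemma~\ref{FC}, the velocity splitting $\bv = \bv_{+}+\bv_{-}$ of Definition~\ref{vfu}, and a Smith-Tataru-type Strichartz estimate for the linear wave operator $\square_g$ on this background, adapted to the low vorticity regularity $w\in L^\infty_t H_x^{s_0-\frac14}$. Since the data are smooth and compactly supported, a local smooth solution exists on some open maximal interval, and the job is to propagate bounds. I would posit the bootstrap
\begin{equation*}
\|d\bv,d\rho,d\bv_{+}\|_{L^4_{[-T_*,T_*]}C^\delta_x}\le 2\epsilon_2, \qquad T_*\le 1,
\end{equation*}
and show it can be improved to $\epsilon_2$, which by continuity forces $T_*=1$.

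Under the bootstrap, Theorem~\ref{be} immediately yields $\|\bv,\rho\|_{L^\infty_t H_x^s}+\|w\|_{L^\infty_t H_x^{s_0-1/4}}+\|\nabla w\|_{L^\infty_t L^8_x}\lesssim\epsilon_3$, because $\int_0^{T_*}\|d\bv,d\rho\|_{L^\infty_x}\,d\tau\lesssim T_*^{3/4}\cdot 2\epsilon_2 \ll 1$ by \eqref{a0} and the exponential factor in \eqref{E7} is harmless. From \eqref{etad} combined with \eqref{eee} and Sobolev embedding, elliptic regularity gives $\|d\bv_{-}\|_{L^\infty_t C^\delta_x}\lesssim\epsilon_3$, which is far better than the bootstrap demands; this handles the $\bv_{-}$ component of the Strichartz estimate entirely through energy methods.

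The core step is to prove the linear Strichartz estimates \eqref{304}--\eqref{305} for $\square_g f=\mathbf{T}\Theta+B$ on this acoustic background. The metric $g$ constructed from $(\bv,\rho)$ satisfies, through \eqref{fc},
\begin{equation*}
\square_g g \;=\; \nabla w + (d\bv,d\rho)\cdot(d\bv,d\rho)\in L^\infty_t H_x^{s_0-\frac54},
\end{equation*}
and the exponent $s_0-\frac54>\frac12$ is exactly the threshold at which a careful adaptation of Smith-Tataru's \cite{ST} wave-packet construction, paradifferential truncation of $g$, and null-hypersurface geometry estimates can be pushed through in two dimensions. The appearance of $\|\Theta\|_{L^\infty_tH_x^{r-1}\cap L^1_tH_x^r}$ on the right-hand side, rather than $\|\mathbf{T}\Theta\|_{L^1_tH_x^{r-1}}$, is built in through the modified Duhamel principle of Lemma~\ref{LD}. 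The variant with $\left<\nabla\right>^{a-1}d$ in place of $\left<\nabla\right>^a$ follows from the main estimate since $d$ costs one derivative and can be recovered from the equation. Granting \eqref{304}--\eqref{305}, I would apply them to $\rho$ via \eqref{web} (source controlled by \eqref{YYE}) and to $\bv_{+}$ via \eqref{fcp} (here $\bQ$ is controlled by \eqref{YYE}, and $\mathbf{T}\mathbf{T}\bv_{-}$ is absorbed as a $\mathbf{T}\Theta$ source with $\Theta=\mathbf{T}\bv_{-}$ bounded by \eqref{eta}). This yields $\|d\rho,d\bv_{+}\|_{L^4_tC^\delta_x}\le\tfrac12\epsilon_2$, and adding $d\bv_{-}$ closes the bootstrap for $d\bv$.

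The main obstacle is precisely this linear Strichartz estimate, which is where the novelty relative to \cite{Z1,Z2} lies: there the vorticity supplied $\square_g g\in H^{3/4+}$, whereas here only $H^{1/2+}$ is available. The construction of null hypersurfaces, the control of their second fundamental form and connection coefficients, the paradifferential decomposition of the metric, and the wave-packet resolution must all be revisited at this reduced regularity. The hierarchy \eqref{a0} among $\epsilon_0,\ldots,\epsilon_3$ is used exactly so that each stage of that construction inherits enough smallness to close.
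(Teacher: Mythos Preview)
Your proposal is correct in spirit and shares the paper's core ingredients: bootstrap plus Theorem~\ref{be} for energy, the elliptic estimate \eqref{eee} for $\bv_{-}$, the linear Strichartz estimate (Proposition~\ref{r3}) applied via Lemma~\ref{LD} to the wave equations \eqref{fcr} for $\rho$ and $\bv_{+}$, with sources controlled by \eqref{YYE}--\eqref{eta}. You also correctly identify the reduced regularity $\square_g g\in H_x^{s_0-\frac54}$ as the place where the Smith--Tataru machinery must be reworked.

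The paper, however, organizes the continuation differently. Rather than a time-bootstrap on $[-T_*,T_*]$, it (i) introduces the cut-off metric $\mathbf{g}$ of \eqref{boldg}, extended to equal Minkowski for $t\in[-2,-\tfrac32]$, and works on the \emph{fixed} slab $[-2,2]$; (ii) runs a homotopy continuity argument in a scalar parameter $\gamma\in[0,1]$ scaling the initial data from zero (the proof of Proposition~\ref{p1} from Proposition~\ref{p2}); and (iii) adds a second bootstrap layer via the geometric functional $G(\bv,\rho)$ of \eqref{500}, measuring the $H^{s_0-\frac14}$ regularity of the null foliation $\Sigma_{\theta,r}$, with the hierarchy $\epsilon_2\ll\epsilon_1$ separating the Strichartz hypothesis from the foliation hypothesis. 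The point of (i) is that the wave-packet construction launches the null hypersurfaces from a flat time slice, which your time-bootstrap does not directly provide. The point of (iii) is that the linear Strichartz estimate (Proposition~\ref{r3}) is proved \emph{assuming} $G\le 2\epsilon_1$, and then Proposition~\ref{r2} closes $G\lesssim\epsilon_2\ll\epsilon_1$ from the characteristic energy estimates; your single-layer bootstrap hides this circularity inside the phrase ``granting \eqref{304}--\eqref{305}''. None of this invalidates your outline, but it does mean the actual proof carries more structure than a direct time-continuation.
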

In the following, we will give a proof for Proposition \ref{p3} based on Proposition \ref{p1}.
\begin{proof}[Proof of Proposition \ref{p3} by using Proposition \ref{p1}]
We divide the proof into three steps.

{\bf Step 1. Scaling}. From \eqref{p30}, we can see
\begin{equation}\label{a4}
\begin{split}
& \|\bv_0 \|_{H^s}+  \| \rho_0 \|_{H^s} +  \| w_0 \|_{H^{s_0-\frac14}} + \|\nabla w_0 \|_{L^8} \leq M_0.
\end{split}
\end{equation}
Take the scaling
\begin{equation*}
\begin{split}
&\widetilde{\bv}(t,x)=\bv(Tt,Tx),\quad \widetilde{\rho}(t,x)=\rho(Tt,Tx). 
\end{split}
\end{equation*}
Let
\begin{equation*}
	\widetilde{w}= \mathrm{e}^{-\widetilde{\rho}} \mathrm{curl} \widetilde{\bv}.
\end{equation*}
Due to \eqref{a4}, we can compute out
\begin{equation*}
\begin{split}
& \|\widetilde{\bv}_0 \|_{\dot{H}^s}+ \|\widetilde{\rho}_0\|_{\dot{H}^s} \leq M_0 T^{s-1},
\\
&  \|\widetilde{w}_0 \|_{\dot{H}^{s_0-\frac14}} \leq M_0 T^{s_0-\frac14}, \quad \| \nabla \widetilde{w}_0 \|_{L^8}\leq M_0 T^{\frac34}.
\end{split}
\end{equation*}
Choose sufficiently small $T$ such that
\begin{equation*}\label{xxs}
\max\{ M_0 T^{\frac34}, M_0 T^{s_0-\frac14}, M_0 T^{s-1} \} \ll \epsilon_3.
\end{equation*}
Then we have
\begin{equation*}
\begin{split}
  & \|\widetilde{\bv}_0 \|_{\dot{H}^s}+  \|\widetilde{\rho}_0 \|_{\dot{H}^s} + \|\widetilde{w}_0 \|_{\dot{H}^{s_0-\frac14}} + \| \nabla \widetilde{w}_0 \|_{L^8} \leq \epsilon_3.
\end{split}
\end{equation*}
Definitely, the above homogeneous norm is not enough for us to apply Proposition \ref{p1}. Next, we use the physical localization technique to reduce the data with small in-homogeneous norm.

{\bf Step 2. Localization}. Let $\chi$ be a smooth function supported in $B(0,c+2)$, and which equals $1$ in $B(0,c+1)$. For any given $y \in \mathbb{R}^2$, we define the localized initial data near $y$:
\begin{equation*}
\begin{split}
&\bv^y_0=\chi(x-y)\left( \bv_0(x)-\bv_0(y)\right),
\\
& \rho_0^y=\chi(x-y)\left( \rho_0(x)-\rho_0(y)\right).
\end{split}
\end{equation*}
Using \eqref{sv}, we next define
\begin{equation*}
  w^y_0= \mathrm{e}^{-(\rho^y_0+\rho_0(y))}\text{curl}\bv^y_0.
\end{equation*}
Since $\frac74<s_0\leq s \leq 2$, it is not difficult for us to verify
\begin{equation}\label{a5}
 \| (\bv^y_0, \rho_0^y)\|_{H_x^s}+ \| w_0^y \|_{H_x^{s_0-\frac14}} \lesssim  \|\bv_0, \rho_0\|_{\dot{H}^s}+ \| w_0  \|_{\dot{H}^{s_0-\frac14}} \lesssim \epsilon_3,
\end{equation}
and
\begin{equation}\label{AA9}
	\| \nabla w_0^y \|_{L^8} \lesssim  \| \nabla w_0 \|_{L^8}+ \|\bv_0\|_{\dot{H}^s}\|\rho_0\|_{\dot{H}^s} \lesssim \epsilon_3.
\end{equation}
Applying \eqref{a5} and \eqref{AA9}, we get
\begin{equation*}
	\| \bv^y_0\|_{H_x^s}+ \|\rho_0^y\|_{H_x^s}+ \| w_0^y \|_{H_x^{s_0-\frac14}}+ \| \nabla w_0^y \|_{L^8} \lesssim  \epsilon_3.
\end{equation*}
\quad {\bf Step 3. Using Proposition \ref{p1}.} Due to Proposition \ref{p1}, there is a smooth solution $(\bv^y, \rho^y, w^y)$ on $[-1,1]\times \mathbb{R}^2$ satisfying the following Cauchy problem
\begin{equation}\label{p}
\begin{cases}
& \square_g v^i =-\epsilon^{ia}e^{\rho}c^2_s\partial^a w+Q^i,
\\
& \square_g \rho=\mathcal{D},
\\
& \mathbf{T} w =0.
 \\
& (\bv,\rho,w)|_{t=0}=(\bv^y_0,\rho^y_0,w^y_0),
\\
& (\partial_t \bv, \partial_t {\rho})|_{t=0}=(-\bv_0^y\cdot \nabla \bv^y_0+c^2_s\nabla \rho^y_0,-\bv^y_0 \cdot \nabla \rho^y_0-\mathrm{div}\bv^y_0),
\end{cases}
\end{equation}
where $Q^i$ ($i=1,2$) and $\mathcal{D}$ are stated as \eqref{Di}. As a result, $\bv^y+\bv_0(y), \rho^y+\rho_0(y), w^y$ also solves \eqref{p}, where the initial data coincides with $(\bv_0, \rho_0, w_0)$ in $B(y,c+1)$. At the same time, the following Strichartz estimate also holds:
\begin{equation}\label{a6}
   \|d \bv^y, d \rho^y \|_{L^4_t C_x^\delta} \leq \epsilon_2.
\end{equation}
Consider the restriction, for $y\in \mathbb{R}^2$,
\begin{equation}\label{ppw}
  \left( \bv^y+\bv_0(y) \right)|_{\text{K}^y}, \quad \left( { \rho}^y+{\rho}_0(y) \right)|_{\text{K}^y}, \quad  w^y |_{\text{K}^y},
\end{equation}
where $\text{K}^y$ is defined by
\begin{equation*}
  \text{K}^y=\left\{(t,x):ct+|x-y| \leq c+1, |t|<1 \right\}.
\end{equation*}
Then the restriction \eqref{ppw} solves \eqref{p} on $\text{K}^y$. By finite speed of propagation and the uniqueness of solutions of \eqref{fc}, if we give
\begin{align*}
  \bv(t,x)  &= \bv^y(t,x)+\bv_0(y), \quad (t,x) \in \text{K}^y,
  \\
   {\rho}(t,x)  &={\rho}^y(t,x)+{\rho}_0(y), \quad \ (t,x) \in \text{K}^y,
  \\
   w(t,x)  &=w^y(t,x), \quad \qquad \quad  \ \ (t,x) \in \text{K}^y,
\end{align*}
then $(\bv, \rho, w)$ satisfies \eqref{fc} on $[-1,1] \times \mathbb{R}^2$. By use of Theorem \ref{be}, we have for $t \in [-1,1]$
\begin{equation}\label{a7}
\begin{split}
  &  \| (\bv, \rho)\|_{H_x^{s}}+\|w\|_{H_x^{s_0-\frac14}} +\|\nabla w\|_{L_x^8} 
  \\
 =  & \| (\bv^y, \rho^y)\|_{H_x^{s}}+\|w^y\|_{H_x^{s_0-\frac14}}+\|\nabla w^y\|_{L_x^8}
  \\
  \leq & C ( \| (\bv^y_0, \rho_0^y)\|_{H_x^s}+ \| w_0^y \|_{H^{s_0-\frac14}} + \|\nabla w_0^y\|_{L^8} ) 
   \cdot \exp\big \{   \int^t_0 \| d \bv^y, d\rho^y \|_{L^\infty_x} d\tau 
  \\
  & + C (\| (\bv^y_0, \rho_0^y)\|_{H_x^s}+ \| w_0^y \|_{H^{s_0-\frac14}} + \|\nabla w_0^y\|_{L^8}) \exp ( \int^t_0 \| d \bv^y, d\rho^y \|_{L^\infty_x} d\tau )    \big\}
  \\
  \leq & 2CM_0\exp\left( 1+\mathrm{e}^{CM_0}\right) .
\end{split}
\end{equation}
Owing to \eqref{a6}, we get
 \begin{equation}\label{a8}
  \|d \bv, d \rho \|_{L^4_t C_x^\delta}\leq C \|d \bv^y, d \rho^y \|_{L^4_t C_x^\delta} \leq C.
\end{equation}
Taking $M_1=\max\left\{  2CM_0\exp\left( 1+\mathrm{e}^{CM_0}\right) , C \right\}$, then \eqref{p31} and \eqref{p32} hold.

It remains for us to prove \eqref{p33} and \eqref{p34}. Let the cartesian grid $2^{-\frac{1}{2}}\mathbb{Z}^2$ be in $\mathbb{R}^2$, and a corresponding smooth partition of unity be
\begin{equation*}
  \textstyle{ \sum }_{y \in 2^{-\frac{1}{2}} \mathbb{Z}^2 } \psi(x-y)=1,
\end{equation*}
such that the function $\psi$ is supported in the unit ball. Consider a function $f^y$ being a solution of
\begin{equation*}
\begin{cases}
&\square_{g^y} f^y=0,
\\
&(f^y, \partial_t f^y)|_{t=0}=(\psi(x-y)f_0, \psi(x-y)f_1),
\end{cases}
\end{equation*}
where $g^y$ has the same formulation as in \eqref{metricd} with the velocity $\bv^y$ and $\rho^y$. We therefore get
\begin{equation*}\label{gy}
  g^y=g, \quad (t,x) \in \text{K}^y.
\end{equation*}
By finite speed of propagation, for $(t,x)\in \text{K}^y$, we deduce
\begin{equation*}
  f^y=f, \quad (t,x)\in \text{K}^y.
\end{equation*}
Write $f$ as
\begin{equation*}
f(t,x)=\textstyle{ \sum_{y \in 2^{-\frac{1}{2}}\mathbb{Z}^2} } \psi(x-y)f^y(x,t).
\end{equation*}
Due to \eqref{304} and \eqref{305}, for $a< r- \frac{3}{4}$, we obtain
\begin{equation}\label{a10}
\begin{split}
 \| \left< \nabla \right>^{a} f   \|^4_{L^4_t L^\infty_x}
 \leq  & C{ \sum_{y \in 2^{-\frac{1}{2}}\mathbb{Z}^2} }   \|\psi(x-y)  \left< \nabla \right>^a f^y(x,t)  \|^4_{L^4_t L^\infty_x}
\\
\leq  &  C{ \sum_{y \in 2^{-\frac{1}{2}}\mathbb{Z}^2} }   \| \psi(x-y) (f_0,f_1) \|^4_{H^r \times H^{r-1}}.
\\
 \lesssim & \left(  \|f_0\|_{H_x^r}+ \|f_1\|_{H_x^{r-1}}+\| \Theta \|_{L^\infty_tH_x^{r-1} \cap L^1_tH_x^r}+\|B\|_{L^1_tH_x^{r-1}} \right)^4,
\end{split}
\end{equation}
and
\begin{equation}\label{a11}
\begin{split}
 & \| f   \|_{L^\infty_t H^s_x} + \|  \partial_t f   \|_{L^\infty_t H^{s-1}_x}
 \\
 \leq  & C{ \sum_{y \in 2^{-\frac{1}{2}}\mathbb{Z}^2} }   ( \| \psi(x-y) f^y(t,x)  \|_{L^\infty_t H^{s-1}_x}  +  \|\psi(x-y)  \partial_t f^y   \|_{L^\infty_t H^{s-1}_x}  )
\\
 \lesssim & \|f_0\|_{H_x^r}+ \|f_1\|_{H_x^{r-1}}+\| \Theta \|_{L^\infty_tH_x^{r-1} \cap L^1_tH_x^r}+\|B\|_{L^1_tH_x^{r-1}}.
\end{split}
\end{equation}
Due to \eqref{a7}, \eqref{a8}, \eqref{a10}, and \eqref{a11}, we therefore finish the proof of Proposition \ref{p3}.
\end{proof}
It remains for us to verify Proposition \ref{p1}.

\section{The proof of Proposition \ref{p1}: a bootstrap argument}\label{ABA}
In this part, we will reduce the proof of Proposition \ref{p1} to a bootstrap argument. To start, we introduce some necessary notations and definitions. Let $\mathbf{m}$ be a standard Minkowski metric satisfying
\begin{equation*}
	\mathbf{m}^{00}=-1, \quad \mathbf{m}^{ij}=\delta^{ij}, \quad i, j=1,2.
\end{equation*}
Taking $\bv=0, \rho=0$ in $g$, which is record $g(0)$. Then the inverse matrix of the metric $g$ is
\begin{equation*}
	g^{-1}(0)=
	\left(
	\begin{array}{ccc}
		-1 & 0 & 0 \\
		0 & c^2_s(0) & 0 \\
		0 & 0 & c^2_s(0) 
	\end{array}
	\right ),
\end{equation*}
where $c_s(0)=c_s(\rho)|_{\rho=0}$. By a linear change of coordinates which preserves $dt$, we may assume that $g^{\alpha \beta}(0)=\mathbf{m}^{\alpha \beta}$. Let $\chi$ be a smooth cut-off function supported in the region $B(0,3+2c) \times [-\frac{3}{2}, \frac{3}{2}]$, which equals to $1$ in the region $B(0,2+2c) \times [-1, 1]$. We denote
\begin{equation}\label{boldg}
	\mathbf{g}=\chi(t,x)(g-g(0))+g(0),
\end{equation}
where $g$ is defined as in \eqref{metricd}. Consider the Cauchy problem of the following wave-transport system
\begin{equation}\label{CS}
	\begin{cases}
		\square_{\mathbf{g}} v^i=-\epsilon^{ia}\mathrm{e}^{\rho}c_s^2 \partial_a w+Q^i,
		\\
		\square_{\mathbf{g}} {\rho}= \mathcal{D},
		\\
		\mathbf{T}w=0.
		\\
		(\bv,\rho,w)|_{t=0}=(\bv_0,\rho_0,w_0),
	\end{cases}
\end{equation}
where $Q^i$ ($i=1,2$) and $\mathcal{D}$ are defined in \eqref{Di}, and $\mathbf{g}$ is denoted in \eqref{boldg}. 

\begin{definition}{(Definition for the set $\mathcal{H}$)}
	We denote by $\mathcal{H}$ the family of smooth solutions $(\bv, \rho, w)$ to the system \eqref{CS} for $t \in [-2,2]$, with the initial data $(\bv_0, \rho_0, w_0)$ supported in $B(0,2+c)$, and for which
	\begin{equation}\label{401}
		\|\bv_0\|_{H^s} + \|\rho_0\|_{H^s}+ \| w_0\|_{H^{s_0-\frac14}}+ \|\nabla w_0\|_{ L^8}  \leq \epsilon_3, \ \ \ \ \ \ \ \ \ \ \ \ \ \
	\end{equation}
	\begin{equation}\label{402a}
		\| \bv\|_{L^\infty_{[-2,2]} H_x^{s}}+\|\rho\|_{L^\infty_{[-2,2]} H_x^{s}}+ \| w_0\|_{L^\infty_{[-2,2]} H_x^{s_0-\frac14}} + \|\nabla w\|_{L^\infty_{[-2,2]} L^8_x} \leq 2 \epsilon_2,
	\end{equation}
	\begin{equation}\label{403}
		\| d \bv, d \rho, d \bv_{+}\|_{L^4_{[-2,2]} C_x^\delta} \leq 2 \epsilon_2.
	\end{equation}
\end{definition}
\subsection{Statement of bootstrap argument}
\begin{proposition}\label{p2}
Assume $\frac74<s_0\leq s \leq 2$, $ \delta\in (0, s-\frac{7}{4})$, \eqref{HEw} and \eqref{a0} hold. Then there is a continuous functional\footnote{For the definition of $G$, see \eqref{500} below.} $G: \mathcal{H} \rightarrow \mathbb{R}^{+}$, satisfying $G(\mathbf{0},0)=0$, so that for each $(\bv, \rho, w) \in \mathcal{H}$ satisfying $G(\bv, \rho) \leq 2 \epsilon_1$ the following properties hold:
\begin{enumerate}
	\item	The function $\bv, \rho$, and $w$ satisfies $G(\bv, \rho) \leq \epsilon_1$.

	\item The following estimate holds,
	\begin{equation}\label{404}
		\|\bv\|_{L^\infty_{[-2,2]} H_x^{s}}+ \|\rho\|_{L^\infty_{[-2,2]} H_x^{s}} + \|w\|_{L^\infty_{[-2,2]} H_x^{s_0-\frac14}} + \|\nabla w \|_{L^\infty_{[-2,2]} L^8_x} \leq \epsilon_2,
	\end{equation}
	\begin{equation}\label{405}
		\|d \bv, d \rho, d \bv_{+}\|_{L^2_{[-2,2]} C^\delta_x} \leq \epsilon_2. \ \ \ \ \ \ \ \ \
	\end{equation}

\item For $1 \leq r \leq s+1$, the problem 
		\begin{equation*}\label{406}
		\begin{cases}
			& \square_{\mathbf{g}} f=\mathbf{T}\Theta+B, \qquad (t,x) \in [-2,2]\times \mathbb{R}^2,
			\\
			&f(t_0,\cdot)=f_0 \in H_x^r(\mathbb{R}^2), 
			\\
			& \partial_t f(t_0,\cdot)=f_1 \in H_x^{r-1}(\mathbb{R}^2),
		\end{cases}
	\end{equation*}
	 is well-posed in $H_x^r \times H_x^{r-1}$. Moreover, on ${[-2,2]} \times \mathbb{R}^2$, for $a<r-1$, we have the Strichartz estimate 
	 \begin{equation*}\label{407}
	 	\|\left< \nabla \right>^a f\|_{L^4_{t} L^\infty_x} \lesssim  \| f_0\|_{H_x^r}+ \| f_1\|_{H_x^{r-1}}+\| \Theta \|_{L^\infty_tH_x^{r-1}\cap L^1_t H_x^r}+ \| B \|_{L^1_t H_x^{r-1}},
	 \end{equation*}
	 and the energy estimate
	 \begin{equation*}\label{408}
	 	\|f\|_{L^\infty_{t} H^r_x}+ \|\partial_t f\|_{L^\infty_{t} H^{r-1}_x} \lesssim  \| f_0\|_{H_x^r}+ \| f_1\|_{H_x^{r-1}}+\| \Theta \|_{L^\infty_tH_x^{r-1}\cap L^1_t H_x^r}+ \| B \|_{L^1_t H_x^{r-1}}.
	 \end{equation*}
\end{enumerate}
\end{proposition}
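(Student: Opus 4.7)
The plan is to execute a bootstrap argument in the Smith--Tataru framework, using the wave-transport decomposition \eqref{fcp}, \eqref{wec}, together with the elliptic structure of $\bv_{-}$ in \eqref{etad}. The three conclusions are coupled: the linear Strichartz estimate of part (3) feeds the improvement of the nonlinear bounds in part (2), while the improvement of the geometric functional $G$ in part (1) is what makes the linear Strichartz estimate hold in the first place. Accordingly, I would organize the argument so that, at each stage, the bootstrap hypothesis $G(\bv,\rho)\le 2\epsilon_1$ and the $a\ priori$ bounds \eqref{402a}, \eqref{403} are in force, and I then improve everything to $\epsilon_1$ and $\epsilon_2$ respectively.

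The first and easiest step is \eqref{404}. Since $(\bv_0,\rho_0,w_0,\nabla w_0)$ obeys the smallness hypothesis \eqref{401}, Theorem \ref{be} applied on $[-2,2]$ (using \eqref{403} to absorb $\int_0^t\|d\bv,d\rho\|_{L^\infty_x}d\tau\lesssim \epsilon_2$) gives $E(t)\lesssim \epsilon_3^2$; since $\epsilon_3\ll\epsilon_2$ by \eqref{a0}, this yields \eqref{404}. For the Strichartz bound \eqref{405}, I would apply the linear estimate asserted in part (3) to the wave equation \eqref{fcp} for $v^i_{+}$ (with $\Theta=\mathbf{T}v^i_{-}$ and $B=Q^i$) and to $\square_{\mathbf{g}}\rho=\mathcal{D}$, taking $r=s$ and $a=s-\tfrac34-\eta$; the source terms are controlled by Lemma \ref{yux}, giving $\|\mathbf{T}\bv_{-}\|_{L^\infty_tH^{s-1}\cap L^1_tH^s}\lesssim\epsilon_2^2$ and $\|\bQ,\mathcal{D}\|_{L^1_tH^{s-1}}\lesssim\epsilon_2^2$. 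The $d\bv_{-}$ piece is then handled directly: elliptic regularity on \eqref{etad} with $s_0>\tfrac74$ gives $\|d\bv_{-}\|_{L^\infty_x}\lesssim \|w\|_{H^{s_0-3/4}}+\|\rho\|_{H^s}\lesssim\epsilon_2$, which combined with the wave-piece estimate produces \eqref{405}.

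The substantive step is part (3), the linear Strichartz estimate for $\square_{\mathbf{g}}f=\mathbf{T}\Theta+B$ on $[-2,2]$. Using Lemma \ref{LD}, I would reduce the inhomogeneous $\mathbf{T}\Theta$ source to a one-parameter family of Cauchy data and hence to the homogeneous case. For the homogeneous equation, the strategy is the Smith--Tataru wave packet construction: frequency localize to dyadic scale $\lambda$, conjugate by an appropriate phase $\phi$ solving the eikonal equation $\mathbf{g}^{\alpha\beta}\partial_\alpha\phi\,\partial_\beta\phi=0$, establish $L^1\to L^\infty$ dispersive decay on wave packets via a Hamiltonian flow analysis, and conclude by $TT^*$ and an $\ell^2$ reassembly. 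The decisive regularity input is that $\square_{\mathbf{g}}\mathbf{g}\approx \nabla w+(d\bv,d\rho)^2\in L^\infty_tH^{s_0-5/4}_x$ with $s_0-\tfrac54>\tfrac12$, which is exactly the $s'>\tfrac32$ threshold invoked in the introduction; this is what ensures enough regularity of the null frame and Ricci coefficients for the wave packet construction to succeed.

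The main obstacle is therefore the proof of (3), and within it the compatibility between the Strichartz construction and the geometric functional $G$: one needs to design $G$ so that (a) $G(\bv,\rho)\le 2\epsilon_1$ delivers sufficient control on the second fundamental form and Ricci coefficients of the null hypersurfaces of $\mathbf{g}$ to run the wave packet scheme, and (b) the improved energy/Strichartz bounds of part (2), together with the transport equation \eqref{w1} for $\nabla w$, close back to give $G(\bv,\rho)\le \epsilon_1$. Concretely, after part (2) is in hand, I would estimate each geometric quantity appearing in $G$ by decomposing it against the null frame adapted to a foliation by characteristic cones of $\mathbf{g}$, using paradifferential calculus, the commutator and product estimates of Lemmas \ref{jiaohuan}--\ref{ps}, and the bound $w\in L^\infty_tH^{s_0-1/4}_x\cap\dot W^{1,8}_x$ to treat $\nabla w$ in $\square_{\mathbf{g}}\mathbf{g}$. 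The gain in \eqref{a0} between $\epsilon_3,\epsilon_2,\epsilon_1,\epsilon_0$ is exactly what allows each geometric quantity to be improved by a constant factor, closing the bootstrap.
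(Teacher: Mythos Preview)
Your proposal is correct and follows essentially the same route as the paper: reduce part~(3) via Lemma~\ref{LD} to the homogeneous case and prove that by the Smith--Tataru wave-packet construction (the paper packages this as Propositions~\ref{r3} and~\ref{r5}); derive \eqref{404} from Theorem~\ref{be} and \eqref{401}; derive \eqref{405} by applying part~(3) to \eqref{fcp} and $\square_{\mathbf g}\rho=\mathcal D$ with Lemma~\ref{yux} controlling the sources; and improve $G$ via characteristic energy estimates exploiting $\square_{\mathbf g}\mathbf g\in L^\infty_t H^{s_0-5/4}_x$ with $s_0-\tfrac54>\tfrac12$ (the paper isolates this as Proposition~\ref{r2}). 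One small clarification: the improvement $G\lesssim\epsilon_2\le\epsilon_1$ in the paper uses only the bootstrap hypotheses \eqref{402a}--\eqref{403} from $\mathcal H$, not the already-improved \eqref{404}--\eqref{405}, so you need not wait for part~(2) to close part~(1); and the $L^4_tL^\infty_x$ bound in the wave-packet step is obtained by an overlap-counting argument (Proposition~\ref{szt}) rather than a classical $TT^*$/dispersive-decay scheme.
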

Next, we will prove Proposition \ref{p1} based on Proposition \ref{p2}.
\begin{proof}[{Proof of Proposition \ref{p1} by use of Proposition \ref{p2}}]
Observe that the initial data \eqref{300} in Proposition \ref{p1} satisfies
	\begin{equation*}
		\|\bv_0\|_{H^s} + \|\rho_0\|_{H^s}+\|w_0\|_{H_x^{s_0-\frac14}} + \|\nabla w_0\|_{L^8_x}   \leq \epsilon_3.
	\end{equation*}
	We denote by $\text{A}$ the subset of those $\gamma \in [0,1]$ such that the equation \eqref{CS} admits a smooth solution $(\bv_\gamma,\rho_\gamma,w_\gamma)$ with the initial data
	\begin{equation*}
		\begin{split}
			\bv_\gamma|_{t=0}=&\gamma \bv_0,
			\\
			\rho_\gamma|_{t=0}=&\gamma \rho_0,
			\\
			w_\gamma|_{t=0}=&  \mathrm{e}^{-\rho_\gamma(0)}\mathrm{curl}\bv_\gamma(0),
		\end{split}
	\end{equation*}
	and such that $G(\bv_\gamma, \rho_\gamma) \leq \epsilon_1$ and \eqref{404}-\eqref{405} hold. We claim that the set $\text{A}$ is not empty.

	Taking $\gamma=0$, then
	\begin{equation*}
		(\bv_\gamma, \rho_\gamma, w_\gamma)(t,x)=(\mathbf{0},0,0),
	\end{equation*}
	is a smooth solution of \eqref{CS} with the initial data
	\begin{equation*}
		(\bv_\gamma, \rho_\gamma, w_\gamma)|_{t=0}=(\mathbf{0},0,0).
	\end{equation*}
	Thus, $0\in \text{A}$. If we can prove $1 \in \text{A}$, the proposition \ref{p1} follows immediately by applying Proposition \ref{p2}. Next, we will prove that $\text{A} = [0,1]$. Thus, it's suffices for us to prove that $\text{A}$ is both open and closed in $[0,1]$.

  We assume $\gamma \in \text{A}$. Then $(\bv_\gamma, \rho_\gamma, w_\gamma)$ is a smooth solution to \eqref{CS}, where
	\begin{equation*}
		w_\gamma=  \mathrm{e}^{-\rho_\gamma}\mathrm{curl}\bv_\gamma.
	\end{equation*}
	Let $\beta$ be close to $\gamma$. By using the continuity of $G$, we have
	\begin{equation*}
		G(\bv_\beta, \rho_\beta) \leq 2\epsilon_1,
	\end{equation*}
	Similarly, \eqref{401}, \eqref{402a} and \eqref{403} also hold for $(\bv_\beta, \rho_\beta, w_\beta)$. Applying Proposition \ref{p2}, we derive that
	\begin{equation*}
		G(\bv_\beta, \rho_\beta) \leq \epsilon_1.
	\end{equation*}
	In a similar way, the estimates \eqref{404}, \eqref{405} also hold for $(\bv_\beta, \rho_\beta, w_\beta)$. Thus, $\beta \in \text{A}$ and $\text{A}$ is open set.

	Let $\gamma_k \in \text{A}, k \in \mathbb{N}^+$ and $\gamma$ be a limit satisfying $\lim_{k \rightarrow \infty} \gamma_k = \gamma$.
	Then there exists a sequence $\{(\bv_{\gamma_k}, \rho_{\gamma_k}, w_{\gamma_k}) \}_{k \in \mathbb{N}^+}$ being smooth solutions to \eqref{CS} and
	\begin{align*}
		& \|(\bv_{\gamma_k}, \rho_{\gamma_k})\|_{L^\infty_t H_x^{s}}
	+ \| w_{\gamma_k} \|_{L^\infty_t H_x^{s_0-\frac14}}+ \| \nabla w_{\gamma_k} \|_{L^\infty_t L_x^{8}} +  \| d \bv_{\gamma_k}, d \rho_{\gamma_k}\|_{L^4_t C_x^\delta} \leq \epsilon_2.
	\end{align*}
	Then there exists a subsequence of $\{(\bv_{\gamma_k}, \rho_{\gamma_k}, w_{\gamma_k}) \}_{k \in \mathbb{N}^+}$  such that $ (\bv_{\gamma}, \rho_{\gamma}, w_{\gamma}) $ is the limi. Moreover, $ (\bv_{\gamma}, \rho_{\gamma}, w_{\gamma}) $ satisfies
	\begin{equation*}
		\begin{split}
			 \|(\bv_{\gamma}, \rho_{\gamma})\|_{L^\infty_t H_x^{s}}+ 
		 \| w_{\gamma} \|_{L^\infty_t H_x^{s_0-\frac14}}+ \| \nabla w_{\gamma} \|_{L^\infty_t L_x^{8}} + \| d \bv_{\gamma}, d \rho_{\gamma}\|_{L^4_t C_x^\delta} \leq \epsilon_2.
		\end{split}
	\end{equation*}
	Similarly, we also get $G(\bv_\gamma, \rho_\gamma) \leq \epsilon_1$. This implies $\gamma \in \text{A}$. So $\text{A}$ is a closed set.  We therefore complete the proof of Proposition \ref{p1}.
\end{proof}
	It still remains for us to give a definition of $G$ and prove Proposition \ref{p2}. 
\subsection{Definition of the functional}
In order to define $G$, we need a new foliation for the space-time and introduce new norms. Assume $(\bv,\rho,w) \in \mathcal{H}$. Give an extension for $\mathbf{g}$ ($\mathbf{g}$ is defined in \eqref{boldg}), which equals to the Minkowski metric for $t \in [-2, -\frac{3}{2}]$. When no confusion can arise, we still record it $\mathbf{g}$.

For each $\theta \in \mathbb{S}^1$, we consider a foliation of the slice $t=-2$ by taking level sets of the function $r_\theta(-2,x)=\theta \cdot x+2$. Then $\theta \cdot dx-t$ is a null covector field over $t=-2$, and it is co-normal to the level sets of $r_\theta(-2)$. Let $\Gamma_{\theta}$ be the graph of a null covector field given by $dr_{\theta}$. We also define the hypersurface $\Sigma_{\theta,r}$ for $r \in \mathbb{R}$ as the level sets of $r_{\theta}$. Thus, the characteristic hypersurface $\Sigma_{\theta,r}$ is the flowout of the set $\theta \cdot x=r-2$ along the null geodesic flow in the direction $\theta$ at $t=-2$.

We introduce an orthonormal sets of coordinates on $\mathbb{R}^2$ by setting $x_{\theta}=\theta \cdot x$. Let $x'_{\theta}$ be given orthonormal coordinates on the hyperplane prependicular to $\theta$, which then define coordinates on $\mathbb{R}^2$ by projection along $\theta$. Then $(t,x'_{\theta})$ induce the coordinates on $\Sigma_{\theta,r}$, and $\Sigma_{\theta,r}$ is given by
\begin{equation*}
	\Sigma_{\theta,r}=\left\{ (t,x): x_{\theta}-\phi_{\theta, r}=0  \right\}
\end{equation*}
for a smooth function $\phi_{\theta, r}(t,x'_{\theta})$.

We now introduce two norms for functions defined on $[-2,2] \times \mathbb{R}^2$, for $a \geq 1$,
\begin{equation*}\label{d0}
	\begin{split}
		&\vert\kern-0.25ex\vert\kern-0.25ex\vert f\vert\kern-0.25ex\vert\kern-0.25ex\vert_{a, \infty} = \sup_{-2 \leq t \leq 2} \sup_{0 \leq j \leq 1} \| \partial_t^j f(t,\cdot)\|_{H^{a-j}(\mathbb{R}^2)},
		\\
		& \vert\kern-0.25ex\vert\kern-0.25ex\vert  f\vert\kern-0.25ex\vert\kern-0.25ex\vert_{a,2} = \big( \sup_{0 \leq j \leq 1} \int^{2}_{-2} \| \partial_t^j f(t,\cdot)\|^2_{H^{a-j}(\mathbb{R}^2)} dt \big)^{\frac{1}{2}}.
	\end{split}
\end{equation*}
We also denote
\begin{equation*}
	\vert\kern-0.25ex\vert\kern-0.25ex\vert f\vert\kern-0.25ex\vert\kern-0.25ex\vert_{a,2,\Sigma_{\theta,r}}=\vert\kern-0.25ex\vert\kern-0.25ex\vert f|_{\Sigma_{\theta,r}} \vert\kern-0.25ex\vert\kern-0.25ex\vert_{a,2},
\end{equation*}
where the right hand side is the norm of the restriction of $f$ to ${\Sigma_{\theta,r}}$, taken over the $(t,x'_{\theta})$ variables used to parametrise ${\Sigma_{\theta,r}}$. Similarly, the notation
\begin{equation*}
	\|f\|_{H^{a}(\Sigma_{\theta,r})},
\end{equation*}
denotes the $H^{a}(\mathbb{R})$ norm of $f$ restricted to the time $t$ slice of ${\Sigma_{\theta,r}}$ using the $x'_{\theta}$ coordinates on ${\Sigma^t_{\theta,r}}$.

For $\frac74<s_0<s\leq 2$, we now define the functional $G$ as
\begin{equation}\label{500}
	G(\bv, \rho)= \sup_{\theta, r} \vert\kern-0.25ex\vert\kern-0.25ex\vert d \phi_{\theta,r}-dt\vert\kern-0.25ex\vert\kern-0.25ex\vert_{s_0-\frac14,2,{\Sigma_{\theta,r}}}.
\end{equation}
\subsection{Proof of Proposition \ref{p2}} Before our proof, let us introduce two propositions as follows. 
\begin{proposition}\label{r2}
	Assume $\frac74<s_0\leq s \leq 2$ and $ \delta_0 \in (0, s_0-\frac{7}{4})$. 	Let $(\bv, \rho, w) \in \mathcal{H}$ such that $G(\bv, \rho) \leq 2 \epsilon_1$. Then we have

	\begin{equation}\label{G}
		G(\bv, \rho) \lesssim \epsilon_2.
	\end{equation}
	Furthermore, for each $t$ it holds that
	\begin{equation}\label{502}
		\|d \phi_{\theta,r}(t,\cdot)-dt \|_{C^{1,\delta_0}_{x'}} \lesssim \epsilon_2+  \| d {\mathbf{g}}(t,\cdot) \|_{C^{\delta_0}_x(\mathbb{R}^2)}.
	\end{equation}
\end{proposition}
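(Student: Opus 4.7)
The plan is to propagate the smallness of $\psi := d\phi_{\theta,r} - dt$ along the null hypersurface $\Sigma_{\theta,r}$ starting from vanishing initial data. Since the truncated metric $\mathbf{g}$ agrees with $\mathbf{m}$ for $t \leq -3/2$, the optical function reduces there to $\phi_{\theta,r}(t, x'_\theta) = t + r$, so $\psi \equiv 0$ on this slab. Writing $r_\theta(t,x) = x_\theta - \phi_{\theta,r}(t, x'_\theta)$ and differentiating the eikonal equation $\mathbf{g}^{\alpha\beta}\partial_\alpha r_\theta \partial_\beta r_\theta = 0$ in the tangential directions, I would derive a quasilinear transport equation of schematic form
\begin{equation*}
L\psi = A(\mathbf{g}, \psi)\, d\mathbf{g} + B(\mathbf{g}, \psi)\, \psi \otimes \psi,
\end{equation*}
where $L$ is the null geodesic vector field tangent to $\Sigma_{\theta,r}$.

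Next, I would run the $\vert\kern-0.25ex\vert\kern-0.25ex\vert \cdot\vert\kern-0.25ex\vert\kern-0.25ex\vert_{s_0 - \frac14, 2, \Sigma_{\theta,r}}$ energy estimate for $\psi$ restricted to $\Sigma_{\theta,r}$, parametrised by $(t, x'_\theta)$. Since the tangential direction $x'_\theta$ is one-dimensional, propagation along $L$ reduces to a $1$D transport estimate in $H^{s_0-\frac14}_{x'_\theta}$ combined with integration in $t$. The a priori bounds \eqref{402a} on $(\bv, \rho)$, together with the wave-transport formulation \eqref{fcp}, \eqref{w1} of the background, should control the source $d\mathbf{g}|_{\Sigma_{\theta,r}}$ in $L^2_t H^{s_0 - \frac14}_{x'_\theta}$ by $\lesssim \epsilon_2$. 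The bootstrap hypothesis $G(\bv,\rho) \leq 2\epsilon_1$ lets us absorb the quadratic $\psi \otimes \psi$ term on the left, and Gr\"onwall then gives \eqref{G}.

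For the pointwise estimate \eqref{502}, I would apply the one-dimensional Sobolev embedding $H^{s_0 - \frac14}(\mathbb{R}) \hookrightarrow C^{1, s_0 - \frac74}(\mathbb{R})$, which is available precisely because $s_0 > \frac74$, to promote the $H^{s_0-\frac14}_{x'_\theta}$ control of $\psi(t, \cdot)$ to $C^{1, \delta_0}_{x'_\theta}$ at every fixed $t$. The explicit $\|d\mathbf{g}(t, \cdot)\|_{C^{\delta_0}_x}$ term on the right then appears from integrating the transport equation along the null generator at fixed $t$, using that H\"older norms are preserved by the flow of $L$.

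The main obstacle is the trace estimate for $d\mathbf{g}|_{\Sigma_{\theta,r}}$ at tangential regularity $H^{s_0 - \frac14}$. Naively, $d\mathbf{g} \in L^\infty_t H^{s-1}_x$ yields only tangential trace regularity $H^{s - \frac32}$, which falls short of $s_0 - \frac14$ in the regime $\frac74 < s_0 \leq s$. Closing this estimate requires exploiting the wave-transport structure of \eqref{fc}: the decomposition $\bv = \bv_+ + \bv_-$ from Definition \ref{vfu} and the equations \eqref{fcp}--\eqref{w1} allow one to treat the genuine wave part $\bv_+$ by characteristic averaging along $\Sigma_{\theta,r}$ (as in Smith--Tataru), while the low-regularity vorticity contribution $\bv_-$ is handled through the elliptic identity \eqref{etad} together with the transport equation for $\nabla w$. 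This is the technical heart of the adaptation to the two-dimensional Euler system.
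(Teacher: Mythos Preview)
There is a genuine derivative-counting gap in your approach. Your transport equation $L\psi = A\,d\mathbf{g} + B\,\psi\otimes\psi$ is correct, but to close it in $\vert\kern-0.25ex\vert\kern-0.25ex\vert\cdot\vert\kern-0.25ex\vert\kern-0.25ex\vert_{s_0-\frac14,2,\Sigma}$ you need the source $d\mathbf{g}|_\Sigma$ in $L^1_tH^{s_0-\frac14}_{x'}(\Sigma)$. Characteristic energy estimates (of the Smith--Tataru type you invoke, i.e.\ Proposition~\ref{r1} here) give only $\mathbf{g}-\mathbf{m}\in L^2_tH^{s_0-\frac14}_{x'}(\Sigma)$, hence $d\mathbf{g}|_\Sigma\in L^2_tH^{s_0-\frac54}_{x'}(\Sigma)$---exactly one derivative short. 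The $\bv_\pm$ decomposition and the transport equation for $\nabla w$ do not fix this: those tools (and characteristic averaging) are precisely what produce the $H^{s_0-\frac14}_{x'}$ trace for $\mathbf{g}$ itself, not for $d\mathbf{g}$. There is no mechanism in your scheme to recover the missing derivative. Relatedly, your argument for \eqref{502} fails: from $\vert\kern-0.25ex\vert\kern-0.25ex\vert\psi\vert\kern-0.25ex\vert\kern-0.25ex\vert_{s_0-\frac14,2,\Sigma}$ one obtains at a fixed time only $\psi(t,\cdot)\in H^{s_0-\frac34}_{x'}\hookrightarrow C^{s_0-\frac54}_{x'}$ (Lemma~\ref{te2}), and since $s_0\le 2$ this never reaches $C^{1,\delta_0}_{x'}$.

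The paper resolves both issues by working one level deeper, at the second fundamental form $\chi=\langle D_{e_1}l,e_1\rangle$ rather than at $\psi$. The Raychaudhuri equation $l(\chi)=\langle R(l,e_1)l,e_1\rangle_{\mathbf g}-\chi^2-l(\ln\sigma)\chi$ has curvature as source---naively two derivatives of $\mathbf{g}$ and hopeless. The key step (Corollary~\ref{Rfenjie}, following Klainerman--Rodnianski) is to use the wave equation $\square_{\mathbf g}\mathbf{g}\sim \nabla w+(d\mathbf g)^2$ to decompose this curvature component on $\Sigma$ as $l(f_2)+f_1$ with $|f_2|\lesssim|d\mathbf g|$ and $f_1\in L^1_tH^{s_0-\frac54}_{x'}$. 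The renormalized quantity $\chi-f_2$ then satisfies a transport equation with an integrable source, giving $\chi\in L^2_tH^{s_0-\frac54}_{x'}$ (Lemma~\ref{chi}), which is equivalent to $\psi\in L^2_tH^{s_0-\frac14}_{x'}$. The pointwise bound \eqref{502} follows from the same transport argument run in $C^{\delta_0}_{x'}$, where $f_2(t,\cdot)$ contributes the explicit $\|d\mathbf{g}(t,\cdot)\|_{C^{\delta_0}_x}$ term. This curvature decomposition is the essential idea your proposal is missing.
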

\begin{proposition}\label{r3}
	Assume $\frac74<s_0\leq s \leq 2$. Suppose that $(\bv,\rho, w) \in \mathcal{{H}}$ and $G(\bv,\rho)\leq 2 \epsilon_1$.
	For any $1 \leq r \leq s+1$, and for each $t_0 \in [-2,2]$, the linear, non-homogenous equation
	\begin{equation*}
		\begin{cases}
			& \square_{\mathbf{g}} f=\mathbf{T}\Theta+B, \qquad (t,x) \in [t_0,2]\times \mathbb{R}^2,
			\\
			& f(t,x)|_{t=t_0}=f_0 \in H_x^r(\mathbb{R}^2), 
			\\
			& \partial_t f(t,x)|_{t=t_0}=f_1 \in H_x^{r-1}(\mathbb{R}^2),
		\end{cases}
	\end{equation*}
	admites a solution $f \in C([-2,2],H_x^r) \times C^1([-2,2],H_x^{r-1})$ and the following estimates holds:
	\begin{equation}\label{lw0}
		\| f\|_{L_t^\infty H_x^r}+ \|\partial_t f\|_{L_t^\infty H_x^{r-1}} \lesssim \|f_0\|_{H_x^r}+ \|f_1\|_{H_x^{r-1}}+\| \Theta \|_{L^\infty_tH_x^{r-1}\cap L^1_tH_x^r}+\|B\|_{L^1_tH_x^{r-1}}.
	\end{equation}
	Additionally, the following estimates hold, provided $a<r-\frac34$,
	\begin{equation}\label{lw1}
		\| \left<\nabla \right>^a f\|_{L^4_{t}L^\infty_x} \lesssim \|f_0\|_{H_x^r}+ \|f_1\|_{H_x^{r-1}}+\| \Theta \|_{L^\infty_tH_x^{r-1}\cap L^1_tH_x^r}+\|B\|_{L^1_tH_x^{r-1}}.
	\end{equation}
	The similar estimate \eqref{lw1} holds if we replace $\left<\nabla \right>^a$ by $\left<\nabla \right>^{a-1}d$.
\end{proposition}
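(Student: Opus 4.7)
The plan is to reduce Proposition \ref{r3} to two building blocks: a standard energy estimate for the linear wave equation with metric $\mathbf{g}$, and a sharp Strichartz estimate for homogeneous waves $\square_{\mathbf{g}} f = 0$. The inhomogeneous source $\mathbf{T}\Theta + B$ can then be absorbed by Duhamel's principle via Lemma \ref{LD}, which converts an inhomogeneity of the form $\mathbf{T}\Theta + B$ into modified initial data $(0, -\Theta(t_0,\cdot))$ for an auxiliary family of homogeneous Cauchy problems parametrised by the Duhamel time $\tau$, at the cost of $\|\Theta\|_{L^\infty_t H_x^{r-1}\cap L^1_t H_x^r} + \|B\|_{L^1_t H_x^{r-1}}$ on the right-hand side.

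First I would establish \eqref{lw0}. Since $(\bv,\rho,w)\in \mathcal{H}$, the metric $\mathbf{g}$ defined by \eqref{boldg} is uniformly Lorentzian and equal to Minkowski outside a compact set. Using the coercivity of the $g^{00}$-component, standard energy identities for $\partial^\alpha f$ with $|\alpha|\leq r$ (or more precisely $\Lambda_x^{r-1} d f$) yield
\[
\|d f(t,\cdot)\|_{H^{r-1}_x}^2 \lesssim \|df(t_0,\cdot)\|_{H^{r-1}_x}^2 + \int_{t_0}^{t} \bigl(\|d\mathbf{g}\|_{L^\infty_x} \|df\|_{H^{r-1}_x}^2 + \|\square_{\mathbf{g}} f\|_{H^{r-1}_x}\|df\|_{H^{r-1}_x}\bigr)\,d\tau,
\]
together with commutator estimates controlled via Lemmas \ref{jiaohuan}--\ref{ps} by $\|d\bv,d\rho\|_{L^\infty_x}$ and $\|(\bv,\rho)\|_{H^s_x}$, both of which are bounded by $\epsilon_2$ from \eqref{402a}--\eqref{403}. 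A Grönwall argument then closes \eqref{lw0}.

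The heart of the proposition is the Strichartz estimate \eqref{lw1}. Here I would follow the Smith--Tataru framework \cite{ST}: dyadically decompose $f = \sum_\lambda P_\lambda f$, prove a frequency-localized estimate
\[
\|\langle\nabla\rangle^a P_\lambda f\|_{L^4_t L^\infty_x} \lesssim \lambda^{a - r + \frac34 + \mathrm{loss}} \bigl(\|P_\lambda f_0\|_{H^r_x} + \|P_\lambda f_1\|_{H^{r-1}_x}\bigr),
\]
for each dyadic $\lambda$, and then square-sum in $\lambda$ (which is why $a < r-\tfrac34$ is required). For fixed $\lambda$, I would rescale time to intervals of length $\lambda^{-8\epsilon_0}$, on which the rescaled metric is essentially $C^2$, and construct a parametrix via wave packets adapted to the null foliations $\Sigma_{\theta,r}$ introduced in the paper. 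The crucial input at this stage is Proposition \ref{r2}: the bound $G(\bv,\rho)\lesssim \epsilon_2$ controls $\vert\kern-0.25ex\vert\kern-0.25ex\vert d\phi_{\theta,r}-dt\vert\kern-0.25ex\vert\kern-0.25ex\vert_{s_0-\tfrac14,2,\Sigma_{\theta,r}}$, which in conjunction with \eqref{502} gives $C^{1,\delta_0}$ control on the characteristic conormals $dr_\theta$, ensuring that the null geodesic flow defines a well-behaved Lagrangian on which the wave packets concentrate without appreciable dispersion failure. The base case (taking $r = 2$ and $a < 5/4$) is the standard $n=2$ Smith--Tataru Strichartz estimate $L^4_t L^\infty_x$, which encodes the loss of $1/4$ derivatives compared to the flat case. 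For the remaining values $1\leq r \leq s+1$, one interpolates between the base Strichartz and the energy estimate \eqref{lw0}, using that operators $\langle\nabla\rangle^a$ and $\langle\nabla\rangle^{a-1} d$ behave identically at the level of frequency $\lambda\gg 1$.

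The main obstacle is precisely the wave-packet/parametrix construction underlying the dyadic Strichartz estimate, which requires the sharp control of the acoustic null geometry at the threshold exponent $s_0 - \tfrac14$. The geometric regularity fed in through $G(\bv,\rho)\leq 2\epsilon_1$ is exactly what is needed to implement the Smith--Tataru packet concentration lemmas in two dimensions; verifying the analogues of their Lagrangian, tube, and $TT^*$ estimates under our hypotheses---in particular showing that the $L^8$ bound on $\nabla w$, combined with the $H^{s_0-\tfrac14}$ bound on $w$, suffices to control the quantity $\square_{\mathbf{g}} \mathbf{g}$ that governs the null geometry's curvature---is where almost all the work lies. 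Once this is done, Duhamel (Lemma \ref{LD}) and a routine reduction plug the homogeneous Strichartz and energy estimates back into the setting of \eqref{lw0}--\eqref{lw1}, completing the proof.
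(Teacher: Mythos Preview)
Your proposal is correct and follows essentially the same architecture as the paper: reduce Proposition~\ref{r3} to the homogeneous Strichartz estimate (the paper's Proposition~\ref{r5}) via the modified Duhamel principle of Lemma~\ref{LD}, with the homogeneous estimate in turn proved by the Smith--Tataru wave-packet construction using the null-foliation regularity supplied by Proposition~\ref{r2}. One small correction: the paper (following \cite{ST}) builds the wave packets directly on the full interval $[-2,2]$ without any time-rescaling to subintervals of length $\lambda^{-8\epsilon_0}$---that rescaling is the Bahouri--Chemin/Tataru~I mechanism, which is superseded here by the packet parametrix.
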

Based on Propositions \ref{r2} and \ref{r3}, we are able to prove Proposition \ref{p2}.
\begin{proof}[Proof of Proposition \ref{p2}]
	By using Propositions \ref{r2} and \ref{r3}, we only need to verify \eqref{404} and \eqref{405}. Due to Theorem \ref{be} and \eqref{401}, we have
	\begin{equation*}\label{k00}
		\|\bv\|_{L^\infty_{[-2,2]} H_x^{s}}+ \|\rho\|_{L^\infty_{[-2,2]} H_x^{s}} + \|w\|_{L^\infty_{[-2,2]} H_x^{s_0-\frac14}} + \|\nabla w \|_{L^\infty_{[-2,2]} L^8_x} \lesssim \epsilon_3.
	\end{equation*}
	Note that $\epsilon_3 \ll \epsilon_2$. It implies that the estimate \eqref{404} holds. Applying Proposition \ref{r3} to \eqref{fcp}, for $\delta\in (0,s-\frac74)$, we can obtain
		\begin{equation}\label{k02}
		\|d \bv_{+}\|_{L^4_{[-2,2]} C^\delta_x}  \lesssim  \|\bv_0\|_{H^s}+ \|\mathbf{T}\bv_{-}\|_{L^\infty_tH_x^{s-1} }+ \|\mathbf{T}\bv_{-}\|_{L^1_tH_x^{s}} + \|\bQ\|_{L^1_t H_x^{s-1}} .
	\end{equation}
	Similarly, we also have
		\begin{equation}\label{ka}
		\|d \rho \|_{L^4_{[-2,2]} C^\delta_x} \lesssim  \|\rho_0\|_{H^s}+  \|\bv,\rho\|_{L^\infty_tH_x^s} \|d\bv,d\rho\|_{L^4_t L^\infty_x}.
	\end{equation}
	Using \eqref{YYE} and \eqref{eta}, the inequality \eqref{k02} becomes
	\begin{equation}\label{k04}
		\|d \bv_{+}\|_{L^4_{[-2,2]} C^\delta_x}  \lesssim \|\bv,\rho\|_{L^\infty_tH_x^s}  \|\bw\|_{L^\infty_t H_x^{s_0-\frac14}} + \|\bv,\rho\|_{L^\infty_tH_x^s} \|d\bv,d\rho\|_{L^4_t L^\infty_x}.
	\end{equation}
	The estimate \eqref{k04} combines with \eqref{403}-\eqref{404} yield 
		\begin{equation}\label{k06}
		\|d \bv_{+}\|_{L^4_{[-2,2]} C^\delta_x}  \lesssim \epsilon_3.
	\end{equation}
	Due to \eqref{eee} and \eqref{404}, we can obtain
	\begin{equation}\label{k08}
		\|d \bv_{-}\|_{L^4_{[-2,2]} C^\delta_x}  \lesssim \|\rho\|_{L^\infty_{[-2,2]} H_x^{s}} ( 1+ \|w\|_{L^\infty_{[-2,2]} H_x^{s_0-\frac14}} ) \lesssim \epsilon_3.
	\end{equation}
	Adding \eqref{k06} and \eqref{k08}, it implies
		\begin{equation}\label{k10}
		\|d \bv \|_{L^4_{[-2,2]} C^\delta_x}  \lesssim  \epsilon_3.
	\end{equation}
	In a similar way, using \eqref{ka}, and combining with \eqref{403}-\eqref{404}, we get
	\begin{equation}\label{k12}
		\|d \rho \|_{L^4_{[-2,2]} C^\delta_x}  \lesssim  \epsilon_3.
	\end{equation}
	Noting $\epsilon_3 \ll \epsilon_2$, and seeing \eqref{k06}, \eqref{k10}, and \eqref{k12}, we can obtain \eqref{405}. Therefore, we have finished the proof of Proposition \ref{p2}.
\end{proof}

It still remains for us to prove Proposition \ref{r2} and \ref{r3}, which will be presented in Section \ref{sec6} and \ref{sec7} respectively.

\section{The proof of Proposition \ref{r2} }\label{sec6}
To prove Proposition \ref{r2}, the key relies on discussing the regularity of the characteristic hypersurface. We begin by introducing the following result:
\begin{proposition}\label{r1}
Assume $\frac74<s_0\leq s \leq 2$. 	Let $(\bv, \rho, w) \in \mathcal{H}$ so that $G(\bv, \rho) \leq 2 \epsilon_1$. Then
	\begin{equation}\label{501}
		\vert\kern-0.25ex\vert\kern-0.25ex\vert  {\mathbf{g}}^{\alpha \beta}-\mathbf{m}^{\alpha \beta} \vert\kern-0.25ex\vert\kern-0.25ex\vert_{s_0-\frac14,2,{\Sigma_{\theta,r}}} + \vert\kern-0.25ex\vert\kern-0.25ex\vert \lambda ({\mathbf{g}}^{\alpha \beta}-P_{<\lambda} {\mathbf{g}}^{\alpha \beta}), d P_{ <\lambda } {\mathbf{g}}^{\alpha \beta}, \lambda^{-1} \nabla P_{<\lambda} d {\mathbf{g}}^{\alpha \beta}  \vert\kern-0.25ex\vert\kern-0.25ex\vert_{s_0-\frac54,2,{\Sigma_{\theta,r}}} \lesssim \epsilon_2.
	\end{equation}
\end{proposition}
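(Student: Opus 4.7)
The plan is to combine ambient energy/Strichartz control of $\mathbf{g}$ with a sharp trace inequality along the characteristic hypersurfaces $\Sigma_{\theta,r}$, following the Smith--Tataru framework already adapted in \cite{Z1, Z2}. Start by noting that $\mathbf{g}^{\alpha\beta} - \mathbf{m}^{\alpha\beta}$ is a smooth function of $(\bv,\rho)$ vanishing at the origin (times the cutoff used in \eqref{boldg}). Hence Lemma \ref{jiaohuan0} combined with Theorem \ref{be} and the smallness \eqref{401} yields $\|\mathbf{g}^{\alpha\beta} - \mathbf{m}^{\alpha\beta}\|_{L^\infty_{[-2,2]} H^s_x} \lesssim \epsilon_2$, while \eqref{403} provides $\|d\mathbf{g}^{\alpha\beta}\|_{L^4_t C^\delta_x} \lesssim \epsilon_2$. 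Differentiating the formula for $\mathbf{g}$ and plugging in the wave--transport system \eqref{fc} expresses $\square_\mathbf{g} \mathbf{g}^{\alpha\beta}$ as a sum of quadratic terms in $(d\bv, d\rho)$ plus a term linear in $\nabla w$; using Lemma \ref{ps} together with the vorticity bound $\|\nabla w\|_{L^\infty_t H^{s_0 - 5/4}_x} \lesssim \|w\|_{L^\infty_t H^{s_0 - 1/4}_x} \lesssim \epsilon_2$, this gives $\|\square_\mathbf{g} \mathbf{g}^{\alpha\beta}\|_{L^2_t H^{s_0 - 5/4}_x} \lesssim \epsilon_2$.

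Next, the hypothesis $G(\bv, \rho) \leq 2\epsilon_1$ implies that $\phi_{\theta,r}$ is a small perturbation of the flat null hyperplane $\{x_\theta = t + r - 2\}$. Pulling back $\mathbf{g}^{\alpha\beta} - \mathbf{m}^{\alpha\beta}$ along the parameterization $(t, x'_\theta) \mapsto (t, \phi_{\theta,r}(t, x'_\theta)\theta + x'_\theta)$, one applies the sharp trace estimate for approximately null hypersurfaces from \cite{ST}, which at our regularity level reads, schematically,
\[ \vert\kern-0.25ex\vert\kern-0.25ex\vert f \vert\kern-0.25ex\vert\kern-0.25ex\vert_{s_0 - 1/4, 2, \Sigma_{\theta,r}} \lesssim \|f\|_{L^\infty_t H^{s_0 - 1/4}_x} + \|\square_\mathbf{g} f\|_{L^2_t H^{s_0 - 5/4}_x} + (\text{lower order in } G). \]
Setting $f = \mathbf{g}^{\alpha\beta} - \mathbf{m}^{\alpha\beta}$ and combining with the bounds from the previous paragraph yields the first piece of \eqref{501}. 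For the second piece, apply the same trace inequality dyadically together with Bernstein's inequality: for each dyadic $\lambda$, each of $\lambda(\mathbf{g}^{\alpha\beta} - P_{<\lambda} \mathbf{g}^{\alpha\beta})$, $dP_{<\lambda}\mathbf{g}^{\alpha\beta}$, and $\lambda^{-1} \nabla P_{<\lambda} d\mathbf{g}^{\alpha\beta}$ has $H^{s_0 - 5/4}_x$ norm controlled by $\|\mathbf{g}^{\alpha\beta}\|_{H^{s_0 - 1/4}_x}$, since the multipliers $\lambda(I - P_{<\lambda})$, $dP_{<\lambda}$ and $\lambda^{-1}\nabla P_{<\lambda} d$ all trade one derivative for a factor $\lambda^{\pm 1}$ at the proper frequency. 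Summing dyadically then produces the second piece of \eqref{501}.

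The main obstacle is the sharp trace estimate itself. At the regularity threshold $s_0 - 1/4$, slightly above $3/2$, a naive spatial restriction from a $2$D Sobolev function to a $1$D slice would cost $1/2$ derivative---more than the margin between the ambient $H^s$ ($s$ just above $7/4$) and the target $H^{s_0 - 1/4}$. The gain must come from the characteristic character of $\Sigma_{\theta,r}$: the wave operator $\square_\mathbf{g}$ degenerates along the null direction, and the assumption $G(\bv,\rho) \leq 2\epsilon_1$ ensures that $\Sigma_{\theta,r}$ is close enough to a null hyperplane for the Smith--Tataru trace inequality to apply with only lower-order corrections controlled by $G$ itself. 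Propagating the $H^{s_0 - 1/4}$ regularity of the defining function $\phi_{\theta,r}$ through the change of variables---and verifying that all the resulting lower-order terms sit at the correct frequency-weighted Sobolev level so that the bootstrap closes---is the delicate bookkeeping step of the proof.
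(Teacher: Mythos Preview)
Your outline is in the right spirit but routes through a different mechanism than the paper. You propose a wave-equation-based trace inequality involving $\|\square_{\mathbf{g}} f\|_{L^2_t H^{s_0-5/4}_x}$; the paper instead exploits that $\bU = (\bv, p(\mathrm{e}^\rho))$ solves the first-order symmetric hyperbolic system \eqref{sq}. After straightening $\Sigma$ via $x_2 \mapsto x_2 - \phi(t,x')$, standard energy estimates for that system (Lemma \ref{te1}) give
\[
\vert\kern-0.25ex\vert\kern-0.25ex\vert \bU \vert\kern-0.25ex\vert\kern-0.25ex\vert_{s_0-\frac14,2,\Sigma} \lesssim \|d\bU\|_{L^4_t L^\infty_x} + \|\bU\|_{L^\infty_t H^{s_0-\frac14}_x},
\]
with no $\square_{\mathbf{g}}$ term on the right at all; the first piece of \eqref{501} then follows from the algebra property of $\vert\kern-0.25ex\vert\kern-0.25ex\vert\cdot\vert\kern-0.25ex\vert\kern-0.25ex\vert_{s_0-\frac14,2,\Sigma}$ (Lemma \ref{te2}), since $\mathbf{g}-\mathbf{m}$ is a smooth function of $(\bv,\rho)$. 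For the frequency-localized second piece, the paper (Lemma \ref{fre}) applies the same symmetric-hyperbolic characteristic estimate to $\Delta_0 \bU$ for zeroth-order multipliers $\Delta_0 \in \{P_\mu, P_{<\lambda}, \lambda^{-1}\nabla P_{<\lambda}\}$ and only needs the \emph{first-order} commutator $[\Delta_0, A^\alpha(\bU)]\partial_\alpha \bU$. Your plan to ``apply the same trace inequality dyadically with Bernstein'' is the thinnest part of your sketch: routing through $\square_{\mathbf{g}}$ would require $\square_{\mathbf{g}}$ applied to frequency-localized pieces of $\mathbf{g}$, which generates second-order commutators $[\mathbf{g}^{\alpha i}, P_{<\lambda}]\partial_\alpha\partial_i \mathbf{g}$ that you do not address. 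Your computation of $\square_{\mathbf{g}}\mathbf{g}$ (including the $\nabla w$ contribution) is correct, but in the paper it appears later, for the curvature decomposition along $\Sigma$ (Corollary \ref{Rfenjie}), not in the proof of Proposition \ref{r1}.
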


%
%
To prove Proposition \ref{r1}, including \ref{r2}, it suffices to restrict our attention to the case where $\theta=(0,1)$ and $r=0$. We fix this choice, and suppress $\theta$ and $r$ in our notation. We use $(x_2, x')$ instead of $(x_{\theta}, x'_{\theta})$. Then $\Sigma$ is defined by
\begin{equation*}
	\Sigma=\left\{ x_2- \phi(t,x')=0 \right\}.
\end{equation*}
Let $ \Delta_{x'}$ be a Laplacian operator on $\Sigma$, and
\begin{equation}\label{Sig}
	\Lambda_{x'}= (- \Delta_{x'})^{\frac12}.
\end{equation}
The hypothesis $G(\bv,\rho) \leq 2 \epsilon_1$ implies that
\begin{equation}\label{503}
	\vert\kern-0.25ex\vert\kern-0.25ex\vert d \phi_{\theta,r}(t,\cdot)-dt \vert\kern-0.25ex\vert\kern-0.25ex\vert_{s_0-\frac14,2, \Sigma} \leq 2 \epsilon_1.
\end{equation}
By using \eqref{503} and Sobolev imbedding, we have
\begin{equation}\label{504}
	\|d \phi(t,x')-dt \|_{L^4_t C^{1,\delta_0}_{x'}} + \| \partial_t d \phi(t,x')\|_{L^4_t C^{\delta_0}_{x'}} \lesssim \epsilon_1,
\end{equation}
where we use $s_0>\frac74$ and $\delta_0 \in (0,s_0-\frac74)$. Noting that $(\bv, \rho, w) \in \mathcal{H}$, we can obtain
\begin{equation}\label{5021}
	\vert\kern-0.25ex\vert\kern-0.25ex\vert \bv \vert\kern-0.25ex\vert\kern-0.25ex\vert_{s,\infty}+\vert\kern-0.25ex\vert\kern-0.25ex\vert \rho\vert\kern-0.25ex\vert\kern-0.25ex\vert_{s,\infty}+\vert\kern-0.25ex\vert\kern-0.25ex\vert w \vert\kern-0.25ex\vert\kern-0.25ex\vert_{s_0-\frac14,\infty} +\|d\bv,d\rho, d\bv_{+}\|_{L^4_tC^\delta_x} \lesssim \epsilon_2.
\end{equation}
Next, we will prove Proposition \ref{r1}.
\subsection{Characteristic energy estimates and the proof of Proposition \ref{r1}}
To start, let us introduce two lemmas about characteristic energy estimates, which are referred to in \cite{ST}.
\begin{Lemma}\label{te0}(\cite{ST}, Lemma 5.5)
Assume $s\in (\frac74,2]$. Let $\tilde{f}(t,x)=f(t,x',x_2+\phi(t,x'))$. Then we have
	\begin{equation*}
		\vert\kern-0.25ex\vert\kern-0.25ex\vert \tilde{f}\vert\kern-0.25ex\vert\kern-0.25ex\vert_{s-\frac14,\infty}\lesssim \vert\kern-0.25ex\vert\kern-0.25ex\vert f\vert\kern-0.25ex\vert\kern-0.25ex\vert_{s-\frac14,\infty}, \quad \|d\tilde{f}\|_{L^4_tL_x^\infty}\lesssim \|d f\|_{{L^4_tL_x^\infty}}, \quad  \|\tilde{f}\|_{H^{s-\frac14}_{x}}\lesssim \|f\|_{H^{s-\frac14}_{x}}.
	\end{equation*}
\end{Lemma}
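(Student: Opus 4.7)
The plan is to view $\tilde f$ as the pullback $\tilde f = f \circ \Phi$ under the smooth diffeomorphism
$\Phi(t,x',x_2) = (t,x', x_2 + \phi(t,x'))$ of $[-2,2] \times \mathbb{R}^2$, whose properties are entirely controlled by the hypothesis $G(\bv,\rho) \leq 2\epsilon_1$. Specifically, \eqref{504} provides $\|d\phi - dt\|_{L^4_t C^{1,\delta_0}_{x'}} + \|\partial_t d\phi\|_{L^4_t C^{\delta_0}_{x'}} \lesssim \epsilon_1$, while \eqref{503} gives Sobolev control $\vert\kern-0.25ex\vert\kern-0.25ex\vert d\phi - dt\vert\kern-0.25ex\vert\kern-0.25ex\vert_{s_0-\frac14,2,\Sigma} \lesssim \epsilon_1$. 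A fundamental observation is that $\Phi$ is volume-preserving in the spatial slice (the Jacobian is identically $1$, since only $x_2$ is shifted), so $L^p_x$ norms are exactly preserved under composition, and $\Phi$ is bi-Lipschitz with constant close to $1$.

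The easiest of the three bounds is the Strichartz-type $\|d\tilde f\|_{L^4_tL^\infty_x} \lesssim \|df\|_{L^4_tL^\infty_x}$. By the chain rule,
\begin{equation*}
\partial_t \tilde f = (\partial_t f)\circ\Phi + (\partial_{x_2} f)\circ\Phi \cdot \partial_t\phi, \qquad \partial_{x'_i} \tilde f = (\partial_{x'_i} f)\circ\Phi + (\partial_{x_2} f)\circ\Phi \cdot \partial_{x'_i}\phi, \qquad \partial_{x_2}\tilde f = (\partial_{x_2}f)\circ\Phi,
\end{equation*}
and since $\Phi(t,\cdot)$ is a spatial bijection, $\|(df)\circ\Phi\|_{L^\infty_x}=\|df\|_{L^\infty_x}$. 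The second estimate then follows from $\|d\phi\|_{L^\infty_{t,x'}} \lesssim 1 + \epsilon_1$ (obtained from \eqref{504} via Sobolev embedding) after taking $L^4_t$ in time.

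For the fixed-time estimate $\|\tilde f(t,\cdot)\|_{H^{s-\frac14}_x} \lesssim \|f(t,\cdot)\|_{H^{s-\frac14}_x}$, I would use a composition estimate in Sobolev spaces. Since $s-\tfrac14 \leq \tfrac74 < 2$, and since $\Phi$ is a bi-Lipschitz diffeomorphism whose derivative is bounded in $C^{0,\delta_0}_{x'}$ (from \eqref{504}) and, more importantly, whose spatial gradient lies in $H^{s_0-\frac54}_{x'}$ on each $\Sigma$-slice with controlled norm (from \eqref{503}), the standard Moser-type composition lemma --- e.g.\ via paradifferential calculus or a Littlewood--Paley decomposition of $f$ and $\Phi$ separately in the $x'$-variable --- yields boundedness on $H^\sigma$ for all $\sigma$ up to the threshold controlled by the Sobolev regularity of $d\phi$. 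Since $s-\tfrac14 < s_0 - \tfrac14 + 1$ and $s_0 > \tfrac74$, we stay in the admissible range.

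Finally, the mixed norm $\vert\kern-0.25ex\vert\kern-0.25ex\vert \tilde f\vert\kern-0.25ex\vert\kern-0.25ex\vert_{s-\frac14,\infty} \lesssim \vert\kern-0.25ex\vert\kern-0.25ex\vert f\vert\kern-0.25ex\vert\kern-0.25ex\vert_{s-\frac14,\infty}$ is handled case by case: the $j=0$ contribution follows by taking $\sup_t$ of the fixed-time estimate above, while the $j=1$ contribution uses the chain-rule formula for $\partial_t \tilde f$. The first term in that formula is a composition of $\partial_t f \in H^{s-\frac54}_x$ with $\Phi$, handled as before. The second term requires multiplying $(\partial_{x_2} f)\circ\Phi \in H^{s-\frac54}_x$ by $\partial_t\phi \in C^{\delta_0}_{x'}$, which is a product estimate valid when $s - \tfrac54 < \delta_0 + \text{(some margin)}$; since $s - \tfrac54 \leq \tfrac34 < 1$ and $\partial_t\phi$ has additional Sobolev regularity from \eqref{503}, the product estimate from Lemma \ref{ps} applies.

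\emph{Main obstacle.} The most delicate point will be the composition estimate at regularity $s - \tfrac14$, because the pointwise Hölder regularity $C^{1,\delta_0}$ of $\phi$ from \eqref{504} alone is not sufficient when $s - \tfrac14 > 1 + \delta_0$. The argument genuinely requires the additional Sobolev control $\vert\kern-0.25ex\vert\kern-0.25ex\vert d\phi - dt\vert\kern-0.25ex\vert\kern-0.25ex\vert_{s_0-\frac14,2,\Sigma} \lesssim \epsilon_1$ built into the functional $G$, and the proof must use paradifferential or frequency-decomposition techniques rather than a naive Hölder composition bound.
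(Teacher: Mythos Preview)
The paper does not supply its own proof of this lemma; it is stated with a citation to \cite{ST}, Lemma~5.5, and the text moves directly to the next cited lemma. So there is no in-paper argument to compare against, and your proposal is effectively an attempt to reconstruct the Smith--Tataru proof.

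Your outline is sound and matches the standard route. The $L^4_t L^\infty_x$ bound via the chain rule and the observation that $\Phi(t,\cdot)$ is a volume-preserving bijection is exactly right and requires nothing beyond $\|d\phi\|_{L^\infty_{t,x'}} \lesssim 1$. Your identification of the fixed-time $H^{s-\frac14}_x$ composition estimate as the genuine content is also correct: the shear structure $(x',x_2)\mapsto(x',x_2+\phi(t,x'))$ means all the difficulty is in the $x'$-regularity of $\phi$, and the H\"older bound $C^{1,\delta_0}_{x'}$ from \eqref{504} alone is borderline for $s-\tfrac14$ close to $\tfrac74$.

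One point to tighten: the Sobolev control you invoke, $\vert\kern-0.25ex\vert\kern-0.25ex\vert d\phi-dt\vert\kern-0.25ex\vert\kern-0.25ex\vert_{s_0-\frac14,2,\Sigma}$, is an $L^2_t$-based norm along the foliation, not a uniform-in-$t$ spatial norm. To use it for the fixed-time bound you must first pass through the trace inequality of Lemma~\ref{te2}, which yields $\sup_t \|d\phi(t,\cdot)-dt\|_{H^{s_0-\frac34}_{x'}} \lesssim \epsilon_1$; since $s_0-\tfrac34>1$ in one spatial dimension, this gives $\phi(t,\cdot)\in C^{1,\alpha}_{x'}$ for some $\alpha>\tfrac12$ uniformly in $t$, which is what actually feeds the paradifferential composition argument. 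With that correction your sketch is complete in spirit, and the remaining work is routine bookkeeping of the sort carried out in \cite{ST}.
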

\begin{Lemma}\label{te2}(\cite{ST}, Lemma 5.4)
	For $r\geq 1$, we have
	\begin{equation*}
		\begin{split}
			\sup_{t\in[-2,2]} \| f\|_{H^{r-\frac{1}{2}}(\mathbb{R}^2)} & \lesssim \vert\kern-0.25ex\vert\kern-0.25ex\vert f \vert\kern-0.25ex\vert\kern-0.25ex\vert_{r,2},
			\\
			\sup_{t\in[-2,2]} \| f\|_{H^{r-\frac{1}{2}}(\Sigma^t)} & \lesssim \vert\kern-0.25ex\vert\kern-0.25ex\vert f \vert\kern-0.25ex\vert\kern-0.25ex\vert_{r,2,\Sigma}.
		\end{split}
	\end{equation*}
	If $r> \frac{3}{2}$, then
	\begin{equation*}
		\vert\kern-0.25ex\vert\kern-0.25ex\vert f_1f_2\vert\kern-0.25ex\vert\kern-0.25ex\vert_{r,2}\lesssim  \vert\kern-0.25ex\vert\kern-0.25ex\vert f_2 \vert\kern-0.25ex\vert\kern-0.25ex\vert_{r,2} \vert\kern-0.25ex\vert\kern-0.25ex\vert f_1\vert\kern-0.25ex\vert\kern-0.25ex\vert_{r,2}.
	\end{equation*}
	Similarly, if $r>1$, then
	\begin{equation*}
		\vert\kern-0.25ex\vert\kern-0.25ex\vert f_1f_2\vert\kern-0.25ex\vert\kern-0.25ex\vert_{r,2,\Sigma}\lesssim  \vert\kern-0.25ex\vert\kern-0.25ex\vert f_2 \vert\kern-0.25ex\vert\kern-0.25ex\vert_{r,2,\Sigma} \vert\kern-0.25ex\vert\kern-0.25ex\vert f_1 \vert\kern-0.25ex\vert\kern-0.25ex\vert_{r,2,\Sigma}.
	\end{equation*}
\end{Lemma}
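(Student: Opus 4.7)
The plan is to treat the four inequalities in two groups: the two trace-type inequalities, followed by the two product-type inequalities. In each group the argument on $\Sigma$ reduces to the corresponding argument on $\mathbb{R}^2$ or $\mathbb{R}$ via a straightening change of variables, so the main work is really on the flat model.

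For the trace inequality $\sup_t\|f\|_{H^{r-1/2}(\mathbb{R}^2)}\lesssim \vert\kern-0.25ex\vert\kern-0.25ex\vert f\vert\kern-0.25ex\vert\kern-0.25ex\vert_{r,2}$, I will Plancherel in $x$ and apply the one-dimensional Sobolev embedding in $t$ frequency-by-frequency. Concretely, for each fixed $\xi$ the function $t\mapsto \widehat{f}(t,\xi)$ lies in $H^1([-2,2])$, and the standard pointwise bound $\sup_t|h(t)|^2\lesssim \|h\|_{L^2_t}\|h'\|_{L^2_t}+\|h\|_{L^2_t}^2$ applied with $h(t)=\langle\xi\rangle^{r-\frac12}\widehat{f}(t,\xi)$ gives
\begin{equation*}
\sup_t\langle\xi\rangle^{2r-1}|\widehat{f}(t,\xi)|^2\lesssim \langle\xi\rangle^{2r}\|\widehat{f}(\cdot,\xi)\|_{L^2_t}^2+\langle\xi\rangle^{2r-2}\|\partial_t\widehat{f}(\cdot,\xi)\|_{L^2_t}^2.
\end{equation*}
Integration in $\xi$ and interchange of supremum and integral (after a density argument) yield the claim. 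For the analogue on $\Sigma^t$, I first apply Lemma \ref{te0} to pull back to the flat surface $\{x_2=0\}$ via $\tilde f(t,x',x_2)=f(t,x',x_2+\phi(t,x'))$; the $\vert\kern-0.25ex\vert\kern-0.25ex\vert \cdot\vert\kern-0.25ex\vert\kern-0.25ex\vert_{r,2,\Sigma}$ norms are then the one-dimensional analogue of $\vert\kern-0.25ex\vert\kern-0.25ex\vert\cdot\vert\kern-0.25ex\vert\kern-0.25ex\vert_{r,2}$ with $x'$ in place of $x$, and exactly the same Fourier argument (now taking the Fourier transform in $x'\in\mathbb{R}$) produces $\sup_t\|\tilde f\|_{H^{r-1/2}(\mathbb{R})}\lesssim \vert\kern-0.25ex\vert\kern-0.25ex\vert\tilde f\vert\kern-0.25ex\vert\kern-0.25ex\vert_{r,2}$, which is the desired inequality.

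For the product estimate on $\mathbb{R}^2$ with $r>\tfrac32$, I will bound $\|f_1f_2\|_{L^2_tH^r_x}$ and $\|\partial_t(f_1f_2)\|_{L^2_tH^{r-1}_x}$ separately. The spatial factor is handled by Kato-Ponce (Lemma \ref{jiaohuan}) in the form $\|f_1f_2\|_{H^r_x}\lesssim\|f_1\|_{H^r_x}\|f_2\|_{L^\infty_x}+\|f_1\|_{L^\infty_x}\|f_2\|_{H^r_x}$, and the key step is $\sup_t\|f_i\|_{L^\infty_x}\lesssim \sup_t\|f_i\|_{H^{r-1/2}(\mathbb{R}^2)}\lesssim\vert\kern-0.25ex\vert\kern-0.25ex\vert f_i\vert\kern-0.25ex\vert\kern-0.25ex\vert_{r,2}$, where the Sobolev embedding $H^{r-1/2}(\mathbb{R}^2)\hookrightarrow L^\infty$ uses $r-\tfrac12>1=\tfrac{n}{2}$. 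The time derivative is expanded by Leibniz, then estimated by the same Kato-Ponce type product inequality in $H^{r-1}_x$ (this is where Lemma \ref{ps} enters, combined with $H^{r-1/2}\hookrightarrow L^\infty$ to control the undifferentiated factor). After integrating in $t$ and pulling the $L^\infty_t$ factor outside, the two trace inequalities already proved close the estimate. The analogous product estimate on $\Sigma$ is obtained by the same straightening as above, after which the spatial variable is $x'\in\mathbb{R}$; the Sobolev embedding now reads $H^{r-1/2}(\mathbb{R})\hookrightarrow L^\infty(\mathbb{R})$ and is valid for $r-\tfrac12>\tfrac12$, i.e.\ $r>1$, which explains the lower threshold on $\Sigma$.

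The main obstacle is that the thresholds $r>\tfrac32$ and $r>1$ are sharp and the embeddings used are borderline: if one is not careful, the product estimate would require controlling $\|f_i\|_{L^\infty_{t,x}}$ directly, which is false at the critical exponent. The trick is to first pay the trace in $t$, converting $L^2_tH^r_x\cap H^1_tH^{r-1}_x$ into $L^\infty_tH^{r-1/2}_x$, and only afterwards invoke Sobolev embedding in $x$ (or in $x'$ on $\Sigma$). A secondary technical point is ensuring that the change of variables $(t,x',x_2)\mapsto(t,x',x_2+\phi)$ preserves both components of the $\vert\kern-0.25ex\vert\kern-0.25ex\vert\cdot\vert\kern-0.25ex\vert\kern-0.25ex\vert_{r,2}$ norm uniformly; this is supplied by Lemma \ref{te0} and the regularity of $\phi$ encoded in \eqref{504}, but it must be checked that the time derivative in $(t,x')$-coordinates generates only lower order errors controllable by $\vert\kern-0.25ex\vert\kern-0.25ex\vert\cdot\vert\kern-0.25ex\vert\kern-0.25ex\vert_{r,2,\Sigma}$.
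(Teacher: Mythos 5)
Your argument is sound and is essentially the standard proof of this lemma; note, however, that the paper itself does not prove it at all but simply quotes it as Lemma 5.4 of \cite{ST}, whose proof proceeds along the same lines you describe (a frequency-wise fundamental-theorem-of-calculus/Cauchy--Schwarz trace bound giving $L^\infty_t H^{r-\frac12}_x$ control, followed by Kato--Ponce type product estimates combined with the Sobolev embedding $H^{r-\frac12}\hookrightarrow L^\infty$, which is exactly what produces the thresholds $r>\frac32$ in two spatial dimensions and $r>1$ in one). One small simplification: the straightening step via Lemma \ref{te0} and the worry about lower-order errors from the time derivative are unnecessary, because $\vert\kern-0.25ex\vert\kern-0.25ex\vert \cdot \vert\kern-0.25ex\vert\kern-0.25ex\vert_{r,2,\Sigma}$ and $\|\cdot\|_{H^{a}(\Sigma^t)}$ are by definition the norms of the restriction of $f$ in the $(t,x'_\theta)$ parametrisation, so the $\Sigma$ versions follow verbatim from the abstract one-dimensional argument with $x'$ in place of $x$, with no change of variables to control.
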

Next, we prove two characteristic energy estimates for the hyperbolic system \eqref{sq}.
\begin{Lemma}\label{te1}
Assume $\frac74<s_0\leq s \leq 2$. Suppose that $\bU=(v^1,v^2,p(\mathrm{e}^{\rho}))^{\mathrm{T}}$ satisfies the hyperbolic symmetric system
	\begin{equation}\label{505}
		A^0(\bU) \partial_t\bU+ \sum^{2}_{i=1}A^i(\bU) \partial_i \bU= 0.
	\end{equation}
	Then
	\begin{equation}\label{te10}
		\begin{split}
			\vert\kern-0.25ex\vert\kern-0.25ex\vert  \bU\vert\kern-0.25ex\vert\kern-0.25ex\vert_{s_0-\frac14,2,\Sigma} & \lesssim \|d \bU \|_{L^4_t L^{\infty}_x}+ \| \bU\|_{L^{\infty}_tH_x^{s_0-\frac14}}.
		\end{split}
	\end{equation}
\end{Lemma}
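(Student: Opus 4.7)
The plan is to prove \eqref{te10} by a characteristic energy estimate for the symmetric hyperbolic system \eqref{505} on the acoustic null hypersurface $\Sigma$, adapted to the low-regularity setting.

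First I would straighten $\Sigma$ via the change of variables $(t,x_{1},x_{2})\mapsto(t,x',y)$ with $x'=x_{1}$ and $y=x_{2}-\phi(t,x')$, and set $\tilde{\bU}(t,x',y)=\bU(t,x',y+\phi)$, so that $\Sigma$ becomes $\{y=0\}$. Lemma~\ref{te0} together with the bound \eqref{504} on $d\phi-dt$ guarantees that this change of coordinates preserves the relevant norms up to constants. In the new variables, \eqref{505} takes the form
\begin{equation*}
\tilde{A}^{0}\partial_{t}\tilde{\bU}+\tilde{A}^{1}\partial_{x'}\tilde{\bU}+N\,\partial_{y}\tilde{\bU}=0,\qquad N:=\tilde{A}^{2}-\phi_{t}\tilde{A}^{0}-\phi_{x'}\tilde{A}^{1},
\end{equation*}
with all coefficients symmetric. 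Since $\Sigma$ is characteristic for the acoustic metric, the co-normal $(-\phi_{t},-\phi_{x'},1)$ is null with respect to $g$, which is exactly the condition that $N|_{y=0}$ is semi-definite with nontrivial kernel; by \eqref{503}--\eqref{504} it is a small perturbation of the corresponding flat matrix, so its \emph{good} components are non-degenerate.

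Next I would apply $\Lambda_{x'}^{s_{0}-\frac{1}{4}}$ (tangential to $\Sigma$) to this straightened system and run a standard symmetric-hyperbolic $L^{2}$ energy estimate on $\{(t,x',y):t\in[-2,2],\ y\leq 0\}$: multiplying by $(\Lambda_{x'}^{s_{0}-\frac{1}{4}}\tilde{\bU})^{\top}$, using the symmetry of $\tilde{A}^{\alpha}$ and $N$, and integrating by parts, I obtain initial and terminal traces at $t=\pm 2$ together with a boundary integral on $\Sigma$ of the form $\int_{\Sigma}\langle N\Lambda_{x'}^{s_{0}-\frac{1}{4}}\tilde{\bU},\,\Lambda_{x'}^{s_{0}-\frac{1}{4}}\tilde{\bU}\rangle\,dx'\,dt$, plus bulk commutator errors of shape $[\Lambda_{x'}^{s_{0}-\frac{1}{4}},\tilde{A}^{\alpha}]\partial_{\alpha}\tilde{\bU}$ and $[\Lambda_{x'}^{s_{0}-\frac{1}{4}},N]\partial_{y}\tilde{\bU}$. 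The $t=\pm 2$ slices are dominated by $\|\bU\|_{L^{\infty}_{t}H^{s_{0}-\frac{1}{4}}_{x}}^{2}$, while Lemma~\ref{jiaohuan} combined with H\"older in $t$ bounds each commutator by the product $\|d\bU\|_{L^{4}_{t}L^{\infty}_{x}}\|\bU\|_{L^{\infty}_{t}H^{s_{0}-\frac{1}{4}}_{x}}$. This controls the ``characteristic part'' of $\vert\kern-0.25ex\vert\kern-0.25ex\vert\bU\vert\kern-0.25ex\vert\kern-0.25ex\vert_{s_{0}-\frac{1}{4},2,\Sigma}$ corresponding to the range of $N$.

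The main obstacle is the component of $\tilde{\bU}$ in $\ker N|_{\Sigma}$, which the characteristic energy does not see. For the compressible Euler system this kernel corresponds to the vorticity/transport mode, and along $\Sigma$ this mode satisfies a one-dimensional transport equation whose coefficients are controlled by $d\bU$; a Gr\"onwall argument along the null generators of $\Sigma$, with initial data on the curve $\Sigma\cap\{t=-2\}$ controlled by Lemma~\ref{te2}, closes the estimate. Finally, the $j=1$ piece of the norm $\vert\kern-0.25ex\vert\kern-0.25ex\vert\cdot\vert\kern-0.25ex\vert\kern-0.25ex\vert_{s_{0}-\frac{1}{4},2,\Sigma}$ is obtained by trading $\partial_{t}$ for spatial derivatives via \eqref{505} and repeating the argument with $\Lambda_{x'}^{s_{0}-\frac{5}{4}}$. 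Transferring back through Lemma~\ref{te0} yields \eqref{te10}.
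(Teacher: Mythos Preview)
Your overall architecture---straighten $\Sigma$, commute with the tangential operator $\Lambda_{x'}^{s_0-1/4}$, run the symmetric-hyperbolic energy identity on a half-space, bound the commutators $[\Lambda_{x'}^{s_0-1/4},\tilde A^\alpha]\partial_\alpha\tilde{\bU}$ by Kato--Ponce, and finally trade $\partial_t$ for spatial derivatives via \eqref{505}---is exactly what the paper does. The paper's write-up is extremely terse (and contains several typos), and in particular it never mentions the kernel of the boundary matrix at all, so in flagging that issue you are being more careful than the source.

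That said, your identification of $\ker N$ is wrong, and this is not cosmetic. With $\nu=(-\phi_t,-\phi_{x'},1)$ the eigenvalues of $(A^0)^{-1}N$ are $\nu_0+v\cdot\nu'$ and $\nu_0+v\cdot\nu'\pm c_s|\nu'|$; the acoustic null condition $g^{\alpha\beta}\nu_\alpha\nu_\beta=0$ forces $\nu_0+v\cdot\nu'=\pm c_s|\nu'|$, so it is one of the \emph{acoustic} eigenvalues that vanishes, while the transport eigenvalue equals $\mp c_s|\nu'|\neq 0$ and sits in the non-degenerate part of $N$. In the flat model ($v=0$, $c_s=1$, $\phi=t$) one finds $\ker N=\mathrm{span}\,(0,1,1)$, i.e.\ the outgoing Riemann invariant $v^2+p$; the tangential velocity $v^1$ (your ``vorticity/transport mode'') is in $\mathrm{ran}\,N$ and is fully controlled by the characteristic flux.

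This matters for your closing step. On $\Sigma$ the kernel component obeys (flat model) $\partial_t(v^2+p)=-\partial_{x'}v^1$: the source is a \emph{tangential derivative} of a non-kernel component. A Gr\"onwall argument along the null generators therefore recovers $v^2+p$ only in $L^2_tH^{s_0-5/4}_{x'}(\Sigma)$, one full derivative below the target $H^{s_0-1/4}_{x'}$, and your sketch gives no mechanism to regain that derivative. One clean way to sidestep this altogether is to abandon the first-order system at this step and instead run the Smith--Tataru characteristic energy estimate directly on the scalar wave equations for $v^i$ and $\rho$ supplied by Lemma~\ref{FC}, where no kernel obstruction arises; the bound on $\bU=(v^1,v^2,p(\mathrm e^\rho))$ then follows from Lemma~\ref{te2}.
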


\begin{proof}
	By a change of coordinates $x_2 \rightarrow x_2-\phi(t,x')$ and setting $\tilde{\bU}(t,x)=U(t,x',x_2+\phi(t,x'))$, the system \eqref{505} is transformed to
	\begin{equation*}
		A^0(\tilde{\bU}) \partial_t \tilde{\bU}+ \sum_{i=1}^2 A^i(\tilde{\bU}) \partial_{i} \tilde{\bU}= - \partial_t \phi  \partial_2 \tilde{\bU} - \sum_{\alpha=0}^2 A^\alpha(\tilde{\bU}) \partial_{\alpha}\phi \partial_\alpha \tilde{\bU}.
	\end{equation*}
	For $\phi$ is independent of $x_2$, we further get
	\begin{equation}\label{U}
		A^0(\tilde{\bU}) \partial_t \tilde{\bU}+ \sum_{i=1}^2 A^i(\tilde{\bU}) \partial_{i} \tilde{\bU}= - \partial_t \phi  \partial_2 \tilde{\bU} - \sum_{\alpha=0}^2 A^\alpha(\tilde{\bU}) \partial_{\alpha}\phi \partial_\alpha \tilde{\bU}.
	\end{equation}
	To prove \eqref{te10}, we first establish the $0$-order estimate. A direct calculation on$[-2,2]\times \mathbb{R}^2$ shows that
	\begin{equation*}
		\begin{split}
			\vert\kern-0.25ex\vert\kern-0.25ex\vert \tilde{\bU}\vert\kern-0.25ex\vert\kern-0.25ex\vert^2_{0,2,\Sigma} & \lesssim \| d\tilde{\bU} \|_{L^1_t L_x^\infty}\|\tilde{\bU}\|_{L_x^2} + \|\nabla d\phi\|_{L^1_t L_{x'}^\infty}\|\tilde{\bU}\|_{L_x^2}
			\\
			& \lesssim \| d\tilde{\bU} \|_{L^1_t L_x^\infty}\|\tilde{\bU}\|_{L_x^2} + \|\nabla d\phi\|_{L^1_t L_x^\infty}\|\tilde{\bU}\|_{L_x^2}.
		\end{split}
	\end{equation*}
	By using Lemma \ref{te0}, \eqref{504} and \eqref{5021}, we can prove that
	\begin{equation}\label{U0}
		\vert\kern-0.25ex\vert\kern-0.25ex\vert \bU\vert\kern-0.25ex\vert\kern-0.25ex\vert_{0,2,\Sigma} \lesssim \|d \bU \|_{L^2_t L^{\infty}_x}+ \| \bU\|_{L^{\infty}_tL_x^2}.
	\end{equation}
	Next, we will prove the $(s_0-\frac14)$-order estimate. Operating the derivative of $\partial^{\beta}_{x'}$($1 \leq |\beta| \leq s_0-\frac14$) on \eqref{U} and integrating it on $[-2,2]\times \mathbb{R}^2$, we get
	\begin{equation}\label{U1}
		\begin{split}
			\| \partial^{\beta}_{x'} \tilde{\bU}\|^2_{L^2_{\Sigma}} & \lesssim \| d \tilde{\bU} \|_{L^1_t L^{\infty}_x} \| \partial^{\beta}_{x} \tilde{\bU}\|_{L^{\infty}_tL_x^2} +|I_1|+|I_2|,
		\end{split}
	\end{equation}
	where
	\begin{equation*}
		\begin{split}
			&I_1= -\int_{-2}^2\int_{\mathbb{R}^2} \partial^{\beta}_{x'}  \big( \partial_t \phi  \partial_3 \tilde{\bU} \big) \cdot \Lambda^{\beta}_{x'} \tilde{\bU}  dxd\tau,
			\\
			& I_2= -\sum^2_{\alpha=0}\int_{-2}^2\int_{\mathbb{R}^2} \partial^{\beta}_{x'} \big( A^\alpha(\tilde{\bU}) \partial_{\alpha}\phi \partial_\alpha \tilde{\bU} \big) \cdot \partial^{\beta}_{x'} \tilde{\bU}   dxd\tau.
		\end{split}
	\end{equation*}
	Write $I_1$ as
	\begin{equation*}
		\begin{split}
			I_1 =& -\int_{-2}^2\int_{\mathbb{R}^2} \big( \partial^{\beta}_{x'}(\partial_t \phi  \partial_3 \tilde{\bU})-\partial_t \phi \partial_3 \partial^{\beta}_{x'} \tilde{\bU} \big)   \partial^{\beta}_{x'} \tilde{\bU}dxd\tau
			\\
			& + \int_{-2}^2\int_{\mathbb{R}^2} \partial_t \phi \cdot \partial_3 \partial^{\beta}_{x'} \tilde{\bU} \cdot \partial^{\beta}_{x'} \tilde{\bU}  dx d\tau,
			\\
			=& -\int_{-2}^2\int_{\mathbb{R}^2} [ \partial^{\beta}_{x'}, \partial_t \phi  \partial_3] \tilde{\bU} \cdot \partial^{\beta}_{x'} \tilde{\bU}dxd\tau
		\end{split}
	\end{equation*}
	Similarly, $I_2$ can be written by
	\begin{equation*}
		\begin{split}
			I_2 = & -\sum^2_{\alpha=0}\int_{-2}^2\int_{\mathbb{R}^2} \big( \partial^{\beta}_{x'} \big( A^\alpha(\tilde{\bU}) \partial_{\alpha}\phi \partial_\alpha \tilde{\bU}) -  A^\alpha(\tilde{\bU}) \partial_{\alpha}\phi \partial_\alpha \partial^{\beta}_{x'} \tilde{\bU} \big) \cdot \partial^{\beta}_{x'} \tilde{\bU}  dxd\tau
			\\
			& +\sum^2_{\alpha=0} \int_{-2}^2\int_{\mathbb{R}^2} \big( A^\alpha(\tilde{\bU}) \partial_{\alpha}\phi \big) \cdot \partial_\alpha(\partial^{\beta}_{x'} \tilde{\bU}) \cdot \partial^{\beta}_{x'} \tilde{\bU}dxd\tau.
		\end{split}
	\end{equation*}
	By using Lemma \ref{jiaohuan}, we infer that
	\begin{equation}\label{U2}
		\begin{split}
			|I_1|
			& \lesssim  \big( \|\partial^\beta \tilde \bU\|_{L^{\infty}_tL_x^2} \| d \partial_t \phi \|_{L^1_tL_{x}^\infty}+ \sup_{\theta, r} \|\partial^{\beta}_{x'} \partial_t \phi\|_{L^2(\Sigma_{\theta,r})} \| d \tilde{\bU}\|_{L^1_tL_x^\infty} \big)\cdot\|\partial^\beta \tilde \bU\|_{L^\infty_tL_x^2}
		\end{split}
	\end{equation}
	and
	\begin{equation}\label{U3}
		\begin{split}
			|I_2| \lesssim  & \big(\| \partial^{\beta}_{x'} \tilde{\bU} \|_{L^2_tL_x^2} \|d \nabla \phi\|_{L^1_t L^\infty_x} + \|d\tilde{\bU}\|_{L^1_tL_x^\infty} \sup_{\theta,r}\|\partial^{\beta}_{x'}d \phi\|_{L^2(\Sigma_{\theta,r})}  \big) \cdot \|\partial^{\beta}_{x'} \tilde{\bU} \|_{L^\infty_tL_x^2}
			\\
			& + \big( \| d \tilde{\bU} \|_{L^2_t L_x^\infty} \| \nabla \phi\| _{L^2_tL_x^\infty}+ \|\tilde{\bU}\|_{L^2_t L_x^\infty} \|\nabla^2\phi\|_{L^2_tL_{x}^\infty} \big)  \cdot \|\partial^\beta \tilde{\bU} \|^2_{L^\infty_t L_x^2}.
		\end{split}
	\end{equation}
	Take sum of $1\leq \beta \leq s_0-\frac14$ on \eqref{U1}. By using Lemma \ref{te0}, \eqref{U2}, \eqref{U3}, \eqref{5021}, and \eqref{504}, we obtain
	\begin{equation}\label{U4}
		\begin{split}
			\vert\kern-0.25ex\vert\kern-0.25ex\vert \partial_{x'} \bU\vert\kern-0.25ex\vert\kern-0.25ex\vert_{s_0-\frac54,2,\Sigma} & \lesssim \|d \bU \|_{L^4_t L^{\infty}_x}+\|d \bU\|_{L^{\infty}_tH_x^{s_0-1}}.
		\end{split}
	\end{equation}
	Operating $\nabla$ on \eqref{505}, we have
	\begin{equation*}
		\begin{split}
			A^0(\bU) \partial_t(\nabla \bU) + \sum^2_{i=1} A^i(\bU) \partial_i(\nabla \bU)=-\sum^2_{\alpha=0}\nabla (A^\alpha(\bU)) \partial_\alpha \bU,
		\end{split}
	\end{equation*}
	In a similar way, we can obtain
	\begin{equation}\label{U40}
		\begin{split}
			\vert\kern-0.25ex\vert\kern-0.25ex\vert \nabla \bU \vert\kern-0.25ex\vert\kern-0.25ex\vert_{s_0-\frac14,2,\Sigma} & \lesssim \|d \bU \|_{L^4_t L^{\infty}_x}+\|d \bU\|_{L^{\infty}_tH_x^{s_0-1}}.
		\end{split}
	\end{equation}
	Thanks to $\partial_t \bU=- \sum^2_{i=1}(A^0)^{-1}A^i(\bU)\partial_i \bU$ and Lemma \ref{te2}, we can carry out
	\begin{equation}\label{U5}
		\begin{split}
			\vert\kern-0.25ex\vert\kern-0.25ex\vert \partial_t \bU \vert\kern-0.25ex\vert\kern-0.25ex\vert_{s_0-\frac14,2,\Sigma}
			& \lesssim \vert\kern-0.25ex\vert\kern-0.25ex\vert \bU \vert\kern-0.25ex\vert\kern-0.25ex\vert_{s_0-\frac14,2,\Sigma} \vert\kern-0.25ex\vert\kern-0.25ex\vert\partial_t \bU\vert\kern-0.25ex\vert\kern-0.25ex\vert_{s_0-\frac14,2,\Sigma}
			\lesssim \|d \bU \|_{L^4_t L^{\infty}_x}.
		\end{split}
	\end{equation}
	The estimate \eqref{U5} together with \eqref{U0}, \eqref{U4}, \eqref{U40} thus gives \eqref{te10}. This conclude the proof of this lemma.
\end{proof}
\begin{Lemma}\label{fre}
	Assume $\frac74<s_0\leq s \leq 2$. Let $\bU$ satisfy the assumption in Lemma \ref{te1}. Then
	\begin{equation}\label{508}
		\vert\kern-0.25ex\vert\kern-0.25ex\vert  \lambda(\bU-P_{<\lambda} \bU), d P_{<\lambda} \bU, \lambda^{-1} d \nabla P_{<\lambda} \bU \vert\kern-0.25ex\vert\kern-0.25ex\vert_{s_0-\frac{5}{4},2,\Sigma} \lesssim \epsilon_2. 
	\end{equation}
\end{Lemma}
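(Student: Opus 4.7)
The plan is to extend the characteristic energy estimate of Lemma \ref{te1} to the three frequency-localized quantities in the statement, each of which satisfies a symmetric hyperbolic system of the same shape as \eqref{505} modulo commutator source terms. First I would apply the Littlewood–Paley cut-off $P_{<\lambda}$ to the hyperbolic system for $\bU$, obtaining
\begin{equation*}
A^0(\bU)\partial_t (P_{<\lambda}\bU) + \sum_{i=1}^2 A^i(\bU)\partial_i (P_{<\lambda}\bU) = \mathcal{R}_\lambda, \qquad \mathcal{R}_\lambda := -\sum_{\alpha=0}^{2}[P_{<\lambda},\, A^\alpha(\bU)]\,\partial_\alpha \bU,
\end{equation*}
so that $\bU - P_{<\lambda}\bU$ solves the same system with source $-\mathcal{R}_\lambda$. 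Commuting one (resp.\ two) further spatial derivative(s) through this system and rescaling by $\lambda^{-1}$ produces analogous hyperbolic systems for $dP_{<\lambda}\bU$ and $\lambda^{-1}d\nabla P_{<\lambda}\bU$, with source terms built from $\mathcal{R}_\lambda$ together with products of $\partial A^\alpha(\bU)$ with derivatives of $P_{<\lambda}\bU$.

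Next I would rerun the proof of Lemma \ref{te1} on each of these three systems, but at the regularity level $s_0 - \tfrac54$ instead of $s_0 - \tfrac14$. The output is, for each $F \in \{\lambda(\bU-P_{<\lambda}\bU),\, dP_{<\lambda}\bU,\, \lambda^{-1}d\nabla P_{<\lambda}\bU\}$, an inequality of the form
\begin{equation*}
\vert\kern-0.25ex\vert\kern-0.25ex\vert F\vert\kern-0.25ex\vert\kern-0.25ex\vert_{s_0-\frac{5}{4},2,\Sigma} \lesssim \|dF\|_{L^4_t L^\infty_x} + \|F\|_{L^\infty_t H^{s_0-\frac{5}{4}}_x} + \|\mathcal{S}_F\|_{L^1_t H^{s_0-\frac{5}{4}}_x},
\end{equation*}
where $\mathcal{S}_F$ denotes the associated commutator source. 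The first two right-hand terms are absorbed by the bootstrap bound \eqref{5021}, using Bernstein to pass the $P_{<\lambda}$-cut-offs through the $L^\infty$ and Sobolev norms; in particular $\lambda \|(I-P_{<\lambda})\bU\|_{L^4_t L^\infty_x} \lesssim \|d\bU\|_{L^4_t L^\infty_x} \lesssim \epsilon_2$, and the $\lambda^{-1}$-weight in the third quantity exactly matches the extra $\nabla$ by Bernstein.

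The main obstacle is controlling the commutator sources $\mathcal{S}_F$ in $L^1_t H^{s_0-\frac54}_x$ uniformly in $\lambda$. For $F = \lambda(\bU-P_{<\lambda}\bU)$ this reduces to the Kato–Ponce-type bound
\begin{equation*}
\bigl\|\lambda [P_{<\lambda}, A^\alpha(\bU)]\partial_\alpha\bU\bigr\|_{L^1_t H^{s_0-\frac54}_x} \lesssim \|\nabla A^\alpha(\bU)\|_{L^1_t L^\infty_x} \|\bU\|_{L^\infty_t H^{s_0-\frac14}_x} \lesssim \epsilon_2,
\end{equation*}
where $\nabla A^\alpha(\bU)$ is controlled by $\|d\bU\|_{L^4_t L^\infty_x}$ via the chain rule and Lemma \ref{jiaohuan0}. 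For the one- and two-derivative analogs needed to treat $dP_{<\lambda}\bU$ and $\lambda^{-1}d\nabla P_{<\lambda}\bU$, a paraproduct decomposition is required to balance high and low frequencies, and the hypothesis $s_0 > \tfrac74$ (so that $s_0 - \tfrac54 > \tfrac12$) is exactly what is needed to invoke Lemma \ref{ps} and close the product estimates in $H^{s_0-\frac54}$. Combining these ingredients yields the desired $\epsilon_2$-bound and completes the proof.
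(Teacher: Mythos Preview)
Your overall strategy—rerun the characteristic energy argument of Lemma~\ref{te1} on frequency-localized pieces of $\bU$—is the same as the paper's. But there is a genuine gap in how you handle the term $F=\lambda(\bU-P_{<\lambda}\bU)$, and the same issue afflicts your treatment of the other two quantities.

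When you rerun the proof of Lemma~\ref{te1} for an unknown $F$ solving $A^\alpha(\bU)\partial_\alpha F=\mathcal{S}_F$, the Kato--Ponce commutator terms $I_1,I_2$ (see \eqref{U2}--\eqref{U3}) produce factors of $\|d\tilde F\|_{L^1_tL^\infty_x}$, not merely $\|F\|_{L^4_tL^\infty_x}$. Your displayed inequality lists $\|dF\|_{L^4_tL^\infty_x}$, but you then only bound $\lambda\|(I-P_{<\lambda})\bU\|_{L^4_tL^\infty_x}$, which is $\|F\|$, not $\|dF\|$. The required quantity is $\lambda\|(I-P_{<\lambda})d\bU\|_{L^4_tL^\infty_x}$, and Bernstein only gives this $\lesssim\lambda^{1-\delta}\|d\bU\|_{L^4_tC^\delta_x}$, which blows up in $\lambda$. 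The underlying obstruction is that $\lambda(I-P_{<\lambda})$ is a \emph{first}-order multiplier, not an order-zero one bounded on $L^\infty$. The same failure occurs for your $F=dP_{<\lambda}\bU$ and $F=\lambda^{-1}d\nabla P_{<\lambda}\bU$: in each case $\|dF\|_{L^\infty_x}$ carries one more derivative than Bernstein can absorb.

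The paper sidesteps this by never differentiating the unknown. It first proves \eqref{60e} for $\Delta_0\bU$ with $\Delta_0$ an arbitrary order-zero multiplier that is \emph{bounded on $L^\infty$}; this hypothesis guarantees $\|d\Delta_0\bU\|_{L^\infty_x}\lesssim\|d\bU\|_{L^\infty_x}$, so the Kato--Ponce terms close. For $dP_{<\lambda}\bU$ and $\lambda^{-1}d\nabla P_{<\lambda}\bU$ one then takes $\Delta_0=P_{<\lambda}$ and $\Delta_0=\lambda^{-1}\nabla P_{<\lambda}$, both order-zero and $L^\infty$-bounded uniformly in $\lambda$; the $d$ in the target norm is absorbed by passing from level $s_0-\tfrac14$ to $s_0-\tfrac54$. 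For $\lambda(\bU-P_{<\lambda}\bU)$ one cannot take $\Delta_0=\lambda(I-P_{<\lambda})$; instead one writes $\lambda(\bU-P_{<\lambda}\bU)=\sum_{\mu\ge\lambda}\tfrac{\lambda}{\mu}\,\mu P_\mu\bU$, applies \eqref{60e} (at level $s_0-\tfrac54$) with $\Delta_0=P_\mu$ to each dyadic piece, and sums the geometric series in $\lambda/\mu$. Your proposal is missing this dyadic step, and without it the argument does not close uniformly in $\lambda$.
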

\begin{proof}
	Let $\Delta_0$ be a standard multiplier of order $0$ on $\mathbb{R}^2$, such that $\Delta_0$ is additionally bounded on $L^\infty(\mathbb{R}^2)$. Clearly, we have
	\begin{equation*}
		A^0(\bU) \partial_t(\Delta_0U)+ \sum^2_{i=1}A^i(\bU) \partial_i(\Delta_0 \bU)= -[\Delta_0, A^\alpha(\bU)]\partial_{\alpha}\bU.
	\end{equation*}
	Applying Lemma \ref{te1}, we derive
	\begin{equation}\label{60}
		\vert\kern-0.25ex\vert\kern-0.25ex\vert \Delta_0U\vert\kern-0.25ex\vert\kern-0.25ex\vert^2_{s_0-\frac{1}{4},2,\Sigma} \lesssim \|d \bU \|_{L^4_t L^{\infty}_x}\| \Delta_0 \bU\|^2_{L^{\infty}_tH_x^{s_0}}+\| [\Delta_0, A^\alpha(\bU)]\partial_{\alpha}\bU \|_{L^{2}_tH_x^{s_0-\frac{1}{4}}}\| \Delta_0 \bU\|_{L^{\infty}_tH_x^{s_0-\frac{1}{4}}}.
	\end{equation}
	Due to commutator estimates, we obtain
	\begin{equation*}
		\| [\Delta_0, A^\alpha(\bU)]\partial_{\alpha}\bU \|_{L^{2}_tH_x^{s_0-\frac{1}{4}}} \lesssim \|d \bU \|_{L^4_t L^{\infty}_x}\| \Delta_0 \bU\|_{L^{\infty}_tH_x^{s_0-\frac{1}{4}}}.
	\end{equation*}
	According to the inequality \eqref{60}, it turns out
	\begin{equation}\label{60e}
		\vert\kern-0.25ex\vert\kern-0.25ex\vert \Delta_0 \bU \vert\kern-0.25ex\vert\kern-0.25ex\vert^2_{s_0-\frac{1}{4},2,\Sigma} \lesssim \|d \bU \|_{L^4_t L^{\infty}_x}\| \Delta_0 \bU\|^2_{L^{\infty}_tH_x^{s_0-\frac{1}{4}}}.
	\end{equation}
	To get a bound for $\lambda(\bU-P_{<\lambda} \bU)$, we write
	\begin{equation*}
		\lambda(\bU-P_{<\lambda} \bU)= \lambda \sum_{\mu\geq \lambda}P_\mu \bU,
	\end{equation*}
	where $P_\mu \bU$ satisfies the above conditions for $\Delta_0 \bU$. Applying \eqref{60e} and replacing $s_0-\frac{1}{4}$ to $s_0-\frac{5}{4}$, we get
	\begin{equation*}
		\begin{split}
			\vert\kern-0.25ex\vert\kern-0.25ex\vert \lambda(\bU-P_{<\lambda} \bU) \vert\kern-0.25ex\vert\kern-0.25ex\vert^2_{s_0-\frac{5}{4},2,\Sigma}
		 =	& \sum_{\mu\geq \lambda} \vert\kern-0.25ex\vert\kern-0.25ex\vert \mu P_\mu \bU \cdot \frac{\lambda}{\mu}  \vert\kern-0.25ex\vert\kern-0.25ex\vert^2_{s_0-\frac{5}{4},2,\Sigma}
			\\
			\lesssim & \| \bU \|^2_{L^{\infty}_tH_x^{s_0-\frac{1}{4}}} \lesssim \epsilon^2_2.
		\end{split}
	\end{equation*}
	Taking square of the above inequality, which yields
	\begin{equation*}
		\vert\kern-0.25ex\vert\kern-0.25ex\vert \lambda(\bU-P_{<\lambda} \bU) \vert\kern-0.25ex\vert\kern-0.25ex\vert_{s_0-\frac{5}{4},2,\Sigma}
		\lesssim  \epsilon_2.
	\end{equation*}
	Finally, applying \eqref{60} to $\Delta_0=P_{<\lambda}$ and $\Delta_0=\lambda^{-1}\nabla P_{<\lambda}$ shows that
	\begin{equation*}
		\vert\kern-0.25ex\vert\kern-0.25ex\vert d P_{<\lambda} \bU\vert\kern-0.25ex\vert\kern-0.25ex\vert_{s_0-\frac{5}{4},2,\Sigma} +\vert\kern-0.25ex\vert\kern-0.25ex\vert \lambda^{-1} d \nabla P_{<\lambda} \bU \vert\kern-0.25ex\vert\kern-0.25ex\vert_{s_0-\frac{5}{4},2,\Sigma} \lesssim \epsilon_2.
	\end{equation*}
	Therefore, the estimate \eqref{508} holds. We have finished the proof of this lemma.
\end{proof}
Taking advantage of Lemma \ref{te1}, inequalities \eqref{402a} and \eqref{403}, we can directly obtain:
 \begin{corollary}\label{vte}
Assume $\frac74<s_0\leq s \leq 2$. 	Suppose $(\bv, \rho, w) \in \mathcal{H}$. Then the following estimate holds:
 	\begin{equation*}\label{e017}
 		\vert\kern-0.25ex\vert\kern-0.25ex\vert \bv \vert\kern-0.25ex\vert\kern-0.25ex\vert_{s_0-\frac14,2,\Sigma}+ \vert\kern-0.25ex\vert\kern-0.25ex\vert \rho \vert\kern-0.25ex\vert\kern-0.25ex\vert_{s_0-\frac14,2,\Sigma}  \lesssim \epsilon_2.
 	\end{equation*}
 \end{corollary}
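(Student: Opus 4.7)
The plan is to apply Lemma \ref{te1} to the state vector $\bU = (v^1, v^2, p(\mathrm{e}^{\rho}))^{\mathrm{T}}$ from Lemma \ref{sh}, then to unpack the resulting estimate into separate bounds for $\bv$ and $\rho$. Since $(\bv, \rho, w) \in \mathcal{H}$, the tuple $\bU$ solves the symmetric hyperbolic system \eqref{sq}, so Lemma \ref{te1} yields
\begin{equation*}
\vert\kern-0.25ex\vert\kern-0.25ex\vert \bU \vert\kern-0.25ex\vert\kern-0.25ex\vert_{s_0-\frac14,2,\Sigma} \lesssim \|d\bU\|_{L^4_t L^{\infty}_x} + \|\bU\|_{L^{\infty}_t H^{s_0-\frac14}_x}.
\end{equation*}

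The next step is to bound both right-hand-side terms by $\epsilon_2$. For the dispersive term, the embedding $C^\delta_x \hookrightarrow L^\infty_x$ combined with \eqref{403} gives $\|d\bv, d\rho\|_{L^4_t L^\infty_x} \lesssim \epsilon_2$; then writing $d\, p(\mathrm{e}^{\rho}) = \gamma \mathrm{e}^{\gamma \rho} d\rho$ and using the uniform $L^\infty$ bound on $\rho$ (which follows from \eqref{HEw} propagated by \eqref{402a} via $H^s \hookrightarrow L^\infty$ for $s > 1$) yields $\|d\bU\|_{L^4_t L^\infty_x} \lesssim \epsilon_2$. For the energy term, the components $v^1, v^2$ are directly controlled by $\|\bv\|_{L^\infty_t H^s_x}$ from \eqref{402a}, since $s_0 - \tfrac14 \leq s$, while the third component is handled by the Moser-type composition estimate of Lemma \ref{jiaohuan0} applied to the smooth function $F(\rho) = \mathrm{e}^{\gamma \rho} - 1$. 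Combining, $\vert\kern-0.25ex\vert\kern-0.25ex\vert \bU \vert\kern-0.25ex\vert\kern-0.25ex\vert_{s_0-\frac14,2,\Sigma} \lesssim \epsilon_2$.

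The bound for $\bv$ is then immediate, since $v^1, v^2$ are the first two components of $\bU$. To recover the bound for $\rho$, I would use $\rho = \frac{1}{\gamma} \log \bU_3$ and exploit the uniform $L^\infty$ bound on $\rho$, which keeps $\bU_3 = \mathrm{e}^{\gamma \rho}$ in a compact interval bounded away from $0$, so that $\log$ acts as a smooth function of $\bU_3$ on its range. A composition estimate on $\Sigma$ analogous to Lemma \ref{jiaohuan0}, which in the $(t, x')$ coordinates parametrising $\Sigma$ reduces to a standard Moser estimate in two variables, then gives
\begin{equation*}
\vert\kern-0.25ex\vert\kern-0.25ex\vert \rho \vert\kern-0.25ex\vert\kern-0.25ex\vert_{s_0-\frac14,2,\Sigma} \lesssim \vert\kern-0.25ex\vert\kern-0.25ex\vert \bU_3 \vert\kern-0.25ex\vert\kern-0.25ex\vert_{s_0-\frac14,2,\Sigma} \lesssim \epsilon_2.
\end{equation*}

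The only mildly technical point is this last composition estimate on $\Sigma$, which is not explicitly stated in the excerpt but is entirely routine once one writes the surface norm in the intrinsic $(t, x')$ coordinates. An alternative that bypasses this step is to symmetrise \eqref{CEE0} directly in the variables $(\bv, \rho)$ (for instance via the Lefloch-Ukai symmetriser referenced in the remark after Lemma \ref{sh}) and rerun the argument of Lemma \ref{te1} with $\bU$ replaced by $(\bv, \rho)$, so that the bound on $\rho$ drops out without any post-processing.
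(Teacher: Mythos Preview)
Your proposal is correct and follows essentially the same approach as the paper: the paper simply states that the corollary follows directly from Lemma \ref{te1} together with \eqref{402a} and \eqref{403}. Your additional care in explaining how to pass from $\bU_3 = p(\mathrm{e}^{\rho})$ back to $\rho$ via a Moser-type composition estimate on $\Sigma$ is a detail the paper leaves implicit (and which is indeed routine given Lemma \ref{te2} and the uniform $L^\infty$ bounds).
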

Next, let us prove the characteristic energy estimates for vorticity.
\begin{Lemma}\label{te20}
Assume $\frac74<s_0\leq s \leq 2$. 	Suppose that $(\bv, \rho, w) \in \mathcal{H}$. Then we have
	\begin{equation}\label{te201}
		\begin{split}
			\vert\kern-0.25ex\vert\kern-0.25ex\vert  w  \vert\kern-0.25ex\vert\kern-0.25ex\vert_{s_0-\frac14,2,\Sigma}+
			\vert\kern-0.25ex\vert\kern-0.25ex\vert  \nabla w  \vert\kern-0.25ex\vert\kern-0.25ex\vert_{s_0-\frac54,2,\Sigma} \lesssim  \epsilon_2.
		\end{split}
	\end{equation}
\end{Lemma}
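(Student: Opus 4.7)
The core idea is that the transport vector field $\mathbf{T} = \partial_t + \bv\cdot\nabla$ is timelike with respect to the acoustical metric $g$, while $\Sigma$ is null with respect to $g$. Consequently $\mathbf{T}$ is quantitatively transverse to $\Sigma$, and a direct $L^2$-based characteristic energy estimate for the transport equation $\mathbf{T} w = 0$ produces the desired trace bounds on $\Sigma$. Throughout, I will work in the same straightened coordinates $(t,x',y_2)$ used in the proof of Lemma \ref{te1}, defined by $y_2 = x_2 - \phi(t,x')$, so that $\Sigma = \{y_2 = 0\}$, and I will set $\tilde w(t,x',y_2) = w(t,x',y_2+\phi(t,x'))$.

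The transport equation becomes
\begin{equation*}
\partial_t \tilde w + v^1 \partial_{x'} \tilde w + \tilde v^2 \partial_{y_2} \tilde w = 0,
\qquad \tilde v^2 := v^2 - \partial_t\phi - v^1 \partial_{x'}\phi.
\end{equation*}
Because $\|\bv\|_{L^\infty_{t,x}} \lesssim \epsilon_2$ from \eqref{5021} and $\|d\phi - dt\|_{L^4_t C^{1,\delta_0}_{x'}} \lesssim \epsilon_1$ from \eqref{504}, we have the uniform transversality $|\tilde v^2| \geq \tfrac{1}{2}$ in a neighbourhood of $\Sigma$. Multiplying the equation by $\tilde w$ and applying the divergence theorem on the region $\{-2 \leq t \leq 2, \, y_2 \geq 0\}$ produces a boundary integral $\tfrac{1}{2}\int_{\Sigma} |\tilde v^2|\,|\tilde w|^2\,dx'\,dt$ on the left and bulk terms controlled by $\|w_0\|_{L^2}^2 + \int \|\mathrm{div}\,\bv\|_{L^\infty_x} \|\tilde w\|_{L^2_x}^2 dt$ on the right, yielding $\|\tilde w\|_{L^2(\Sigma)} \lesssim \epsilon_2$.

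For the higher-order tangential norm on $\Sigma$, I apply $\Lambda_{x'}^{s_0-\frac14}$ to the transported equation (it commutes with $\partial_{y_2}$), obtaining
\begin{equation*}
\partial_t(\Lambda_{x'}^{s_0-\frac14}\tilde w) + v^1 \partial_{x'}(\Lambda_{x'}^{s_0-\frac14}\tilde w) + \tilde v^2 \partial_{y_2}(\Lambda_{x'}^{s_0-\frac14}\tilde w) = -[\Lambda_{x'}^{s_0-\frac14},v^1]\partial_{x'}\tilde w - [\Lambda_{x'}^{s_0-\frac14},\tilde v^2]\partial_{y_2}\tilde w.
\end{equation*}
The commutators are estimated by Lemma \ref{jiaohuan}, controlling the $v^1, \tilde v^2$ factors via the bounds of Corollary \ref{vte}, Lemma \ref{fre} and \eqref{504}. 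Repeating the energy estimate gives $\|\Lambda_{x'}^{s_0-\frac14}\tilde w\|_{L^2(\Sigma)} \lesssim \epsilon_2$. The time-derivative component of the $\vert\kern-0.25ex\vert\kern-0.25ex\vert\cdot\vert\kern-0.25ex\vert\kern-0.25ex\vert_{s_0-\frac14,2,\Sigma}$ norm is then recovered from $\partial_t w = -v^j \partial_j w$ by taking traces and applying the algebra property of Lemma \ref{te2} on $\Sigma^t$ (which is a 1D slice, so $H^{s_0-\frac54}(\Sigma^t)$ is an algebra since $s_0-\frac54 > \tfrac12$), using the trace bounds for $\bv$ from Corollary \ref{vte} and the forthcoming trace bound for $\nabla w$.

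The estimate for $\nabla w$ follows by the same scheme applied to Lemma \ref{tw}, $\mathbf{T}(\partial_i w) = \partial_i v^j \partial_j w$, with the forcing treated as an inhomogeneous source. Here the main obstacle appears: the product $\partial_i v^j \partial_j w$ must be controlled in a norm compatible with the $\vert\kern-0.25ex\vert\kern-0.25ex\vert\cdot\vert\kern-0.25ex\vert\kern-0.25ex\vert_{s_0-\frac54,2}$ estimate, and the regularity $\bv \in H^s$, $w \in H^{s_0-\frac14}$ with $s, s_0 > \frac74$ sits precisely at the threshold. The Kato-Ponce split
\begin{equation*}
\|\partial_i v^j \partial_j w\|_{H^{s_0-\frac54}_x} \lesssim \|\nabla\bv\|_{L^\infty_x}\|\nabla w\|_{H^{s_0-\frac54}_x} + \|\nabla\bv\|_{H^{s_0-\frac54}_x}\|\nabla w\|_{L^{\infty-}_x}
\end{equation*}
forces use of the auxiliary bound $\nabla w \in L^\infty_t L^8_x$ from the hypothesis $(\bv,\rho,w) \in \mathcal{H}$, combined with Sobolev interpolation, to handle the second factor of the second term; this is precisely the role played by the $L^8$-hypothesis on $\nabla w_0$ in \eqref{401}. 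Once the source is estimated in $L^2_t H^{s_0-\frac54}_x$, the characteristic energy method from the preceding paragraphs delivers $\vert\kern-0.25ex\vert\kern-0.25ex\vert \nabla w \vert\kern-0.25ex\vert\kern-0.25ex\vert_{s_0-\frac54,2,\Sigma} \lesssim \epsilon_2$, completing the proof.
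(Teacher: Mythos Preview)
Your approach is correct and matches the paper's own proof: straighten the characteristic surface to $\{y_2=0\}$, exploit that $\mathbf{T}$ is uniformly transverse to $\Sigma$ (since $\tilde v^2 \approx -1$), run the $L^2$ energy identity to extract the trace term, commute with tangential fractional derivatives, and invoke the $L^8$ bound on $\nabla w$ to close the product estimate on the source $\nabla\bv\cdot\nabla w$. The paper does exactly this, though it is terser about the mechanism producing the trace (it writes ``integrating on $[0,t]\times\mathbb{R}^2$'' where your half-space divergence argument is the more transparent formulation); your explicit identification of the transversality $|\tilde v^2|\geq \tfrac12$ as the geometric reason the trace term is coercive is a welcome clarification. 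One cosmetic point: in your Kato--Ponce split you write $\|\nabla w\|_{L^{\infty-}_x}$, but the actual exponent you want (and the one the paper uses) is $L^8_x$, paired with $\|\nabla\bv\|_{W^{s_0-\frac54,8/3}_x}\lesssim \|\bv\|_{H^{s_0}_x}$ via the 2D Sobolev embedding $H^{1/4}\hookrightarrow L^{8/3}$.
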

\begin{proof}
By a change of coordinates $x_2 \rightarrow x_2-\phi(t,x')$ and setting $\tilde{w}(t,x)= w(t,x',x_2+\phi(t,x'))$, the third equation in \eqref{fc} is transformed to
\begin{equation*}
	\partial_t \tilde{w}+ \tilde{\bv} \cdot \nabla \tilde{w}= - \partial_t \phi  \partial_2 \tilde{w} -  \tilde{v}^j \partial_{j}\phi \partial_j \tilde{w}.
\end{equation*}
For $\phi$ is independent of $x_2$, we then get
\begin{equation}\label{VW1}
	\partial_t \tilde{w}+ \tilde{\bv} \cdot \nabla \tilde{w}= - \partial_t \phi  \partial_2 \tilde{w} -  \tilde{v}^1 \partial_{1}\phi \partial_1 \tilde{w}.
\end{equation}
multiplying with $\Lambda_{x'}^{\alpha} \tilde{w}$, and integrating on $[0,t] \times \mathbb{R}^2$, we can obtain 
\begin{equation*}\label{VW2}
	\begin{split}
		\| \tilde{w} \|^2_{0,2,\Sigma}\leq & C \|\nabla \bv\|_{L^4_t L^\infty_x}\| \tilde{w} \|^2_{L^\infty_t L_x^{2}}+ C\| d \phi -dt\|_{s_0-\frac14,2,\Sigma} \| \tilde{w} \|^2_{L^\infty_t L_x^{2}}.
	\end{split}
\end{equation*}
Similarly, using \eqref{w1}, we can obtain 
\begin{equation*}\label{VW2a}
	\begin{split}
	\| \nabla {w} \|^2_{s_0-\frac54,2,\Sigma}\leq & C \|\nabla \bv\|_{L^4_t L^\infty_x}\| \tilde{w} \|^2_{L^\infty_t H_x^{s_0-\frac14}}+ C\| d \phi -dt\|_{s_0-\frac14,2,\Sigma} \| \tilde{w} \|^2_{L^\infty_t H_x^{s_0-\frac14}}
	\\
	&+  C\| d \phi -dt\|_{s_0-\frac14,2,\Sigma}\| \tilde{w} \|_{L^\infty_t H_x^{s_0-\frac14}} 
	{\| \nabla \tilde{w} \|_{L^\infty_t L^8_x} } (1+\|\bv\|_{L^\infty_t H^{s_0}_x}).
\end{split}
\end{equation*}
Taking advantage of inequalities \eqref{401} and \eqref{403}, we find
\begin{equation}\label{VW3}
	\| \nabla {w} \|_{s_0-\frac54,2,\Sigma} \lesssim \epsilon_2,
\end{equation}
and 
\begin{equation}\label{VW3a}
	\|  {w} \|_{0,2,\Sigma} \lesssim \epsilon_2.
\end{equation}
Due to $\partial_{x'} w  = \nabla w \partial_{x'}\phi$, we obtain
\begin{equation}\label{VW4}
	\|\partial_{x'} {w} \|_{s_0-\frac54,2,\Sigma} \lesssim \| \nabla {w} \|_{s_0-\frac54,2,\Sigma}.
\end{equation}
Combining \eqref{VW3}, \eqref{VW3a}, with \eqref{VW4} yields the estimate \eqref{te201}. We complete the proof of this lemma.
\end{proof}
Finally, we are ready to prove Proposition \ref{r1}.
\begin{proof}[Proof of Proposition \ref{r1}]
	From Lemma \ref{fre}, it only remains for us to verify that
	\begin{equation*}
		\begin{split}
			\vert\kern-0.25ex\vert\kern-0.25ex\vert {\mathbf{g}}^{\alpha \beta}-\mathbf{m}^{\alpha \beta}\vert\kern-0.25ex\vert\kern-0.25ex\vert_{s_0-\frac{1}{4},2,\Sigma_{\theta,r}} \lesssim \epsilon_2.
		\end{split}
	\end{equation*}
Due to Lemma \ref{te1}, \eqref{504}, and \eqref{5021}, we get
	\begin{equation*}
		\sup_{\theta,r}\vert\kern-0.25ex\vert\kern-0.25ex\vert \bv \vert\kern-0.25ex\vert\kern-0.25ex\vert_{s_0-\frac{1}{4},2,\Sigma_{\theta,r}}+ \sup_{\theta,r}\vert\kern-0.25ex\vert\kern-0.25ex\vert \rho \vert\kern-0.25ex\vert\kern-0.25ex\vert_{s_0-\frac{1}{4},2,\Sigma_{\theta,r}} \lesssim \epsilon_2.
	\end{equation*}
By using the definition of $\mathbf{g}$ (see \eqref{boldg}), and Lemma \ref{te2}, the following estimate
	\begin{equation*}
		\begin{split}
			\vert\kern-0.25ex\vert\kern-0.25ex\vert  {\mathbf{g}}^{\alpha \beta}-\mathbf{m}^{\alpha \beta}\vert\kern-0.25ex\vert\kern-0.25ex\vert _{s_0-\frac{1}{4},2,\Sigma_{\theta,r}} & \lesssim \vert\kern-0.25ex\vert\kern-0.25ex\vert \bv\vert\kern-0.25ex\vert\kern-0.25ex\vert_{s_0-\frac{1}{4},2,\Sigma_{\theta,r}}+\vert\kern-0.25ex\vert\kern-0.25ex\vert \bv \cdot \bv\vert\kern-0.25ex\vert\kern-0.25ex\vert_{s_0-\frac{1}{4},2,\Sigma_{\theta,r}}+\vert\kern-0.25ex\vert\kern-0.25ex\vert c_s^2-c_s^2(0)\vert\kern-0.25ex\vert\kern-0.25ex\vert_{s_0-\frac{1}{4},2,\Sigma_{\theta,r}}
			\\
			& \lesssim \epsilon_2,
		\end{split}
	\end{equation*}
holds for $s_0>\frac74$. Therefore, the estimate \eqref{501} holds. This concludes the proof of this lemma.
\end{proof}
It also remains for us to obtain Proposition \ref{r2}. To achieve the goal, we need to introduce a new frame on the null hypersurface $\Sigma$.
\subsection{Null frame}
We introduce a null frame along $\Sigma$ as follows. Let
\begin{equation*}
	V=(dr)^*,
\end{equation*}
where $r$ is the defining function of the foliation $\Sigma$, and where $*$ denotes the identification of covectors and vectors induced by $\mathbf{g}$. Then $V$ is the null geodesic flow field tangent to $\Sigma$. Let
\begin{equation*}\label{600}
	\sigma=dt(V), \qquad l=\sigma^{-1} V.
\end{equation*}
Thus $l$ is the $\mathbf{g}$-normal field to $\Sigma$ normalized so that $dt(l)=1$, hence
\begin{equation*}\label{601}
	l=\left< dt,dx_2-d\phi\right>^{-1}_{\mathbf{g}} \left( dx_2-d \phi \right)^*.
\end{equation*}
Therefore, the coefficients $l^j$ are smooth functions of $\bv, \rho$ and $d \phi$. Conversely, we have
\begin{equation}\label{602}
	dx_2-d \phi =\left< l,\partial_{2}\right>^{-1}_{\mathbf{g}} l^*.
\end{equation}
Seeing from \eqref{602}, $d \phi$ is also a smooth function of $\bv, \rho$ and the coefficients of $l$.

Next we introduce the vector fields $e_1$ tangent to the fixed-time slice $\Sigma^t$ of $\Sigma$. We do this by applying Grahm-Schmidt orthogonalization in the metric $\mathbf{g}$ to the $\Sigma^t$-tangent vector fields $\partial_{1}+ \partial_{1} \phi \partial_{2}$.

Finally, we denote
\begin{equation*}
	\underline{l}=l+2\partial_t.
\end{equation*}
It follows that $\{l, \underline{l}, e_1 \}$ form a null frame in the sense that
\begin{align*}
	& \left<l, \underline{l} \right>_{\mathbf{g}} =2, \qquad \qquad \ \ \left< e_1, e_1\right>_{\mathbf{g}}=1,
	\\
	& \left<l, l \right>_{\mathbf{g}} =\left<\underline{l}, \underline{l} \right>_{\mathbf{g}}=0, \quad \left<l, e_1 \right>_{\mathbf{g}}=\left<\underline{l}, e_1 \right>_{\mathbf{g}}=0 .
\end{align*}
The coefficients of each fields is a smooth function of $\bv,\rho$ and $d \phi$, and by assumption we also have the pointwise bound
\begin{equation*}
	| e_1 - \partial_{1} |  + | l- (\partial_t+\partial_{2}) | + | \underline{l} - (-\partial_t+\partial_{2})|  \lesssim \epsilon_1.
\end{equation*}
Based on the above setting, we introduce a result about the decomposition of curvature tensor.
\begin{Lemma}\label{LLQ}(\cite{ST}, Lemma 5.8)
Assume $\frac74<s_0\leq s \leq 2$. Suppose $f$ satisfying $$\mathbf{g}^{\alpha \beta} \partial^2_{\alpha \beta}f=F.$$
	Let $(t,x',\phi(t,x'))$ denote the projective parametrisation of $\Sigma$, and for $0 \leq \alpha, \beta \leq 1$, let $/\kern-0.55em \partial_\alpha$ denote differentiation along $\Sigma$ in the induced coordinates. Then, for $0 \leq \alpha, \beta \leq 1$, one can write
	\begin{equation*}
		/\kern-0.55em \partial_\alpha /\kern-0.55em \partial_\beta (f|_{\Sigma}) = l(f_2)+ f_1,
	\end{equation*}
	where
	\begin{equation*}
		\| f_2 \|_{L^2_t H^{s_0-\frac54}_{x'}(\Sigma)}+\| f_1 \|_{L^1_t H^{s_0-\frac54}_{x'}(\Sigma)} \lesssim \|df\|_{L^\infty_t H_x^{s_0-\frac54}}+ \|df\|_{L^4_t L_x^\infty}+  \| F\|_{L^1_t H^{s_0-\frac54}_{x'}(\Sigma)}.
	\end{equation*}
\end{Lemma}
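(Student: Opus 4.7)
The plan is to decompose the tangential Hessian in the null frame $\{l, e_1\}$ spanning $T\Sigma$ and then use the wave equation $\mathbf{g}^{\alpha\beta}\partial_\alpha\partial_\beta f = F$ to trade the $e_1 e_1$-component for an $l(\underline{l} f)$-component plus $F$. Since the projective tangent vectors $/\kern-0.55em\partial_\alpha = \partial_\alpha + \partial_\alpha\phi\,\partial_2$ (for $\alpha=0,1$) lie in the span of $\{l, e_1\}$, with coefficients that are smooth functions of $\bv$, $\rho$, and $d\phi$, one can write
\[
/\kern-0.55em\partial_\alpha /\kern-0.55em\partial_\beta (f|_\Sigma) \;=\; \sum_{X,Y\in\{l,e_1\}} c_{XY}\,XYf \;+\; \sum_{X\in\{l,e_1\}}(/\kern-0.55em\partial c_{X})\,Xf,
\]
where the $c$-coefficients and their tangential derivatives are controlled on $\Sigma$ by Proposition \ref{r1} and Corollary \ref{vte}. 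The $ll$ and $l\,e_1$ contributions are already of the form $l(\,\cdot\,)$, so they are absorbed directly into the $l(f_2)$ term; only the $e_1 e_1$-piece requires the wave equation.

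For that piece, expand $\mathbf{g}^{\alpha\beta}$ in the null frame using $\mathbf{g}(l,\underline{l})=2$, $\mathbf{g}(e_1,e_1)=1$, and all other inner products zero, to obtain
\[
e_1 e_1 f \;=\; F \;+\; l(\underline{l} f) \;+\; \Gamma\cdot df,
\]
where $\Gamma$ denotes a package of connection-type coefficients built from derivatives of the frame fields $l,\underline{l},e_1$; here the identity $\underline{l} l f = l\underline{l} f + [\underline{l},l]f$ absorbs the symmetric partner into $l(\underline{l} f)$ plus a commutator that joins $\Gamma\cdot df$. Moving the coefficient $c_{e_1 e_1}$ inside via $l(c_{e_1 e_1}\underline{l} f) = c_{e_1 e_1}\,l(\underline{l} f) + l(c_{e_1 e_1})\,\underline{l} f$ yields the decomposition $/\kern-0.55em\partial_\alpha/\kern-0.55em\partial_\beta(f|_\Sigma) = l(f_2) + f_1$, with $f_2 = c_{e_1 e_1}\,\underline{l} f$ (plus the intrinsic $ll$, $le_1$ antiderivatives) and $f_1$ collecting $c_{e_1e_1}F$, the $\Gamma\cdot df$ terms, the coefficient derivatives $l(c_{e_1e_1})\underline{l} f$ and $/\kern-0.55em\partial c\cdot df$, plus tangential commutators $[/\kern-0.55em\partial_\alpha,/\kern-0.55em\partial_\beta]f$.

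The remaining task is to bound $f_2$ in $L^2_tH^{s_0-\frac54}_{x'}(\Sigma)$ and $f_1$ in $L^1_tH^{s_0-\frac54}_{x'}(\Sigma)$. For $f_2$, since $c_{e_1 e_1}$ is a smooth function of $(\bv,\rho,d\phi)$ whose $H^{s_0-\frac14}_{x'}(\Sigma)$-norm is controlled by Proposition \ref{r1} and the hypothesis $G(\bv,\rho)\le 2\epsilon_1$, the product estimate in Lemma \ref{te2} reduces everything to $\|\underline{l} f\|_{L^2_tH^{s_0-\frac54}_{x'}(\Sigma)}\lesssim \|df\|_{L^\infty_tH^{s_0-\frac54}_x}$ (using the trace inequality in Lemma \ref{te2}). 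For $f_1$, the $c_{e_1e_1}F$ summand is immediate; the terms involving $\Gamma\cdot df$ and $/\kern-0.55em\partial c\cdot df$ are handled by Proposition \ref{r1}, which provides not only $L^2$-trace control on $d\mathbf{g}$ and $d(d\phi)$ at level $s_0-\frac54$ but also the crucial frequency-localized bound on $\lambda^{-1}\nabla P_{<\lambda}d\mathbf{g}$. The \textbf{main obstacle} is precisely this low-regularity product bookkeeping: at the threshold $s_0-\frac54<\tfrac12$, one cannot simply multiply rough objects on $\Sigma$, and so the proof must follow Smith--Tataru's paradifferential splitting, writing $\mathbf{g} = P_{<\lambda}\mathbf{g} + (\mathbf{g}-P_{<\lambda}\mathbf{g})$ and routing the high-frequency part of $\Gamma\cdot df$ into the $L^2$ ``$l(f_2)$'' channel (exploiting that $l$ is tangent to the characteristic $\Sigma$) while the low-frequency part is absorbed into the $L^1$ ``$f_1$'' channel. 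Summing over the Littlewood--Paley decomposition with the aid of Proposition \ref{r1} then closes the estimate.
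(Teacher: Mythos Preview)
Your overall strategy---decompose the tangential Hessian in the null frame $\{l,e_1\}$, observe that the $ll$ and $le_1$ pieces are already of the form $l(\cdot)$, and use the wave equation $\mathbf{g}^{\alpha\beta}\partial_\alpha\partial_\beta f=F$ to convert $e_1e_1f$ into $l(\underline{l}f)+F+\Gamma\cdot df$---is exactly the Smith--Tataru argument, and the paper simply cites \cite{ST} for this lemma rather than reproving it.

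There is, however, a concrete error in your ``main obstacle.'' You claim that $s_0-\tfrac54<\tfrac12$, but the standing hypothesis is $s_0>\tfrac74$, so in fact $s_0-\tfrac54>\tfrac12$. Since the time slices $\Sigma^t$ are one-dimensional (parametrized by $x'\in\mathbb{R}$), the space $H^{s_0-\frac54}_{x'}(\Sigma^t)$ is an algebra---the paper uses exactly this fact in the proof of Lemma~\ref{chi}, see \eqref{614}. Consequently the product $\Gamma\cdot df$ can be estimated directly: $\|\Gamma\cdot df\|_{L^1_tH^{s_0-\frac54}_{x'}}\lesssim\|\Gamma\|_{L^2_tH^{s_0-\frac54}_{x'}}\|df\|_{L^2_tH^{s_0-\frac54}_{x'}}$, with the first factor controlled by Proposition~\ref{r1} and the second by $\|df\|_{L^\infty_tH^{s_0-\frac54}_x}$ via Lemma~\ref{te0}. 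The paradifferential splitting $\mathbf{g}=P_{<\lambda}\mathbf{g}+(\mathbf{g}-P_{<\lambda}\mathbf{g})$ and the invocation of the $\lambda^{-1}\nabla P_{<\lambda}d\mathbf{g}$ bound from Proposition~\ref{r1} are unnecessary here; that machinery is reserved for the wave-packet analysis in Proposition~\ref{np}, not for this lemma. Once you drop that detour, the proof is complete along the lines you sketched.
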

\begin{corollary}\label{Rfenjie}
Assume $\frac74<s_0\leq s \leq 2$ and $\delta_0\in (0,s_0-\frac74)$. Let $R$ be the Riemann curvature tensor for the metric ${\mathbf{g}}$. Let $e_0=l$. Then for any $0 \leq a, b, c,d \leq 2$, we can write
	\begin{equation}\label{603}
		\left< R(e_a, e_b)e_c, e_d \right>_{\mathbf{g}}|_{\Sigma}=l(f_2)+f_1,
	\end{equation}
	where $|f_1|\lesssim |\nabla w|+ |d \mathbf{g} |^2$ and $|f_2| \lesssim |d {\mathbf{g}}|$. Moreover, the characteristic energy estimates
	\begin{equation}\label{604}
		\|f_2\|_{L^2_t H^{s_0-\frac54}_{x'}(\Sigma)}+\|f_1\|_{L^1_t H^{s_0-\frac54}_{x'}(\Sigma)} \lesssim \epsilon_2,
	\end{equation}
	holds. Additionally, for any $t \in [-2,2]$, it follows
	\begin{equation}\label{605}
		\|f_2(t,\cdot)\|_{C^{\delta_0}_{x'}(\Sigma^t)} \lesssim \|d \mathbf{g}\|_{C^{\delta_0}_x(\mathbb{R}^3)}.
	\end{equation}
\end{corollary}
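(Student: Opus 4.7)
I follow the null-frame curvature analysis from \cite{ST}, adapted to the present wave-transport system \eqref{fc}. Starting from the coordinate identity
\begin{equation*}
R_{\mu\nu\rho\sigma} = \tfrac12\bigl(\partial^2_{\mu\sigma}\mathbf{g}_{\nu\rho} + \partial^2_{\nu\rho}\mathbf{g}_{\mu\sigma} - \partial^2_{\mu\rho}\mathbf{g}_{\nu\sigma} - \partial^2_{\nu\sigma}\mathbf{g}_{\mu\rho}\bigr) + \mathcal{Q}(d\mathbf{g}, d\mathbf{g}),
\end{equation*}
pair with the frame $\{l, e_1, \underline{l}\}$; since each frame vector has coefficients smooth in $(\bv,\rho,d\phi)$, $\langle R(e_a,e_b)e_c,e_d\rangle_{\mathbf{g}}|_\Sigma$ becomes a smooth linear combination of second derivatives of scalar components $\mathbf{g}_{\mu\nu}|_\Sigma$, plus a pointwise $O(|d\mathbf{g}|^2)$ remainder that will be absorbed into $f_1$. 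Using that $l, e_1$ are tangent to $\Sigma$, each such second derivative can be re-expressed as a tangential one $/\kern-0.55em \partial_\alpha /\kern-0.55em \partial_\beta(\mathbf{g}_{\mu\nu}|_\Sigma)$; any residual transversal second derivative (the only dangerous direction being $\underline{l}\underline{l}$) is swapped via the null decomposition $-l\underline{l}+e_1 e_1 = \square_{\mathbf{g}} + O(d\mathbf{g})\cdot\partial$, which converts it into a tangential derivative plus a $\square_{\mathbf{g}}\mathbf{g}_{\mu\nu}$ source.

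With this algebraic reduction, I apply Lemma \ref{LLQ} with $f = \mathbf{g}_{\mu\nu}$ and $F = \square_{\mathbf{g}}\mathbf{g}_{\mu\nu}$. Since $\mathbf{g}_{\mu\nu}$ is smooth in $(\bv,\rho)$, the chain rule and \eqref{fc}-\eqref{Di} give
\begin{equation*}
\square_{\mathbf{g}}\mathbf{g}_{\mu\nu} = (\partial_{\bv}\mathbf{g}_{\mu\nu})\cdot \square_{\mathbf{g}}\bv + (\partial_\rho \mathbf{g}_{\mu\nu})\cdot \square_{\mathbf{g}}\rho + O(|d\mathbf{g}|^2),
\end{equation*}
which is bounded pointwise by $|\nabla w| + |d\mathbf{g}|^2$. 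The Lemma then outputs $/\kern-0.55em \partial_\alpha /\kern-0.55em \partial_\beta(\mathbf{g}_{\mu\nu}|_\Sigma) = l(\tilde f_2) + \tilde f_1$, and collecting all contributions across the various $(a,b,c,d)$ proves the decomposition \eqref{603}. The pointwise bounds $|f_2|\lesssim |d\mathbf{g}|$ and $|f_1|\lesssim |\nabla w|+|d\mathbf{g}|^2$ fall out of the construction: $f_2$ collects only first-order-in-$\mathbf{g}$ terms (the tangential derivatives packaged by Lemma \ref{LLQ}), while $f_1$ inherits $F$ together with the $\mathcal{Q}$-remainder.

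For \eqref{604}, the inputs demanded by Lemma \ref{LLQ} are all available: $\|d\mathbf{g}\|_{L^\infty_t H^{s_0-5/4}_x}$ and $\|d\mathbf{g}\|_{L^4_t L^\infty_x}$ are controlled by Proposition \ref{r1} and \eqref{5021}, while for $\|F\|_{L^1_t H^{s_0-5/4}_{x'}(\Sigma)}$ I would bound the $\nabla w$ piece via Lemma \ref{te20} (Cauchy-Schwarz in $t\in[-2,2]$ passing from $L^2_t$ to $L^1_t$) and the $|d\mathbf{g}|^2$ piece via Lemma \ref{te2} together with Proposition \ref{r1}; each contribution is $\lesssim \epsilon_2$. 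The Hölder bound \eqref{605} is then immediate, since $f_2$ is algebraic in $d\mathbf{g}$ with coefficients smooth in $(\bv,\rho,d\phi)$, and \eqref{502} provides the $C^{\delta_0}$ control of the change of variables $x\mapsto(x',\phi(t,x'))$ needed to pass from $\|d\mathbf{g}\|_{C^{\delta_0}_x}$ to $\|f_2\|_{C^{\delta_0}_{x'}(\Sigma^t)}$. The principal obstacle I anticipate is the algebraic book-keeping in the first step: one must verify that every second-derivative contribution of $R$ in the null frame genuinely admits the tangential-plus-$\square_{\mathbf{g}}$-source form, with no $\underline{l}\underline{l}$ term surviving; once this is done the analytic estimates reduce entirely to tools already in place (Lemma \ref{LLQ}, Lemmas \ref{te1}-\ref{te20}, and Proposition \ref{r1}).
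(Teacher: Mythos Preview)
Your approach is correct and matches the paper's: write $R$ via its coordinate formula, put the quadratic $(d\mathbf{g})^2$ piece into $f_1$, reduce the second-derivative terms to tangential second derivatives of $\mathbf{g}_{\mu\nu}|_\Sigma$, and invoke Lemma \ref{LLQ} with source $F=\square_{\mathbf{g}}\mathbf{g}_{\mu\nu}$, which is then bounded via Corollary \ref{vte} and Lemma \ref{te20}. The paper's proof is in fact terser than yours---it treats only the ``typical'' term with $e_a,e_c$ already expanded in the tangent basis $/\kern-0.55em\partial_\alpha$ and does not discuss the $\underline{l}$ direction at all; note that your stated null identity $-l\underline{l}+e_1e_1=\square_{\mathbf g}+O(d\mathbf g)\cdot\partial$ actually eliminates $l\underline{l}$ rather than $\underline{l}\,\underline{l}$, but this is immaterial since the sole application (Lemma \ref{chi}) uses only $\langle R(l,e_1)l,e_1\rangle$, where no $\underline{l}$ appears.
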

\begin{proof}
	Due to the definition of curvature tensor, we have
	\begin{equation*}
		\left< R(e_a, e_b)e_c, e_d \right>_{\mathbf{g}}= R_{\alpha \beta \mu \nu}e^\alpha_a e^\beta_b e_c^\mu e_d^\nu,
	\end{equation*}
	where
	\begin{equation*}
		R_{\alpha \beta \mu \nu}= \frac12 \left[ \partial^2_{\alpha \mu} \mathbf{g}_{\beta \nu}+\partial^2_{\beta \nu} \mathbf{g}_{\alpha \mu}-\partial^2_{\beta \mu} \mathbf{g}_{\alpha \nu}-\partial^2_{\alpha \nu} \mathbf{g}_{\beta \mu} \right]+ F(\mathbf{g}^{\alpha \beta}, d \mathbf{g}_{\alpha \beta}),
	\end{equation*}
	where $F$ is a sum of products of coefficients of $\mathbf{g}^{\alpha \beta} $ with quadratic forms in $d \mathbf{g}_{\alpha \beta}$. Due to Lemma \ref{LLQ}, the estimate \eqref{603} holds. Next, we will prove \eqref{604} and \eqref{605}.

	Applying Proposition \ref{r1}, the term $F$ satisfies the bound required of $f_1$. It suffices for us to consider
	\begin{equation*}
		\frac12 e^\alpha_a e^\beta_b e_c^\mu e_d^\nu  \left[ \partial^2_{\alpha \mu} \mathbf{g}_{\beta \nu}+\partial^2_{\beta \nu} \mathbf{g}_{\alpha \mu}-\partial^2_{\beta \mu} \mathbf{g}_{\alpha \nu}-\partial^2_{\alpha \nu} \mathbf{g}_{\beta \mu} \right].
	\end{equation*}
	We therefore look at the term $ e^\alpha_a e_c^\mu \partial^2_{\alpha \mu} \mathbf{g}_{\beta \nu} $, which is typical. By \eqref{503} and Proposition \ref{r1}, we get
	\begin{equation}\label{LLL}
		\vert\kern-0.25ex\vert\kern-0.25ex\vert  l^\alpha - \delta^{\alpha 0} \vert\kern-0.25ex\vert\kern-0.25ex\vert _{s_0-\frac14,2,\Sigma} +\vert\kern-0.25ex\vert\kern-0.25ex\vert  \underline{l}^\alpha + \delta^{\alpha 0}-2\delta^{\alpha 2} \vert\kern-0.25ex\vert\kern-0.25ex\vert _{s_0-\frac14,2,\Sigma} + \vert\kern-0.25ex\vert\kern-0.25ex\vert  e^\alpha_1- \delta^{\alpha 1} \vert\kern-0.25ex\vert\kern-0.25ex\vert _{s_0-\frac14,2,\Sigma} \lesssim \epsilon_1.
	\end{equation}
	By using \eqref{LLL} and Proposition \ref{r1}, the term $ e_a (e_c^\mu) \partial_{ \mu} \mathbf{g}_{\beta \nu}$ satisfies the bound required of $f_1$, we therefore consider $e_a(e_c(\mathbf{g}_{\beta \nu}))$. Since the coefficients of $e_c$ in the basis $/\kern-0.55em \partial_\alpha$ have tangential derivatives bounded in $L^2_tH^{s_0-\frac54}_{x'}(\Sigma)$, we are reduced by Lemma \ref{LLQ} to verifying that
	\begin{equation*}
		\| \mathbf{g}^{\alpha \beta } \partial^2_{\alpha \beta} \mathbf{g}_{\mu \nu} \|_{L^1_t H^{s_0-\frac54}_{x'}(\Sigma)} \lesssim \epsilon_2.
	\end{equation*}
	Note $\mathbf{g}^{\alpha \beta } \partial^2_{\alpha \beta} \mathbf{g}_{\mu \nu}= \square_{\mathbf{g}} \mathbf{g}_{\mu \nu}$. By use of Corollary \ref{vte} and Lemma \ref{te20}, we have
	\begin{equation*}
		\begin{split}
			\| \square_{\mathbf{g}} \mathbf{g}_{\mu \nu}\|_{L^1_t H^{s_0-\frac54}_{x'}(\Sigma)}
			\lesssim & \ \| \square_{\mathbf{g}} \bv\|_{L^2_t H^{s_0-\frac54}_{x'}(\Sigma)}+\| \square_{\mathbf{g}} \rho \|_{L^2_t H^{s_0-\frac54}_{x'}(\Sigma)}
			\\
			\lesssim & \ \| \nabla w \|_{L^2_t H^{s_0-\frac54}_{x'}(\Sigma)}+ \| (d\bv,dg)\cdot (d\bv,dg) \|_{L^1_t H^{s_0-\frac54}_{x'}(\Sigma)}
			\\
			\lesssim & \ \| \nabla w \|_{L^2_t H^{s_0-\frac54}_{x'}(\Sigma)}+ [ 2-(-2)  ]^{\frac14}\| d\bv, d\rho \|_{L^4_t L^\infty_x }\| d\bv, d\rho\|_{L^2_t H^{s_0-\frac54}_{x'}(\Sigma)}
			\\
			\lesssim & \ \epsilon_2.
		\end{split}
	\end{equation*}
	Above, $\nabla w$ and $(d{\mathbf{g}})^2$ are included in $f_1$. Therefore, we have finished the proof of this corollary.
\end{proof}
Based on the above null frame, we can discuss the estimates for connection coefficients as follows.
\subsection{Estimates for connection coefficients}
Define
\begin{equation*}
	\chi= \left<D_{e_1}l,e_1 \right>_{\mathbf{g}}, \qquad l(\ln \sigma)=\frac{1}{2}\left<D_{l}\underline{l},l \right>_{\mathbf{g}}. \quad 
\end{equation*}
For $\sigma$, we set the initial data $\sigma=1$ at the time $-2$. Thanks to Proposition \ref{r1}, we have
\begin{equation}\label{606}
	\|\chi \|_{L^2_t H^{s_0-\frac54}_{x'}(\Sigma)} + \| l(\ln \sigma)\|_{L^2_t H^{s_0-\frac54}_{x'}(\Sigma)} \lesssim \epsilon_1.
\end{equation}
In a similar way, if we expand $l=l^\alpha /\kern-0.55em \partial_\alpha$ in the tangent frame $\partial_t, \partial_{x'}$ on $\Sigma$, then
\begin{equation}\label{607}
	l^0=1, \quad \|l^1\|_{s_0-\frac14,2,\Sigma} \lesssim \epsilon_1.
\end{equation}
Next, we will establish some estimates for connections along the hypersurface.
\begin{Lemma}\label{chi}
Assume $\frac74<s_0\leq s \leq 2$ and $ \delta_0 \in (0, s_0-\frac{7}{4})$. Let $\chi$ be defined as before. Then
	\begin{equation}\label{608}
		\|\chi\|_{L^2_t H^{s_0-\frac54}_{x'}(\Sigma)} \lesssim \epsilon_2.
	\end{equation}
	Furthermore, for any $t \in [-2,2]$,
	\begin{equation}\label{609}
		\| \chi \|_{C^{\delta_0}_{x'}(\Sigma^t)} \lesssim \epsilon_2+ \|d \mathbf{g} \|_{C^{\delta_0}_{x}(\mathbb{R}^2)}.
	\end{equation}
\end{Lemma}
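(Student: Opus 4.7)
The plan is to derive a Riccati-type transport equation for $\chi$ along the null generator $l$, absorb the leading curvature contribution via Corollary \ref{Rfenjie}, and integrate along the null geodesics. Since $e_1$ is a $\mathbf{g}$-unit vector tangent to the time slices $\Sigma^t$ and $l$ is the null geodesic flow field normalized by $dt(l)=1$ (so that $\sigma$ records the deviation from affine parametrisation), the standard null structure equations on a $(2+1)$-dimensional spacetime yield an equation of the form
\begin{equation*}
l(\chi) + \chi^2 \;=\; l(\ln\sigma)\,\chi \;-\; \langle R(e_1,l)l,e_1\rangle_{\mathbf{g}},
\end{equation*}
where the right-hand side captures the interaction with the null lapse and the ambient curvature. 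By Corollary \ref{Rfenjie}, the curvature term splits as $l(f_2)+f_1$ with $\|f_2\|_{L^2_tH^{s_0-5/4}_{x'}(\Sigma)}+\|f_1\|_{L^1_tH^{s_0-5/4}_{x'}(\Sigma)}\lesssim\epsilon_2$, so setting $\tilde\chi = \chi + f_2$ converts the equation into
\begin{equation*}
l(\tilde\chi) \;=\; -\chi^2 + l(\ln\sigma)\,\chi + f_1,
\end{equation*}
which no longer loses a derivative on the right.

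At $t=-2$ the metric $\mathbf{g}$ coincides with Minkowski, so $\Sigma$ is a flat null plane and $\chi|_{t=-2}=0$. Integrating along each null generator from $t=-2$, I would express $\tilde\chi(t)$ as the time integral of the right-hand side and read off bounds. For the Sobolev estimate \eqref{608}, the key inputs are: (i) in one spatial dimension, $H^{s_0-5/4}(\mathbb{R})$ is an algebra since $s_0>7/4$, so that $\|\chi^2\|_{L^1_tH^{s_0-5/4}_{x'}}\lesssim \|\chi\|_{L^2_tH^{s_0-5/4}_{x'}}^2$; (ii) the bound $\|l(\ln\sigma)\|_{L^2_tH^{s_0-5/4}_{x'}(\Sigma)}\lesssim\epsilon_1$ from \eqref{606}, combined with the same product rule, handles the mixed term; (iii) $f_1$ and the recovery term $f_2$ are bounded by $\epsilon_2$ directly from Corollary \ref{Rfenjie}. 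A bootstrap using the smallness $\epsilon_1\ll 1$ absorbs the nonlinear contribution into the left-hand side.

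For the Hölder estimate \eqref{609}, I would integrate the same equation along $l$ at fixed time and employ \eqref{605}, which gives $\|f_2(t,\cdot)\|_{C^{\delta_0}_{x'}(\Sigma^t)}\lesssim \|d\mathbf{g}\|_{C^{\delta_0}_x}$, accounting for the explicit $\|d\mathbf{g}\|_{C^{\delta_0}_x}$ on the right-hand side through $\chi=\tilde\chi-f_2$. The remaining contributions $\chi^2$ and $l(\ln\sigma)\chi$ are placed in $L^1_tC^{\delta_0}_{x'}$ via the algebra property in one space dimension and the already established bound on $\chi$, invoking once more the bootstrap smallness.

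The principal obstacle will be the quadratic Riccati term $\chi^2$: controlling it requires both the algebra property of $H^{s_0-5/4}(\mathbb{R})$, which is borderline and only holds because $s_0>7/4$, and an a priori smallness hypothesis on $\chi$ that must be closed by a bootstrap. A subsidiary technical point is to verify that the connection coefficients appearing implicitly in the transport equation (through $l$, $e_1$, and $\sigma$) enjoy the estimates \eqref{606}--\eqref{607}, which in turn rest on Proposition \ref{r1}; this ensures that all terms outside the explicit curvature are lower order and absorbable.
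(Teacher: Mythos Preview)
Your proposal is correct and follows essentially the same route as the paper: derive the Riccati transport equation for $\chi$ along $l$, use Corollary \ref{Rfenjie} to absorb the curvature term as $l(f_2)+f_1$, and integrate to get both the $L^2_tH^{s_0-5/4}_{x'}$ and $C^{\delta_0}_{x'}$ bounds via the algebra property of $H^{s_0-5/4}(\mathbb{R})$. The only procedural difference is that the paper does not run a bootstrap: the a priori bound \eqref{606} (which you mention at the end) already gives $\|\chi\|_{L^2_tH^{s_0-5/4}_{x'}(\Sigma)}\lesssim\epsilon_1$, so the quadratic term is controlled by $\epsilon_1^2$ outright and the lemma merely improves $\epsilon_1$ to $\epsilon_2$.
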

\begin{proof}
	Along the null hypersurface, $\chi$ satisfies a transport equation (see Klainerman and Rodnianski \cite{KR2}). This tells us:
	\begin{equation*}
		l(\chi)=\left< R(l,e_1)l, e_1 \right>_{\mathbf{g}}-\chi^2-l(\ln \sigma)\chi.
	\end{equation*}
	Due to Corollary \ref{Rfenjie}, we can rewrite the above equation by
	\begin{equation}\label{610}
		l(\chi-f_2)=f_1-\chi^2-l(\ln \sigma)\chi,
	\end{equation}
	where
	\begin{equation}\label{611}
		\|f_2\|_{L^2_t H^{s_0-\frac54}_{x'}(\Sigma)}+\|f_1\|_{L^1_t H^{s_0-\frac54}_{x'}(\Sigma)} \lesssim \epsilon_2,
	\end{equation}
	and for any $t \in [0,T]$,
	\begin{equation}\label{612}
		\|f_2(t,\cdot)\|_{C^{\delta_0}_{x'}(\Sigma^t)} \lesssim \|d \mathbf{g}\|_{C^{\delta_0}_x(\mathbb{R}^2)}.
	\end{equation}
	Let $\Lambda_{x'}$ be defined in \eqref{Sig}. To be simple, we set
	\begin{equation*}\label{ef}
		F=f_1-\chi^2-l(\ln \sigma)\chi.
	\end{equation*}
Applying integration by parts on \eqref{610}, we obtain
	\begin{equation}\label{613}
		\begin{split}
			 \|\Lambda_{x'}^{s_0-\frac54}(\chi-f_2)(t,\cdot) \|_{L^2_{x'}(\Sigma^t)}
			\lesssim \ &\| [\Lambda_{x'}^{s_0-\frac54},l](\chi-f_2) \|_{L^1_tL^2_{x'}(\Sigma^t)}+ \| \Lambda_{x'}^{s_0-\frac54}F \|_{L^1_tL^2_{x'}(\Sigma^t)}.
		\end{split}
	\end{equation}
By using $s_0>\frac74$, $H^{s_0-\frac54}_{x'}(\Sigma^t)$ is an algebra. This ensures that
	\begin{equation}\label{614}
		\begin{split}
			\| \Lambda_{x'}^{s_0-\frac54} F \|_{L^1_tL^2_{x'}(\Sigma^t)} &\lesssim \|f_1\|_{L^1_tH^{s_0-\frac54}_{x'}(\Sigma^t)}+ \|\chi\|^2_{L^2_tH^{s_0-\frac54}_{x'}(\Sigma^t)}
			\\
			& \quad + \|\chi\|_{L^2_tH^{s_0-\frac54}_{x'}(\Sigma^t)}\cdot\|l(\ln \sigma)\|_{L^2_tH^{s_0-\frac54}_{x'}(\Sigma^t)}
			\\
			& \quad + \|\mu\|_{L^2_tH^{s_0-\frac54}_{x'}(\Sigma^t)}\cdot\|\chi\|_{L^2_tH^{s_0-\frac54}_{x'}(\Sigma^t)}.
		\end{split}
	\end{equation}
For $a \geq 0$, a direct calculation tells us
\begin{equation*}
	\begin{split}
		[\Lambda_{x'}^{a},l]f=& \Lambda_{x'}^{a} (l^{\alpha} /\kern-0.55em \partial_{\alpha}  f ) - l^{\alpha} /\kern-0.55em \partial_{\alpha} \Lambda_{x'}^{a} f
		\\
		=& \Lambda_{x'}^{a} /\kern-0.55em \partial_{\alpha}  ( l^{\alpha} f ) - \Lambda_{x'}^{a} ( /\kern-0.55em \partial_{\alpha} l^\alpha f)- l^\alpha \Lambda_{x'}^{a} /\kern-0.55em \partial_{\alpha} f
		\\
		=&  - \Lambda_{x'}^{a} ( /\kern-0.55em \partial_{\alpha} l^\alpha f) + [\Lambda_{x'}^{a} /\kern-0.55em \partial_{\alpha} , l^{\alpha} ]f.
	\end{split}
\end{equation*}
As a result, we can bound $\| [\Lambda_{x'}^{s_0-\frac54},l](\chi-f_2) \|_{L^2_{x'}(\Sigma^t)}$ by
	\begin{equation}\label{k20}
	\begin{split}
		\| [\Lambda_{x'}^{s_0-\frac54},l](\chi-f_2) \|_{L^2_{x'}(\Sigma^t)} 
		 \leq & \| /\kern-0.55em \partial_{\alpha} l^{\alpha} (\chi-f_2)(t,\cdot) \|_{H^{s_0-\frac54}_{x'}(\Sigma^t)}
		\\
		& + \|[\Lambda_{x'}^{s_0-\frac54} /\kern-0.55em \partial_{\alpha}, l^{\alpha}](\chi-f_2)(t,\cdot) \|_{L^{2}_{x'}(\Sigma^t)}.
	\end{split}	
	\end{equation}
Note $l^0=1$ and $s_0>\frac74$. Applying Kato-Ponce's commutator estimate and Sobolev embeddings, the right hand side of \eqref{k20} can be bounded by
	\begin{equation}\label{615}
		\|l^1(t,\cdot)\|_{H^{s_0-\frac54}_{x'}(\Sigma^t)} \| \Lambda_{x'}^{s_0-\frac54}(\chi-f_2)(t,\cdot) \|_{L^{2}_{x'}(\Sigma^t)} .
	\end{equation}
	Because of \eqref{606}, \eqref{607}, \eqref{611}, \eqref{613}, \eqref{614} and \eqref{615}, we thus prove that
	\begin{equation}\label{k22}
		\sup_t \|(\chi-f_2)(t,\cdot)\|_{H^{s_0-\frac54}_{x'}(\Sigma^t)}  \lesssim \epsilon_2.
	\end{equation}
	The estimate \eqref{k22} combining with \eqref{611} yield \eqref{608}. Due to \eqref{610}, we find
	\begin{equation}\label{616}
		\begin{split}
			\| \chi-f_2\|_{C^{\delta_0}_{x'}} & \lesssim \| f_1 \|_{L^1_tC^{\delta_0}_{x'}}+ \|\chi^2\|_{L^1_tC^{\delta_0}_{x'} }+\|l(\ln \sigma)\chi\|_{L^1_tC^{\delta_0}_{x'}}.
		\end{split}
	\end{equation}
	Using Sobolev's imbedding, we have
	\begin{equation}\label{619}
		H^{s_0-\frac54}(\mathbb{R})\hookrightarrow C^{\delta_0}(\mathbb{R}), \qquad \delta_0 \in (0,s_0-\frac74).
	\end{equation}
Taking advantage of \eqref{612}, \eqref{616}, and \eqref{619}, we get \eqref{609}. At this stage, we have finished the proof of this lemma.
\end{proof}
Finally, we present a proof for Proposition \ref{r2} as follows.
\subsection{The proof of Proposition \ref{r2}}
First, we will verify \eqref{G}. By using \eqref{501} and \eqref{602} for $\vert\kern-0.25ex\vert\kern-0.25ex\vert \mathbf{g}-\mathbf{m} \vert\kern-0.25ex\vert\kern-0.25ex\vert_{s_0-\frac54,2,\Sigma}$, then the inequality \eqref{G} follows from the following bound:
\begin{equation*}
	\vert\kern-0.25ex\vert\kern-0.25ex\vert l-(\partial_t-\partial_{2})\vert\kern-0.25ex\vert\kern-0.25ex\vert_{s_0-\frac14,2,\Sigma} \lesssim \epsilon_2.
\end{equation*}
Above, it is understood that one takes the norm of the coefficients of $l-(\partial_t-\partial_{2})$ in the standard frame on $\mathbb{R}^{1+2}$. The geodesic equation, together with the bound for Christoffel symbols $\|\Gamma^\alpha_{\beta \gamma}\|_{L^4_t L^\infty_x} \lesssim \|d {\mathbf{g}} \|_{L^4_t L^\infty_x}\lesssim \epsilon_2$, imply that
\begin{equation*}
	\|l-(\partial_t-\partial_{2})\|_{L^\infty_{t,x}} \lesssim \epsilon_2.
\end{equation*}
Therefore, it suffices for us to bound the tangential derivatives of the coefficients for $l-(\partial_t-\partial_{2})$ in the norm $L^2_t H^{s_0-\frac54}_{x'}(\Sigma)$. By using Proposition \ref{r1}, we can estimate the Christoffel symbols
\begin{equation*}
	\|\Gamma^\alpha_{\beta \gamma} \|_{L^2_t H^{s_0-\frac54}_{x'}(\Sigma^t)} \lesssim \epsilon_2.
\end{equation*}
Note that $H^{s_0-\frac54}_{x'}(\Sigma^t)$ is a algebra when $s_0>\frac74$. We thus derive
\begin{equation*}
	\|\Gamma^\alpha_{\beta \gamma} e_1^\beta l^\gamma\|_{L^2_t H^{s_0-\frac54}_{x'}(\Sigma^t)} \lesssim \epsilon_2.
\end{equation*}
In what follows, we can establish the following bound:
\begin{equation}\label{fb}
	\| \left< D_{e_1}l, e_1 \right>\|_{L^2_t H^{s_0-\frac54}_{x'}(\Sigma^t)}+ \| \left< D_{e_1}l, \underline{l} \right>\|_{L^2_t H^{s_0-\frac54}_{x'}(\Sigma^t)}+\|\left< D_{l}l, \underline{l} \right>\|_{L^2_t H^{s_0-\frac54}_{x'}(\Sigma^t)} \lesssim \epsilon_2.
\end{equation}
In fact, the first term in \eqref{fb} is $\chi$, which can be bounded by using Lemma \ref{chi}. For the second term in the left side of \eqref{fb}, consider
\begin{equation*}
	\left< D_{e_1}l, \underline{l} \right>=\left< D_{e_1}l, 2\partial_t \right>=-2\left< D_{e_1}\partial_t,l \right>.
\end{equation*}
Then it can be bounded by use of Proposition \ref{r1}. Similarly, using Proposition \ref{r1}, we can obtain the desired estimate for the last term in \eqref{fb}. Therefore, the rest of proof is to obtain
\begin{equation*}
	\| d \phi(t,x')-dt \|_{C^{1,\delta_0}_{x'}(\mathbb{R})}  \lesssim \epsilon_2+ \| d\mathbf{g}(t,\cdot)\|_{C^{\delta_0}_x(\mathbb{R}^2)}.
\end{equation*}
In order to do this, it suffices for us to show that
\begin{equation*}
	\|l(t,\cdot)-(\partial_t-\partial_{x_2})\|_{C^{1,\delta_0}_{x'}(\mathbb{R})} \lesssim \epsilon_2+ \| d \mathbf{g} (t,\cdot)\|_{C^{\delta_0}_x(\mathbb{R}^2)}.
\end{equation*}
It's clear that the coefficients of $e_1$ are small in $C^{\delta_0}_{x'}(\Sigma^t)$ perturbations of their constant coefficient analogs. Thus, it's left for us to prove
\begin{equation*}
	\|\left< D_{e_1}l, e_1 \right>(t,\cdot)\|_{C^{\delta_0}_{x'}(\Sigma^t)}
	+\|\left< D_{e_1}l, \underline{l} \right>(t,\cdot)\|_{C^{\delta_0}_{x'}(\Sigma^t)}  \lesssim \epsilon_2+ \| d\mathbf{g}(t,\cdot)\|_{C^{\delta_0}_x(\mathbb{R}^2)}.
\end{equation*}
Above, the first term is bounded by Lemma \ref{chi}, and the second by using
\begin{equation*}
	\|\left< D_{e_1}\partial_t, l \right>(t,\cdot)\|_{C^{\delta_0}_{x'}(\Sigma^t)} \lesssim  \| d\mathbf{g}(t,\cdot)\|_{C^{\delta_0}_x(\mathbb{R}^2)}.
\end{equation*}
Therefore, \eqref{502} holds. We complete the proof of Proposition \ref{r2}.

\section{The proof of Proposition \ref{r3} and Strichartz estimates}\label{sec7}
In this part, our goal is to give a proof of Proposition \ref{r3} and establish Strichartz estimates of solutions.
\subsection{The proof of Proposition \ref{r3}}
We first introduce Strichartz estimates for linear inhomogeneous wave equations.
\begin{proposition}\label{r5}
Assume $s\in (\frac74, 2]$. Suppose that $(\bv,\rho, w) \in \mathcal{{H}}$ and $G(\bv,\rho)\leq 2 \epsilon_1$. For each $1 \leq r \leq s+1$, then the linear, homogenous equation
	\begin{equation*}
		\begin{cases}
			& \square_{\mathbf{g}} F=0,
			\\
			&F(t,x)|_{t=t_0}=F_0, \quad \partial_t F(t,x)|_{t=t_0}=F_1,
		\end{cases}
	\end{equation*}
	is well-posed for the initial data $(F_0, F_1)$ in $H_x^r \times H_x^{r-1}$. Moreover, the solution satisfies, respectively, the energy estimates
	\begin{equation*}
		\| F\|_{L^\infty_tH_x^r}+ \| \partial_t F\|_{L^\infty_tH_x^{r-1}} \leq C \big( \|F_0\|_{H_x^r}+ \|F_1\|_{H_x^{r-1}} \big),
	\end{equation*}
	and the Strichartz estimates
	\begin{equation}\label{SL}
		\| \left<\nabla \right>^a F\|_{L^4_{t}L^\infty_x} \leq C \big( \|F_0\|_{H_x^r}+ \|F_1\|_{H_x^{r-1}} \big), \quad a<r-\frac34.
	\end{equation}
	The similar estimate \eqref{SL} also holds if we replace $\left<\nabla \right>^a$ by $\left<\nabla \right>^{a-1}d$.
\end{proposition}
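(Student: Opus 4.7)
The plan is to follow the wave-packet parametrix scheme of Smith--Tataru \cite{ST}, adapted to the acoustic metric $\mathbf{g}$ of \eqref{boldg}. The energy estimate $\|F\|_{L^\infty_t H^r_x} + \|\partial_t F\|_{L^\infty_t H^{r-1}_x} \lesssim \|F_0\|_{H^r} + \|F_1\|_{H^{r-1}}$ I would obtain first by the standard integration-by-parts argument applied to $\square_{\mathbf{g}} F = 0$: multiplying by an appropriate derivative of $F$, the commutator errors can be controlled by $\|d\mathbf{g}\|_{L^4_t L^\infty_x}$, which is of size $\epsilon_2$ by \eqref{5021} and the Sobolev embedding $C^\delta_x \hookrightarrow L^\infty_x$. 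Together with Gronwall, this handles the $L^\infty_t$ bounds for $1 \leq r \leq s+1$.

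For the Strichartz estimate \eqref{SL}, I would decompose Littlewood--Paley-wise: writing $F = \sum_\lambda F_\lambda$ where $F_\lambda$ has data localized at frequency $\lambda$, it suffices by Bernstein and duality to prove the frequency-localized bound
\begin{equation*}
	\| F_\lambda \|_{L^4_t L^\infty_x} \lesssim \lambda^{\frac{3}{4}} \bigl( \|F_{0,\lambda}\|_{L^2} + \lambda^{-1}\|F_{1,\lambda}\|_{L^2} \bigr),
\end{equation*}
with a summable constant over dyadic $\lambda$, which then yields the $3/4$-derivative loss in \eqref{SL}. Following \cite{ST}, the strategy is to replace $\mathbf{g}$ by the regularized metric $\mathbf{g}_{<\lambda} = P_{<\lambda}\mathbf{g}$, which is smooth at scale $\lambda^{-1}$; the paradifferential error $(\mathbf{g}-\mathbf{g}_{<\lambda})\partial^2 F_\lambda$ is controllable using the bounds in Proposition \ref{r1} (in particular $\vert\kern-0.25ex\vert\kern-0.25ex\vert \lambda(\mathbf{g}-P_{<\lambda}\mathbf{g})\vert\kern-0.25ex\vert\kern-0.25ex\vert_{s_0-5/4,2,\Sigma}\lesssim \epsilon_2$).

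For the regularized operator $\square_{\mathbf{g}_{<\lambda}}$, I would construct a wave packet parametrix by decomposing the phase space into tubes of size $\lambda^{-1/2} \times \lambda^{-1/2} \times 1$ along null geodesics of $\mathbf{g}_{<\lambda}$, and summing the parametrix contributions via a $TT^*$ argument. The dispersive decay for a single packet relies on the regularity of the null hypersurfaces established in Proposition \ref{r2} and Lemma \ref{chi}: the uniform $C^{1,\delta_0}_{x'}$ bound \eqref{502} combined with the $L^2_t H^{s_0-5/4}_{x'}(\Sigma)$ control of the null second fundamental form \eqref{608} ensures the tubes remain geometrically coherent. The curvature decomposition \eqref{603}--\eqref{605} of Corollary \ref{Rfenjie} is what drives the transport estimate for $\chi$, and hence the non-focusing property of the geodesic flow.

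The main obstacle will be the interplay between the low regularity of the metric and the resulting loss in the parametrix. The Strichartz exponent $3/4$ in 2D is sharp for metrics only of $C^{1,\delta_0}$ regularity; here $\mathbf{g}$ is below that scaling classically, and the argument only closes because the characteristic hypersurfaces have \emph{better} regularity than the metric along them --- this is precisely the content of Proposition \ref{r2}, enabled by the wave-transport structure of \eqref{fc} and the $H^{s_0-1/4}_x$ control of the vorticity. The energy estimate for the parametrix remainder must be fed back through a bootstrap using the same Strichartz norm being estimated, which requires a careful frequency-by-frequency iteration and summation as in \cite{ST}. The passage from $\left<\nabla\right>^a F$ to $\left<\nabla\right>^{a-1} dF$ follows by differentiating the equation and treating $\partial_t F$ via the equation itself, so that step is routine.
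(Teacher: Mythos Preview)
Your proposal is correct and follows essentially the same route as the paper: the paper reduces Proposition \ref{r5} to a frequency-localized statement (Proposition \ref{A1}) via Littlewood--Paley decomposition and metric regularization $\mathbf{g}\to\mathbf{g}_\lambda$, then builds approximate solutions as superpositions of normalized wave packets along the null foliations, invoking Propositions \ref{r1}, \ref{r2} and Corollary \ref{Rfenjie} for the geometric control, and closes with the Smith--Tataru overlap/counting argument plus an iteration for the parametrix remainder. One minor correction: the wave packets here have cross-section $\lambda^{-1}\times (\epsilon_0\lambda)^{-1/2}$ rather than $\lambda^{-1/2}\times\lambda^{-1/2}$, but this does not affect the validity of your outline.
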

\begin{remark}
	The proof of Proposition \ref{r5} will be given in Section \ref{Ap}. 
\end{remark}
Now we are ready to prove Proposition \ref{r3} by use of Proposition \ref{r5}.
\begin{proof}[Proof of Proposition \ref{r3} by using Proposition \ref{r5}]
	Let $\mathbf{g}$ be stated as in Proposition \ref{r3}. Let $V$ satisfy the linear, homogenous equation
	\begin{equation}\label{Vf}
		\begin{cases}
			& \square_{\mathbf{g}} V=0,
			\\
			&V(t,x)|_{t=t_0}=f_0, \quad \mathbf{T} V(t,x)|_{t=t_0}=f_1-\Theta(t_0,x),
		\end{cases}
	\end{equation}
	where $(f_0, f_1-G(t_0,x)) \in H_x^r \times H_x^{r-1}$.
	Let $Q$ satisfy the the linear, nonhomogenous equation
	\begin{equation}\label{Qf}
		\begin{cases}
			& \square_{\mathbf{g}} Q=\mathbf{T}\Theta+B,
			\\
			&Q(t,x)|_{t=t_0}=0, \quad \mathbf{T} Q(t,x)|_{t=t_0}=\Theta(t_0,x),
		\end{cases}
	\end{equation}
	where $\Theta(t_0,x) \in H_x^{r-1}$. Due to $\mathbf{T}=\partial_t+\bv \cdot \nabla$, we can rewrite \eqref{Vf} and \eqref{Qf} as 
	\begin{equation*}
		\begin{cases}
			& \square_{\mathbf{g}} V=0,
			\\
			&V(t,x)|_{t=t_0}=f_0, \quad \partial_t V(t,x)|_{t=t_0}=f_1-\Theta(t_0,x)-\bv\cdot \nabla f_0(t_0,x),
		\end{cases}
	\end{equation*}
	and
	\begin{equation*}
		\begin{cases}
			& \square_{\mathbf{g}} Q=\mathbf{T}\Theta+B,
			\\
			&Q(t,x)|_{t=t_0}=0, \quad \partial_t Q(t,x)|_{t=t_0}=\Theta(t_0,x).
		\end{cases}
	\end{equation*}
Therefore, the sum $f= V+Q$ satisfies
	\begin{equation*}
		\begin{cases}
			& \square_{\mathbf{g}} f=\mathbf{T}\Theta+B, 
			\\
			&f(t,x)|_{t=t_0}=f_0 \in H_x^r(\mathbb{R}^2), 
			\\
			& \mathbf{T} f(t,x)|_{t=t_0}=f_1 \in H_x^{r-1}(\mathbb{R}^2).
		\end{cases}
	\end{equation*}
	Observe that the proof for \eqref{lw0} and \eqref{lw1} relies on $V$ and $Q$. By Proposition \ref{r5}, we can infer
	\begin{equation*}
		\| V\|_{L^\infty_tH_x^r}+ \| \partial_t V\|_{L^\infty_tH_x^{r-1}} \leq C \big( \|f_0\|_{H_x^r}+ \|f_1-\Theta (t_0,\cdot)-(\bv\cdot \nabla) V(t_0,\cdot)\|_{H_x^{r-1}} \big),
	\end{equation*}
	and
	\begin{equation*}
		\| \left<\nabla \right>^a V\|_{L^4_{t}L^\infty_x} \leq C \big( \|f_0\|_{H_x^r}+ \|f_1-\Theta(t_0,\cdot)-(\bv\cdot \nabla) V(t_0,\cdot)\|_{H_x^{r-1}} \big), \quad a<r-\frac34.
	\end{equation*}
Thanks to $s>\frac74$ and $r<s+1$, it turns out
	\begin{equation}\label{VE}
		\begin{split}
			& \| V\|_{L^\infty_tH_x^r}+ \| \partial_t V\|_{L^\infty_tH_x^{r-1}}
			\\
			\leq & C \big( \|f_0\|_{H_x^r}+ \|f_1\|_{H_x^{r-1}}+\|\Theta(t_0,\cdot)\|_{H_x^{r-1}}+\| \bv\|_{H_x^{s_0}} \| \nabla f_0(t_0,\cdot)\|_{H_x^{r-1}} \big)
			\\
			\leq & C \big( \|f_0\|_{H_x^r}+ \|f_1\|_{H_x^{r-1}}+\|\Theta\|_{L^\infty_tH_x^{r-1}}\big),
		\end{split}
	\end{equation}
	and
	\begin{equation}\label{SV}
		\| \left< \nabla \right>^a V\|_{L^4_{t}L^\infty_x} \leq C \big( \|f_0\|_{H_x^r}+ \|f_1\|_{H_x^{r-1}}+\|\Theta\|_{L^\infty_tH_x^{r-1}} \big), \quad a<r-\frac34.
	\end{equation}
	By applying Lemma \ref{LD} and Proposition \ref{r5}, we can also prove
	\begin{equation}\label{QE}
		\| Q\|_{L^\infty_t H_x^r}+ \| \partial_t Q\|_{L^\infty_t H_x^{r-1}} \leq C \big( \|\Theta(t_0,\cdot)\|_{H_x^{r-1}}+ \|\Theta\|_{L^1_tH^{r}}+ \|B\|_{L^1_tH^{r-1}}\big),
	\end{equation}
	and
	\begin{equation}\label{SQ}
		\| \left<\nabla \right>^a Q\|_{L^4_{t}L^\infty_x} \leq C \big( \|\Theta(t_0,\cdot)\|_{H_x^{r-1}}+ \|\Theta\|_{L^1_tH_x^{r}}+ \|B\|_{L^1_tH_x^{r-1}}\big), \quad a<r-1.
	\end{equation}
	Adding \eqref{VE} and \eqref{QE}, we get \eqref{lw0}. Adding \eqref{SV} and \eqref{SQ}, we obtain \eqref{lw1}. Therefore, we complete the proof of Proposition \ref{r3}.
\end{proof}

We can also obtain the following Strichartz estimates of solutions.
\subsection{Strichartz estimates for solutions}
\begin{proposition}\label{r6}
Assume $\frac74<s_0\leq s\leq 2$. Suppose $(\bv, \rho, w) \in \mathcal{H}$ and $G(\bv, \rho)\leq 2 \epsilon_1$. Let $\bv_{+}$ be defined in \eqref{dvc}. Then for $a<s-\frac34$, we have
	\begin{equation}\label{fgh}
		\|\left< \nabla \right>^a \rho, \left< \nabla \right>^a \bv_{+}\|_{L^4_t L^\infty_x}
		\lesssim  \| \rho_0\|_{H^s}+\| \bv_0\|_{H^s}+\| w_0\|_{H^{1+}} .
	\end{equation}
	The similar estimate \eqref{fgh} holds if we replace $\left<\nabla \right>^a$ by $\left<\nabla \right>^{a-1}d$.
\end{proposition}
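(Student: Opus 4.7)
The plan is to derive each half of the bound by applying Proposition \ref{r3} (the linear Strichartz estimate) to the appropriate wave equation from the decomposed Euler system: for $\rho$ the equation $\square_{\mathbf{g}}\rho=\mathcal{D}$ from Lemma \ref{FC}, and for $\bv_+$ the equation $\square_{\mathbf{g}} v^i_+=\mathbf{T}(\mathbf{T}v^i_-)+Q^i$ from Lemma \ref{wte1}. In both cases I take $r=s$ and $a<s-\tfrac{3}{4}$, so the hypotheses on $r$ and $a$ in Proposition \ref{r3} are satisfied.

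For $\rho$, choosing $\Theta=0$ and $B=\mathcal{D}$ reduces the task to bounding $\|\rho_0\|_{H^s}+\|\partial_t\rho|_{t=0}\|_{H^{s-1}}+\|\mathcal{D}\|_{L^1_tH^{s-1}_x}$. The first term is already on the desired right-hand side. The transport equation $\partial_t\rho=-\bv\cdot\nabla\rho-\mathrm{div}\,\bv$ together with Lemma \ref{ps} controls the second term by $\|\bv_0\|_{H^s}(1+\|\rho_0\|_{H^s})$. For the third, estimate \eqref{YYE} paired with H\"older in time and the bootstrap bounds \eqref{402a}, \eqref{403} from the $\mathcal{H}$ hypothesis gives $\|\mathcal{D}\|_{L^1_tH^{s-1}_x}\lesssim \|d\bv,d\rho\|_{L^4_tL^\infty_x}\|\bv,\rho\|_{L^\infty_tH^s_x}\lesssim \epsilon_2^2$, which is absorbed using Theorem \ref{be} to convert the $L^\infty_t$ norm into an initial-data norm.

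For $\bv_+$, choosing $\Theta^i=\mathbf{T}v^i_-$ and $B^i=Q^i$ in Proposition \ref{r3} produces
\begin{equation*}
\|\langle\nabla\rangle^a\bv_+\|_{L^4_tL^\infty_x}\lesssim \|\bv_+(0)\|_{H^s}+\|\partial_t\bv_+(0)\|_{H^{s-1}}+\|\mathbf{T}\bv_-\|_{L^\infty_tH^{s-1}_x\cap L^1_tH^s_x}+\|Q\|_{L^1_tH^{s-1}_x}.
\end{equation*}
The bulk source terms are handled directly by \eqref{eta} and \eqref{YYE}, respectively, and the resulting right-hand side is of the form $\|\bv,\rho\|_{L^\infty_tH^s_x}\|w\|_{L^\infty_tH^{1+}_x}$ plus the earlier $\epsilon_2^2$-type piece. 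For the initial data, $\bv_+(0)=\bv_0-\bv_-(0)$ and the elliptic estimate \eqref{eee} gives $\|\bv_-(0)\|_{H^s}\lesssim \|\bv_-(0)\|_{H^{2+}}\lesssim (1+\|\rho_0\|_{H^s})\|w_0\|_{H^{1+}}$ since $s\leq 2$. To treat $\partial_t\bv_+(0)=\mathbf{T}\bv(0)-\mathbf{T}\bv_-(0)-(\bv_0\cdot\nabla)\bv_+(0)$, I substitute $\mathbf{T}\bv(0)=-p'(e^{\rho_0})\nabla\rho_0$ from the Euler equation, control $\mathbf{T}\bv_-(0)$ in $H^{s-1}$ by \eqref{eta}, and treat the convection piece with Lemma \ref{ps}.

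The last step is to convert every interior $L^\infty_tH^s_x$ or $L^\infty_tH^{1+}_x$ norm appearing on the right-hand side to an initial-data norm via Theorem \ref{be}; the exponential factor there remains universally bounded because \eqref{403} together with H\"older ensures $\int_{-2}^{2}\|d\bv,d\rho\|_{L^\infty_x}\,d\tau\lesssim \epsilon_2$. The analogous estimate with $\langle\nabla\rangle^{a-1}d$ in place of $\langle\nabla\rangle^a$ follows verbatim from the corresponding clause of Proposition \ref{r3}. The main obstacle is the $\bv_+$ initial-data bookkeeping: one must extract an $H^{s-1}$ bound on $\partial_t\bv_+(0)$ purely in terms of $\|\bv_0\|_{H^s}$, $\|\rho_0\|_{H^s}$, and $\|w_0\|_{H^{1+}}$, which requires invoking simultaneously the transport form of $\partial_t\bv$, the splitting $\bv=\bv_++\bv_-$, and estimate \eqref{eta} for the nonlocal correction $\mathbf{T}\bv_-$.
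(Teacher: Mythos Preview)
Your overall strategy (apply Proposition \ref{r3} with $r=s$ to the wave equations for $\rho$ and $\bv_+$, then control the source terms by \eqref{YYE} and \eqref{eta}) matches the paper's, and your treatment of the initial data for $\bv_+$ is more careful than the paper's sketch. However, there is a genuine gap in your final step.

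You propose to convert the interior norm $\|w\|_{L^\infty_tH^{1+}_x}$ into an initial-data norm ``via Theorem \ref{be}.'' But Theorem \ref{be} controls the quantity $E(t)$, whose vorticity component is $\|w\|_{H^{s_0-1/4}_x}^2+\|\nabla w\|_{L^8_x}^2$, in terms of $E_0$, which contains $\|w_0\|_{H^{s_0-1/4}}$ and $\|\nabla w_0\|_{L^8}$. Invoking it would therefore place $\|w_0\|_{H^{s_0-1/4}}+\|\nabla w_0\|_{L^8}$ on the right-hand side rather than the claimed $\|w_0\|_{H^{1+}}$. Since the entire point of Proposition \ref{r6} (as the remark immediately after it emphasizes) is precisely this low-regularity vorticity dependence, this is not a cosmetic issue.

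The paper fills this gap by deriving, directly from the transport equation $\mathbf{T}w=0$ (via \eqref{w1}), a self-contained energy estimate at the $H^{1+a'}$ level for small $a'>0$:
\[
\|w\|_{L^\infty_{[-2,2]}H^{1+a'}_x}\lesssim \|w_0\|_{H^{1+a'}}\exp\Bigl(\int_{-2}^{2}\|d\bv\|_{C^{a'}_x}\,d\tau\Bigr),
\]
where the exponential is bounded thanks to \eqref{403}. This closes in $H^{1+a'}$ without reference to the higher norms in $E(t)$. You should insert an analogous argument in place of the appeal to Theorem \ref{be} for the vorticity factor; the $\|\bv,\rho\|_{L^\infty_tH^s_x}$ factor can still be handled by the hyperbolic estimate \eqref{AA00} (which is part of the proof of Theorem \ref{be} and involves only $\|\bv_0,\rho_0\|_{H^s}$).
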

\begin{proof}
	We note the functions $\rho$ and $\bv_{+}$ satisfy the system
	\begin{equation}\label{fcr}
		\begin{cases}
			& \square_{\mathbf{g}} \rho= \mathcal{D},
			\\
			&\square_{\mathbf{g}} \bv_{+}=\mathbf{T}\mathbf{T} \bv_{-}+ \bQ.
		\end{cases}
	\end{equation}
	By using the Strichartz estimate in Proposition \ref{r5} (taking $r=s$ and $a<r-\frac34$) 
	, we obtain
	\begin{equation}\label{fgH}
		\begin{split}
			& \|\left< \nabla \right>^a \rho, \left< \nabla \right>^a \bv_{+}\|_{L^4_t L^\infty_x} 
			\\
			\lesssim & \| \rho_0 \|_{H^s}+ \| \bv_0\|_{H^s}+  \|\mathbf{T}\bv_{-}\|_{L^\infty_{[-2,2]}H_x^{s-1}}+\|\bQ, \mathbf{T}\bv_{-} \|_{L^1_{[-2,2]}H_x^s}
			\\
			\lesssim & \| \rho_0 \|_{H^s}+ \| \bv_0\|_{H^s} +  ( \|w\|_{L^\infty_{[-2,2]}H_x^{1+}} +  \|d\rho,d\bv \|_{L^4_{[-2,2]}L_x^\infty} ) \|\rho,\bv \|_{L^\infty_{[-2,2]}H_x^s}  .
		\end{split}
	\end{equation}
	Operating $\Lambda_x^{a'}$ on \eqref{w1}, we have
	\begin{equation*}
		\begin{split}
			\mathbf{T} (\Lambda_x^{a'} \nabla w) = & [\Lambda_x^{a'},\mathbf{T} ] \nabla w + \Lambda_x^{a'} (\nabla \bv  \cdot \nabla w)
			\\
			= & [\Lambda_x^{a'}, \bv \cdot \nabla ] \nabla w + \Lambda_x^{a'} (\nabla \bv  \cdot \nabla w).
		\end{split}
	\end{equation*}
By using commutator and product estimates, for $0\leq a'<1.$, we can obtain the following energy:
	\begin{equation*}
		\begin{split}
			\|w\|^2_{L^\infty_{[-2,2]}H_x^{1+a'}}  \lesssim & \|w_0 \|^2_{L^\infty_{[-2,2]}H_x^{1+a'}}+ \int^t_0  \|\nabla \bv\|_{L^\infty_x}\|w\|^2_{H_x^{1+a'}} d\tau 
		 + \int^t_0  \|\nabla \bv\|_{C^{a'}_x}\|w\|^2_{H_x^{1}} d\tau .
		\end{split}
	\end{equation*}
	Due to Gronwall's inequality, we get
	\begin{equation}\label{k24}
		\|w\|_{L^\infty_{[-2,2]}H_x^{1+a'}}  \lesssim \|w_0 \|_{L^\infty_{[-2,2]}H_x^{1+a'}} \exp\left(  \int^t_0 \|d\bv\|_{C^{a'}_x} d\tau \right), \quad 0\leq a'<1.
	\end{equation}
	Due to \eqref{402a}, \eqref{403} and \eqref{k24}, the estimate \eqref{fgH} becomes
	\begin{equation*}
		\begin{split}
			\|\left< \nabla \right>^a \rho, \left< \nabla \right>^a \bv_{+}\|_{L^4_t L^\infty_x} & \lesssim \| \rho_0\|_{H^s}+\| \bv_0\|_{H^s} +\| w_{0}\|_{H^{1+}}, \qquad a<s-\frac34.
		\end{split}
	\end{equation*}
\end{proof}
	The Proposition \ref{r6} states a Strichartz estimate of solutions with a very low regularity of the velocity, density, and vorticity. This also motivates the following proposition.
\begin{proposition}\label{r4}
Assume $\frac74<s_0\leq s\leq 2$ and $ \delta\in (0, s-\frac{7}{4})$. Suppose $(\bv, \rho, w) \in \mathcal{H}$ and $G(\bv, \rho)\leq 2 \epsilon_1$. Let $\bv_{+}$ be defined in \eqref{dvc}. Then we have
	\begin{equation}\label{SR0}
		\begin{split}
			\| d \rho, d \bv\|_{L^2_t C^\delta_x} & \lesssim \| \rho \|_{L^\infty_t H_x^s}+\|\bv\|_{L^\infty_t H_x^s}+ {\| w\|_{L^\infty_t H_x^{1+2\delta}}}.
		\end{split}
	\end{equation}
	Furthermore, the Strichartz estimates
	\begin{equation}\label{strr}
		\|d \bv, d \rho, d \bv_{+}\|_{L^4_t C^\delta_x} \leq \epsilon_2,
	\end{equation}
	and the energy estimates
	\begin{equation}\label{eef}
		\| \bv\|_{L^\infty_tH_x^s} +\|\rho\|_{L^\infty_tH_x^s}+\| w \|_{L^\infty_tH_x^{s_0-\frac14}}+ \| \nabla w \|_{L^\infty_tL^8_x} \leq \epsilon_2,
	\end{equation}
	hold.
\end{proposition}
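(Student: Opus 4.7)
The plan is to prove the three estimates in the order \eqref{eef}, \eqref{strr}, \eqref{SR0}. The energy estimate \eqref{eef} follows directly from Theorem \ref{be}: the initial data bound \eqref{401} gives $E_0 \lesssim \epsilon_3^2$, while the bootstrap \eqref{403} combined with H\"older in time on $[-2,2]$ yields $\int_{-2}^2 \|d\bv, d\rho\|_{L^\infty_x}\, d\tau \lesssim \epsilon_2 \ll 1$; both exponential factors in \eqref{E7} are then $\lesssim 1$, so $E(t)^{1/2} \lesssim \epsilon_3 \ll \epsilon_2$, giving \eqref{eef}.

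For \eqref{strr}, I would apply Proposition \ref{r3} to the wave equation \eqref{fcp} and to $\square_\mathbf{g}\rho = \mathcal{D}$. The key structural point is to rewrite $\square_\mathbf{g} \bv_+ = \mathbf{T}(\mathbf{T}\bv_-) + \bQ$ in the form $\mathbf{T}\Theta + B$ with $\Theta = \mathbf{T}\bv_-$ and $B = \bQ$, so that only one derivative of $\bv_-$ must be controlled. Taking $r = s$ and choosing $a \in (1+\delta,\, s - \tfrac{3}{4})$ (available since $\delta < s - \tfrac{7}{4}$), the two-dimensional Sobolev embedding gives $\|f\|_{C^\delta_x} \lesssim \|\left<\nabla\right>^{a-1} df\|_{L^\infty_x}$. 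The source bounds from Lemma \ref{yux} yield $\|\mathbf{T}\bv_-\|_{L^\infty_t H^{s-1}_x \cap L^1_t H^s_x} \lesssim \|\bv, \rho\|_{L^\infty_t H^s_x} \|w\|_{L^\infty_t H^{1+}_x}$ and $\|\bQ, \mathcal{D}\|_{L^1_t H^{s-1}_x} \lesssim \|\bv, \rho\|_{L^\infty_t H^s_x} \|d\bv, d\rho\|_{L^1_t L^\infty_x}$. Combining with \eqref{eef} and the bootstrap assumptions yields $\|d\bv_+, d\rho\|_{L^4_t C^\delta_x} \lesssim \epsilon_3 \ll \epsilon_2$, while $d\bv_- = d\bv - d\bv_+$ is bounded via \eqref{eee} and the embedding $H^{2+\epsilon}(\mathbb{R}^2) \hookrightarrow C^{1,\delta}(\mathbb{R}^2)$ for $\epsilon > \delta$.

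Estimate \eqref{SR0} has the same architecture but needs sharper bookkeeping. The $L^2_t$ norm on the left follows from the $L^4_t$ Strichartz bound via H\"older on $[-2,2]$. The decisive change is the treatment of $d\bv_-$: from \eqref{etad} and the second inequality of Lemma \ref{ps} (with $r = s > 1$ and $r' = 2\delta$), one obtains $\|\bv_-\|_{H^{2+2\delta}_x} \lesssim (1+\|\rho\|_{H^s_x})\|w\|_{H^{1+2\delta}_x}$; the chain of Sobolev embeddings $H^{2+2\delta}(\mathbb{R}^2) \hookrightarrow C^{1,2\delta}(\mathbb{R}^2) \hookrightarrow C^{1,\delta}(\mathbb{R}^2)$ then yields $\|d\bv_-\|_{L^\infty_t C^\delta_x} \lesssim \|w\|_{L^\infty_t H^{1+2\delta}_x}$, accounting for the exponent $1+2\delta$ in \eqref{SR0}. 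For $d\bv_+$ and $d\rho$, Proposition \ref{r3} applies exactly as before, and the bootstrap values of $\|d\bv, d\rho\|_{L^4_t L^\infty_x}$ and $\|w\|_{L^\infty_t H^{1+}_x}$ are absorbed into universal constants, leaving only $\|\bv, \rho\|_{L^\infty_t H^s_x} + \|w\|_{L^\infty_t H^{1+2\delta}_x}$ on the right.

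The main obstacle is the correct handling of the double-transport source $\mathbf{T}\mathbf{T}\bv_-$ in the wave equation for $\bv_+$: absorbing the outer $\mathbf{T}$ into the $\mathbf{T}\Theta$ slot of Proposition \ref{r3} is precisely what keeps the vorticity demand at $H^{1+}$ (respectively $H^{1+2\delta}$) rather than one derivative higher. A secondary subtlety is the precise exponent $1+2\delta$ appearing in \eqref{SR0}, forced by the margin required in Lemma \ref{ps} to transfer regularity of $w$ through the elliptic equation \eqref{etad} and then into $C^\delta$ regularity for $d\bv_-$ via Sobolev embedding.
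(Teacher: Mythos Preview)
Your plan is essentially correct and uses the same ingredients as the paper (Proposition~\ref{r3}, Lemma~\ref{yux}, Theorem~\ref{be}, and the bootstrap hypotheses of $\mathcal{H}$). The paper proceeds in the opposite order --- it proves \eqref{SR0} first, then \eqref{strr}, then \eqref{eef} --- but either order closes, since all three arguments may freely invoke the bootstrap bounds \eqref{402a}--\eqref{403}.

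There is one small but genuine gap in your treatment of $d\bv_-$. The elliptic estimate \eqref{eee} (or its $H^{2+2\delta}$ refinement via Lemma~\ref{ps}) together with $H^{2+2\delta}(\mathbb{R}^2)\hookrightarrow C^{1,\delta}$ controls only the \emph{spatial} gradient $\nabla\bv_-$, not $\partial_t\bv_-$; so the sentence ``$d\bv_- = d\bv - d\bv_+$ is bounded via \eqref{eee}'' does not go through as written. The paper handles this by never estimating $\partial_t\bv_-$ directly: it bounds $\nabla\bv_+$, $\nabla\bv_-$ (hence $\nabla\bv$) and $\nabla\rho$, and then recovers the time derivatives from the first-order Euler system, namely $\mathbf{T}\bv = -c_s^2\nabla\rho$ and $\mathbf{T}\rho = -\mathrm{div}\,\bv$, which gives $\|\mathbf{T}\rho,\mathbf{T}\bv\|_{L^4_t C^\delta_x}\lesssim \|\nabla\rho,\nabla\bv\|_{L^4_t C^\delta_x}$ (this is \eqref{Stwo}). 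An equivalent fix within your framework is to write $\partial_t\bv_- = \mathbf{T}\bv_- - (\bv\cdot\nabla)\bv_-$ and invoke \eqref{eta} for $\|\mathbf{T}\bv_-\|_{H^s_x}$, followed by Sobolev embedding. Either route closes the gap with no change to the exponent $1+2\delta$ in \eqref{SR0}.
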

\begin{proof}
Due to \eqref{fcr}, and using the Strichartz estimate in Proposition \ref{r5} (taking $r=s$ and $a=1$), we have
	\begin{equation}\label{rrr}
	\begin{split}
		\| d \rho, d \bv_+\|_{L^2_t C^\delta_x}  \lesssim \| \rho \|_{L^\infty_t H_x^s}+\|\bv\|_{L^\infty_t H_x^s}+ {\| w\|_{L^\infty_t H_x^{1+\delta}}}.
	\end{split}
\end{equation}
	Recall 
	\begin{equation*}
		\bv=\bv_{+}+\bv_{-}, \quad v^i_{-}=(-\Delta)^{-1} (\epsilon^{ia}\mathrm{e}^{\rho}\partial_a w).
	\end{equation*}
By Sobolev imbedding $C^\delta_x \hookrightarrow H_x^{1+2\delta}$, we then get
	\begin{equation}\label{pp3}
		\begin{split}
			\| \nabla \bv_{-}\|_{C^\delta_x} & \lesssim \| \nabla(-\Delta)^{-1} (\mathrm{e}^{\rho}\nabla w )\|_{H^{1+2\delta}}
			\\
			& \lesssim \|\mathrm{e}^{\rho} \nabla w \|_{H^{2\delta}}
			\\
			& \lesssim \|\rho\|_{H_x^{s}}+\| w \|_{H_x^{1+2\delta}}.
		\end{split}
	\end{equation} 
Thanks to \eqref{rrr} and \eqref{pp3}, we have proved \eqref{SR0}. For $s_0>\frac74$, using Theorem \ref{bBe}, we conclude that
	\begin{equation}\label{pp2}
		\begin{split}
			\| \nabla \rho,  \nabla \bv_{+}, \nabla \bv\|_{L^2_t C^\delta_x} 
			&  \lesssim  \| \rho \|_{L^\infty_t H_x^s}+\|\bv\|_{L^\infty_t H_x^s}+ {\| w\|_{L^\infty_t H_x^{s_0-\frac14}}}
			\\
			& \lesssim \| \rho_0\|_{H^s}+\|\bv_0\|_{H^s} + \| w_0\|_{H^{s_0-\frac14}} + \|\nabla w_0\|_{L_x^8}.
		\end{split}
	\end{equation}
	By using \eqref{401} and \eqref{pp2}, we have
	\begin{equation}\label{900}
		\begin{split}
			\| \nabla \rho, \nabla \bv_{+}, \nabla \bv\|_{L^2_t C^\delta_x} \lesssim \epsilon_3.
		\end{split}
	\end{equation}
	Due to \eqref{fc}, it yields
	\begin{equation}\label{Stwo}
		\| \mathbf{T} \rho,\mathbf{T} \bv\|_{L^4_t C^\delta_x} \lesssim \| \nabla \rho, \nabla \bv\|_{L^4_t C^\delta_x } .
	\end{equation}
	Note that $\epsilon_3 \ll \epsilon_2$. Taking advantage of \eqref{900} and \eqref{Stwo}, we therefore obtain \eqref{strr}. By using Theorem \ref{be} and \eqref{strr}, the estimate \eqref{eef} holds. Therefore, we have finished the proof of this proposition. 
\end{proof}
\begin{remark}
	The Strichartz estimate \eqref{SR0} requires a very low regularity of the velocity, density, and vorticity. We hope that the regularity exponent in \eqref{SR0} is sharp.
\end{remark}
It only remains for us to prove Proposition \ref{r5}. This will be discussed in the next section.

\section{Proof of Proposition \ref{r5}}\label{Ap}
In this section, following Smith-Tataru's paper \cite{ST}, we will present a proof for Proposition \ref{r5} by using wave-packet approximation, . 

\subsection{The proof of Proposition \ref{r5}}
In order to prove Proposition \ref{r5}, we first reduce it to phase space. Given a frequency scale $\lambda \geq 1$ ($\lambda=2^j, j\in \mathbb{N}^{+}$), we consider the smooth coefficients
\begin{equation*}
	\mathbf{g}_{\lambda}= P_{<\lambda} \mathbf{g}=\sum_{\lambda'<\lambda} P_{\lambda'},
\end{equation*}
where $\mathbf{g}$ is defined in \eqref{boldg}. We now introduce a proposition in the phase space, which will be very useful.
\begin{proposition}\label{A1}
	Suppose $(\bv,\rho, w) \in \mathcal{{H}}$ and $G(\bv,\rho)\leq 2 \epsilon_1$. Let $f$ satisfy
	\begin{equation}\label{linearA}
		\begin{cases}
			\square_{\mathbf{g}} f=0, \quad (t,x)\in [-2,2]\times \mathbb{R}^2,\\
			(f, \partial_t f)|_{t=t_0}=(f_0, f_1).
		\end{cases}
	\end{equation}
	Then for each $(f_0,f_1) \in H^1 \times L^2$ there exists a function $f_{\lambda} \in C^\infty([-2,2]\times \mathbb{R}^2)$, with
	\begin{equation*}
		\mathrm{supp} \widehat{f_\lambda(t,\cdot)} \subseteq \{ \xi \in \mathbb{R}^2: \frac{\lambda}{8} \leq |\xi| \leq 8\lambda \},
	\end{equation*}
	such that
	\begin{equation}\label{Yee}
		\begin{cases}
			& \| \square_{\mathbf{g}_\lambda} f_{\lambda} \|_{L^4_{[-2,2]} L^2_x} \lesssim \epsilon_0 (\| f_0\|_{H^1}+\| f_1 \|_{L^2} ),
			\\
			& f_\lambda(-2)=P_\lambda f_0, \quad \partial_t f_{\lambda} (-2)=P_{\lambda} f_1.
		\end{cases}
	\end{equation}
	Additionally, if $r>\frac34$, then the following Strichartz estimates holds:
	\begin{equation}\label{Ase}
	\| f_{\lambda} \|_{L^4_{[-2,2]} L^\infty_x} \lesssim \epsilon_0^{-\frac{1}{4}} \lambda^{r-1} ( \| P_\lambda f_0 \|_{H^1} + \| P_\lambda f_1 \|_{L^2} ) .
	\end{equation}
\end{proposition}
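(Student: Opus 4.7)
The strategy is to build $f_\lambda$ as a wave packet parametrix adapted to the smoothed metric $\mathbf{g}_\lambda=P_{<\lambda}\mathbf{g}$, following the Smith--Tataru construction in \cite{ST} but in the simpler $2$D geometry. The initial data $P_\lambda f_0,P_\lambda f_1$ are first split into characteristic $\pm$-components and then decomposed, via an FBI (coherent-state) transform at scale $\lambda$, into a superposition of wave packets $\phi_T$ concentrated on $2$D tubes $T$ of spatial width $\lambda^{-1/2}$, frequency width $\lambda^{1/2}$, and carrying a fixed direction $\theta\in\mathbb{S}^1$. Each $\phi_T$ is then flowed along the null bicharacteristic of $\mathbf{g}_\lambda$ starting at $T(-2)$; since the characteristic flow is essentially transport along the null hypersurfaces $\Sigma_{\theta,r}$ constructed in Section \ref{sec6}, the evolved packet stays localised in a curved tube of comparable dimensions and gives a space-time packet $\psi_T$. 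We then set $f_\lambda=\sum_{T}c_T\,\psi_T$, with coefficients $c_T$ chosen to match $(P_\lambda f_0,P_\lambda f_1)$ at $t=-2$.

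To verify the error bound in \eqref{Yee}, I would expand $\square_{\mathbf{g}_\lambda}\psi_T$ using the eikonal/transport equations that define the parametrix. The principal symbol vanishes because $\psi_T$ is transported along the Hamilton flow of $\mathbf{g}_\lambda$, and the subprincipal term is absorbed into the transport equation for the amplitude. The residual error involves $\lambda^{-1}\,\partial^2_{t,x}(\text{amplitude})$ and tangential second derivatives of the null phase, both of which are controlled by the characteristic bounds of Proposition \ref{r1} together with the connection-coefficient estimates \eqref{608}--\eqref{609} from Lemma \ref{chi}. Square-summing over the tubes $T$ in a given direction $\theta$ and integrating over $\theta$ then yields $L^4_tL^2_x$ rather than $L^\infty_t L^2_x$, and the smallness factor $\epsilon_0$ appears because every geometric input is bounded by $\epsilon_2\ll\epsilon_0$.

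For the Strichartz bound \eqref{Ase}, the plan is to exploit the almost orthogonality of the tubes $\{\psi_T\}_T$. Each individual packet satisfies the deterministic pointwise bound $\|\psi_T\|_{L^\infty_x(t)}\lesssim \lambda^{1/2}\|\phi_T\|_{L^2}$ and is supported in a tube of $x$-cross section $\lambda^{-1/2}$, so at fixed $t$ the $L^\infty_x$ norm picks up only finitely many $T$'s in each direction $\theta$. The $TT^\ast$ argument (equivalently, Knapp-type overlap counting, as in \cite{ST} Section~6) then produces the sharp $2$D loss $\lambda^{r-1}$ for $r>3/4$, with the factor $\epsilon_0^{-1/4}$ arising from the $L^4_t$-distribution of the error in \eqref{Yee} via Duhamel. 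The interpolation between $r=1$ (energy) and the sharp dispersive exponent $r=3/4+$ is standard.

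The main obstacle is the one overcome by Smith--Tataru: although $\mathbf{g}_\lambda$ has $\|\partial^2\mathbf{g}_\lambda\|_{L^\infty_x}\lesssim\lambda\|d\mathbf{g}\|_{L^\infty_x}$, which would naively defocus wave packets after time $\lambda^{-1/2}$, the curvature relevant to null-hypersurface regularity is only \emph{tangential}. In our $2$D setting this tangential curvature lives in $L^2_tH^{s_0-5/4}_{x'}(\Sigma)$ by Corollary \ref{Rfenjie}, and this is precisely what allows the null foliation, and hence the packets $\psi_T$, to remain coherent on the full interval $[-2,2]$. Packaging this coherence---controlled ultimately by $G(\bv,\rho)\le 2\epsilon_1$ via Propositions \ref{r1} and \ref{r2}---into the parametrix construction is the core technical step, after which the error estimate \eqref{Yee} and the dispersive estimate \eqref{Ase} follow by direct computation.
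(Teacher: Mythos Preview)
Your overall strategy---a wave-packet parametrix for $\square_{\mathbf{g}_\lambda}$ at frequency $\lambda$, with coherence guaranteed by the characteristic-surface estimates of Section~\ref{sec6}---is the same as the paper's. However, several details are off in ways that matter for the proof to go through.

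First, the transverse scale of the packets is not $\lambda^{-1/2}$ but $(\epsilon_0\lambda)^{-1/2}$. The paper (following \cite{ST}) defines a normalized packet as $\epsilon_0^{1/4}\lambda^{-5/4}T_\lambda(uh)$ with $u=\delta(x_\omega-\phi_{\omega,r})$ a delta on the null hypersurface and $h$ a cutoff at scale $(\epsilon_0\lambda)^{-1/2}$ in $x'_\omega$. The extra $\epsilon_0$ in the scale is precisely what compensates for the defocusing you mention in your last paragraph; at scale $\lambda^{-1/2}$ the packets would not stay coherent on $[-2,2]$ and the error bound \eqref{Yee} would fail. Relatedly, the paper does not use an FBI/coherent-state transform but rather this explicit delta-on-$\Sigma$ construction, so the error computation (Proposition~\ref{np}) produces terms $\psi_m\,\delta^{(m)}(x_\omega-\phi)$ whose $H^{s_0-5/4}$ norms along $\Sigma$ are bounded directly by Proposition~\ref{r1} and \eqref{G}, rather than via eikonal/transport expansions.

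Second, your explanation of the $\epsilon_0^{-1/4}$ in \eqref{Ase} is incorrect: it does not come from Duhamel against the $L^4_t$ error. It comes from the slab-overlap count (Corollary~\ref{corl}), $N_\lambda(P_1,P_2)\lesssim \epsilon_0^{-1/2}\lambda^{-1/2}|t_1-t_2|^{-1/2}$, whose $\epsilon_0^{-1/2}$ reflects the $(\epsilon_0\lambda)^{-1/2}$ packet width. The Strichartz bound is then proved not by $TT^\ast$ but by a combinatorial counting argument (Proposition~\ref{szt}): one discretises $[-2,2]$ into $\sim\lambda$ subintervals, reduces to bounding $\sum_j|f(t_j,x_j)|^4$, and uses the overlap estimate together with a dyadic decomposition in the coefficient sizes $|a_J|$ and the number of slabs containing each point. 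This produces $\|f\|_{L^4_tL^\infty_x}\lesssim \epsilon_0^{-1/4}(\ln\lambda)^{1/4}$, and the logarithmic loss is why one needs $r>\tfrac34$ strictly to absorb it into $\lambda^{r-1}$. Your sketch omits this log and misattributes the $\epsilon_0$-power.
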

Next, we give a proof for Proposition \ref{r5} based on Proposition \ref{A1}.
\begin{proof}[Proof of Proposition \ref{r5} by use of Proposition \ref{A1}]
	We divide the proof into several cases.
	
	(i) $r=1$. Using basic energy estimates for \eqref{linear}, we have
	\begin{equation*}
		\begin{split}
			\| \partial_t f \|_{L^2_x} + \|\nabla f \|_{L^2_x}  \lesssim & \ (\| f_0\|_{H^1}+ \| f_1\|_{L^2}) \exp(\int^t_0 \| d \mathbf{g} \|_{L^\infty_x} d\tau)
			\\
			\lesssim & \ \| f_0\|_{H^1}+ \| f_1\|_{L^2}.
		\end{split}
	\end{equation*}
	Then the Cauchy problem \eqref{linearA} holds a unique solution $f \in C([-2,2],H_x^1)$ and $\partial_t f \in C([-2,2],L_x^2)$. It remains to show that the solution $f$ also satisfies the Strichartz estimate \eqref{SL}.

	Without loss of generality, we take $t_0=0$. For any given initial data $(f_0,f_1) \in H^1 \times L^2$, and $t_0 \in [-2,2]$, we take a Littlewood-Paley decomposition
	\begin{equation*}
		f_0=\sum_{\lambda}P_{\lambda}f_0, \qquad  f_1=\sum_{\lambda}P_{\lambda}f_1,
	\end{equation*}
	and for each $\lambda$ we take the corresponding $f_{\lambda}$ as in Proposition \ref{A1}.  If we set
	\begin{equation*}
		f=\sum_{\lambda}f_{\lambda},
	\end{equation*}
	then $f$ matches the initial data $(f_0,f_1)$ at the time $t=t_0$, and also satisfies the Strichartz estimates \eqref{Ase}. In fact, $f$ is also an approximate solution for $\square_{\mathbf{g}}$ in the sense that
	\begin{equation*}
		\| \square_{\mathbf{g}} f \|_{L^2_t L^2_x} \lesssim \epsilon_0(\| f_0 \|_{H^1}+\| f_1 \|_{L^2} ).
	\end{equation*}
	We can derive the above bound by using the decomposition
	\begin{equation*}
		\square_{\mathbf{g}} f=\textstyle{\sum_{\lambda}} \square_{\mathbf{g}_\lambda} f_\lambda+ \textstyle{\sum_{\lambda}} \square_{\mathbf{g}-\mathbf{g}_\lambda} f_\lambda.
	\end{equation*}
	The first term can be controlled by Proposition \ref{A1}. As for the second term, using $\mathbf{g}^{00}=-1$, we can rewrite
	\begin{equation*}
		\begin{split}
			\textstyle{\sum_{\lambda}} \square_{\mathbf{g}-\mathbf{g}_\lambda} f_\lambda= \textstyle{\sum_{\lambda}} ({\mathbf{g}-\mathbf{g}_\lambda}) \nabla  df_\lambda.
		\end{split}
	\end{equation*}
	By using H\"older's inequality, it follows that
	\begin{equation}\label{yv1}
		\begin{split}
			\| \textstyle{\sum_{\lambda}} \square_{\mathbf{g}-\mathbf{g}_\lambda} f_\lambda \|_{L^2_x} \lesssim \mathrm{sup}_{\lambda} \left(\lambda \| {\mathbf{g}-\mathbf{g}_\lambda} \|_{L^\infty_x} \right) \left( \sum_{\lambda}  \| df_\lambda \|^2_{L^2_x} \right)^{\frac12}.
		\end{split}
	\end{equation}
	Due to Bernstein's inequality, we have
	\begin{equation}\label{yv2}
		\begin{split}
			\mathrm{sup}_{\lambda} \left(\lambda \| {\mathbf{g}-\mathbf{g}_\lambda} \|_{L^\infty_x} \right) \lesssim & \ \mathrm{sup}_{\lambda} \big(\lambda \sum_{\mu > \lambda}\| \mathbf{g}_\mu \|_{L^\infty_x} \big)
			\\
			\lesssim & \ \mathrm{sup}_{\lambda} \big(\lambda \sum_{\mu > \lambda}\mu^{-(1+\delta)}\|d \mathbf{g}_\mu \|_{C^\delta_x} \big)
			\\
			\lesssim & \ \|d \mathbf{g} \|_{C^\delta_x}(\textstyle{\sum_{\mu}} \mu^{-\delta}) \lesssim \ \|d \mathbf{g} \|_{C^\delta_x}.
		\end{split}
	\end{equation}
	Combining \eqref{yv1} and \eqref{yv2}, we get
	\begin{equation*}
		\begin{split}
			\textstyle{\sum_{\lambda}} \square_{\mathbf{g}-\mathbf{g}_\lambda} f_\lambda \lesssim \epsilon_0(\| f_0 \|_{H^1}+\| f_1 \|_{L^2} ).
		\end{split}
	\end{equation*}
	For given $F\in L^1_t L^2_x$, we set
	\begin{equation*}
		\mathbf{M} F=\int^t_0 f^{\tau}(t,x)d\tau,
	\end{equation*}
	where $f^{\tau}(t,x)$ is the approximate solution formed above with the Cauchy data
	\begin{equation*}
		f^{\tau}(\tau,x)=0, \quad \partial_t f^{\tau}(\tau,x)=F(\tau,\cdot).
	\end{equation*}
	By calculating
	\begin{equation*}
		\square_{\mathbf{g}} \mathbf{M}F=\int^t_0 \square_{\mathbf{g}} f^{\tau}(t,x) d\tau+F,
	\end{equation*}
	it follows that
	\begin{equation*}
		\| \square_{\mathbf{g}} \mathbf{M}F-F\|_{L^2_t L^2_x} \lesssim \| \square_{\mathbf{g}}f^{\tau}\|_{L^1_t L^2_x} \lesssim \epsilon_0 \| F \|_{L^2_t L^2_x}.
	\end{equation*}
	Using the contraction principle, we can write the solution $f$ in the form
	\begin{equation*}
		f= \tilde{f}+ \mathbf{M}F,
	\end{equation*}
	where $\tilde{f}$ is the approximation solution formed above for initial data $(f_0,f_1)$ specified at time $t=0$, and
	\begin{equation*}
		\| F\|_{L^2_t L^2_x} \lesssim \epsilon_0 ( \| f_0 \|_{H^1}+\| f_1\|_{L^2}).
	\end{equation*}
	The Strichartz estimates now follow since they holds for each $f^\tau$, $\tau \in [0,t]$. By Duhamel's principle, we can also obtain the Strichartz estimates for the linear, nonhomogeneous wave equation
	\begin{equation*}
		\begin{cases}
			\square_{\mathbf{g}} f= F',
			\\
			f|_{t=0}=f_0, \quad \partial_tf|_{t=0}=f_1.
		\end{cases}
	\end{equation*}
	That means
	\begin{equation*}
		\| \left< \nabla \right>^a f \|_{L^4_t L^\infty_x} \lesssim  \|f_0\|_{H^1}+ \| f_1 \|_{L^2}+ \|F'\|_{L^1_t L^2_x} , \quad   a<\frac14.
	\end{equation*}
 (ii) $1 < r \leq s+1$. Based on the above result, we plan to transform the initial data in $H^1 \times L^2$. Operating $\left< \nabla \right>^{r-1}$ on \eqref{linearA}, we have 
	\begin{equation*}
		\square_{\mathbf{g}} \left< \nabla \right>^{r-1}f=-[\square_{\mathbf{g}}, \left< \nabla \right>^{r-1}]f.
	\end{equation*}
	Let $\left< \nabla \right>^{r-1}f=\bar{f}$. Then $\bar{f}$ is a solution to
	\begin{equation}\label{qd}
		\begin{cases}
			\square_{\mathbf{g}}\bar{f}=-[\square_{\mathbf{g}}, \left< \nabla \right>^{r-1}]\left< \nabla \right>^{1-r}\bar{f},
			\\
			(\bar{f}(t_0), \partial_t \bar{f}(t_0)) \in H^1 \times L^2.
		\end{cases}
	\end{equation}
	To handle this case, we need to bound the right term as
	\begin{equation}\label{qqd}
		\| [\square_{\mathbf{g}}, \left< \nabla \right>^{r-1}]\left< \nabla \right>^{1-r}\bar{f}\|_{L^4_t L^2_x} \lesssim \epsilon_0( \| d\bar{f}\|_{L^\infty_t L^2_x}+  \| \left< \nabla \right>^m d\bar{f}\|_{L^4_t L^\infty_x}),
	\end{equation}
	provided $m>1-s$. To prove it, we apply analytic interpolation to the family
	\begin{equation*}
		\bar{f} \rightarrow   [\square_{\mathbf{g}}, \left< \nabla \right>^{r-1}]\left< \nabla \right>^{1-r}\bar{f}.
	\end{equation*}
	For $\mathrm{Re}z=0$, noting $\mathbf{g}^{00}=-1$, we use the commutator estimate (c.f. (3.6.35) of \cite{KP}) to get
	\begin{equation*}
		\| [\mathbf{g}^{\alpha \beta}, \left< \nabla \right>^z] \partial^2_{\alpha \beta} \bar{f} \|_{L^2_x}=\| [\mathbf{g}^{\alpha i}, \left< \nabla \right>^z] \partial_i ( \partial_{\alpha} \bar{f} )\|_{L^2_x} \lesssim \| d \mathbf{g} \|_{L^\infty_x} \| d \bar{f} \|_{L^2_x}.
	\end{equation*}
	For $\mathrm{Re}z=s$, we use the Kato-Ponce commutator estimate
	\begin{equation}\label{qdy}
		\begin{split}
			\| [\mathbf{g}^{\alpha \beta}, \left< \nabla \right>^z] \left< \nabla \right>^{-z} \partial^2_{\alpha \beta} \bar{f} \|_{L^2_x} 
			= 	& \| [\mathbf{g}^{\alpha i}, \left< \nabla \right>^z] \left< \nabla \right>^{-z} \partial_{i} ( \partial_{\alpha } \bar{f} ) \|_{L^2_x} 
			\\
			\lesssim & \| d \mathbf{g} \|_{L^\infty_x} \| d \bar{f} \|_{L^2_x}+  \| \mathbf{g}^{\alpha \beta}- \mathbf{m}^{\alpha \beta}\|_{H^{s}_x}   \|\left< \nabla \right>^{-z} \nabla d\bar{f} \|_{L^\infty_x}.
		\end{split}
	\end{equation}
Taking advantage of
	\begin{equation*}
		\| d \mathbf{g} \|_{L^4_t L^\infty_x} +  \| \mathbf{g}^{\alpha \beta}- \mathbf{m}^{\alpha \beta}\|_{L^\infty_t H^{s}_x} \lesssim \epsilon_0,
	\end{equation*}
we can bound \eqref{qdy} by
	\begin{equation*}
		\| [\mathbf{g}^{\alpha \beta}, \left< \nabla \right>^z] \left< \nabla \right>^{-z} \nabla^2_{\alpha \beta} \bar{f} \|_{L^2_x} \lesssim \epsilon_0( \| d \bar{f} \|_{L^2_x}+  \|\left< \nabla \right>^{-z} \nabla d\bar{f}  \|_{L^\infty_x}).
	\end{equation*}
	Let us go back \eqref{qd}. Using the discussion in case $r=1$, for $ \theta<0$, we obtain
	\begin{equation}\label{eh}
		\begin{split}
			& \| \left< \nabla \right>^{(\theta-1)} d \bar{f} \|_{L^4_t L^\infty_x} 
			\\
			\lesssim  & \  \epsilon_0( \|\left< \nabla \right>^{r-1}f_0\|_{H^1}+ \| \left< \nabla \right>^{r-1} f_1 \|_{L^2}+ \| d \bar{f} \|_{L^2_x}+  \|\left< \nabla \right>^{-r} \nabla d \bar{f} \|_{L^4_t L^\infty_x} ) ,
			\\
			\lesssim & \ \epsilon_0( \|f_0\|_{H^r}+ \| f_1 \|_{H^{r-1}} + \| d \bar{f} \|_{L^2_x}+  \|\left< \nabla \right>^{-r} \nabla d \bar{f} \|_{L^4_t L^\infty_x} ).
		\end{split}
	\end{equation}
	Taking $\theta = -r+1$ in \eqref{eh}, we can see
	\begin{equation}\label{eeh}
		\|\left< \nabla \right>^{-r} \nabla d \bar{f} \|_{L^4_t L^\infty_x} \lesssim \epsilon_0( \|f_0\|_{H^r}+ \| f_1 \|_{H^{r-1}}).
	\end{equation}
By \eqref{qd}, \eqref{qqd}, and \eqref{eeh}, we have
	\begin{equation*}
		\begin{split}
			\| \left< \nabla \right>^a f, \left< \nabla \right>^{a-1} d f \|_{L^2_t L^\infty_x} \lesssim  
		  & \  \epsilon_0( \|f_0\|_{H^r}+ \| f_1 \|_{H^{r-1}} ) , \quad   a<r-1.
		\end{split}
	\end{equation*}
\end{proof}
\begin{remark}\label{rel}
From Proposition \ref{A1}, it implies that there is a good approximate solution $f_\lambda$ for the problem
	\begin{equation}\label{linearD}
		\begin{cases}
			\square_{\mathbf{g}_\lambda} f=0, \quad (t,x)\in [-2,2]\times \mathbb{R}^2,\\
			(f, \partial_t f)|_{t=t_0}=(P_\lambda f_0, P_\lambda f_1).
		\end{cases}
	\end{equation}
	In the case of $\epsilon_0 \lambda\leq 1$, we can take $f_\lambda=P_\lambda f$, where $f$ is the exact solution of \eqref{linearD}.
	Applying energy estimates for \eqref{linearD}, we can see
	\begin{equation}\label{eet}
		\|df\|_{L^\infty_{[-2,2]} L^2_x} \lesssim \| P_\lambda f_0\|_{H^1}+\|P_\lambda f_1\|_{L^2}.
	\end{equation}
	Moreover, for $\mathbf{g}^{00}_\lambda=-1$, so we have
	\begin{equation*}
		\begin{split}
			\| \square_{\mathbf{g}_\lambda} f_\lambda \|_{L^2_{[-2,2]} L^2_x} 
			\lesssim & 
			\| [ \mathbf{g}^{\alpha i}_\lambda, \partial_\alpha P_\lambda ] \partial_i f \|_{ L^2_{[-2,2]} L^2_x } 
			+ \| P_\lambda (\partial_\alpha \mathbf{g}^{\alpha i}_\lambda ) \partial_i f \|_{ L^2_{[-2,2]} L^2_x }
			\\
			\lesssim & \| d \mathbf{g}_\lambda \|_{L^2_{[-2,2]} L^\infty_x} \| d {f} \|_{L^\infty_{[-2,2]} L^2_x}
			\\
			\lesssim & \epsilon_0 ( \| P_\lambda f_0\|_{H^1}+\|P_\lambda f_1\|_{L^2} ).
		\end{split}
	\end{equation*}
	The Strichartz estimate \eqref{Ase} follows from Sobolev imbeddings and \eqref{eet}. Hence, the rest of the paper is to establishing Proposition \ref{AA1} in the case that
	\begin{equation*}
		\epsilon_0 \lambda \gg 1.
	\end{equation*}
\end{remark}
\subsection{Proof of Proposition \ref{A1}}
By using Remark \ref{rel}, it suffices for us to consider the problem if the frequency $\lambda$ is large enough, namely
\begin{equation*}
	\lambda \geq \epsilon_0^{-1}.
\end{equation*}
In this case, we can construct an approximate solution to \eqref{linearA} by using wave packets. We also note that the proposition \ref{A1} has two parts: \eqref{Yee} and \eqref{Ase}. The conclusion of the first part, i.e. \eqref{Yee} will be established in the following Proposition \ref{szy} and \ref{szi}. The inequality \eqref{Ase} will be proved in Proposition \ref{szt}.

Before our proof, we introduce a spatially localized mollifier $T_\lambda$ by
\begin{equation*}
	T_\lambda f = \chi_\lambda * f, \quad \chi_\lambda=\lambda^2 \chi(\lambda^{-1} y),
\end{equation*}
where $\chi \in C^\infty_0(\mathbb{R}^2)$ is supported in the ball $|x| \leq \frac{1}{32}$, and has integral $1$. By choosing $\chi$ appropriately, any function $\phi$ with frequency support contained in $\{\xi\in \mathbb{R}^2:|\xi|\leq 4\lambda\}$ can be factored $\phi=T_\lambda \widetilde{\phi}$, where $\| \widetilde{\phi} \|_{L^2_x} \approx \| \phi \|_{L^2_x}$.


\subsubsection{\textbf{A normalized wave packet}}\label{cwp}
Let's begin with the definition of a normalized wave packet.
\begin{definition}[\cite{ST},Definition 8.1]
	Let the hypersurface $\Sigma_{\omega,r}$ and the geodesic $\gamma$ be defined in Section \ref{sec6}. A normalized wave packet centered around $\gamma$ is a function $f$ of the form
	\begin{equation*}
		f=\epsilon_0^{\frac14} \lambda^{-\frac54} T_\lambda(u h),
	\end{equation*}
	where
	\begin{equation*}
		u(t,x)=\delta(x_{\omega}-\phi_{\omega,r}(t,x'_\omega)), \quad h=h_0( (\epsilon_0 \lambda)^{\frac12}(x'_\omega-\gamma'_\omega(t))  ).
	\end{equation*}
	Here, $h_0$ is a smooth function supported in the set $|x'| \leq 1$, with uniform bounds on its derivatives $|\partial^\alpha_{x'} h_0(x')| \leq c_\alpha$.
\end{definition}
\begin{remark}
The advantage of this definition is that the derivatives of wave packets involve only the tangential behavior of the restrictions of various functions to the characteristic surfaces $\Sigma_{\omega,r}$, as opposed to their regularity within the support of the wave packet. Therefore, we don't need to discuss the behavior of the null foliations $\cup_{r\in \mathbb{R}}\Sigma_{\omega,r}$ in
transversal directions.
\end{remark}
We give two notations here. We denote $L(\varphi,\psi)$ to denote a translation invariant bilinear operator of the form
\begin{equation*}
	L(\varphi,\psi)(x)=\int K(y,z)\varphi(x+y)\psi(x+z)dydz,
\end{equation*}
where $K(y,z)$ is a finite measure. If $X$ is a Sobolev spaces, we then denote $X_\kappa$ the same space but with the norm obtained by dimensionless rescaling by $\kappa$,
\begin{equation*}
	\| \varphi \|_{X_\kappa}=\| \varphi(\kappa \cdot)\|_{X}.
\end{equation*}
Since $\frac74<s_0\leq s \leq 2$, we get $s_0-\frac54>\frac12$. For $\kappa<1$, we have $\| \varphi \|_{H^{s_0-\frac54}_\kappa(\mathbb{R})} \lesssim \| \varphi \|_{H^{s_0-\frac54}(\mathbb{R})}$.

Based on the above definition, let us calculate what we will get when we operate $\square_{\mathbf{g}_\lambda}$ on wave packets.
\begin{proposition}\label{np}
	Let $f$ be a normalized packet. Then there exists another normalized wave packet $\tilde{f}$ and functions $\phi_m(t,x'_\omega), m=0,1,2$, so that
	\begin{equation}\label{np1}
		\square_{\mathbf{g}_\lambda} P_\lambda f= L(d \mathbf{g}, d \tilde{P}_\lambda \tilde{f})+ \epsilon_0^{\frac14}\lambda^{-\frac54}P_{\lambda}T_{\lambda}\sum_{m=0,1,2}
		\psi_m\delta^{(m)}(x'_\omega-\phi_{\omega,r}),
	\end{equation}
	where the functions $\psi_m=\psi_m(t,x'_\omega)$ satisfy the scaled Sobolev estimates
	\begin{equation}\label{np2}
		\| \psi_m\|_{L^2_t H^{s_0-\frac54}_{a,x'_\omega}} \lesssim \epsilon_0 \lambda^{1-m}, \quad m=0,1,2, \quad a=(\epsilon_0 \lambda)^{-\frac12}.
	\end{equation}
\end{proposition}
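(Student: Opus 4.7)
The plan is to compute $\square_{\mathbf{g}_\lambda} P_\lambda f$ directly by expanding the d'Alembertian in the coordinate system $(t, x'_\omega, x_\omega)$ adapted to the characteristic hypersurface $\Sigma_{\omega, r}$, and then to match each piece either with the bilinear $L(d\mathbf{g}, d\tilde{P}_\lambda \tilde{f})$ remainder or with one of the three terms $\psi_m \delta^{(m)}(x_\omega - \phi_{\omega,r})$. The first step is to commute $\square_{\mathbf{g}_\lambda}$ past $P_\lambda T_\lambda$. Since $\mathbf{g}^{00}_\lambda = -1$ and the coefficients $\mathbf{g}^{\alpha\beta}_\lambda$ are frequency-localized below $\lambda$, the commutator $[\square_{\mathbf{g}_\lambda}, P_\lambda T_\lambda](uh)$ reduces by standard paraproduct calculus to an expression of the form $L(d\mathbf{g}, d\tilde{P}_\lambda \tilde{f})$ for some normalized wave packet $\tilde{f}$ (built from $uh$ with a slightly enlarged bump). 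This reduces the analysis to computing $P_\lambda T_\lambda \square_{\mathbf{g}_\lambda}(uh)$ with $u = \delta(x_\omega - \phi_{\omega, r}(t, x'_\omega))$.

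Next, I would write $F(t,x) = x_\omega - \phi_{\omega,r}(t, x'_\omega)$ and expand
\begin{equation*}
\square_{\mathbf{g}_\lambda}(uh) = h \, \mathbf{g}^{\alpha\beta}_\lambda (\partial_\alpha F)(\partial_\beta F) \, \delta''(F) - h \, (\square_{\mathbf{g}_\lambda} F) \, \delta'(F) + 2 \mathbf{g}^{\alpha\beta}_\lambda (\partial_\alpha F)(\partial_\beta h) \, \delta'(F) + (\square_{\mathbf{g}_\lambda} h) \, \delta(F).
\end{equation*}
This identifies $\psi_2 = h \, \mathbf{g}^{\alpha\beta}_\lambda \partial_\alpha F \partial_\beta F$, $\psi_1$ built from $-h \square_{\mathbf{g}_\lambda}F + 2 \mathbf{g}^{\alpha\beta}_\lambda \partial_\alpha F \partial_\beta h$, and $\psi_0 = \square_{\mathbf{g}_\lambda} h$. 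For $\psi_2$, one uses that $\Sigma_{\omega,r}$ is null with respect to the full metric $\mathbf{g}$, so $\mathbf{g}^{\alpha\beta} \partial_\alpha F \partial_\beta F \equiv 0$; the deficit is $(\mathbf{g}_\lambda - \mathbf{g})^{\alpha\beta}\partial_\alpha F \partial_\beta F$, which by the Bernstein-type bound from \eqref{501} and the mollification rate $\lambda^{-1}$ yields a factor $\epsilon_0\lambda^{-1}$, and combined with the $h$-localization this produces the required bound $\|\psi_2\|_{L^2_t H^{s_0 - 5/4}_{a,x'}} \lesssim \epsilon_0 \lambda^{-1}$.

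For $\psi_0$, the bound is straightforward: each derivative of $h$ in the tangential variable $x'_\omega$ costs $(\epsilon_0\lambda)^{1/2}$, so $\square_{\mathbf{g}_\lambda} h$ has pointwise size $\epsilon_0\lambda$, and the rescaled $H^{s_0-5/4}$ norm recovers the stated bound. For $\psi_1$, the key observation is that $\square_{\mathbf{g}_\lambda} F = -\square_{\mathbf{g}_\lambda}\phi_{\omega,r}$, and when restricted to $\Sigma_{\omega,r}$ this quantity is essentially the trace of the null second fundamental form, i.e., the connection coefficient $\chi$ studied in Lemma \ref{chi}, modulo curvature terms controlled by Corollary \ref{Rfenjie}. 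The characteristic energy estimate $\|\chi\|_{L^2_t H^{s_0 - 5/4}_{x'}(\Sigma)} \lesssim \epsilon_2 \ll \epsilon_0$ together with the tangential term $\mathbf{g}^{\alpha\beta}\partial_\alpha F \partial_\beta h$, which upon using $(\epsilon_0\lambda)^{1/2}$-localization of $h$ and Proposition \ref{r1}, yields the desired $\|\psi_1\|_{L^2_t H^{s_0-5/4}_{a,x'}} \lesssim \epsilon_0$.

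The main obstacle will be the $\psi_1$ bound: one must show that the Sobolev norm at regularity $s_0 - \tfrac{5}{4}$ of $\square_{\mathbf{g}_\lambda} \phi_{\omega,r}$ along $\Sigma$, measured in the rescaled coordinates $x' \mapsto ax'$, is of size $\epsilon_0$. This requires reinterpreting $\square_g \phi$ in the null frame $\{l, \underline{l}, e_1\}$, isolating the $\chi$ contribution, and then invoking Lemma \ref{chi} together with the curvature decomposition from Corollary \ref{Rfenjie} (whose $f_1$ component is absorbed after integration against the bump $h$, and whose $l(f_2)$ component is handled by trading the $l$-derivative through an integration by parts along the geodesic direction). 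The errors coming from replacing $\mathbf{g}$ by $\mathbf{g}_\lambda$ throughout this computation must be shown to be of lower order, and this is where Bernstein's inequality, the bound \eqref{502}, and the frequency-localization inequality from Proposition \ref{r1} on $d\mathbf{g}_\lambda - dP_{<\lambda}\mathbf{g}$ become essential.
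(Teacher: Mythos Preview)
Your overall strategy---commute $\square_{\mathbf{g}_\lambda}$ past $P_\lambda T_\lambda$, then apply Leibniz to $\square_{\mathbf{g}_\lambda}(uh)$---matches the paper, and your handling of $\psi_2$ via the null condition $\mathbf{g}^{\alpha\beta}\nu_\alpha\nu_\beta=0$ is exactly what the paper does. Two points diverge.

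First, a minor gap: in your Leibniz expansion the coefficients multiplying $\delta^{(m)}(x_\omega-\phi)$ still depend on $x_\omega$ through $\mathbf{g}_\lambda$, whereas the $\psi_m$ in the statement must be functions of $(t,x'_\omega)$ only. The paper reduces to that form via the distributional identity $a(x_\omega)\,\delta^{(m)}(x_\omega-\phi)=\sum_{k\le m}\binom{m}{k}(-1)^k(\partial_{x_\omega}^k a)(\phi)\,\delta^{(m-k)}(x_\omega-\phi)$, which generates additional lower-$m$ contributions involving $(\partial_{x_\omega}\mathbf{g}_\lambda)|_\Sigma$ and $(\partial_{x_\omega}^2\mathbf{g}_\lambda)|_\Sigma$; these are bounded using Proposition~\ref{r1}.

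Second, and more importantly, your plan for $\psi_1$ is a significant detour. You propose to reinterpret $\square_{\mathbf{g}_\lambda}\phi$ on $\Sigma$ as the connection coefficient $\chi$, then invoke Lemma~\ref{chi} and the curvature decomposition of Corollary~\ref{Rfenjie}, with an integration by parts along $l$. The paper uses none of this machinery here. The term $\mathbf{g}^{\alpha\beta}_\lambda\partial^2_{\alpha\beta}\phi$ is bounded in $L^2_t H^{s_0-5/4}_{x'}(\Sigma)$ \emph{directly} from the definition of the functional $G$: since $G(\bv,\rho)=\|d\phi-dt\|_{s_0-\frac14,2,\Sigma}$, one derivative of $d\phi$ already sits in $L^2_tH^{s_0-5/4}_{x'}$ with norm $\lesssim G\lesssim\epsilon_2\ll\epsilon_0$ by~\eqref{G}. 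The remaining factors ($h$, $\mathbf{g}_\lambda|_\Sigma$, $d\mathbf{g}_\lambda|_\Sigma$) are controlled by Proposition~\ref{r1} and Corollary~\ref{vte}, using that $H^{s_0-5/4}(\mathbb R)$ is an algebra. In the paper's logic, Lemma~\ref{chi} and Corollary~\ref{Rfenjie} are inputs to the \emph{proof} of~\eqref{G} (Proposition~\ref{r2}); once that proposition is in hand, the $\psi_1$ bound is immediate and no curvature analysis enters the wave-packet computation.
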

\begin{proof}[Proof of Proposition \ref{np}]
	For brevity, we consider the case $\omega=(0,1)$. Then $x_\omega=x_2$, and $x'_\omega=x'$. We write
	\begin{equation}\label{js0}
		\square_{\mathbf{g}_\lambda} P_\lambda f
		= \lambda^{-1} ( [\square_{\mathbf{g}_\lambda}, P_\lambda T_\lambda]+P_\lambda T_\lambda \square_{\mathbf{g}_\lambda} )(uh).
	\end{equation}
	For the first term in \eqref{js0}, noting $\mathbf{g}_\lambda$ supported at frequency $\leq 4 \lambda$, then we can write
	\begin{equation*}
		[\square_{\mathbf{g}_\lambda}, P_\lambda T_\lambda]=[\square_{\mathbf{g}_\lambda}, P_\lambda T_\lambda]\tilde{P}_\lambda \tilde{T}_\lambda
	\end{equation*}
	for some multipliers $\tilde{P}_\lambda$ and $\tilde{T}_\lambda$ which have the same properties as $P_\lambda$ and $T_\lambda$. Therefore, by using the kernal bounds for $P_\lambda T_\lambda$, we conclude that
	\begin{equation*}
		[\square_{\mathbf{g}_\lambda}, P_\lambda T_\lambda]f=L(d\mathbf{g}, df).
	\end{equation*}
	For the second term in \eqref{js0}, we use the Leibniz rule
	\begin{equation}\label{js1}
		\square_{\mathbf{g}_\lambda}(uh)=h \square_{\mathbf{g}_\lambda} u+(\mathbf{g}^{\alpha \beta}_\lambda+\mathbf{g}^{\beta \alpha}_\lambda)\partial_\alpha u \partial_\beta h + u \square_{\mathbf{g}_\lambda} h.
	\end{equation}
	Let $\nu$ denote the conormal vector field along $\Sigma$. We thus know $\nu=dx_2-d\phi(t,x')$. In the following, we take the greek indices $0 \leq \alpha, \beta \leq 1$.

	For the first term in \eqref{js1}, we can compute out
	\begin{equation*}
		\begin{split}
			\mathbf{g}_{\lambda}^{\alpha \beta} \partial^2_{\alpha \beta} u =& \mathbf{g}_{\lambda}^{\alpha \beta}(t,x',\phi)\nu_\alpha \nu_\beta \delta^{(2)}_{x_2-\phi}
			-2 (\partial_2 \mathbf{g}_{\lambda}^{\alpha \beta})(t,x',\phi) \nu_\alpha \nu_\beta \delta^{(1)}_{x_2-\phi}
			\\
			&+(\partial^2_2 \mathbf{g}_{\lambda}^{\alpha \beta})(t,x',\phi) \nu_\alpha \nu_\beta \delta^{(0)}_{x_2-\phi}- \mathbf{g}_{\lambda}^{\alpha \beta}(t,x',\phi) \partial^2_{\alpha \beta}\phi \delta^{(1)}_{x_2-\phi}
			\\
			&+ \partial_2\mathbf{g}_{\lambda}^{\alpha \beta}(t,x',\phi) \partial^2_{\alpha \beta}\phi \delta^{(0)}_{x_2-\phi}.
		\end{split}
	\end{equation*}
	Above, $\delta^{(m)}_{x_2-\phi}=(\partial^m \delta)(x_2-\phi) $. Due to Leibniz rule, we can take
	\begin{equation*}
		\begin{split}
			\psi_0&=h \big\{ (\partial^2_2 \mathbf{g}_{\lambda}^{\alpha \beta})(t,x',\phi)\nu_\alpha \nu_\beta+ (\partial_2 \mathbf{g}_{\lambda}^{\alpha \beta})(t,x',\phi)\partial^2_{\alpha \beta}\phi  \big\},
			\\
			\psi_1&=h \big\{ 2(\partial_2 \mathbf{g}_{\lambda}^{\alpha \beta})(t,x',\phi)\nu_\alpha \nu_\beta-  \mathbf{g}_{\lambda}^{\alpha \beta}(t,x',\phi)\partial^2_{\alpha \beta}\phi \big\},
			\\
			\psi_2&=h( \mathbf{g}_{\lambda}^{\alpha \beta}-\mathbf{g}^{\alpha \beta})\nu_\alpha \nu_\beta,
		\end{split}
	\end{equation*}
	Taking advantage of \eqref{G}, Proposition \ref{r1}, and Corollary \ref{vte}, we can conclude that this settings of $\psi_0, \psi_1,$ and $\psi_2$ satisfy the estimates \eqref{np2}.

	For the second term in \eqref{js0}, we have
	\begin{equation*}
		\begin{split}
			(\mathbf{g}^{\alpha \beta}_\lambda+\mathbf{g}^{\beta \alpha}_\lambda)\partial_\alpha u \partial_\beta h=& \frac12\nu_\alpha (\mathbf{g}^{\alpha \beta}_\lambda+\mathbf{g}^{\beta \alpha}_\lambda)(t,x',\phi)\partial_\beta h \delta^{(1)}_{x_2-\phi}
			\\
			&- \frac12\nu_\alpha \partial_2 (\mathbf{g}^{\alpha \beta}_\lambda+\mathbf{g}^{\beta \alpha}_\lambda)(t,x',\phi)\partial_\beta h \delta^{(0)}_{x_2-\phi}.
		\end{split}
	\end{equation*}
	Then we can take
	\begin{equation*}
		\psi_0= \frac12\nu_\alpha \partial_2 (\mathbf{g}^{\alpha \beta}_\lambda+\mathbf{g}^{\beta \alpha}_\lambda)(t,x',\phi)\partial_\beta h, \quad \psi_1= \frac12\nu_\alpha (\mathbf{g}^{\alpha \beta}_\lambda+\mathbf{g}^{\beta \alpha}_\lambda)(t,x',\phi)\partial_\beta h.
	\end{equation*}
	Thanks to \eqref{G}, Proposition \ref{r1}, and Corollary \ref{vte}, we conclude that this settings of $\psi_0$ and $\psi_1$ satisfy the estimate \eqref{np2}.

	For the third term in \eqref{js0}, we take
	\begin{equation*}
		\psi_0=\mathbf{g}^{\alpha \beta}_\lambda(t,x',\phi)\partial^2_{\alpha \beta}h.
	\end{equation*}
	By using \eqref{G}, Proposition \ref{r1}, and Corollary \ref{vte} again, this settings of $\psi_0$ satisfies the estimates \eqref{np2}.
\end{proof}
As a immediate consequence, we can obtain the following corollary.
\begin{corollary}\label{np0}
	Let $f$ be a normalized wave packet. Then the following estimates hold:
	\begin{equation*}\label{np3}
		\|d P_\lambda f \|_{L^\infty_t L^2_x} \lesssim 1, \quad \| \square_{\mathbf{g}} P_\lambda f \|_{L^2_t L^2_x} \lesssim \epsilon_0.
	\end{equation*}
\end{corollary}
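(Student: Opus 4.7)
\smallskip

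\noindent\textbf{Proof proposal.} The first bound is essentially a direct computation from the definition of a normalized wave packet. Since $P_\lambda f$ has spatial frequency localized at $|\xi|\sim \lambda$, one has $\|dP_\lambda f\|_{L^2_x}\lesssim \lambda\|f\|_{L^2_x}$, so it suffices to show $\|T_\lambda(uh)\|_{L^2_x}\lesssim \epsilon_0^{-1/4}\lambda^{1/4}$ uniformly in $t$. Writing
\[
T_\lambda(uh)(x_\omega,x')=\int \chi_\lambda\!\left(x_\omega-\phi(t,x'-z),\,z\right)h(x'-z)\,dz,
\]
one observes that $h$ varies at the scale $a=(\epsilon_0\lambda)^{-1/2}\gg \lambda^{-1}$, so it can be frozen at $x'$, and the remaining kernel integral is bounded by $\lambda\cdot\mathbf 1_{|x_\omega-\phi(t,x')|\lesssim \lambda^{-1}}$. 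Integrating in $x_\omega$ and then in $x'$ uses $\|h\|_{L^2_{x'}}^2\sim (\epsilon_0\lambda)^{-1/2}$, which produces the required bound. Because the foliation estimate $G(\mathbf v,\rho)\lesssim\epsilon_1$ is uniform in $t$, no loss occurs from the $L^\infty_t$ norm.

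For the second bound, split $\square_{\mathbf g}P_\lambda f=\square_{\mathbf g_\lambda}P_\lambda f+\square_{\mathbf g-\mathbf g_\lambda}P_\lambda f$. Since $\mathbf g^{00}=-1$, the second piece equals $(\mathbf g-\mathbf g_\lambda)^{\alpha i}\partial_i\partial_\alpha P_\lambda f$; Bernstein-type estimates give $\|\mathbf g-\mathbf g_\lambda\|_{L^\infty_x}\lesssim \lambda^{-1}\|d\mathbf g\|_{L^\infty_x}$, while $\|\nabla dP_\lambda f\|_{L^2_x}\lesssim \lambda\|dP_\lambda f\|_{L^2_x}\lesssim\lambda$ from the first estimate. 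Combined with $\|d\mathbf g\|_{L^2_tL^\infty_x}\lesssim\epsilon_0$ (which follows from \eqref{403} and $\epsilon_3\ll\epsilon_0$), this gives an $\epsilon_0$ bound in $L^2_tL^2_x$. The first piece is supplied by Proposition \ref{np}: the commutator term $L(d\mathbf g,d\tilde P_\lambda\tilde f)$ is handled by the same H\"older argument, using the first part of the corollary applied to the new wave packet $\tilde f$.

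The remaining and most delicate task is to estimate $\epsilon_0^{1/4}\lambda^{-5/4}P_\lambda T_\lambda\bigl(\psi_m\,\delta^{(m)}(x'_\omega-\phi_{\omega,r})\bigr)$ in $L^2_tL^2_x$ for $m=0,1,2$. The plan is to compute the $L^2_x$ norm directly: after mollification the $\delta^{(m)}$ in the normal direction contributes $\lambda^{m+1/2}$ (by a Young-type argument using the $L^1$-norm of $\partial_1^m\chi_\lambda$ in the tangential slice), so that
\[
\|T_\lambda(\psi_m\delta^{(m)})\|_{L^2_x}\lesssim \lambda^{m+1/2}\|\psi_m(t,\cdot)\|_{L^2_{x'}},
\]
and $P_\lambda$ is bounded on $L^2$. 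Since $\psi_m$ is localized to a tangential window of size $a=(\epsilon_0\lambda)^{-1/2}$ inherited from the cut-off $h$, the rescaled Sobolev bound \eqref{np2} converts to $\|\psi_m\|_{L^2_tL^2_{x'}}\lesssim a^{1/2}\epsilon_0\lambda^{1-m}=\epsilon_0^{3/4}\lambda^{3/4-m}$. Multiplying the three factors gives
\[
\epsilon_0^{1/4}\lambda^{-5/4}\cdot\lambda^{m+1/2}\cdot\epsilon_0^{3/4}\lambda^{3/4-m}=\epsilon_0,
\]
exactly matching the target. The main obstacle here is justifying the above $L^2$ kernel estimate rigorously in the presence of the curved surface $x_\omega=\phi(t,x')$: one must check that the $L^1$ integral of $\partial_1^m\chi_\lambda$ along the graph of $\phi$ behaves as in the flat case, which uses $\|d\phi-dt\|_{L^\infty}\lesssim\epsilon_1$ from \eqref{504} to freeze the surface at linear order over the $\lambda^{-1}$-scale support of the mollifier.
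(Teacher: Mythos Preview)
Your proposal is correct. The paper itself gives no proof of this corollary, stating only that it follows ``as an immediate consequence'' of Proposition~\ref{np}; your argument supplies exactly the missing details, and your kernel/scaling computation for $T_\lambda(\psi_m\delta^{(m)})$ is the same one that the paper later carries out (in a superposed setting) in the proof of Proposition~\ref{szy}.

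One small technical remark: your opening reduction $\|dP_\lambda f\|_{L^2_x}\lesssim \lambda\|f\|_{L^2_x}$ is only a Bernstein inequality for \emph{spatial} derivatives, whereas $\partial_t P_\lambda f=P_\lambda\partial_t f$ requires handling $\partial_t(uh)=-h\,\partial_t\phi\,\delta^{(1)}(x_\omega-\phi)+u\,\partial_t h$ directly. This is harmless: the first term is covered by your own $m=1$ kernel estimate (with the bounded factor $\partial_t\phi$), and the second is lower order since $\partial_t h$ carries only a factor $(\epsilon_0\lambda)^{1/2}$. The final bound is unchanged.
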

From Proposition \ref{np} and Corollary \ref{np0}, a single normalized wave packet is not enough for us to construct approximate solutions to a linear wave equation. Therefore, we discuss the superposition of wave packets as follows.
\subsubsection{\textbf{Superpositions of wave packets}}
The index $\omega$ stands for the initial orientation of the wave packet at $t=-2$, which varies over a maximal collection of approximately $\epsilon_0^{-\frac12}\lambda^{\frac12}$ unit vectors separated by at least $\epsilon_0^{\frac12}\lambda^{-\frac12}$. For each $\omega$, we have the orthonormal coordinate system $(x_\omega, x'_\omega)$ of $\mathbb{R}^2$, where $x_\omega=x \cdot \omega$, and $x'_\omega$ are projective along $\omega$.


Next, we decompose $\mathbb{R}^2$ into a parallel tiling of rectangles of size $(8\lambda)^{-1}$ in the $x_{\omega}$ direction, and $(4\epsilon_0 \lambda)^{-\frac12}$ in the other directions $x'_{\omega}$. The index $j \in \mathbb{N}$ corresponds to a counting of these rectangles in this decomposition. We let $R_{\omega,j}$ denote the collection of the doubles of these rectangles, and let $\Sigma_{\omega,j}$ denote the element of the $\Sigma_{\omega}$ foliation upon which $R_{\omega,j}$ is centered. 
The distinct elements of the foliation, denoted by $\Sigma_{\omega,j}$ are thus separated by at least $(8\lambda)^{-1}$ at $t = −2$, and thus by $(9\lambda)^{-1}$ at all values of $t$, as shown in \eqref{pr0} below. Let $\gamma_{\omega,j}$ denote the null geodesic contained in $\Sigma_{\omega,j}$ which passes through the center of $R_{\omega,j}$ at time $t = −2$.

We denote the slab $T_{\omega,j}$ as
\begin{equation}\label{twj}
	T_{\omega,j}=\Sigma_{\omega,j} \cap \{ |x'_{\omega}-\gamma_{\omega,j}| \leq (\epsilon_0 \lambda)^{-\frac12}\},
\end{equation}
and let
\begin{equation*}
	\Sigma_\omega=\cup_{r\in\mathbb{R}}\Sigma_{\omega,r}.
\end{equation*}
For each $\omega$ the slabs $T_{\omega,j}$ satisfy a finite-overlap condition; indeed, slabs associated to different elements of $\Sigma_\omega$ are disjoint, and those associated to the
same $\Sigma_\omega$ have finite overlap in the $x'_{\omega}$ variable, since the flow restricted to any
$\Sigma_{\omega,r}$ is $C^1$ close to translation. We next introduce some geometry properties for these slabs.

By \eqref{600} and \eqref{606}, then the estimate
\begin{equation*}
	|dr_{\theta}-(\theta \cdot d x-dt)| \lesssim \epsilon_1,
\end{equation*}
holds pointwise uniformly on $[-2,2]\times \mathbb{R}^2$. This also implies that
\begin{equation}\label{pr0}
	| \phi_{\theta,r}(t,x'_{\theta})-\phi_{\theta,r'}(t,x'_{\theta})-(r-r')| \lesssim \epsilon_1|r-r'|.
\end{equation}
On the other hand, \eqref{502} tells us
\begin{equation}\label{pr1}
	\| d^2_{x'_{\omega}}\phi_{\omega,r}(t,x'_{\omega})-d^2_{x'_{\omega}}\phi_{\omega,r'}(t,x'_{\omega})\|_{L^\infty_{x'_{\omega}}} \lesssim \epsilon_2+ \bar{\rho}(t),
\end{equation}
where set $$\bar{\rho}(t)=\| d \mathbf{g} \|_{C^{\delta_0}_x}.$$
By using \eqref{pr0} and \eqref{pr1}, we get
\begin{equation*}\label{pr2}
	\| d_{x'_{\omega}}\phi_{\omega,r}(t,x'_{\omega})-d_{x'_{\omega}}\phi_{\omega,r'}(t,x'_{\omega})\|_{L^\infty_{x'_{\omega}}} \lesssim (\epsilon_2+ \bar{\rho}(t))^{\frac12}|r-r'|^{\frac12}.
\end{equation*}
For $dx_{\omega}-d\phi_{\omega,r}$ is null and also $|d \mathbf{g} | \leq \bar{\rho}(t)$, this also implies H\"older-$\frac12$ bounds on $d\phi_{\omega,r}$. Therefore, we suppose that $(t,x)\in \Sigma_{\omega,r}$ and $(t,y)\in \Sigma_{\omega,r'}$, that $|x'_{\omega}-y'_{\omega}| \leq 2(\epsilon_0 \lambda)^{-\frac12}$, and that $|r-r'|\leq 2 \lambda^{-1}$. Using \eqref{601}, we can obtain
\begin{equation*}
	|l_{\omega}(t,x)-l_{\omega}(t,y)| \lesssim \epsilon_0^{\frac12}\lambda^{-\frac12}+\epsilon_0^{-\frac12}\bar{\rho}(t)\lambda^{-\frac12}.
\end{equation*}
Due to $\dot{\gamma}_{\omega}=l_{\omega}$ and $\|\bar{\rho}\|_{L^4_t} \lesssim \epsilon_0$, any geodesic in $\Sigma_{\omega}$ which intersects a slab $T_{\omega,j}$ should be contained in the similar slab of half the scale.

We now introduce a lemma for superpositions of wave packets from a certain fixed time.
\begin{Lemma}\label{SWP}[\cite{ST},Lemma 8.5 and Lemma 8.6]
	Let $\frac74<s_0\leq s \leq 2$, $\delta_0 \in (0, s_0-\frac74)$ and $0<\mu < \delta_0$. Let a scalar function $\bar{v}(t,x)$ be formulated by
	\begin{equation*}\label{swp}
		\bar{v}(t,x)=\epsilon_0^{\frac14}\lambda^{-\frac14} P_{\lambda} \sum_{\omega,j}T_\lambda(\psi^{\omega,j}\delta_{x_{\omega}-\phi_{\omega,j}(t,x'_{\omega})}).
	\end{equation*}
	Set $a=(\epsilon_0 \lambda)^{-\frac12}$. Then we have
	\begin{equation}\label{swp0}
		\| \bar{v}(t) \|^2_{L^2_x}\lesssim \sum_{\omega,j}\| \psi^{\omega,j}\|^2_{H^{\frac12+\mu}_a}, \qquad \text{if} \ \ \ \ \bar{\rho}(t) \leq \epsilon_0,
	\end{equation}
	and
	\begin{equation}\label{swp1}
		\| \bar{v}(t) \|^2_{L^2_x}\lesssim \epsilon_0^{-1} \bar{\rho}(t)  \sum_{\omega,j}\| \psi^{\omega,j}\|^2_{H^{\frac12+\mu}_a},  \quad \text{if} \ \ \bar{\rho}(t) \geq \epsilon_0 .
	\end{equation}

	\begin{remark}
		Thanks to \eqref{swp0} and \eqref{swp1}, we can carry out
		\begin{equation}\label{swp3}
			\| \bar{v}(t) \|^2_{L^2_x}\lesssim \left(1+\epsilon_0^{-1} \bar{\rho}(t)\right)  \sum_{\omega,j}\| \psi^{\omega,j}\|^2_{H^{\frac12+\mu}_a}.
		\end{equation}
	\end{remark}
\end{Lemma}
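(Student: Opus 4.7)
The plan is to establish the bounds by a $TT^{*}$ / almost-orthogonality argument in the spirit of Smith--Tataru. Write the individual wave packet as
\[
f^{\omega,j}(t,x)=\epsilon_0^{\frac14}\lambda^{-\frac14}P_\lambda T_\lambda\bigl(\psi^{\omega,j}\delta_{x_\omega-\phi_{\omega,j}(t,x'_\omega)}\bigr),
\]
so that $\bar v=\sum_{\omega,j}f^{\omega,j}$ and $\|\bar v(t)\|_{L^2_x}^2=\sum_{(\omega,j),(\omega',j')}\langle f^{\omega,j},f^{\omega',j'}\rangle$. The goal is to produce a kernel $K\bigl((\omega,j),(\omega',j')\bigr)$ so that
\[
|\langle f^{\omega,j},f^{\omega',j'}\rangle|\lesssim K\bigl((\omega,j),(\omega',j')\bigr)\,\|\psi^{\omega,j}\|_{H^{\frac12+\mu}_a}\|\psi^{\omega',j'}\|_{H^{\frac12+\mu}_a},
\]
with $K$ satisfying the Schur condition $\sup_{(\omega,j)}\sum_{(\omega',j')}K\lesssim 1$ (or $\lesssim \epsilon_0^{-1}\bar\rho(t)$ in the second regime). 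The conclusion is then immediate from the Schur test.

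The kernel $K$ is read off from the fact that each $f^{\omega,j}$ is simultaneously spatially localized in the tube $T_{\omega,j}$ defined in \eqref{twj}, with transverse width $(\epsilon_0\lambda)^{-\frac12}$ and thickness $\lambda^{-1}$, and microlocalized in frequency to a box of size $\lambda\times\epsilon_0^{\frac12}\lambda^{\frac12}$ oriented in direction $\omega$. Two such packets are essentially orthogonal when either their frequency boxes are disjoint (controlled by $|\omega-\omega'|\gg\epsilon_0^{\frac12}\lambda^{-\frac12}$) or when the tubes $T_{\omega,j}$ and $T_{\omega',j'}$ are spatially separated at time $t$. The rapid decay of $K$ in the angular and transverse distances is obtained by stationary phase applied to the kernels of $P_\lambda$ and $T_\lambda$, after changing coordinates along each $\Sigma_\omega$; the $H^{\frac12+\mu}_a$ norm on the symbols $\psi^{\omega,j}$ is precisely what is needed to absorb the finite-overlap count of slabs sharing the same $\omega$ and to account for the Sobolev trace onto the one-dimensional parameter $x'_\omega$, with $\mu>0$ providing the logarithmic gain.

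The dichotomy in $\bar\rho(t)$ appears through the geometric count of how many distinct hypersurfaces $\Sigma_{\omega,r}$ of the foliation can pass through a fixed neighborhood. When $\bar\rho(t)\le\epsilon_0$, the estimates \eqref{pr0} and \eqref{pr1} show that each foliation $\Sigma_\omega$ is a uniformly small $C^{1,\delta_0}$ perturbation of a flat cone, so the tubes $T_{\omega,j}$ for fixed $\omega$ have finite overlap, and the standard flat-case orthogonality yields \eqref{swp0}. When $\bar\rho(t)\ge\epsilon_0$, \eqref{pr1} bounds the $C^{1,\delta_0}$-variation of $\phi_{\omega,r}$ in $r$ by $\bar\rho(t)$, which inflates the effective overlap between hypersurfaces by a factor of $\epsilon_0^{-1}\bar\rho(t)$, giving the loss in \eqref{swp1}.

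The main obstacle is the uniform control of the off-diagonal kernel $K$ given only the very limited regularity available on the functions $\phi_{\omega,r}$. This is where Proposition \ref{r2} is essential: the $L^4_tC^{1,\delta_0}_{x'}$ bound \eqref{502}, together with the tangential Sobolev control in Proposition \ref{r1}, gives exactly the regularity of the null foliation needed to execute the stationary-phase argument on the oscillatory integrals defining $\langle f^{\omega,j},f^{\omega',j'}\rangle$ while keeping constants bounded by a universal multiple of $\epsilon_0$ (or $\bar\rho(t)$). Once this analytic input is in place the remainder is the bookkeeping of Lemmas 8.5--8.6 of \cite{ST}, now carried out in dimension $2$ where the angular parameter is one-dimensional and the slab count collapses accordingly.
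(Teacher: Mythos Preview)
The paper does not prove this lemma at all: it is stated as a direct citation of Lemmas~8.5 and~8.6 of Smith--Tataru \cite{ST}, with no argument given. So there is no ``paper's own proof'' to compare against; the result is imported as a black box.

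Your sketch is a reasonable high-level summary of the Smith--Tataru argument: the $TT^{*}$/Schur-test framework, the phase-space localization of each packet to a $\lambda\times\epsilon_0^{1/2}\lambda^{1/2}$ frequency box and a $\lambda^{-1}\times(\epsilon_0\lambda)^{-1/2}$ spatial tube, and the role of the scaled Sobolev norm $H^{1/2+\mu}_a$ are all correct in spirit. Two points of caution if you intend this as more than an outline. First, the off-diagonal decay in \cite{ST} is not obtained by stationary phase on $P_\lambda T_\lambda$ in the usual sense; rather one writes $\langle f^{\omega,j},f^{\omega',j'}\rangle$ as an integral over the intersection of the two characteristic surfaces and uses the trace/restriction bounds together with the angular separation to get summable decay---the $H^{1/2+\mu}_a$ norm enters precisely through a one-dimensional trace inequality, not merely a finite-overlap count. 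Second, your explanation of the $\bar\rho(t)$ dichotomy is slightly off: the loss $\epsilon_0^{-1}\bar\rho(t)$ in \eqref{swp1} does not come from an inflated overlap of \emph{hypersurfaces} but from the failure of the angular almost-orthogonality when $\|d^2\phi\|_{L^\infty}$ (controlled by $\bar\rho(t)$ via \eqref{502}) exceeds the packet curvature scale $\epsilon_0$; in that regime the effective angular support of each packet widens and more directions $\omega'$ interact with a given $\omega$. These are refinements rather than gaps, and since the paper itself simply cites \cite{ST}, your outline is already more detailed than what the paper provides.
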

\begin{proposition}[\cite{ST},Proposition 8.4]\label{szy}
	Let $f=\sum_{\omega,j}a_{\omega,j}f^{\omega,j}$, where $f^{\omega,j}$ are normalized wave packets supported in $T_{\omega,j}$. Then we have
	\begin{equation}\label{ese}
		\| d P_\lambda f\|_{L^\infty_t L^2_x} \lesssim (\sum_{\omega,j} a^2_{\omega,j})^{\frac12},
	\end{equation}
	and
	\begin{equation}\label{ese1}
		\| \square_{\mathbf{g}_{\lambda}} P_\lambda f \|_{L^1_t L^2_x} \lesssim \epsilon_0 (\sum_{\omega,j} a^2_{\omega,j})^{\frac12}.
	\end{equation}
\end{proposition}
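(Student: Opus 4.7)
The plan is to combine the packet-by-packet decomposition from Proposition~\ref{np} with the superposition estimate of Lemma~\ref{SWP} and the standard energy identity for $\square_{\mathbf{g}_\lambda}$. I would establish \eqref{ese1} first and then close \eqref{ese} via the energy estimate, treating the two bounds as a coupled bootstrap in which the bilinear coupling is small.

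For \eqref{ese1}, apply Proposition~\ref{np} to each $P_\lambda f^{\omega,j}$ and sum against the weights $a_{\omega,j}$. The bilinear piece is handled by H\"older,
\begin{equation*}
\|L(d\mathbf{g}, d\tilde P_\lambda \tilde f)\|_{L^1_t L^2_x} \lesssim \|d\mathbf{g}\|_{L^1_t L^\infty_x}\,\|d\tilde P_\lambda \tilde f\|_{L^\infty_t L^2_x} \lesssim \epsilon_0\Big(\textstyle\sum a_{\omega,j}^2\Big)^{1/2},
\end{equation*}
where Proposition~\ref{r4} controls $\|d\mathbf{g}\|_{L^1_t L^\infty_x}$ and the second factor is bounded via a bootstrap invocation of \eqref{ese}. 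For the $\delta$-pieces, moving the $m$ derivatives on $\delta^{(m)}$ onto the mollifier kernel $\chi_\lambda$ produces a factor of $\lambda^m$, which together with the packet normalization $\epsilon_0^{1/4}\lambda^{-5/4}$ recovers exactly the prefactor $\epsilon_0^{1/4}\lambda^{-1/4}$ needed to apply Lemma~\ref{SWP}. The resulting superposition estimate has the form
\begin{equation*}
\Big\|\epsilon_0^{\frac14}\lambda^{-\frac54}P_\lambda T_\lambda\!\sum_{\omega,j} a_{\omega,j}\psi_m^{\omega,j}\delta^{(m)}(t)\Big\|_{L^2_x}^{2} \lesssim \lambda^{2m-2}\bigl(1+\epsilon_0^{-1}\bar\rho(t)\bigr)\!\sum_{\omega,j}\! a_{\omega,j}^2\,\|\psi_m^{\omega,j}(t)\|_{H^{1/2+\mu}_{a,x'}}^{2},
\end{equation*}
for $a=(\epsilon_0\lambda)^{-1/2}$ and a small $\mu\in(0,s_0-\tfrac74)$. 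Integrating in time, using $\|\bar\rho\|_{L^4_t}\lesssim \epsilon_2\ll\epsilon_0$ to absorb the factor $(1+\epsilon_0^{-1}\bar\rho(t))^{1/2}$ through H\"older, and invoking $\|\psi_m\|_{L^2_t H^{s_0-5/4}_{a,x'}}\lesssim \epsilon_0\lambda^{1-m}$ from \eqref{np2}, delivers the required $\epsilon_0(\sum a^2)^{1/2}$ bound after summing $m=0,1,2$.

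Finally, for \eqref{ese} I would use the energy inequality
\begin{equation*}
\|dP_\lambda f\|_{L^\infty_t L^2_x} \lesssim \|dP_\lambda f(-2)\|_{L^2_x} + \|\square_{\mathbf{g}_\lambda} P_\lambda f\|_{L^1_t L^2_x}.
\end{equation*}
Since $\mathbf{g}$ was extended to equal the Minkowski metric on $t\in[-2,-\tfrac32]$, one has $\bar\rho(-2)=0$, so Lemma~\ref{SWP}'s clean estimate \eqref{swp0} combined with the spatial almost-orthogonality of the slabs $T_{\omega,j}$ gives $\|dP_\lambda f(-2)\|_{L^2_x}^{2}\lesssim \sum a_{\omega,j}^2$. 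Pairing this with \eqref{ese1} closes the bootstrap, the coupling constant between the two estimates being $O(\epsilon_0)\ll 1$. The principal obstacle is the $\delta$-term analysis: the scaled Sobolev norms $H^{s_0-5/4}_{a,x'}$ live tangentially along the characteristic surfaces $\Sigma_{\omega,j}$, and one must verify that for each $m\in\{0,1,2\}$ the $\lambda^m$ differentiation factor, the $\lambda^{1-m}$ gain in \eqref{np2}, and the transverse scale $a=(\epsilon_0\lambda)^{-1/2}$ combine consistently, while the factor $(1+\epsilon_0^{-1}\bar\rho(t))^{1/2}$ is absorbed using only the $L^4_t$ control on $\bar\rho$.
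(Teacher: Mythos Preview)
Your strategy is correct and uses the same core ingredients as the paper---Proposition~\ref{np} for the packet-by-packet decomposition of $\square_{\mathbf{g}_\lambda}P_\lambda f$, Lemma~\ref{SWP} for the superposition in $L^2_x$, and the energy inequality to close---so the argument would go through. The minor differences are worth noting.

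The paper avoids your bootstrap entirely by first proving the \emph{weaker} bound $\|dP_\lambda f\|_{L^2_t L^2_x}\lesssim(\sum a_{\omega,j}^2)^{1/2}$ directly from Lemma~\ref{SWP} (applying \eqref{swp3} at each fixed time and then integrating, using $\|\bar\rho\|_{L^4_t}\lesssim\epsilon_0$). This $L^2_t L^2_x$ bound requires no information about $\square_{\mathbf{g}_\lambda}P_\lambda f$, so there is no circularity: the bilinear term is then controlled via $\|L(d\mathbf g,d\tilde P_\lambda\tilde f)\|_{L^1_tL^2_x}\lesssim \|d\mathbf g\|_{L^2_tL^\infty_x}\,\|d\tilde P_\lambda\tilde f\|_{L^2_tL^2_x}$, which feeds straight into \eqref{ese1}, and only at the very end does the energy inequality upgrade $L^2_tL^2_x$ to $L^\infty_tL^2_x$. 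Your route instead pairs $L^1_tL^\infty_x$ with $L^\infty_tL^2_x$ and absorbs the resulting $O(\epsilon_0)\|dP_\lambda f\|_{L^\infty_tL^2_x}$ term into the left side; this is legitimate since everything is a priori finite for smooth solutions, but the paper's ordering is a cleaner linear chain. Two small corrections: the bound $\|d\mathbf g\|_{L^1_tL^\infty_x}\lesssim\epsilon_0$ comes from the hypothesis $(\bv,\rho,w)\in\mathcal H$ (i.e.\ \eqref{403}), not from Proposition~\ref{r4}, which is itself a consequence of the bootstrap; and for the $\delta$-pieces the paper passes to $L^1_tL^2_x$ via Cauchy--Schwarz in $t$, splitting off the factor $\int(1+\epsilon_0^{-1}\bar\rho)\,dt\lesssim 1$, which is exactly what your ``H\"older with $L^4_t$ control on $\bar\rho$'' amounts to.
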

\begin{proof}
	We first prove a weaker estimate comparing with \eqref{ese}
	\begin{equation}\label{ese2}
		\| d P_\lambda f\|_{L^2_t L^2_x} \lesssim (\sum_{\omega,j} a^2_{\omega,j})^{\frac12}.
	\end{equation}
	By using \eqref{swp3} and replacing $P_\lambda$ by $\lambda^{-1}\nabla P_\lambda$, and $\psi^{\omega,j}=a_{\omega,j}\zeta^{\omega,j}$, we have
	\begin{equation*}
		\| \nabla P_\lambda f(t)\|^2_{L^2_x} \lesssim (1+\epsilon_0^{-1}\bar{\rho}(t)) \sum_{\omega,j} a^2_{\omega,j}.
	\end{equation*}
	Due to the fact $\|\bar{\rho}\|_{L^4_t} \lesssim \epsilon_0$, we can see that
	\begin{equation}\label{ese3}
		\| \nabla P_\lambda f\|^2_{L^2_t L^2_x} \lesssim \sum_{\omega,j} a^2_{\omega,j} .
	\end{equation}
	We also need to get the similar estimate for the time derivatives. We can calculate
	\begin{equation*}
		\partial_t h = \dot{\gamma}(t) (\epsilon_0 \lambda)^{\frac12} \tilde{h}, \quad \partial_t \delta(x_\omega-\phi_{\omega,j})=\partial_t \phi_{\omega,j} \delta^{(1)}(x_\omega-\phi_{\omega,j}).
	\end{equation*}
	For $\dot{\gamma} \in L^\infty_t$ and $\partial_t \phi_{\omega,j} \in L^\infty_t$, then we have
	\begin{equation}\label{ese4}
		\| \partial_t P_\lambda f\|^2_{L^2_t L^2_x} \lesssim \sum_{\omega,j} a^2_{\omega,j} .
	\end{equation}
	Together with \eqref{ese3} and \eqref{ese4}, we have proved \eqref{ese2}.

	To prove \eqref{ese1}, we use the formula \eqref{np1}. Considering the right hand of \eqref{np1}, by using \eqref{ese2}, we can bound the first term by
	\begin{equation}\label{k26}
		\| L(d\mathbf{g}, d \tilde{P}_{\lambda} \tilde{f}) \|_{L^1_tL^2_x} \lesssim \|d\mathbf{g}\|_{L^4_tL^\infty_x} \|d \tilde{P}_{\lambda} \tilde{f} \|_{L^2_tL^2_x} \lesssim \epsilon_0 (\sum_{\omega,j} a^2_{\omega,j})^{\frac12}.
	\end{equation}
	It only remains for us to estimate the second right term on \eqref{np1}. If we set
	\begin{equation}\label{k28}
		\vartheta=\epsilon_0^{\frac12} \lambda^{-\frac54} P_\lambda T_\lambda \left(\sum_{\omega,j}a_{\omega,j}\cdot \sum_{m=0,1,2}\psi^{\omega,j}_m \delta^{(m)}_{x_{\omega}-\phi_{\omega,j}} \right),
	\end{equation}
	we have
	\begin{equation*}
		\| \vartheta \|^2_{L^2_x}=(1+\bar{\rho}(t)\epsilon_0^{-1}) \sum_{\omega,j}a^2_{\omega,j} \sum_{m=0,1,2} \lambda^{m-1}\|\psi^{\omega,j}_m (t) \|^2_{H^{1+\mu}_a}.
	\end{equation*}
	By \eqref{np2}, we therefore get
	\begin{equation}\label{k30}
		\begin{split}
			& \| \vartheta \|^2_{L^1_t L^2_x}
			\\
			\lesssim & \left(\int^2_{-2}[1+\bar{\rho}(t)\epsilon_0^{-1}]dt \right) \left( \int^{2}_{-2} \sum_{\omega,j}a^2_{\omega,j}  \sum_{m=0,1,2} \lambda^{2(m-1)}\|\psi^{\omega,j}_m (t) \|^2_{H^{1+\mu}_a} dt \right)
			\\
			\lesssim & \epsilon_0 (\sum_{\omega,j} a^2_{\omega,j})^{\frac12}.
		\end{split}
	\end{equation}
	Due to \eqref{np1}, using \eqref{k26}, \eqref{k28}, and \eqref{k30}, we have proved \eqref{ese1}. Using \eqref{ese2} and \eqref{ese1}, and classical energy estimates for linear wave equation, we obtain \eqref{ese}.
\end{proof}
\subsubsection{\textbf{Matching the initial data}}
Although we have constructed the approximate solutions using superpositions of normalized wave packets, we also need to complete this construction, i.e. matching the initial data for the solutions. 
\begin{proposition}[\cite{ST}, Proposition 8.7]\label{szi}
	Given any initial data $(f_0,f_1) \in H^1 \times L^2$, there exists a function of the form
	\begin{equation*}
		f=\sum_{\omega,j}a_{\omega,j}f^{\omega,j},
	\end{equation*}
	where the function $f^{\omega,j}$ are normalized wave packets, such that
	\begin{equation*}
		P_\lambda f(-2)=P_\lambda f_0, \quad \partial_t P_\lambda f(-2)=P_\lambda f_1.
	\end{equation*}
	Furthermore,
	\begin{equation*}
		\sum_{\omega,j}a_{\omega,j}^2 \lesssim \| f_0 \|^2_{H^1}+ \| f_1 \|^2_{L^2}.
	\end{equation*}
\end{proposition}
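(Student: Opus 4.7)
The plan is to produce the decomposition in three stages: an angular Fourier decomposition, a physical-space tiling, and finally a splitting into outgoing/incoming wave packets so that \emph{both} $f_0$ and $f_1$ are matched simultaneously at $t=-2$. Since $P_\lambda$ restricts attention to frequencies of size $\lambda$, fix a smooth partition of unity $\{\chi_\omega(\xi/|\xi|)\}$ on $\mathbb{S}^1$ with $\omega$ ranging over the $\epsilon_0^{1/2}\lambda^{-1/2}$-separated grid, each cutoff supported in a cap of angular radius $\epsilon_0^{1/2}\lambda^{-1/2}$ about $\omega$. Writing $P_{\lambda,\omega}=\chi_\omega(D/|D|)P_\lambda$ gives a decomposition $P_\lambda=\sum_\omega P_{\lambda,\omega}$ whose pieces have Fourier support in boxes of size $\lambda\times\epsilon_0^{1/2}\lambda^{1/2}$ aligned with $\omega$. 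By Plancherel and finite overlap, $\sum_\omega\|P_{\lambda,\omega}g\|_{L^2}^2\lesssim \|P_\lambda g\|_{L^2}^2$.

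Next, for each $\omega$, I spatially tile $\mathbb{R}^2$ by the dual rectangles of size $\lambda^{-1}$ in the $x_\omega$ direction and $(\epsilon_0\lambda)^{-1/2}$ in the $x'_\omega$ direction. This is exactly the scale introduced in \eqref{twj}. Using a smooth partition of unity $\{\psi_{\omega,j}\}$ subordinate to this tiling and composing with $P_{\lambda,\omega}$ produces pieces $g_{\omega,j}=\psi_{\omega,j}P_{\lambda,\omega}g$ which, up to a Schwartz tail, are concentrated on the tile and carry the correct frequency localization. Orthogonality in both the angular index $\omega$ and the position index $j$, together with the uncertainty principle, yields
\begin{equation*}
\sum_{\omega,j}\|g_{\omega,j}\|_{L^2}^2\lesssim \|P_\lambda g\|_{L^2}^2.
\end{equation*}
In particular, for each $(\omega,j)$ the normalized object $\epsilon_0^{-1/4}\lambda^{5/4}g_{\omega,j}$ matches, up to a Schwartz tail, the form $T_\lambda(uh)$ at $t=-2$ arising from the definition of a normalized wave packet in Section \ref{cwp}, with $u=\delta(x_\omega-\phi_{\omega,r_j}(-2,x'_\omega))$ and $h$ the appropriately rescaled bump centered at $\gamma_{\omega,j}'(-2)$; this is because the foliation $\Sigma_{\omega,r}$ coincides at $t=-2$ with the flat hyperplanes $\{x_\omega=r-2\}$.

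To handle both Cauchy data simultaneously I pass to the $\pm$-splitting $f_0^\pm=\tfrac12(f_0\mp i\Lambda^{-1}f_1)$, which has the property that, in the flat-metric model, $f_0^+$ launches a positive-frequency wave and $f_0^-$ a negative-frequency wave. Equivalently, packets indexed by $\omega$ and by $-\omega$ carry independent degrees of freedom corresponding to opposite orientations of the null generator $l_\omega$ at $t=-2$. Applying the angular/spatial decomposition above separately to $f_0^+$ and $f_0^-$ produces coefficients $a_{\omega,j}$, and the desired matching of $(P_\lambda f(-2),\partial_t P_\lambda f(-2))=(P_\lambda f_0,P_\lambda f_1)$ becomes a solvable $2\times 2$ linear system at each $(\omega,j)$ because the initial velocity of each packet is transverse in a controlled way (by Proposition \ref{r2}, $\dot\gamma_\omega(-2)$ is $O(\epsilon_1)$-close to $\omega$, so the system is non-degenerate uniformly). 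The $\ell^2$ bound
\begin{equation*}
\sum_{\omega,j}a_{\omega,j}^2\lesssim \|f_0\|_{H^1}^2+\|f_1\|_{L^2}^2
\end{equation*}
then follows from Plancherel applied to $f_0^\pm$ combined with $\|f_0^+\|_{H^1}^2+\|f_0^-\|_{H^1}^2\lesssim\|f_0\|_{H^1}^2+\|f_1\|_{L^2}^2$.

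The main obstacle is that the wave packets as defined in Section \ref{cwp} are adapted to the curved null foliation $\Sigma_{\omega,r}$, whereas the initial data decomposition is naturally Euclidean. To close the argument I would use the $L^\infty$ estimate $|dr_\theta-(\theta\cdot dx-dt)|\lesssim\epsilon_1$ together with \eqref{pr0} and \eqref{pr1} to compare the flat tiling at $t=-2$ with the induced parametrization of $\Sigma_{\omega,r}$, absorbing the error into a contraction/fixed-point argument: the linear map sending wave-packet coefficients to Cauchy data is $O(\epsilon_1)$-close to the isometry realized in the constant-coefficient case (which is the standard Smith--Sogge wave-packet transform on $\mathbb{R}^2$), hence invertible with bounded inverse on $\ell^2$. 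Schwartz tails of $P_{\lambda,\omega}$ outside the tile produce only $O(\lambda^{-N})$ errors per packet, which are summable and can be absorbed by the same fixed-point step.
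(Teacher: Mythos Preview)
The paper does not give its own proof of this proposition; it is quoted verbatim from Smith--Tataru \cite{ST}, Proposition 8.7, and used as a black box. So there is no in-paper argument to compare against, and your sketch is being measured against the original \cite{ST} construction.

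Your outline follows the correct Smith--Tataru strategy: angular decomposition into $(\epsilon_0\lambda)^{-1/2}$-caps, spatial tiling at the dual scale $\lambda^{-1}\times(\epsilon_0\lambda)^{-1/2}$, and a half-wave $\pm$ splitting so that packets indexed by $\omega$ and $-\omega$ together span the two-dimensional Cauchy data $(f_0,f_1)$ on each tile. The $\ell^2$ bound via Plancherel and finite overlap is also right.

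Where your write-up goes slightly astray is the final paragraph. You flag as the ``main obstacle'' the discrepancy between the curved foliation $\Sigma_{\omega,r}$ and the Euclidean decomposition, and propose to close it by a fixed-point argument exploiting $O(\epsilon_1)$-closeness. But in this paper (and in \cite{ST}) the metric $\mathbf{g}$ has been extended to be \emph{exactly} Minkowski for $t\in[-2,-\tfrac32]$; see the sentence preceding the definition of $r_\theta(-2,x)=\theta\cdot x+2$. Consequently, at $t=-2$ the surfaces $\Sigma_{\omega,r}$ are flat hyperplanes, $\phi_{\omega,r}(-2,x'_\omega)=r-2$, $\partial_t\phi_{\omega,r}(-2,\cdot)=1$ exactly, and $\dot\gamma_\omega(-2)$ equals the flat null direction with no $\epsilon_1$ error. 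The matching problem therefore reduces \emph{exactly} to the constant-coefficient wave-packet transform on $\mathbb{R}^2$, and the curvature perturbation you describe never enters. The only iteration needed is a standard Neumann-series cleanup for the Schwartz tails of $P_{\lambda,\omega}$ and the smooth tile cutoffs $\psi_{\omega,j}$, which is independent of the geometry. You had already noted the flatness at $t=-2$ earlier in your argument; the last paragraph simply contradicts that observation and should be dropped or replaced by the tail-absorption remark.
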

\subsubsection{\textbf{Overlap estimates}} Since the foliations $\Sigma_{\omega,r}$ varing with $\omega$ and $r$, so a fixed $\Sigma_{\omega,r}$ may intersect with other $\Sigma_{\omega',r'}$. As a result, we should be clear about the number of $\lambda$-slabs which contain two given points in the space-time $[-2,2]\times \mathbb{R}^2$.
\begin{corollary}[\cite{ST}, Proposition 9.2]\label{corl}
	For all points $P_1=(t_1,x_1)$ and $P_2=(t_2,x_2)$ in space-time $\mathbb{R}^{+} \times \mathbb{R}^2$ , and $\epsilon_0 \lambda \geq 1$, the number $N_{\lambda}(P_1,P_2)$ of slabs of scale $\lambda$ that contain both $P_1$ and $P_2$ satisfies the bound
	\begin{equation*}
		\begin{split}
			N_{\lambda}(P_1,P_2)\lesssim & \epsilon_0^{-\frac12} \lambda^{-\frac12} |t_1-t_2|^{-\frac12}.
		\end{split}
	\end{equation*}
\end{corollary}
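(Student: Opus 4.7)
I would count the slabs $(\omega,j)$ with $P_1,P_2\in T_{\omega,j}$ in two steps: first fix $\omega$ and count the admissible $j$, then count the admissible directions $\omega$.

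First, the slabs $\{T_{\omega,j}\}_j$ for fixed $\omega$ obey a finite-overlap property. The hypersurfaces $\Sigma_{\omega,j}$ with different $j$ are separated by at least $(9\lambda)^{-1}$ in the normal direction (because the initial spacing at $t=-2$ is $(8\lambda)^{-1}$ and the null flow is a small $C^1$-perturbation of translation), while within a single $\Sigma_{\omega,j}$ the geodesics $\gamma_{\omega,j}$ are $(\epsilon_0\lambda)^{-1/2}$-separated tangentially with bounded overlap. Consequently the number of $j$ with $P_1\in T_{\omega,j}$ is $O(1)$, and it suffices to estimate the number of $\omega$'s that admit any such $j$.

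Second, I extract a quantitative constraint on $\omega$. If both points lie in $T_{\omega,j}$, each must lie within tangential distance $(\epsilon_0\lambda)^{-1/2}$ of $\gamma_{\omega,j}$ and within normal distance $\lambda^{-1}$ of $\Sigma_{\omega,j}$. Combining the pointwise bound \eqref{LLL} (which gives $|l-(\partial_t+\omega\cdot\na)|\lesssim \epsilon_1$) with the H\"older estimate \eqref{502} on $d\phi_{\omega,r}-dt$, the geodesic $\gamma_{\omega,j}$ deviates from its Minkowski straight line by at most $O(\epsilon_2\tau)$ over $[t_1,t_2]$, which is absorbed into the above tolerances. Setting $X=x_2-x_1$ and $\tau=|t_1-t_2|$, this yields
\[
|X\cdot\omega-\tau|\lesssim \lambda^{-1},\qquad |X\cdot\omega^{\perp}|\lesssim (\epsilon_0\lambda)^{-1/2}.
\]

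Third, I translate these into an angular window on $\omega$. In coordinates where $X=(r,0)$ and $\omega=(\cos\theta,\sin\theta)$, the constraints read $|r\cos\theta-\tau|\lesssim\lambda^{-1}$ and $|r\sin\theta|\lesssim(\epsilon_0\lambda)^{-1/2}$, so I must intersect the circle of radius $r$ with a thin rectangle of dimensions $2\lambda^{-1}\times 2(\epsilon_0\lambda)^{-1/2}$ centred at $(\tau,0)$. A case analysis on whether $|r-\tau|\lesssim\lambda^{-1}$ (near-light-cone) or $|r-\tau|\gg\lambda^{-1}$ (transverse) shows the admissible $\theta$'s lie in an arc of length commensurate with $\lambda^{-1}|t_1-t_2|^{-1/2}$; dividing by the angular spacing $\epsilon_0^{1/2}\lambda^{-1/2}$ yields the asserted bound $N_\lambda(P_1,P_2)\lesssim \epsilon_0^{-1/2}\lambda^{-1/2}|t_1-t_2|^{-1/2}$.

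\textbf{Main obstacle.} The delicate point is the second step: the null geodesic $\gamma_{\omega,j}$ and the null graph $\phi_{\omega,r}$ are only H\"older-regular in $x$, and many of the relevant estimates live in the characteristic-energy norms $\vert\kern-0.25ex\vert\kern-0.25ex\vert\cdot\vert\kern-0.25ex\vert\kern-0.25ex\vert_{s_0-\frac14,2,\Sigma}$ rather than pointwise. One must carefully exploit the interplay between the $C^{1,\delta_0}$-bound \eqref{502}, the transport estimate for $\chi$ in Lemma \ref{chi}, the pointwise smallness of $l-(\partial_t+\omega\cdot\na)$, and the parameter hierarchy $\epsilon_2\ll\epsilon_0$ from \eqref{a0}, so that the linearisation errors are absorbed into the $\lambda^{-1}$ and $(\epsilon_0\lambda)^{-1/2}$ tolerances without degrading the final counting estimate.
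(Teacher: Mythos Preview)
The paper does not supply its own proof of this corollary; it is quoted verbatim from \cite{ST}, Proposition 9.2, and used as a black box in the proof of Proposition~\ref{szt}. Your plan---finite overlap in the index $j$ for each fixed direction $\omega$, followed by an angular counting based on the two constraints $|X\cdot\omega-\tau|\lesssim\lambda^{-1}$ and $|X\cdot\omega^{\perp}|\lesssim(\epsilon_0\lambda)^{-1/2}$---is exactly the strategy of the original Smith--Tataru argument, so in that sense your approach matches the source the paper defers to.

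One point in your third step deserves a second look. In the near-light-cone case $r\approx\tau$, the constraint $|r\cos\phi-\tau|\lesssim\lambda^{-1}$ alone gives $|\phi|\lesssim(\lambda\tau)^{-1/2}$, which after dividing by the angular spacing $\epsilon_0^{1/2}\lambda^{-1/2}$ produces $\epsilon_0^{-1/2}\tau^{-1/2}$ rather than the stated $\epsilon_0^{-1/2}\lambda^{-1/2}\tau^{-1/2}$; your claimed arc length $\lambda^{-1}\tau^{-1/2}$ is a factor of $\lambda^{1/2}$ smaller. To recover the extra $\lambda^{-1/2}$ one must use both constraints together and argue more carefully about how the rectangle of dimensions $\lambda^{-1}\times(\epsilon_0\lambda)^{-1/2}$ meets the circle, distinguishing the tangential and transverse intersection regimes. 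This is precisely where the Smith--Tataru proof does its work, so the ``main obstacle'' you flag is real, but it is a matter of executing the case analysis more precisely rather than a missing idea.
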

After the construction of approximate solutions, we still need to prove the key estimate \eqref{Ase}. 
\subsubsection{\textbf{The proof of \eqref{Ase}}}
To start the proof, let us define $\mathcal{T}=\cup_{\omega,j}T_{\omega,j}$, where $T_{\omega,j}$ is set in \eqref{twj}. We also denote $\chi_{{J}}$ be a smooth cut-off function, and $\chi_J=1$ on a set $J$.
\begin{proposition}\label{szt}
	Let $t \in [-2,2]$ and
	\begin{equation*}
		f=\sum_{J \in \mathcal{T}}a_{J}\chi_{J}f_{J},
	\end{equation*}
	where $\sum_{J \in \mathcal{T}}a_{J}^2 \leq 1$ and $f_J$ are normalized wave packets in $J$. Then
	\begin{equation}\label{Aswr}
		\|f \|_{L^4_t L^\infty_x} \lesssim \epsilon_0^{-\frac14}  (\ln \lambda)^{\frac14}.
	\end{equation}
\end{proposition}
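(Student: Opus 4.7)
\textbf{Proof plan for Proposition \ref{szt}.} The argument will follow the Smith--Tataru $TT^*$ framework, exploiting the tube-overlap estimate of Corollary \ref{corl} to convert a naive pointwise bound into a logarithmic loss.

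The starting point is the elementary identity $\|f\|_{L^4_t L^\infty_x}^4 = \||f|^2\|_{L^2_t L^\infty_x}^2$, so it suffices to prove that $\||f|^2\|_{L^2_t L^\infty_x} \lesssim \epsilon_0^{-1/2}(\ln\lambda)^{1/2}$. Expanding in the packet basis gives
\[
|f(t,x)|^2 \;=\; \sum_{J,K\in \mathcal{T}} a_J\bar a_K\,(\chi_J f_J)(t,x)\,\overline{(\chi_K f_K)(t,x)},
\]
and each normalized packet $f_J$ satisfies $\|f_J\|_{L^\infty}\lesssim \epsilon_0^{1/4}\lambda^{3/4}$ on its tube $T_J$ (by the volume computation using the packet's $L^2$-normalization and spatial support scales $\lambda^{-1}$ and $(\epsilon_0\lambda)^{-1/2}$). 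Matters therefore reduce to controlling, for each pair $(J,K)$, the size of the bilinear product $\chi_J f_J\cdot\chi_K f_K$ on the intersection $T_J\cap T_K$ and summing with the weights $a_J\bar a_K$ subject to $\sum a_J^2\leq 1$.

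The core step is a Cauchy--Schwarz application in the pair variable $(J,K)$, organized by the temporal separation. By Corollary \ref{corl}, the number of scale-$\lambda$ tubes containing any two space-time points $(t_1,x_1)$ and $(t_2,x_2)$ is bounded by $C\epsilon_0^{-1/2}\lambda^{-1/2}|t_1-t_2|^{-1/2}$. Decomposing the time axis dyadically and estimating $\|f^2(t)\|_{L^\infty_x}\|f^2(t')\|_{L^\infty_x}$ for $|t-t'|\simeq 2^{-k}$ via this overlap kernel, each dyadic scale $k$ contributes a uniform amount. Since the relevant range of scales runs from the minimal packet separation $\sim\lambda^{-1}$ up to the unit time interval, there are $O(\ln\lambda)$ such scales, producing
\[
\sum_{\lambda^{-1}\leq 2^{-k}\leq 1} 1 \;\simeq\; \ln\lambda.
\]
Taking square roots yields the $(\ln\lambda)^{1/4}$ loss after passing back through the $L^4\to L^2$ reduction, while the factor $\epsilon_0^{-1/4}$ comes directly from the $\epsilon_0^{-1/2}$ factor in Corollary \ref{corl} combined with the $\epsilon_0^{1/4}$ amplitude of a normalized packet.

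\textbf{Main obstacle.} The principal difficulty is extracting a logarithmic loss rather than a polynomial loss in $\lambda$: a naive Cauchy--Schwarz that bounds the diagonal piece $\sum_{J\ni(t,x)}|a_J|^2|f_J(t,x)|^2$ at a single point and then integrates in time would yield a factor of $\lambda^{1/2}$ arising from the tube-density at one point. Avoiding this loss requires organizing the $(J,K)$ sum bilinearly so that the two-point overlap estimate of Corollary \ref{corl} is actually used (rather than the one-point count), and the careful bookkeeping that aligns the dyadic-in-time decomposition with the quasi-orthogonality across distinct foliations $\Sigma_\omega$, while preserving the unit normalization $\sum a_J^2\leq 1$, will be the delicate computational part.
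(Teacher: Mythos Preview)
Your proposal has a genuine structural gap. The quantity you wish to bound is
\[
\|f\|_{L^4_t L^\infty_x}^4 = \int_{-2}^2 \|f(t,\cdot)\|_{L^\infty_x}^4\,dt,
\]
which is a single time integral; there is no natural mechanism by which a second time variable $t'$ enters, and in particular no step in the argument produces the object ``$\|f^2(t)\|_{L^\infty_x}\|f^2(t')\|_{L^\infty_x}$ for $|t-t'|\simeq 2^{-k}$'' that you propose to estimate. A $TT^*$ scheme would be relevant if one were bounding an operator norm via a kernel, but here the target is a direct norm of a fixed function. Your dyadic-in-$|t-t'|$ decomposition therefore floats free of the quantity being bounded, and the assertion that ``each dyadic scale contributes a uniform amount'' is neither formulated precisely nor connected to the $L^4_t L^\infty_x$ norm. (As a side remark, your packet amplitude is off: a normalized packet has $\|f_J\|_{L^\infty}\lesssim \epsilon_0^{1/4}\lambda^{-1/4}$, not $\epsilon_0^{1/4}\lambda^{3/4}$.)

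The paper's proof, following Smith--Tataru, is not a $TT^*$ argument but a discretization plus combinatorial incidence count. One first partitions $[-2,2]$ into $\approx\lambda$ subintervals of length $\approx\lambda^{-1}$ and uses the mean value theorem to reduce to bounding $\sum_j |f(t_j,x_j)|^4$ over $\approx\lambda$ sample points $(t_j,x_j)$ with $|t_{j+1}-t_j|\approx\lambda^{-1}$. Two dyadic pigeonholings then reduce matters to a fixed configuration: $N$ packets all with $|a_J|\approx N^{-1/2}$, and $M$ sample points each contained in $\approx L$ of these slabs. The goal becomes the purely combinatorial bound $L^4MN^{-2}\lesssim\epsilon_0^{-1}\ln\lambda$. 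This is proved by counting the number $K$ of ordered pairs $(P_i,P_j)$ lying in a common slab in two ways: a lower bound $K\gtrsim N^{-1}M^2L^2$ by Cauchy--Schwarz on the incidence counts $n_J$, and an upper bound $K\lesssim \epsilon_0^{-1/2}\lambda^{-1/2}\sum_{i\neq j}|t_i-t_j|^{-1/2}$ directly from Corollary~\ref{corl}. The latter sum is maximized when the $t_j$ are consecutive multiples of $\lambda^{-1}$, and a further Cauchy--Schwarz yields $K\lesssim \epsilon_0^{-1/2}M^{3/2}(\ln\lambda)^{1/2}$; comparing the two bounds gives the result. The role of Corollary~\ref{corl} is thus to control incidences between \emph{discrete} sample points and slabs, not to furnish a two-time kernel --- that is the key idea your plan is missing.
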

\begin{proof}
	This proof follows Smith-Tataru's paper \cite{ST}(Proposition 10.1 on page 353).
	
	Let us first make a partition of the time-interval $[-2,2]$. By decomposition, there exists a partition $\left\{ I_j \right\}$ of the  interval $[-2,2]$ into disjoint subintervals $I_j$ such that with the size of each $I_j$, $|I_j| \approx \lambda^{-1}$, and the number of subintervals $j \approx\lambda$. 
	We claim that, by the Mean Value Theorem, there exists a number $t_j$ such that
	\begin{equation}\label{Aswrq}
		\|f \|^4_{L^4_t L^\infty_x} \leq \sum_j \|f \|^4_{L^4_{I_j} L^\infty_x} \leq \sum_j \|f(t_j,\cdot) \|^4_{ L^\infty_x} ,
	\end{equation}
	where $t_j$ is located in $I_j$, and $|t_{j+1}-t_j| \approx \lambda^{-1}$. Let us explain the \eqref{Aswrq} as follows. To be simple, we let  $I_{j_0}=[0,\lambda^{-1}]$ and $I_{j_0+1}=[\lambda^{-1},2\lambda^{-1}]$.By mean value theorem, on $I_{j_0}$, we have
	\begin{equation*}
		\|f \|^4_{L^4_{I_{j_0}} L^\infty_x} = \|f(t_{j_0},\cdot) \|^4_{ L^\infty_x}\lambda^{-1}.
	\end{equation*}
	We also have
	\begin{equation*}
		\|f \|^4_{L^4_{I_{j_0+1}} L^\infty_x} = \|f(t_{j_0+1},\cdot) \|^4_{ L^\infty_x}\lambda^{-1}.
	\end{equation*}
	If $t_{j_0} \leq \frac12 \lambda^{-1}$ or $t_{j_0+1} \geq \frac32 \lambda^{-1}$, then $|t_{j_0+1}-t_{j_0}| \geq \frac12 \lambda^{-1}$. Otherwise, $t_{j_0} \in [\frac12 \lambda^{-1}, \lambda^{-1}] $ and $t_{j_0+1} \in [\lambda^{-1}, \frac32 \lambda^{-1}]$. In this case,
	we combine $I_{j_0}, I_{j_0+1}$ together and set $I^*_{j_0}= I_{j_0}\cup I_{j_0+1}$. On the new interval  $I^*_{j_0}$, we can see that
	\begin{equation*}
		\|f \|^4_{L^4_{I^*_{j_0}} L^\infty_x} = 2\lambda^{-1}\|f(t^*_{j_0},\cdot) \|^4_{ L^\infty_x},
	\end{equation*}
	and
	\begin{equation*}
		2\|f(t^*_{j_0},\cdot) \|^4_{ L^\infty_x} = \|f(t_{j_0},\cdot) \|^4_{ L^\infty_x}+\|f(t_{j_0+1},\cdot) \|^4_{ L^\infty_x}. 
	\end{equation*}
	For $f$ is a contituous function, we then get
	\begin{equation*}
		\|f(t^*_{j_0},\cdot) \|_{ L^\infty_x} = \left\{  \frac12( \|f(t_{j_0},\cdot) \|^4_{ L^\infty_x} +\|f(t_{j_0+1},\cdot) \|^4_{ L^\infty_x} ) \right\}^{\frac14}, \qquad t^*_{j_0} \in [\frac12\lambda^{-1}, \frac32\lambda^{-1}].
	\end{equation*}
	When no confusion arise, we still set $t_{j_0}=t^*_{j_0} \in [\frac12\lambda^{-1}, \frac32\lambda^{-1}]$. On the next time-interval $I_{j_0+2}=[2\lambda^{-1}, 3\lambda^{-1}]$, we have
	\begin{equation*}
		\|f \|^4_{L^4_{I_{j_0+2}} L^\infty_x} = \|f(t_{j_0+1},\cdot) \|^4_{ L^\infty_x}\lambda^{-1}, \quad t_{j_0+1} \in [2\lambda^{-1}, 3\lambda^{-1}].
	\end{equation*}
	We thus obtain $|t_{j_0+1}-t_{j_0}| \geq \frac12 \lambda^{-1}$. In this way, we can decompose $[-2,2]$.

	Therefore, to prove \eqref{Aswr}, and combining with \eqref{Aswrq}, we only need to show that
	\begin{equation}\label{wee0}
		\sum_j |f(t_j,x_j)|^4 \lesssim \epsilon_0^{-1} \ln \lambda,
	\end{equation}
	where $x_j$ is arbitrarily chosen. We then set the points $P_j=(t_j,x_j)$.

	Since each points lies in at most $\approx \epsilon_0^{-\frac12}\lambda^{\frac12}$ slabs, so we may assume that $|a_J| \geq \epsilon_0^{\frac12}\lambda^{-\frac12}$. Then we decompose the sum $f=\sum_{J \in \mathcal{T}}a_{J}\chi_{J}f_{J}$ dyadically with respect to the size of $a_J$. 
	We next decompose the sum over $j$ via a dyadic decomposition in the numbers of slabs containing $(t_j,x_j)$. We may assume that we are summing over $M$ points $(t_j,x_j)$, each of which is contained in approximately $L$ slabs. Then $|f(t_j,x_j)|\lesssim N^{-\frac12}L$ and
	\begin{equation}\label{wee1}
		\sum_j |f(t_j,x_j)|^4 \lesssim L^4 M N^{-2}.
	\end{equation}
	Combining \eqref{wee0} with \eqref{wee1}, so we only prove
	\begin{equation}\label{wee2}
		L^4 M N^{-1} \lesssim \epsilon_0^{-1} \ln \lambda .
	\end{equation}
	This is a counting problem, which we will prove it by calculating in two different ways. It's based on the number $K$ of pairs $(i,j)$ for which $P_i$ and $P_j$ are contained in a common slab, counted with multiplicity. For $J \in \mathcal{T}$, we denote by $n_J$ the number of points $P_j$ contained in $J$. Then
	\begin{equation*}
		K \approx \sum_{n_J \geq 2} n_J^2 \gtrsim N^{-1}  (\sum_{n_J \geq 2} n_J)^2.
	\end{equation*}
	Note that $ \sum_{J \in \mathcal{T}} n_J \approx ML$. We consider it into two cases. If
	\begin{equation*}
		\sum_{n_J \geq 2} n_J \leq \sum_{n_J =1 } n_J,
	\end{equation*}
	then $N\approx ML$. In this case, combining with the fact that $L \lesssim \epsilon_0^{-\frac12}\lambda^{\frac12}$, then \eqref{wee2} holds. Otherwise, we have
	\begin{equation}\label{wee3}
		K \gtrsim N^{-1}M^2L^2.
	\end{equation}
	In this case, by using Corollary \ref{corl}, we obtain
	\begin{equation}\label{wee4}
		K \lesssim \epsilon_0^{-\frac12} \lambda^{-\frac12} \sum_{1\leq i,j \leq M, i\neq j} |t_i-t_j|^{-\frac12}. 
	\end{equation}
	The sum is maximized in the case that $t_j$ are close as possible, i.e. if the $t_j$ are consecutive multiples of $\lambda^{-1}$. Therefore, for $M \approx \lambda$, we can update \eqref{wee4} as
	\begin{equation}\label{wee5}
		\begin{split}
				K \lesssim & \epsilon_0^{-\frac12} \lambda^{-\frac12} \lambda^{\frac12}  \sum_{1\leq i,j \leq M, i\neq j} |i-j|^{-\frac12} 
				\\
				\lesssim & \epsilon_0^{-\frac12}  (\sum_{1\leq i,j \leq M, i\neq j} |i-j|^{-1})^{\frac12}  (\sum_{1\leq i,j \leq M, i\neq j} 1 )^{\frac12}
				\\ 
				\lesssim & \epsilon_0^{-\frac12} ( M \ln M )^{\frac12} \cdot ( M^2 )^{\frac12}
				\\
				=& M^{\frac32} \epsilon_0^{-\frac12} ( \ln \lambda )^{\frac12}. 
		\end{split}
	\end{equation}
	Combining \eqref{wee5} and \eqref{wee3}, we get \eqref{wee2}. Therefore, we have finished the proof of Proposition \ref{szt}.
\end{proof}

\section{Proof of Theorem \ref{dl3}}\label{Sec8}
In this section, we will prove the existence of solutions for Theorem \ref{dl3} (uniqueness was already established in Corollary \ref{uniq2}). First, we reduce Theorem \ref{dl3} to the case of smooth initial data with bounded frequency support (see Proposition \ref{DDL2} below). Next, we present a small data result with smoother vorticity than that in Proposition \ref{DDL2} (see Proposition \ref{DDL3} below). Finally, based on Proposition \ref{DDL3}, we develop a semi-classical method\footnote{This approach is inspired by Bahouri-Chemin \cite{BC2}, Tataru \cite{T1}, and Ai-Ifrim-Tataru \cite{AIT}.} to prove the Strichartz estimate involving the original regular vorticity in Proposition \ref{DDL2}, thereby completing the proof of Proposition \ref{DDL2}.

\subsection{Proof of Theorem \ref{dl3}}\label{keypra}
Denote $P_{j}$ being the Littlewood-Paley operator with the frequency $2^j (j \in \mathbb{Z})$. By frequency truncation, we set a sequence of initial data $(\bv_{0j},\rho_{0j})$ satisfying
\begin{equation}\label{Dss0}
	\bv_{0j}=P_{\leq j}\bv_0, \quad \rho_{0j}=P_{\leq j}\rho_0,
\end{equation}
where $(\bv_0,\rho_0)$ is stated as \eqref{chuzhi2}, and $P_{\leq j}=\textstyle{\sum}_{k\leq j}P_k$. Following \eqref{sv}, we define
\begin{equation}\label{Qss0}
	w_{0j}=\mathrm{e}^{-\rho_{0j}}\mathrm{curl}\bv_{0j}.
\end{equation}
Due to \eqref{chuzhi2}, we can obtain
\begin{equation}\label{Ess0}
	\begin{split}
		\|w_{0j}\|_{H^{\frac32}}=& \| \mathrm{e}^{-\rho_{0j}}\mathrm{curl}\bv_{0j} \|_{H^{\frac32}}
		\\
		\leq  & \| \mathrm{e}^{-\rho_{0j}}P_{\leq j}(\mathrm{e}^{\rho_0}w_{0}) \|_{H^{\frac32}}
		\\
		\leq &  C M_0 \mathrm{e}^{2M_0} .
	\end{split}
\end{equation}
Similarly, we have
\begin{equation}\label{Essa}
	\begin{split}
		\|\nabla w_{0j}\|_{L^{8}}
		\leq &  C M_0  .
	\end{split}
\end{equation}
Adding \eqref{Dss0}, \eqref{Ess0}, and \eqref{Essa}, we can see
\begin{equation}\label{pu0}
\|\bv_{0j}\|_{H^s}+ \|\rho_{0j}\|_{H^s}+ \| w_{0j} \|_{H^{\frac32}}  + \|\nabla w_{0j}\|_{L^{8}}\leq CM_0 \mathrm{e}^{2M_0}=	{E}(0).
\end{equation}
Using \eqref{HEw}, \eqref{Dss0} and \eqref{Qss0}, we get
\begin{equation}\label{pu00}
	| \bv_{0j}, \rho_{0j} | \leq C_0, \quad c_s|_{t=0}\geq c_0>0.
\end{equation}

Before we give a proof of Theorem \ref{dl3}, let us now introduce Proposition \ref{DDL2}.

\begin{proposition}\label{DDL2}
	Let $s$ be stated as in Theorem \ref{dl3}. Let \eqref{HEw}-\eqref{chuzhi2} hold. Let $(\bv_{0j}, \rho_{0j}, w_{0j})$ be stated in \eqref{Dss0} and \eqref{Qss0}. For each $j\geq 1$, consider Cauchy problem \eqref{fc} with the initial data $(\bv_{0j}, \rho_{0j}, w_{0j})$. Then for all $j \geq 1$, there exists two positive constants $T^{*}>0$ and ${M}_{2}>0$ ($T^*$ and ${M}_{2}$ only depends on ${M}_{0},s, C_0$ and $c_0$) such that \eqref{fc} has a unique solution $(\bv_{j},\rho_{j})\in C([0,T^*];H_x^s)$, $w_{j}\in C([0,T^*];H_x^{\frac32}) $. To be precise,

	\begin{enumerate}
		\item the solution $\bv_j, \rho_j$ and $w_j$ satisfy the energy estimates
	\begin{equation}\label{Duu0}
		\|\bv_j\|_{L^\infty_{[0,T^*]}H_x^{s}}+\|\rho_j\|_{L^\infty_{[0,T^*]}H_x^{s}}+ \|w_j\|_{L^\infty_{[0,T^*]}H_x^{\frac32}}
		+ \|\nabla w_j\|_{L^\infty_{[0,T^*]}L_x^{8}} \leq {M}_{2},
	\end{equation}
	and
	\begin{equation}\label{Duu00}
		\|\bv_j, \rho_j\|_{L^\infty_{[0,T^*]\times \mathbb{R}^2}} \leq 2+C_0.
	\end{equation}
	\item the solution $\bv_j$ and $\rho_j$ satisfy the Strichartz estimate
	\begin{equation}\label{Duu2}
		\|d\bv_j, d\rho_j\|_{L^4_{[0,T^*]}L_x^\infty} \leq {M}_{2}.
	\end{equation}
	\item  for $s-\frac{3}{4} \leq r \leq \frac{11}{4}$, consider the following linear wave equation
	\begin{equation}\label{Duu21}
		\begin{cases}
			\square_{{g}_j} f_j=0, \qquad [0,T^*]\times \mathbb{R}^2,
			\\
			(f_j,\partial_t f_j)|_{t=0}=(f_{0j},f_{1j}),
		\end{cases}
	\end{equation}
	where $(f_{0j},f_{1j})=(P_{\leq j}f_0,P_{\leq j}f_1)$ and $(f_0,f_1)\in H_x^r \times H^{r-1}_x$. Then there is a unique solution $f_j$ on $[0,T^*]\times \mathbb{R}^2$. Moreover, for $a\leq r-(s-1)$, the following estimates
	\begin{equation}\label{Duu22}
		\begin{split}
			&\|\left< \nabla \right>^{a} {f}_j\|_{L^2_{[0,T^*]} L^\infty_x}
			\leq  {M}_3(\|{f}_0\|_{{H}_x^r}+ \|{f}_1 \|_{{H}_x^{r-1}}),
			\\
			&\|{f}_j\|_{L^\infty_{[0,T^*]} H^{r}_x}+ \|\partial_t {f}_j\|_{L^\infty_{[0,T^*]} H^{r-1}_x} \leq  {M}_3 (\| {f}_0\|_{H_x^r}+ \| {f}_1\|_{H_x^{r-1}}),
		\end{split}
	\end{equation}
	hold, where ${M}_3$ is a universal constant depends on $C_0, c_0, M_0, s$. Above, the similar estimate holds if we replace $\left< \nabla \right>^{a} $ by $\left< \nabla \right>^{a-1} d$.
	\end{enumerate}
\end{proposition}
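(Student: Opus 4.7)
The plan is to obtain a uniform time of existence $T^{*}$ and uniform bounds for the sequence $(\bv_{j}, \rho_{j}, w_{j})$. Because each truncated datum $(\bv_{0j}, \rho_{0j}, w_{0j})$ is smooth, Theorem \ref{dl2} (applied with any $s_{0} \in (\tfrac{7}{4}, 2]$) immediately produces a smooth local solution on some interval $[0, T_{j}]$. However $T_{j}$ shrinks with $j$: since $\|w_{0j}\|_{H^{s_{0} - 1/4}} \lesssim 2^{(s_{0} - 7/4)j} M_{0}$, the constant $C_{M_{0}}$ controlling the Strichartz estimate \eqref{SE1} blows up as $j \to \infty$. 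The substance of Proposition \ref{DDL2} is therefore to replace the $j$-dependent lifespan by a uniform $T^{*}$ depending only on $(M_{0}, s, C_{0}, c_{0})$.

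The scheme is a continuity/bootstrap argument on $[0, T]$ with $T \leq T^{*}$ to be chosen. Under the bootstrap hypotheses
\begin{equation*}
\|\bv_{j}\|_{L^{\infty} H_{x}^{s}} + \|\rho_{j}\|_{L^{\infty} H_{x}^{s}} + \|w_{j}\|_{L^{\infty} H_{x}^{3/2}} + \|\nabla w_{j}\|_{L^{\infty} L_{x}^{8}} \leq 2 M_{2}, \qquad \|d\bv_{j}, d\rho_{j}\|_{L_{t}^{4} L_{x}^{\infty}} \leq 2 M_{2},
\end{equation*}
the Type-2 energy estimate Theorem \ref{bBe} together with \eqref{pu0} improves the first inequality to $M_{2}$ provided $T^{*}$ is small depending only on $M_{0}$. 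Uniform control \eqref{Duu00} follows from $H^{s} \hookrightarrow L^{\infty}$ and \eqref{pu00}. The hard part is to improve the Strichartz bound \eqref{Duu2}, and to establish its linear analog \eqref{Duu22} for the metric $g_{j}$, uniformly in $j$, even though the vorticity only lies in $H^{3/2}$.

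To furnish these Strichartz bounds I plan to adapt the semi-classical / frequency-localised rescaling of Ai--Ifrim--Tataru \cite{AIT} and Andersson--Zhang \cite{AZ}. Decompose dyadically over scales $\lambda = 2^{k}$ with $1 \leq \lambda \leq 2^{j}$. For each $\lambda$, rescale $(t,x) \mapsto (\lambda^{-\alpha} t, \lambda^{-\alpha} x)$ with $\alpha$ chosen so that the rescaled initial data satisfy the hypotheses of Theorem \ref{dl2} with a genuine $s_{0} > \tfrac{7}{4}$, i.e. so that the rescaled vorticity is small in $H^{s_{0} - 1/4}$. Theorem \ref{dl2} then delivers Strichartz estimates on a rescaled time interval of length $\sim T^{*}$, and unscaling returns estimates on the original interval at the price of a loss of $\lambda^{s - 7/4}$ derivatives. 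Square-summing the Littlewood--Paley pieces $P_{\lambda}(d\bv_{j}, d\rho_{j})$ produces \eqref{Duu22} in the form $\|\langle \nabla\rangle^{a} f_{j}\|_{L_{t}^{2} L_{x}^{\infty}} \lesssim \|f_{0}\|_{H^{r}} + \|f_{1}\|_{H^{r-1}}$ for $a \leq r - (s - 1)$, the threshold $s - 1$ reflecting precisely the incurred loss. Applying this linear estimate to the wave--transport system \eqref{fc}--\eqref{fcp} for $(\rho_{j}, \bv_{j,+})$, bounding the source terms by \eqref{YYE}--\eqref{eta}, and estimating $d\bv_{j,-}$ through elliptic regularity of \eqref{etad} as in Proposition \ref{r6}, closes the bootstrap for \eqref{Duu2}.

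The main obstacle will be verifying that the $\lambda^{s - 7/4}$ loss from the rescaling is exactly absorbed by the gap between the Sobolev index $r$ and the Strichartz index $a$ in \eqref{Duu22}, in particular for the endpoint $r = \tfrac{11}{4}$. This requires careful bookkeeping of powers of $\lambda$ when transferring Theorem \ref{dl2} through the scaling, and a summation argument analogous to the localisation performed in the proof of Proposition \ref{p3} via Proposition \ref{p1}, but now with a frequency-dependent localisation radius $\lambda^{-\alpha}$. Once \eqref{Duu0}--\eqref{Duu22} are established uniformly in $j$, uniqueness is immediate from Corollary \ref{uniq2} (applied at each fixed $j$), and the existence portion of Theorem \ref{dl3} will be recovered by extracting a convergent subsequence $(\bv_{j}, \rho_{j}, w_{j}) \to (\bv, \rho, w)$ on $[0, T^{*}]$ in the manner already carried out in the proof of Theorem \ref{dl2} from Proposition \ref{p3}.
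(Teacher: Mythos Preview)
Your high-level plan---rescale into the regime where Theorem~\ref{dl2}/Proposition~\ref{DDL3} applies, accept a derivative loss, and sum---is the right philosophy, and you correctly point to \cite{AIT} and \cite{AZ}. But the mechanism you describe (``for each $\lambda=2^k$, rescale by $\lambda^{-\alpha}$'') has a genuine gap and is not what the paper does.

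The obstruction is that the rescaling scale is forced by the \emph{vorticity}, not by the output frequency $\lambda$. To invoke Proposition~\ref{DDL3} on any subinterval one needs the rescaled $w_j$ small in $H^{3/2+\delta_1}$; since $\|w_j(t)\|_{H^{3/2+\delta_1}}\sim 2^{\delta_1 j}$ (the initial truncation at $2^j$ propagates under transport), the rescaling factor must be $\sim 2^{-\delta_1 j}$ regardless of $\lambda$. This partitions $[0,T^*]$ into $\sim 2^{\delta_1 j}$ subintervals. Summing the Strichartz contribution of a low-frequency piece $P_k d\bv_j$, $k\ll j$, over that many subintervals produces a factor that grows with $j$ and does not close. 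Your proposal contains no device to beat this; the ``square-summing the Littlewood--Paley pieces'' step hides the problem.

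The paper's remedy is to bring in the \emph{entire sequence} $(\bv_m,\rho_m,w_m)_{m\le j}$ and telescope:
\[
d\bv_j \;=\; P_{\ge j}d\bv_j \;+\; \sum_{k=1}^{j-1}\sum_{m=k}^{j-1} P_k\bigl(d\bv_{m+1}-d\bv_m\bigr)\;+\;\sum_{k=1}^{j-1} P_k d\bv_k .
\]
The last sum is handled by applying the short-time Strichartz to $(\bv_k,\rho_k)$ on its own (longer) interval $[0,T^*_k]$; the middle sum uses $L^2$-difference bounds $\|\bv_{m+1}-\bv_m\|_{L^2}\lesssim 2^{-(s-\delta_1)m}$ and $\|w_{m+1}-w_m\|_{L^2}\lesssim 2^{-(3/2-\delta_1)m}$ (from the symmetric-hyperbolic and transport structures), fed into the Strichartz estimate~\eqref{SR0} to produce doubly-summable decay $2^{-\delta_1 k}2^{-6\delta_1 m}$. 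The passage from $[0,T^*_j]$ to $[0,T^*_{N_0}]$ is then an explicit induction on $l$ extending $[0,T^*_j]\to[0,T^*_{j-1}]\to\cdots\to[0,T^*_{N_0}]$, with the telescoped $L^1_tL^\infty_x$ bound keeping $\|d\bv_j,d\rho_j\|_{L^1_tL^\infty_x}\le 2$ and hence $E(t)\le C_*$ via Theorem~\ref{bBe} at every stage---not a single bootstrap on $[0,T^*]$. Part~(3) is proved by the same mechanism applied to the linear equation (Proposition~\ref{rut}). None of this telescoping/difference structure appears in your outline, and without it the low-frequency summation does not converge uniformly in $j$.
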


Based on Proposition \ref{DDL2}, we are ready to prove Theorem \ref{dl3}.
\medskip\begin{proof}[Proof of Theorem \ref{dl3} by using Proposition \ref{DDL2}] 
	
If we set $\bU_j=(p(\rho_j), \bv_j)^{\mathrm{T}}$, by Lemma \ref{sh} and \ref{FC}, for $j \in \mathbb{N}^{+}$ we have
\begin{equation*}
	\begin{split}
		A^0 (\bU_j) \partial_t \bU_j+ A^i (\bU_j) \partial_i \bU_j=0,
		\\
		\partial_t w_j + (\bv_j \cdot \nabla)w_j=0.
	\end{split}
\end{equation*}
Then for any $j, l \in \mathbb{N}^{+}$, we obtain
\begin{equation}\label{ur}
	\begin{split}
		A^0 (\bU_j) \partial_t (\bU_j-\bU_l)+ A^i (\bU_j) \partial_i (\bU_j-\bU_l)=&-\{ A^\alpha (\bU_j)-A^\alpha (\bU_l)\} \partial_\alpha \bU_l,
		\\
		\partial_t (w_j-w_l) + (\bv_j \cdot \nabla)(w_j-w_l)=&-\{(\bv_j-\bv_l) \cdot \nabla\}w_l.
	\end{split}
\end{equation}
Due to \eqref{ur}, using \eqref{Duu0}-\eqref{Duu2}, we can show that
\begin{equation*}
	\| \bU_{j}-\bU_{l}\|_{L^\infty_{[0, T^*]}H^1_x}+\| w_{j}-w_{l}\|_{L^\infty_{[0, T^*]}H^{\frac12}_x} \leq C_{_2} (\| \bv_{0j}-\bv_{0l}\|_{H^{s}}+ \|\rho_{0j}-\rho_{0l}\|_{H^{s}}+ \| w_{0j}-w_{0l}  \|_{H^{\frac32}}).
\end{equation*}
Here $C_{2}$ is a positive constant depending on ${M}_2$. By Lemma \ref{jiaohuan0}, so we get
\begin{equation*}
	\| \bv_{j}-\bv_{l},\rho_{j}-\rho_{l}\|_{L^\infty_{[0, T^*]}H^1_x}+\| w_{j}-w_{l}\|_{L^\infty_{[0, T^*]}H^{\frac12}_x} \leq C_{2} (\| \bv_{0j}-\bv_{0l}\|_{H^{s}}+ \|\rho_{0j}-\rho_{0l}\|_{H^{s}}+ \| w_{0j}-w_{0l}  \|_{H^{\frac12}}).
\end{equation*}
Thus, $\{(\bv_{j}, \rho_{j}, w_{j})\}_{j\in \mathbb{N}^+}$ is a Cauchy sequence in $H^1_x\times H^1_x \times H^{\frac12}_x$. Let the limit be $( \bv, \rho, w)$. Then
\begin{equation}\label{rwe}
	\begin{split}
		(\bv_{j}, \rho_{j}, w_{j})\rightarrow & ( \bv, \rho, w) \quad \quad \ \ \quad \text{in} \ \ H_x^{1} \times H_x^{1} \times H_x^{\frac12}.
	\end{split}
\end{equation}
By using \eqref{Duu0}, a subsequence of $\{(\bv_{j}, \rho_{j}, w_{j})\}_{j\in \mathbb{N}^+}$ is weakly convergent. Therefore, if $m\rightarrow \infty$, then
\begin{equation}\label{rwe0}
	\begin{split}
		(\bv_{j_m}, \rho_{j_m}, w_{j_m}, \nabla w_{j_m})\rightharpoonup & ( \bv, \rho, w, \nabla w) \quad \qquad \ \ \text{in} \ \ H_x^{s} \times H_x^{s} \times H_x^{\frac32} \times L^8_x.
	\end{split}
\end{equation}
Due to \eqref{rwe} and \eqref{rwe0}, then $(\bv,\rho,w,\nabla w)$ is a strong solution of \eqref{fc} in $H_x^{s} \times H_x^{s} \times H_x^{\frac32}\times L^8_x$. Also, points (1), (2), and (3) of theorem \ref{dl3} hold. Therefore, we have finished the proof of Theorem \ref{dl3}.
	
\end{proof}
All that remains is to prove Proposition \ref{DDL2}. We will postpone this proof until Section \ref{keypro}. Instead, we turn to introduce Proposition \ref{DDL3}, which, compared to Proposition \ref{DDL2}, provides a small-data existence result with a smoother vorticity.
\subsection{Proposition \ref{DDL3} for small data}
By the finite propagation speed of system \eqref{fc}, we denote \( c>0 \) as the largest speed. 
\begin{proposition}\label{DDL3}
	Assume $s\in (\frac74,2], \delta\in (0, s-\frac74)$ and \eqref{a0} hold. Let $\delta_1=\frac{s-\frac74}{10}$. For each small, smooth initial data $(\bv_0, \rho_0, w_0)$ supported in $B(0,c+2)$ which satisfies
	\begin{equation}\label{DP30}
		\begin{split}
			&\|\bv_0 \|_{H^{s}}+ \|\rho_0 \|_{H^{s}} +  \| w_0\|_{H^{\frac32+\delta_1}}+ \|\nabla w_0\|_{L^{8}}  \leq \epsilon_3,
		\end{split}
	\end{equation}
	there exists a smooth solution $(\bv, \rho, w)$ to \eqref{fc} on $[-2,2] \times \mathbb{R}^2$ satisfying
	\begin{equation}\label{DP31}
		\|\bv\|_{L^\infty_{[-2,2]}H_x^{s}}+ \|\rho\|_{L^\infty_{[-2,2]}H_x^{s}} +\|w\|_{L^\infty_{[-2,2]}H_x^{\frac32+\delta_1}}+ \|\nabla w\|_{L^\infty_{[-2,2]}L_x^{8}} \leq \epsilon_2.
	\end{equation}
	Furthermore, the solution satisfies the following properties

	\begin{enumerate}
		\item dispersive estimate for $\bv$, $\rho$, and $\bv_+$
	\begin{equation}\label{DP32}
		\|d \bv, d \rho, d \bv_+\|_{L^4_{[-2,2]} C^{\delta}_x} \leq \epsilon_2.
	\end{equation}

	\item For any $t_0 \in [-2,2]$, let $f$ satisfy equation
	\begin{equation}\label{ppp}
		\begin{cases}
			&\square_{g} f=0,
			\\
			&(f,\partial_t f)|_{t=t_0}=(f_0,f_1).
		\end{cases}
	\end{equation} For each $1 \leq r \leq s+1$, the Cauchy problem \eqref{ppp} is well-posed in $H_x^r \times H_x^{r-1}$, and the following estimate holds:
	\begin{equation}\label{DP33}
		\|\left< \nabla \right>^a f\|_{L^4_{[-2,2]} L^\infty_x} \lesssim  \| f_0\|_{H_x^r}+ \| f_1\|_{H_x^{r-1}},\quad \ a<r-\frac34,
	\end{equation}
	and the same estimates hold if we replace $\left< \nabla \right>^a$ by $\left< \nabla \right>^{a-1}d$.
\end{enumerate}
	\end{proposition}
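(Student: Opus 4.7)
The proposition is a direct re-packaging of Proposition \ref{p1} under the substitution $s_0 := \tfrac{7}{4} + \delta_1$, so the plan is to verify that this parameter choice is admissible, confirm that the hypotheses match term-by-term, and then invoke Proposition \ref{p1}.

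First I would check that the choice $s_0 = \tfrac{7}{4}+\delta_1$ lies in the range $\tfrac{7}{4} < s_0 \leq s \leq 2$ demanded by Proposition \ref{p1}. By definition $\delta_1 = \tfrac{1}{10}(s - \tfrac{7}{4})$, and since $s \in (\tfrac{7}{4},2]$ one has $\delta_1 > 0$ so $s_0 > \tfrac{7}{4}$; moreover $s_0 = \tfrac{7}{4} + \tfrac{1}{10}(s - \tfrac{7}{4}) \leq s$. Under this identification the space $H^{s_0 - 1/4}$ appearing in \eqref{300} is precisely $H^{3/2 + \delta_1}$ appearing in \eqref{DP30}, while the hypotheses \eqref{HEw}, \eqref{a0}, and $\delta \in (0, s - \tfrac{7}{4})$ transfer verbatim. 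Thus the full hypothesis block of Proposition \ref{p1} is satisfied.

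Next I would invoke Proposition \ref{p1} to produce a smooth solution $(\bv, \rho, w)$ and translate its three conclusions into the desired statements. The energy estimate of Proposition \ref{p1} yields \eqref{DP31} once the $H^{s_0 - 1/4}$ norm of $w$ is read as the $H^{3/2 + \delta_1}$ norm. The dispersive estimate of Proposition \ref{p1} is exactly \eqref{DP32}. For the linear wave statement I would specialize the source terms $\Theta, B$ in \eqref{linear} to zero so that the equation collapses to the homogeneous problem \eqref{ppp}; the Strichartz bound \eqref{304} of Proposition \ref{p1} then reduces to \eqref{DP33}, and the analogous bound holds with $\left<\nabla\right>^a$ replaced by $\left<\nabla\right>^{a-1} d$.

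The main point requiring care is the bookkeeping linking $s_0$, $\delta_1$, $\delta$, and $s$ so that all the regularity exponents on both sides align exactly. A minor cosmetic issue is that Proposition \ref{p1} is stated on the time interval $[-1,1]$ while the present statement asks for $[-2,2]$; since the small-data bounds produced by Proposition \ref{p1} are uniform in the $\epsilon_2$-scale, one can iterate the small-data solution from a finite sequence of intermediate times to cover $[-2,2]$, or alternatively observe that the bootstrap apparatus of Proposition \ref{p2} underlying Proposition \ref{p1} is itself set on $[-2,2]$, so the conclusions are valid there directly. Beyond this parameter bookkeeping, no new analytic difficulty arises.
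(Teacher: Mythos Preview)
Your proposal is correct and follows exactly the paper's own approach: the paper's proof consists of a single sentence saying that since $\tfrac{7}{4}+\delta_1<s$, one simply applies Proposition~\ref{p1} with the substitution $s_0=\tfrac{7}{4}+\delta_1$. Your treatment is in fact more careful than the paper's, since you explicitly verify the parameter range and flag the $[-1,1]$ versus $[-2,2]$ time-interval mismatch (which the paper does not comment on); your observation that the underlying bootstrap in Proposition~\ref{p2} already lives on $[-2,2]$ is the right way to resolve it.
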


\begin{proof}
		For $\frac74+\delta_1<s$, by replacing the regularity exponents in Proposition \ref{p1} to $s=s, s_0=\frac74+\delta_1$, the conclusion of Proposition \ref{DDL3} follows immediately.
\end{proof}
\begin{remark}
	This Strichartz estimates is stronger than \eqref{Duu2}, since the regularity of vorticity in Proposition \ref{DDL3} is higher than that in Proposition \ref{DDL2}.
\end{remark}

\subsection{Proof of Proposition \ref{DDL2}}\label{keypro}
To prove Proposition \ref{DDL2}, we divide the proof into three parts. First, as presented in subsection \ref{esess}, we obtain a solution to Proposition \ref{DDL2} over a short time interval. Second, we derive robust Strichartz estimates for the low- and mid-frequency components of the solutions within these short intervals, albeit with a loss of derivatives, as demonstrated in subsection \ref{esest}. Finally, in subsection \ref{finalk}, we extend these solutions to a regular time interval. Similarly, the solutions to the linear wave equations can also be extended to this regular time interval, as shown in subsection \ref{finalq}.
\subsubsection{Energy estimates and Strichartz estimates on a short time-interval.} \label{esess}
Take the scaling
\begin{equation*}
	\begin{split}
		& \widetilde{\bv}_{0j}={\bv}_{0j}(Tt,Tx), \quad \widetilde{\rho}_{0j}=\rho_{0j}(Tt,Tx).
	\end{split}
\end{equation*}
Referring to \eqref{sv}, we define
\begin{equation*}
	\widetilde{w}_{0j}=\mathrm{e}^{-\widetilde{\rho}_{0j}}\mathrm{curl} \widetilde{\bv}_{0j}.
\end{equation*}
Thus, we can infer
\begin{equation}\label{Dss2}
	\begin{split}
		\| \widetilde{\bv}_{0j} \|_{\dot{H}^{s}} +  \| \widetilde{\rho}_{0j} \|_{\dot{H}^{s}} \leq  & T^{s-1}\| (\| {\bv}_{0j} \|_{\dot{H}^{s}}+ {\rho}_{0j} \|_{\dot{H}^{s}}),
		\\
		\| \widetilde{w}_{0j}\|_{\dot{H}^{\frac32+\delta_1}} =& \| \mathrm{e}^{-\widetilde{\rho}_{0j}}\mathrm{curl} \widetilde{\bv}_{0j} \|_{\dot{H}^{\frac32+\delta_1}}
		\\
		\leq & T^{\frac12+\delta_{1}} \|\mathrm{e}^{-{\rho}_{0j}}\mathrm{curl} {\bv}_{0j} \|_{\dot{H}^{2+\delta_1}}
		\\
		= & T^{\frac12+\delta_{1}} \|{w}_{0j} \|_{\dot{H}^{\frac32+\delta_1}}
		\\
		\leq & T^{\frac12+\delta_{1}} 2^{\delta_{1} j }\|{w}_{0j} \|_{\dot{H}^{\frac32}}.
	\end{split}
\end{equation}
Similarly, we can deduce
\begin{equation*}\label{Dssa}
	\begin{split}
		\| \nabla \widetilde{w}_{0j}\|_{L^{8}} 
		\leq & T^{\frac74} \|\nabla {w}_{0j} \|_{L^{8}}.
	\end{split}
\end{equation*}
Define
\begin{equation}\label{DTJ}
	T^*_j=2^{-\delta_1j}[1+E(0)]^{-3},
\end{equation}
where $E(0)$ is stated as in \eqref{pu0}. Taking $T$ in \eqref{Dss2} as $T^*_j$, for $s>\frac74$,  \eqref{Dss2} yields
\begin{equation*}\label{pp7}
	\begin{split}
		& \| \widetilde{\bv}_{0j} \|_{\dot{H}^{s}}+\| \widetilde{\rho}_{0j}\|_{\dot{H}^{s}} + \| \widetilde{w}_{0j}\|_{\dot{H}^{\frac32+\delta_1}}+ \|\nabla \widetilde{w}_{0j}\|_{L^{8}}  \leq 2^{-\frac{\delta^2_1}2 j}[1+E(0)]^{-\frac12} .
	\end{split}
\end{equation*}
Note $\| \widetilde{\bv}_{0j}\|_{L^\infty}\leq  \|{\bv}_{0j}\|_{L^\infty}$ and $\| \widetilde{\rho}_{0j}\|_{L^\infty}  \leq  \|{\rho}_{0j}\|_{L^\infty} $. Due to \eqref{pu00}, we have
\begin{equation*}\label{pp60}
	\begin{split}
		\| \widetilde{\bv}_{0j}\|_{L^\infty}+ \| \widetilde{\rho}_{0j}\|_{L^\infty}  \leq  \|{\bv}_{0j}\|_{L^\infty} + \|{\rho}_{0j}\|_{L^\infty} \leq  C_0.
	\end{split}
\end{equation*}
For a small parameter $\epsilon_3$ stated in Proposition \ref{DDL3}, we choose $N_0=N_0(s,M_0,c_0,C_0)$ such that
\begin{equation}\label{pp8}
	\begin{split}
		& 2^{-\frac{\delta^2_1 }2N_0 }[1+E(0)]^{-\frac12} \ll \epsilon_3,
		\\
		&  2^{-\delta_{1} N_0 } (1+C_*)^3\{ 1+ {C^3_*} [E(0)]^{-3}[1+ E(0)]^{-3} \} \leq 1 ,
		\\
		&  C2^{-\frac{3}{4}\delta_1N_0}[E(0)]^{-\frac94} [1+E(0)]^3  [\frac{1}{3}(1-2^{-\delta_{1}})]^{-2} \leq 2 .
	\end{split}	
\end{equation}
Above, $C_*$ is denoted by
\begin{equation}\label{Cstar}
	\begin{split}
		C_*=CE(0)\mathrm{e}^2\exp\{  CE(0) \textrm{e}^2 \}  .
	\end{split}	
\end{equation}
Therefore, for $j \geq N_0$, we have
\begin{equation}\label{Dss3}
	\begin{split}
		\| \widetilde{\bv}_{0j} \|_{\dot{H}^{s}}+\| \widetilde{\rho}_{0j}\|_{\dot{H}^{s}}+\| \widetilde{w}_{0j}\|_{\dot{H}^{\frac32+\delta_1}}+\| \nabla \widetilde{w}_{0j}\|_{L^8}  \leq  \epsilon_3.
	\end{split}
\end{equation}
Since the estimate \eqref{Dss3} is formulated in terms of homogeneous norms, we need to use physical localization techniques to extend these bounds to the inhomogeneous case.

Define a smooth function $\chi\in C_0^\infty(\mathbb{R}^2)$ supported in $B(0,c+2)$, and $\chi$ equals $1$ in $B(0,c+1)$. Given $y \in \mathbb{R}^2$, we define the localized initial data for the velocity and density near $y$:
\begin{equation}\label{yyy0}
	\begin{split}
		\bar{\bv}_{0j}(x)=&\chi(x-y)\left( \widetilde{\bv}_{0j}(x)- \widetilde{\bv}_{0j}(y)\right),
		\\
		\bar{\rho}_{0j}(x)=&\chi(x-y)\left( \widetilde{\rho}_{0j}(x)-\widetilde{\rho}_{0j}(y)\right).
	\end{split}
\end{equation}
Referring to \eqref{sv}, we set
\begin{equation}\label{yyy1}
	\bar{w}_{0j}=
	\mathrm{e}^{-\bar{\rho}_{0j}}\mathrm{curl}\bar{\bv}_{0j}.
\end{equation}
Taking advantage of \eqref{Dss3}, \eqref{yyy0} and \eqref{yyy1}, we obtain
\begin{equation}\label{sS4}
	\begin{split}
		& \| \bar{\bv}_{0j} \|_{{H}^{s}}+\| \bar{\rho}_{0j}\|_{{H}^{s}}+\| \bar{w}_{0j} \|_{{H}^{\frac32+\delta_1}}+\| \nabla \bar{w}_{0j} \|_{{L}^{8}}  
		\\
		\leq  & C(\| \widetilde{\bv}_{0j} \|_{\dot{H}^{s}}+\| \widetilde{\rho}_{0j}\|_{\dot{H}^{s}}+\| \widetilde{w}_{0j} \|_{\dot{H}^{\frac32+\delta_1}}+\| \nabla\widetilde{w}_{0j} \|_{L^{8}})
		\\
		\leq & \epsilon_3.
	\end{split}
\end{equation}
Using Proposition \ref{DDL3}, for every $j$, there is a smooth solution $(\bar{\bv}_j, \bar{\rho}_j, \bar{w}_j)$ on $[-2,2]\times \mathbb{R}^2$ satisfying
\begin{equation}\label{yz0}
	\begin{cases}
		\square_{\widetilde{g}_j} \bar{v}_j^i=-\epsilon^{ia}\mathrm{e}^{\bar{\rho}_j+\tilde{\rho}_{0j}(y)}\widetilde{c}_j^2 \partial_a \bar{w}_j+\widetilde{{Q}}^i,
		\\
		\square_{\widetilde{g}_j} \bar{\rho}_j=\widetilde{\mathcal{D}},
		\\
		\widetilde{\mathbf{T}} \bar{w}_j=0,
		\\
		(\bar{\bv}_j, \bar{\rho}_j, \bar{w}_j)|_{t=0}=(\bar{\bv}_{0j}, \bar{\rho}_{0j}, \bar{w}_{0j}).
	\end{cases}
\end{equation}
Above, the quantities $\widetilde{c}^2_j$ and $\widetilde{{g}_j}$ are defined by
\begin{equation}\label{QRY}
	\begin{split}
		\widetilde{c}^2_j &= c^2_s (\bar{\rho}_j+\tilde{\rho}_{0j}(y)),
		\\
		\widetilde{g}_j &=g({\bar\bv}_j+\tilde{\bv}_{0j}(y), {\bar\rho}_j+\tilde{\rho}_{0j}(y)),
		\\
		\widetilde{\mathbf{T}} &=\partial_t+ ({\bar\bv}_j+\tilde{\bv}_{0j}(y))\cdot \nabla,
	\end{split}
\end{equation}
and $\widetilde{{Q}}^{i}, i=1,2$ and $\widetilde{\mathcal{D}}$ have the same formulations with ${Q}^{i}, \mathcal{D}$ by replacing $({\bv}, {\rho})$ to $(\bar{\bv}_j+\tilde{\bv}_0(y), \bar{\rho}_j+\tilde{\rho}_0(y) )$. Using Proposition \ref{DDL3} again, on $[-2,2]\times \mathbb{R}^2$, we infer
\begin{equation*}\label{Dsee0}
	\|\bar{\bv}_j\|_{L^\infty_{[-2,2]}H_x^{s}}+ \|\bar{\rho}_j\|_{L^\infty_{[-2,2]}H_x^{s}}+ \| \bar{w}_j \|_{L^\infty_{[-2,2]}H_x^{\frac32+\delta_1}}+ \|\nabla \bar{w}_j \|_{L^\infty_{[-2,2]}L_x^{8}} \leq \epsilon_2,
\end{equation*}
and
\begin{equation*}\label{Dsee1}
	\|d\bar{\bv}_j, d\bar{\rho}_j \|_{L^4_{[-2,2]}C^{\delta}_x} \leq \epsilon_2.
\end{equation*}
Furthermore, using \eqref{DP33}, for $1\leq r \leq s+1$, the linear equation\footnote{Here $\widetilde{g}_j$ is given by \eqref{QRY}.}
\begin{equation}\label{Dsee2}
	\begin{cases}
		&\square_{ \widetilde{g}_j } F=0,\qquad \qquad \ \ \qquad [-2,2]\times \mathbb{R}^2,
		\\
		&({F}, \partial_t {F})|_{t=t_0}=({F}_0, {F}_1), \ \ \ t_0 \in [-2,2],
	\end{cases}
\end{equation}
admits a solution ${F} \in C([-2,2],H_x^{r})\times C^1([-2,2],H_x^{r-1})$. Moreover, for $k<r-1$, the following estimate holds\footnote{For all $j$, the initial norm of the initial data \eqref{sS4} is uniformly controlled by the same small parameter $\epsilon_3$, and the regularity of the initial data \eqref{sS4} only depends on $s$, so the constant in \eqref{3s1} is uniform for all $\widetilde{{g}}$ depending on $j$.}:
\begin{equation}\label{3s1}
	\begin{split}
		\|\left< \nabla \right>^{k}F, \left< \nabla \right>^{k-1}d{F}\|_{L^4_{[-2,2]} L^\infty_x} \leq  & C(\| {F}_0\|_{H_x^r}+ \| {F}_1\|_{H_x^{r-1}} ).
	\end{split}
\end{equation}
Note \eqref{yz0}. Then the function $(\bar{\bv}_j+\widetilde{\bv}_{0j}(y), \bar{\rho}_j+\widetilde{\rho}_{0j}(y),  \bar{w}_j)$ is also a solution of the following system
\begin{equation*}\label{yz1}
	\begin{cases}
		\square_{\widetilde{g}_j} ( \bar{\bv}_j+\widetilde{\bv}_{0j}(y) )=-\epsilon^{ia}\mathrm{e}^{\bar{\rho}_j+\tilde{\rho}_{0j}(y)}\widetilde{c}_s^2 \partial_a \bar{w}_j+\widetilde{{Q}}^i,
		\\
		\square_{\widetilde{g}_j} ( \bar{\rho}_j+\widetilde{\rho}_{0j}(y) )=\widetilde{\mathcal{D}},
		\\
		\widetilde{\mathbf{T}}  \bar{w}_j=0,
		\\
		(\bar{\bv}_j+\widetilde{\bv}_{0j}(y), \bar{\rho}_j+\widetilde{\rho}_{0j}(y),  \bar{w}_j )|_{t=0}=(\widetilde{\bv}_{0j}, \widetilde{\rho}_{0j}, \widetilde{w}_{0j}).
	\end{cases}
\end{equation*}
For $y\in \mathbb{R}^2$, we consider the restrictions 
\begin{equation}\label{Dsee3}
	\left( \bar{\bv}_j+\widetilde{\bv}_{0j} (y) \right)|_{\mathrm{K}^y},
	\quad (\bar{\rho}_j+\widetilde{\rho}_{0j} (y) )|_{\mathrm{K}^y},
	\\
	\quad \bar{w}_j|_{\mathrm{K}^y},
\end{equation}
where $$\mathrm{K}^y=\left\{ (t,x): ct+|x-y| \leq c+1, |t| <1 \right\}.$$ Then the restrictions \eqref{Dsee3} solve \eqref{fc} on $\mathrm{K}^y$. Due to finite speed of propagation, a smooth solution $(\bar{\bv}_j+\widetilde{\bv}_{0j}(y), \bar{\rho}_j+ \widetilde{\rho}_{0j}(y), \bar{\bw}_j)$ solves \eqref{fc} on $\mathrm{K}^y$.

Let a function $\psi$ be supported in the unit ball such that
\begin{equation*}
	\textstyle{\sum}_{y \in 3^{-\frac12} \mathbb{Z}^2 } \psi(x-y)=1.
\end{equation*}
Therefore, the function
\begin{equation}\label{Dsee4}
	\begin{split}
		\widetilde{\bv}_j(t,x)  &=\textstyle{\sum}_{y \in 3^{-\frac12} \mathbb{Z}^2 }\psi(x-y) (\bar{\bv}_j+\widetilde{\bv}_{0j}(y)),
		\\
		\widetilde{\rho}_j(t,x)  &=\textstyle{\sum}_{y \in 3^{-\frac12} \mathbb{Z}^2}\psi(x-y) ( \bar{\rho}_j+ \widetilde{\rho}_{0j}(y)),
		\\
		\widetilde{w}_j(t,x)  &=\mathrm{e}^{-\widetilde{\rho}_j}\mathrm{curl}\widetilde{\bv}_j,
	\end{split}
\end{equation}
is a smooth solution of \eqref{fc} on $[-1,1]\times \mathbb{R}^2$ with the initial data $(\widetilde{\bv}_j, \widetilde{\rho}_j, \widetilde{w}_j )|_{t=0}=(\widetilde{\bv}_{0j}, \widetilde{\rho}_{0j}, \widetilde{w}_{0j})$,

On the other hand, the initial data $(\widetilde{\bv}_{0j}, \widetilde{\rho}_{0j}, \widetilde{w}_{0j})$ is a scaling of $(\bv_{0j}, \rho_{0j}, w_{0j})$ with the space-time scale $T^*_j$, and the system \eqref{fc} is scaling-invariant. Therefore, the function
\begin{equation*}
	({\bv}_{j}, {\rho}_{j}, {w}_{j})=(\widetilde{\bv}_{j},\widetilde{\rho}_{j}, \widetilde{w}_{j}) ((T^*_j)^{-1}t,(T^*_j)^{-1}x),
\end{equation*}
is a solution of \eqref{fc} on $[0,T^*_j]\times \mathbb{R}^2$ ($T^*_j$ is defined as in \eqref{DTJ}) with the initial data
\begin{equation*}
	({\bv}_{j}, {\rho}_{j}, {w}_{j})|_{t=0}=({\bv}_{0j}, {\rho}_{0j}, {w}_{0j}).
\end{equation*}
Referring \eqref{Dsee4} and using \eqref{Dsee2}-\eqref{3s1}, we can see
\begin{equation}\label{Dsee5}
	\begin{split}
		\|d\widetilde{\bv}_j, d\widetilde{\rho}_j\|_{L^4_{[0,1]}C^{\delta}_x}
		\leq & \sup_{y \in 3^{-\frac12} \mathbb{Z}^2}  \|d\bar{\bv}_j, d\bar{\rho}_j\|_{L^4_{[0,1]}{C^{\delta}_x}}
		\\
		\leq & C(\|\bar{\bv}_{0j}\|_{H^s}+ \|\bar{\rho}_{0j}\|_{H^s}+ \| \bar{w}_{0j} \|_{H^{\frac32}}+  \| \nabla \bar{w}_{0j} \|_{L^{8}}).
	\end{split}
\end{equation}
By changing of coordinates $(t,x)\rightarrow ((T_j^*)^{-1}t,(T_j^*)^{-1}x)$ for each $j\geq 1$, we get
\begin{equation}\label{Dsee6}
	\begin{split}
		\|d{\bv}_j, d{\rho}_j\|_{L^4_{[0,T^*_j]}C^{\delta}_x}
		\leq & (T^*_j)^{-(\frac34+{\delta})}\|d\widetilde{\bv}_j, d\widetilde{\rho}_j\|_{L^4_{[0,1]}C^{\delta}_x}.
	\end{split}
\end{equation}
Using \eqref{Dsee5}, \eqref{Dsee6}, \eqref{DTJ}, and ${\delta} \in (0,s-\frac74)$, it follows
\begin{equation}\label{Dsee7}
	\begin{split}
		\|d{\bv}_j, d{\rho}_j\|_{L^4_{[0,T^*_j]}C^{\delta}_x}
		\leq	&C (T^*_j)^{-(\frac34+{\delta})}(\|\bar{\bv}_{0j}\|_{H_x^s}+ \|\bar{\rho}_{0j}\|_{H_x^s}+ \| \bar{w}_{0j} \|_{H_x^{\frac32}})
		\\
		\leq & C (T^*_j)^{-(\frac34+{\delta})} \left\{  (T^*_j)^{s-1}\|{\bv}_{0j}, {\rho}_{0j}\|_{\dot{H}_x^s}
		+ (T^*_j)^{\frac32}\| {w}_{0j} \|_{\dot{H}_x^{\frac32}} + (T^*_j)^{\frac74}\|\nabla {w}_{0j} \|_{L_x^{8}} \right\}
		\\
		\leq & C (\|{\bv}_{0j}\|_{H^s}+ \|{\rho}_{0j}\|_{H^s}+ \| {w}_{0j} \|_{H^{\frac32}}+ \| \nabla {w}_{0j} \|_{L^{8}}).
	\end{split}
\end{equation}
Comining \eqref{Dsee7} and \eqref{pu0}, we obtain
\begin{equation}\label{yz4}
	\begin{split}
		\|d{\bv}_j, d{\rho}_j\|_{L^4_{[0,T^*_j]}C^{\delta}_x}
		\leq & C E(0).
	\end{split}
\end{equation}
Set
\begin{equation*}\label{Etr}
	E(T^*_{j})=\|\bv_{j}\|_{L^\infty_{[0,T^*_{j}]} H^{s}_x}+\|\rho_{j}\|_{L^\infty_{[0,T^*_{j}]} H^{s}_x}+
	\|w_{j}\|_{L^\infty_{[0,T^*_{j}]} H^{\frac32}_x}+
	\|\nabla w_{j}\|_{L^\infty_{[0,T^*_{j}]} L^{8}_x}.
\end{equation*}
Due to \eqref{yz4}, \eqref{pp8}, and \eqref{DTJ}, we get
\begin{equation*}\label{Aab}
	\begin{split}
	\|d{\bv}_j, d{\rho}_j\|_{L^4_{[0,T^*_j]}C^{\delta}_x} \leq (T_j^*)^{\frac34}	\|d{\bv}_j, d{\rho}_j\|_{L^4_{[0,T^*_j]}C^{\delta}_x}
		\leq 2.
	\end{split}
\end{equation*}
By using Theorem \ref{bBe}, we have
\begin{align*}
	\label{AMM3}
	& E(T^*_j) \leq C E(0) \mathrm{e}^2\exp\{ C E(0) \mathrm{e}^2\} .
\end{align*}
Similarly, for $1\leq r \leq s+1$, there exists a unique solution for the Cauchy problem
\begin{equation}\label{ru03}
	\begin{cases}
		\square_{{g}_j} F=0, \quad (0,T^*_j]\times \mathbb{R}^2,
		\\
		(F,\partial_t F)|_{t=0}=(F_0,F_1)\in H_x^{r} \times H^{r-1}_x.
	\end{cases}
\end{equation}
Moreover, for $a<r-1$, we have
\begin{equation}\label{ru04}
	\begin{split}
		\|\left<\nabla \right>^{a-1} dF\|_{L^4_{[0,T^*_j]} L^\infty_x}
		\leq & C(\|{F}_0\|_{{H}_x^{r}}+ \| {F}_1 \|_{{H}_x^{r-1}}),
	\end{split}
\end{equation}
and
\begin{equation*}\label{ru05}
	\begin{split}
		\|{F}\|_{L^\infty_{[0,T^*_j]} H^{r}_x}+ \|\partial_t {F}\|_{L^\infty_{[0,T_j]} H^{r-1}_x} \leq  C(\| {F}_0\|_{H_x^{r}}+ \| {F}_1\|_{H_x^{r-1}}).
	\end{split}
\end{equation*}
Gathering the results above, there is a uniform bound for $\bv_{j}, \rho_{j}$ and \(w_j\). However, the length of the interval \([0, T_j^*]\) is not uniform and depends on \(j\). To construct a solution as the limit of these smooth solutions, we must extend them over a uniformly regular time interval.

\subsubsection{A loss of Strichartz estimates on a short time-interval.} \label{esest}
We will discuss it into two cases\footnote{Here, we mainly inspired by Tataru \cite{T1}, Bahouri-Chemin \cite{BC2}, and Ai-Ifrim-Tataru \cite{AIT}. Of course, our work based on the property of Strichartz estimates in Proposition \ref{r6} and careful analysis on vorticity. Moreover, we eventually conclude it by induction method.}, the high frequency and low frequency for $d{\bv}_j$ and $d{\rho}_j$.

\textbf{Case 1: high frequency($k \geq j$).} Due to \eqref{yz4}, we get
\begin{equation}\label{yz6}
	\begin{split}
		& \| d \bv_j, d \rho_j \|_{L^4_{[0,T^*_j]}C^a_x} \leq CE(0), \quad  a \in [0,s-\frac74).
	\end{split}
\end{equation}
For $k \geq j$, using Bernstein inequality and \eqref{yz6}, we have
\begin{equation}\label{yz9}
	\begin{split}
		\| P_{k} d \bv_j, P_{k} d \rho_j \|_{L^4_{[0,T^*_j]}L^\infty_x}
		\leq & 2^{-ka}\| P_k d \bv_j, P_k d \rho_j \|_{L^4_{[0,T^*_j]}C^a_x}
		\\
		\leq & C2^{-ja} \| d \bv_j, d \rho_j \|_{L^4_{[0,T^*_j]}C^a_x}
		\\
		\leq &  C2^{-ja} E(0).
	\end{split}
\end{equation}
Taking $a=9\delta_1$ in \eqref{yz9}, we obtain
\begin{equation}\label{yz10}
	\begin{split}
		\| P_{k} d \bv_j, P_{k} d \rho_j \|_{L^4_{[0,T^*_j]}L^\infty_x}
		\leq & C   2^{-\delta_{1}k} \cdot [1+E(0)]^3 2^{-7\delta_{1}j}, \quad k \geq j,
	\end{split}
\end{equation}
\quad \textbf{Case 2: low frequency($k<j$).} In this case, it's much different from the high frequency. Fortunately, there is some good estimates for difference terms $P_k(d{\rho}_{m+1}-d{\rho}_m)$ and $P_k(d{\bv}_{m+1}-d{\bv}_{m})$. Following \eqref{dvc} and \eqref{etad}, we set
\begin{equation*}\label{etad0}
\begin{split}
	& \bv_m=\bv_{+m}+ \bv_{-m}, \quad \bv_{\pm m}=(v^1_{\pm m},v^2_{\pm m}),
	\\
	& -\Delta v^i_{-m}=\epsilon^{ia}\mathrm{e}^{\rho_m}\partial_a w_m .
\end{split}	
\end{equation*}
We will obtain some good estimates of $P_k(d{\bv}_{m+1}-d{\bv}_m)$ and $P_k(d{\rho}_{m+1}-d{\rho}_m)$ by using \eqref{ru03}-\eqref{ru04}. Indeed, this good estimate is from a loss of derivatives of Strichartz estimate. We claim that
\begin{align}\label{yu0}
	& \|\bv_{m+1}-\bv_{m}, {\rho}_{m+1}-{\rho}_{m}\|_{L^\infty_{ [0,T^*_{m+1}] } L^2_x} \leq 2^{-(s- \delta_1)m}E(0),
	\\\label{yu1}
	& \|w_{m+1}-w_{m}\|_{L^\infty_{ [0,T^*_{m+1}]  } L^2_x} \leq 2^{- (\frac32- \delta_1)m}E(0).
\end{align}
To verify \eqref{yu0} and \eqref{yu1}, we start from the initial data. Applying Bernstein's inequality, we get
\begin{equation}\label{yz12}
	\begin{split}
		\|{\bv}_{0(m+1)}-{\bv}_{0m}\|_{L^2}
		\lesssim & 2^{-sm} \|{\bv}_{0(m+1)}-{\bv}_{0m}\|_{\dot{H}^s}.
	\end{split}	
\end{equation}
Similarly, we obtain
\begin{equation}\label{yz13}
	\begin{split}
		& \|{\rho}_{0(m+1)}-{\rho}_{0m}\|_{L^2} \lesssim   2^{-sm}\|{\rho}_{0(m+1)}-{\rho}_{0m}\|_{\dot{H}^s},
		\\
		& \|{w}_{0(m+1)}-{w}_{0m}\|_{L^2} \lesssim   2^{-{\frac32}m}\|{w}_{0(m+1)}-{w}_{0m}\|_{\dot{H}^{\frac32}}.
	\end{split}	
\end{equation}
Based on \eqref{yz12} and \eqref{yz13}, we can use the structure of the system \eqref{fc} to prove \eqref{yu0}-\eqref{yu1}. Let ${\bU}_{m}=(\bv_{m},p(\rho_{m}))^\mathrm{T} $. Then ${\bU}_{m+1}-{\bU}_{m}$ satisfies
\begin{equation}\label{yz14}
	\begin{cases}
		& A^0({\bU}_{m+1}) \partial_t ( {\bU}_{m+1}- {\bU}_{m}) + A^i({\bU}_{m+1}) \partial_i ( {\bU}_{m+1}- {\bU}_{m})=\Pi_m,
		\\
		& ( {\bU}_{m+1}-{\bU}_{m} )|_{t=0}= {\bU}_{0(m+1)}-{\bU}_{0m},
	\end{cases}
\end{equation}
where
\begin{equation}\label{Fhz}
	\Pi_m=-[A^0({\bU}_{m+1})-A^0({\bU}_{m}) ]\partial_t  {\bU}_{m}- [A^i({\bU}_{m+1})-A^i({\bU}_{m}) ]\partial_i  {\bU}_{m}.
\end{equation}
From \eqref{Fhz}, we can see
\begin{equation*}\label{Fhz0}
	|\Pi_m|\lesssim |{\bU}_{m+1} - {\bU}_{m}| \cdot |  d{\bU}_{m} |.
\end{equation*}
Multiplying ${\bU}_{m+1}- {\bU}_{m}$ on \eqref{yz14} and integrating it on $\mathbb{R}^2$, we have
\begin{equation}\label{yz15}
	\frac{d}{dt}	\|{\bU}_{m+1}-{\bU}_{m} \|^2_{L^2_x} \lesssim   \| (d {\bU}_{m+1}, d{\bU}_{m}) \|_{L^\infty_x}\| {\bU}_{m+1}-{\bU}_{m} \|^2_{L^2_x}.
\end{equation}
Integrating \eqref{yz15} on $[0,T^*_{m+1}]$, using Gronwall's inequality and Strichartz estimate \eqref{yz4}, it yields
\begin{equation*}
	\begin{split}
		\| {\bU}_{m+1}-{\bU}_{m} \|_{L^\infty_{[0,T^*_{m+1} ]}L^2_x} \lesssim &  \| {\bU}_{0(m+1)}-{\bU}_{0m} \|_{L^2_x} .
	\end{split}
\end{equation*}
Due to \eqref{yz12} and \eqref{yz13}, we then have
 \begin{equation*}
 	\begin{split}
 		\| {\bU}_{m+1}-{\bU}_{m} \|_{L^\infty_{[0,T^*_{m+1} ]}L^2_x} \leq 2^{-(s- \delta_1)m}E(0).
 	\end{split}
 \end{equation*}
By using Lemma \ref{jiaohuan0}, we show that
\begin{equation*}
	\begin{split}
		\| {\bv}_{m+1}-{\bv}_{m}, {\rho}_{m+1}-{\rho}_{m} \|_{L^\infty_{[0,T^*_{m+1}]}L^2_x} \leq 2^{-(s- \delta_1)m}E(0).
	\end{split}
\end{equation*}
Thus, the estimate \eqref{yu0} holds. To prove \eqref{yu1}, we consider the transport equation of ${w}_{m+1} - {w}_m$:
\begin{equation}\label{AMM7}
		\partial_t ({w}_{m+1}- {w}_{m}) + ({\bv}_{m+1} \cdot \nabla) ({w}_{m+1}- {w}_{m})
		= -({\bv}_{m+1}-{\bv}_{m}) \cdot \nabla {w}_{m}.
\end{equation}
Multiplying with ${w}_{m+1}- {w}_{m}$ on \eqref{AMM7} and integrating it on $[0,t]\times \mathbb{R}^2$, we derive
\begin{equation}\label{AMM8}
	\begin{split}
		\|{w}_{m+1}- {w}_{m} \|^2_{L^2_x} \leq  & \| {w}_{0(m+1)}- {w}_{0m} \|^2_{L^2_x}+ C\int^t_{0}  \| \nabla {\bv}_{m+1}\|_{L^\infty_x}\| {w}_{m+1}- {w}_{m} \|^2_{L^2_x}d\tau
		\\
		& + C\int^t_{0} \| {\bv}_{m+1}-{\bv}_{m} \|_{L^{\frac83}_x}\| {w}_{m+1}-{w}_{m} \|_{L^2_x}\|\nabla {w}_{m} \|_{L^8_x}d\tau
		\\
		\leq  & \| {w}_{0(m+1)}- {w}_{0m} \|^2_{L^2_x}+ C\int^t_{0}  \| \nabla {\bv}_{m+1}\|_{L^\infty_x}\| {w}_{m+1}- {w}_{m} \|^2_{L^2_x}d\tau
		\\
		& + C\int^t_{0} \| {\bv}_{m+1}-{\bv}_{m} \|_{H^{\frac14}_x}\| {w}_{m+1}-{w}_{m} \|_{L^2_x}\|\nabla {w}_{m} \|_{L^8_x}d\tau.
	\end{split}
\end{equation}
We also note that
\begin{equation}\label{kkk}
	\| {\bv}_{m+1}-{\bv}_{m} \|_{H^{\frac14}_x} \lesssim \| {\bv}_{m+1}-{\bv}_{m} \|^{1-\frac{1}{4s}}_{L^{2}_x} \| {\bv}_{m+1}-{\bv}_{m} \|^\frac{1}{4s}_{H^{s}_x}  .
\end{equation}
Taking advantage of \eqref{yz13}, \eqref{yu0}, \eqref{yz6}, \eqref{AMM8}, and \eqref{kkk}, and using Gronwall's inequality gives
\begin{equation*}
	\begin{split}
		\|{w}_{m+1}- {w}_{m} \|_{L^\infty_{[0,T^*_{m+1}]} L^2_x}  \lesssim   2^{-(\frac32-\delta_1)m} E(0).
	\end{split}
\end{equation*} 
This implies that \eqref{yu1} holds. On the other hand, for $\delta_1<s-\frac74$, using Strichartz estimates \eqref{ru04}(similar to \eqref{SR0}), it yields
\begin{equation}\label{see80}
	\begin{split}
		& \|\nabla({\rho}_{m+1}-{\rho}_{m}) \|_{L^4_{[0,T^*_{m+1}]} C^{\delta_{1}}_x}+\|\nabla ({\bv}_{+(m+1)}-{\bv}_{+m}) \|_{L^4_{[0,T^*_{m+1}]} C^{\delta_{1}}_x}
		\\
		\leq  &  C\| {\rho}_{m+1}-{\rho}_{m}, {\bv}_{m+1}-{\bv}_{m} \|_{L^\infty_{[0,T^*_{m+1}]} H^{\frac74+2\delta_{1}}_x}+\| {w}_{m+1}-{w}_{m} \|_{L^\infty_{[0,T^*_{m+1}]} H^{1+2\delta_{1}}_x}.
	\end{split}
\end{equation}
Noting $s=2+10\delta_{1}$ and using \eqref{yu0}-\eqref{yu1}, we can bound \eqref{see80} by
\begin{equation}\label{see99}
	\begin{split}
		& \|\nabla({\rho}_{m+1}-{\rho}_{m}) \|_{L^4_{[0,T^*_{m+1}]} C^{\delta_{1}}_x}+\|\nabla ({\bv}_{+(m+1)}-{\bv}_{+m}) \|_{L^4_{[0,T^*_{m+1}]} C^{\delta_{1}}_x}
		\leq   2^{-7\delta_1m} E(0).
	\end{split}
\end{equation}
Applying \eqref{yu1} and Sobolev imbeddings, we can establish
\begin{equation}\label{siq}
	\begin{split}
		\|\nabla ({\bv}_{-(m+1)}-{\bv}_{-m}) \|_{L^4_{[0,T^*_{m+1}]} C^{\delta_1}_x}
		\leq & \| \nabla ({\bv}_{-(m+1)}-{\bv}_{-m}) \|_{ L^\infty_{[0,T^*_{m+1}]} C^{\delta_1}_x}
		\\
		\leq  & C \|{w}_{m+1}- {w}_{m} \|_{L^\infty_{[0,T^*_{m+1}]} H^{1+2\delta_{1}}_x}
		\\
		\leq & C  2^{-7\delta_{1} m} [1+E(0)]^2.
	\end{split}
\end{equation}
Adding \eqref{see99} and \eqref{siq}, it yields
\begin{equation}\label{siw}
	\begin{split}
		& \|\nabla( {\rho}_{m+1}-{\rho}_{m} ) \|_{L^4_{[0,T^*_{m+1}]} C^{\delta_{1}}_x}+\| \nabla ({\bv}_{m+1}-{\bv}_{m}) \|_{L^4_{[0,T^*_{m+1}]} C^{\delta_{1}}_x}\leq  C 2^{-7\delta_{1} m} [1+E(0)]^2.
	\end{split}
\end{equation}
By using \eqref{fc} and \eqref{siw}, we obtain
\begin{equation}\label{sie}
	\begin{split}
		& \|\partial_t({\rho}_{m+1}-{\rho}_{m})\|_{L^4_{[0,T^*_{m+1}]} C^{\delta_{1}}_x}+\| \partial_t({\bv}_{m+1}-{\bv}_{m}) \|_{L^2_{[0,T^*_{m+1}]} C^{\delta_{1}}_x}
		\\
		\leq  & \|\nabla({\rho}_{m+1}-{\rho}_{m}) , \nabla ({\bv}_{m+1}-{\bv}_{m}) \|_{L^4_{[0,T^*_{m+1}]} C^{\delta_{1}}_x} \cdot (1+ \|\bv_m, \rho_m\|_{L^\infty_{[0,T^*_{m+1}]} H^s_x} )
		\\
		\leq & C [1+E(0)]^3 2^{-6\delta_{1} m}.
	\end{split}
\end{equation}
Due to \eqref{siw} and \eqref{sie}, for $k<m$, we get
\begin{equation}\label{Sia}
	\|P_k d({\rho}_{m+1}-{\rho}_{m}), P_k d ({\bv}_{m+1}-{\bv}_{m}) \|_{L^4_{[0,T^*_{m+1}]} L^\infty_x}
	\leq   C2^{-\delta_1k}\cdot   2^{-6\delta_1m}[1+E(0)]^3,
\end{equation}
and
\begin{equation*}\label{kz4}
	\|P_k d({\rho}_{m+1}-{\rho}_{m}), P_k d ({\bv}_{m+1}-{\bv}_{m}) \|_{L^1_{[0,T^*_{m+1}]} L^\infty_x}
	\leq   C2^{-\delta_1k} \cdot  2^{-6\delta_1m} [1+E(0)]^3.
\end{equation*}
\subsubsection{Uniform energy and Strichartz estimates on a regular time-interval $[0,T_{N_0}^*]$.}\label{finalk}
Recall \eqref{DTJ}, then $T_{N_0}^*=[E(0)]^{-3}2^{-\delta_{1}N_0}$. Recall \eqref{yz10} and \eqref{Sia}. Therefore, when $k\geq j$ and $j\geq N_0$, using \eqref{yz10} and H\"older's inequality, we have
\begin{equation}\label{kf01}
	\begin{split}
		\| P_{k} d \bv_j, P_{k} d \rho_j \|_{L^1_{[0,T^*_j]}L^\infty_x} \leq & (T^*_j)^{\frac34}\| P_{k} d \bv_j, P_{k} d \rho_j \|_{L^4_{[0,T^*_j]}L^\infty_x}
		\\
		\leq & (T^*_{N_0})^{\frac34} \cdot C  [1+E(0)]^3 2^{-\delta_{1}k} 2^{-7\delta_1j}.
	\end{split}
\end{equation}
When $k< j$ and $m\geq N_0$, using \eqref{Sia} and H\"older's inequality, we obtain
\begin{equation}\label{kf02}
	\|P_k d({\rho}_{m+1}-{\rho}_{m}), P_k d({\bv}_{m+1}-{\bv}_{m}) \|_{L^1_{[0,T^*_{m+1}]} L^\infty_x}
	\leq    (T^*_{N_0})^{\frac34} \cdot C  [1+E(0)]^3 2^{-\delta_{1}k} 2^{-6\delta_1m}.
\end{equation}
On the other side, when $k\geq j$ and $j< N_0$, using \eqref{yz10}, we get
\begin{equation}\label{kf03}
	\begin{split}
		\| P_{k} d \bv_j, P_{k} d \rho_j \|_{L^1_{[0,T^*_{N_0}]}L^\infty_x} \leq & (T^*_{N_0})^{\frac34} \| P_{k} d \bv_j, P_{k} d \rho_j \|_{L^4_{[0,T^*_{N_0}]}L^\infty_x}
		\\
		\leq & (T^*_{N_0})^{\frac34} \| P_{k} d \bv_j, P_{k} d \rho_j \|_{L^4_{[0,T^*_j]}L^\infty_x}
		\\
		\leq & (T^*_{N_0})^{\frac34} \cdot C  [1+E(0)]^3 2^{-\delta_{1}k} 2^{-7\delta_1j}.
	\end{split}
\end{equation}
When $k< j$ and $m< N_0$, using \eqref{Sia} and H\"older's inequality, it yields
\begin{equation}\label{kf04}
	\begin{split}
		& \|P_k d({\rho}_{j+1}-{\rho}_{j}), P_k d({\bv}_{j+1}-{\bv}_{j}) \|_{L^1_{[0,T^*_{N_0}]} L^\infty_x}
		\\
		\leq & (T^*_{N_0})^{\frac34} \|P_k d({\rho}_{j+1}-{\rho}_{j}), P_k d({\bv}_{j+1}-{\bv}_{j}) \|_{L^4_{[0,T^*_{N_0}]} L^\infty_x}
		\\
		\leq & (T^*_{N_0})^{\frac34} \|P_k d({\rho}_{j+1}-{\rho}_{j}), P_k d({\bv}_{j+1}-{\bv}_{j}) \|_{L^4_{[0,T^*_{j+1}]} L^\infty_x}
		\\
		\leq    & (T^*_{N_0})^{\frac34} \cdot C  [1+E(0)]^3 2^{-\delta_{1}k} 2^{-6\delta_1j}.
	\end{split}
\end{equation}
Due to a different time-interval for the sequence  $(\bv_j,\rho_j, w_j)$, we will discuss the solutions $(\bv_j,\rho_j, w_j)$ if  $j \leq N_0$ or  $j \geq N_0+1$ as follows.

\textbf{Case 1: $j \leq N_0$.} In this case, $(\bv_j, \rho_j, w_j)$ exists on $[0,T_j^*]$, and $[0,T^*_{N_0}]\subseteq [0,T^*_{j}]$. As a result, we don't need to extend solutions $(\bv_j, \rho_j, w_j)$ if  $j \leq N_0$. Using \eqref{yz4} and H\"older's inequality, we get
\begin{equation*}
	\| d\bv_j, d\rho_j\|_{L^1_{[0,T^*_{N_0}]}L^\infty_x} \leq (T^*_{N_0})^{\frac34} \| d\bv_j, d\rho_j\|_{L^4_{[0,T^*_{N_0}]}L^\infty_x} \leq C(T^*_{N_0})^{\frac34} [ 1+E(0) ].
\end{equation*}
By \eqref{pp8}, this yields
\begin{equation*}\label{ky0}
	\| d\bv_j, d\rho_j\|_{L^1_{[0,T^*_{N_0}]}L^\infty_x} \leq 2.
\end{equation*}
By using Newton-Leibniz's formula and \eqref{pu00}, it follows that
\begin{equation*}\label{ky1}
	\|\bv_j, \rho_j\|_{L^\infty_{[0,T^*_{N_0}] \times \mathbb{R}^3}}\leq |\bv_{0j}, \rho_{0j}|+ \| d\bv_j, d\rho_j\|_{L^1_{[0,T^*_{N_0}]}L^\infty_x} \leq 2+C_0.
\end{equation*}
Using Theorem \ref{bBe}, we get
\begin{equation*}\label{ky2}
	E(T^*_{N_0}) \leq CE(0)\mathrm{e}^2\exp\{  CE(0)\mathrm{e}^2\} .
\end{equation*}
\quad \textbf{Case 2: $j \geq N_0+1$.} In this case, we expect to extend the time interval $I_1=[0,T^*_j]$ to $[0,T^*_{N_0}]$. Our idea is to use \eqref{yz10}, \eqref{Sia}, and Theorem \ref{bBe} to calculate the energy at time $T_j^*$. Starting at $T_j^*$, we can also get a new time-interval.

We set
\begin{equation*}\label{ti1}
	I_1=[0,T^*_j]=[t_0,t_1], \quad \quad |I_1|=[E(0)]^{-3}2^{-\delta_{1} j}.
\end{equation*}
By using frequency decomposition, we get
\begin{equation}\label{kz03}
	\begin{split}
		d \bv_j= & \textstyle{\sum}^{\infty}_{k=j} d \bv_j+ \textstyle{\sum}^{j-1}_{k=1}P_k d \bv_j
		\\
		=& P_{\geq j}d \bv_j+ \textstyle{\sum}^{j-1}_{k=1} \textstyle{\sum}_{m=k}^{j-1} P_k  (d\bv_{m+1}-d\bv_m)+ \textstyle{\sum}^{j-1}_{k=1}P_k d \bv_k .
	\end{split}
\end{equation}
Similarly, we also have
\begin{equation}\label{kz04}
	\begin{split}
		d \rho_j= & P_{\geq j}d \rho_j+ \textstyle{\sum}^{j-1}_{k=1} \textstyle{\sum}_{m=k}^{j-1} P_k  (d \rho_{m+1}-d \rho_m)+ \textstyle{\sum}^{j-1}_{k=1}P_k d \rho_k.
	\end{split}
\end{equation}
When $k\geq j$, using \eqref{kf01} and \eqref{kf03}, we have\footnote{In the case $j\geq N_0$ we use \eqref{kf01}. In the case $j < N_0$, we take $T^*_j = T^*_{N_0}$ and use \eqref{kf03} to give a bound on $[0,T^*_{N_0}]$.}
\begin{equation}\label{kz01}
	\begin{split}
		\| P_{k} d \bv_j, P_{k} d \rho_j \|_{L^1_{[0, T^*_j]}L^\infty_x} \leq   &   (T^*_{N_0})^{\frac34} \cdot C [ 1+ E(0)]^3 2^{-\delta_{1}k} 2^{-7\delta_{1}j}.
	\end{split}
\end{equation}
When $k< j$, using \eqref{kf02} and \eqref{kf04}, it follows\footnote{In the case $m\geq N_0$ we use \eqref{kf02}. In the case $m < N_0$, we take $T^*_{m+1} = T^*_{N_0}$ and use \eqref{kf04} to give a bound on $[0,T^*_{N_0}]$.}
\begin{equation}\label{kz02}
	\|P_k d({\rho}_{m+1}-{\rho}_{m}), P_k d({\bv}_{m+1}-{\bv}_{m}) \|_{L^1_{[0,T^*_{m+1}]} L^\infty_x}
	\leq    (T^*_{N_0})^{\frac34} \cdot C  [ 1+ E(0)]^3 2^{-\delta_{1}k} 2^{-6\delta_1m}.
\end{equation}
We will use  and \eqref{kz01}-\eqref{kz02} to give a precise analysis of \eqref{kz03}-\eqref{kz04} and get some new time-intervals, and then we try to extend $\rho_j$ from $[0,T^*_j]$ to $[0,T^*_{N_0}]$. Our strategy is as follows. In step 1, we extend it from $[0,T^*_j]$ to $[0,T^*_{j-1}]$. In step 2, we use induction methods to conclude these estimates and also extend it to $[0,T_{N_0}^*]$.

\textbf{Step 1: Extending $[0,T^*_j]$ to $[0,T^*_{j-1}] \ (j \geq N_0+1)$.} To start, referring \eqref{DTJ}, we need to calculate $E(T^*_j)$ for obtaining a length of a new time-interval. Then we shall calculate $\|d \bv_j, d \rho_j\|_{L^1_{[0,T^*_j]}L^\infty_x}$. Using \eqref{kz03} and \eqref{kz04}, we derive that
\begin{equation*}
	\begin{split}
		\|d \bv_j, d \rho_j\|_{L^1_{[0,T^*_j]} L^\infty_x }
		\leq & 	\|P_{\geq j}d\bv_j, P_{\geq j}d \rho_j\|_{L^1_{[0,T^*_j]} L^\infty_x }  + \textstyle{\sum}^{j-1}_{k=1} \|P_k d\bv_k, P_k d\rho_k\|_{L^1_{[0,T^*_j]}L^\infty_x}\\
		& + \textstyle{\sum}^{j-1}_{k=1} \textstyle{\sum}_{m=k}^{j-1}  \|P_k  (d\bv_{m+1}-d\bv_m), P_k  (d\rho_{m+1}-d\rho_m)\|_{L^1_{[0,T^*_j]}L^\infty_x}
		\\
		\leq & 	\|P_{\geq j}d\bv_j, P_{\geq j}d \rho_j\|_{L^1_{[0,T^*_j]}L^\infty_x} + \textstyle{\sum}^{j-1}_{k=1} \|P_k d\bv_k, P_k d\rho_k\|_{L^1_{[0,T^*_k]}L^\infty_x}\\
		& + \textstyle{\sum}^{j-1}_{k=1} \textstyle{\sum}_{m=k}^{j-1}\| P_k  (d\bv_{m+1}-d\bv_m), P_k  (d\rho_{m+1}-d\rho_m)\|_{L^1_{[0,T^*_{m+1}]}L^\infty_x}
		\\
		\leq & C[ 1+ E(0)]^3 (T^*_{N_0})^{\frac34}  \textstyle{\sum}_{k=j}^{\infty} 2^{-\delta_{1} k} 2^{-7\delta_{1} j}
		\\
		& + C[ 1+ E(0)]^3 (T^*_{N_0})^{\frac34} \textstyle{\sum}^{j-1}_{k=1} 2^{-\delta_{1} k} 2^{-7\delta_{1} k}
		\\
		& +  C[ 1+ E(0)]^3 (T^*_{N_0})^{\frac34} \textstyle{\sum}^{j-1}_{k=1} \textstyle{\sum}_{m=k}^{j-1} 2^{-\delta_{1} k} 2^{-6\delta_{1} m}
		\\
		\leq & C[ 1+ E(0)]^3 (T^*_{N_0})^{\frac34} [\frac{1}{3}(1-2^{-\delta_{1}})]^{-2}.
	\end{split}
\end{equation*}
we get
\begin{equation}
	\begin{split}\label{kz05}
		\|d \bv_j, d \rho_j\|_{L^1_{[0,T^*_j]}L^\infty_x}
		\leq  C[ 1+ E(0)]^3(T^*_{N_0})^{\frac34} [\frac{1}{3}(1-2^{-\delta_{1}})]^{-2} \leq 2.
	\end{split}
\end{equation}
By using \eqref{kz05} and \eqref{pu00}, we get
\begin{equation*}
	\begin{split}\label{kp05}
		\| \bv_j,  \rho_j\|_{L^\infty_{[0,T^*_j]}L^\infty_x} \leq \| \bv_{0j},  \rho_{0j}\|+	\|d \bv_j, d \rho_j\|_{L^1_{[0,T^*_j]}L^\infty_x}
		\leq  C_0+2.
	\end{split}
\end{equation*}
By \eqref{kz05}, \eqref{pp8} and Theorem \ref{bBe}, we have
\begin{equation}\label{kz06}
	\begin{split}
		E(T^*_{j}) \leq CE(0)\mathrm{e}^2\exp\{  CE(0) \mathrm{e}^2 \} =C_*.
	\end{split}
\end{equation}
Above, $C_*$ is stated in \eqref{Cstar}. Starting at the time $T^*_j$, seeing \eqref{DTJ} and \eqref{kz06}, we can obtain an extending time-interval with a length of $(C_*)^{-3}2^{-\delta_{1} j}$. But, if $T^*_j + (C_*)^{-3}2^{-\delta_{1} j} \geq T^*_{j-1}$, we have finished this step. Else, we need to extend it again.

\textbf{Case 1:} $T^*_j + (C_*)^{-3}2^{-\delta_{1} j} \geq T^*_{j-1} $. In this case, we can get a new interval
\begin{equation*}\label{deI2}
	I_2=[T^*_j, T^*_{j-1}], \quad |I_2|= (2^{\delta_{1}}-1) [E(0)]^{-3}2^{-\delta_{1} j}.
\end{equation*}
Moreover, referring \eqref{kz01} and \eqref{kz02}, we derive
\begin{equation}\label{kz07}
	\| P_{k} d \bv_j, P_{k} d \rho_j \|_{L^1_{[T^*_j, T^*_{j-1}]}L^\infty_x}
	\leq  (T^*_{N_0})^{\frac34}  \cdot C  (1+C_*)^3 2^{-\delta_{1}k} 2^{-7\delta_{1}j}, \quad k \geq j,
\end{equation}
and $k<j$,
\begin{equation}\label{kz08}
	\|P_k (d{\rho}_{j}-d{\rho}_{j-1}), P_k (d{\bv}_{j}-d{\bv}_{j-1}) \|_{L^1_{[T^*_j, T^*_{j-1}]} L^\infty_x}
	\leq   (T^*_{N_0})^{\frac34}  \cdot C  (1+C_*)^3 2^{-\delta_{1}k} 2^{-6\delta_1(j-1)}.
\end{equation}
Using \eqref{pp8} and $j \geq N_0+1$, \eqref{kz07} and \eqref{kz08} yields
\begin{equation}\label{kz09}
	\| P_{k} d \bv_j, P_{k} d \rho_j \|_{L^1_{[T^*_j, T^*_{j-1}]}L^\infty_x}
	\leq  (T^*_{N_0})^{\frac34}  \cdot C  [1+E(0)]^3 2^{-\delta_{1}k} 2^{-6\delta_{1}j}, \quad k \geq j,
\end{equation}
and $k<j$,
\begin{equation}\label{kz10}
	\|P_k (d{\rho}_{j}-d{\rho}_{j-1}), P_k (d{\bv}_{j}-d{\bv}_{j-1}) \|_{L^1_{[T^*_j, T^*_{j-1}]} L^\infty_x}
	\leq    (T^*_{N_0})^{\frac34}  \cdot C   [1+E(0)]^3 2^{-\delta_{1}k} 2^{-5\delta_1(j-1)}.
\end{equation}
Therefore, we obtain the following estimate
\begin{equation}\label{kz11}
	\begin{split}
		& \|d \bv_j, d\rho_j\|_{L^1_{[0,T^*_{j-1}]} L^\infty_x}
		\\
		\leq & 	\|P_{\geq j}d\bv_j, P_{\geq j}d \rho_j\|_{L^1_{[0,T^*_{j-1}]} L^\infty_x}  + \textstyle{\sum}^{j-1}_{k=1} \|P_k d\bv_k, P_k d\rho_k\|_{L^1_{[0,T^*_{j-1}]} L^\infty_x}
		\\
		& + \textstyle{\sum}^{j-1}_{k=1} \|P_k  (d\bv_{j}-d\bv_{j-1}), P_k  (d\rho_{j}-d\rho_{j-1})\|_{L^1_{[0,T^*_{j-1}]} L^\infty_x}
		\\
		& + \textstyle{\sum}^{j-2}_{k=1} \textstyle{\sum}_{m=k}^{j-2}  \|P_k  (d\bv_{m+1}-d\bv_m), P_k  (d\rho_{m+1}-d\rho_m)\|_{L^1_{[0,T^*_{j-1}]} L^\infty_x}.
	\end{split}
\end{equation}
Due to \eqref{kz01} and \eqref{kz09}, it yields
\begin{equation}\label{kz12}
	\begin{split}
		\|P_{\geq j}d\bv_j, P_{\geq j}d\rho_j\|_{L^1_{[0,T^*_{j-1}]} L^\infty_x}
		\leq & C   [1+E(0)]^3 (T^*_{N_0})^{\frac34}  \textstyle{\sum}^{\infty}_{k=j}2^{-\delta_{1}k} (2^{-7\delta_1 j}+ 2^{-6\delta_{1} j})
		\\
		\leq & C   [1+E(0)]^3  (T^*_{N_0})^{\frac34}  \textstyle{\sum}^{\infty}_{k=j} 2^{-\delta_{1}k} 2^{-6\delta_{1} j}\times 2.
	\end{split}
\end{equation}
Due to \eqref{kz02} and \eqref{kz10}, it follows
\begin{equation}\label{kz13}
	\begin{split}
		& \textstyle{\sum}^{j-1}_{k=1} \|P_k  (d\bv_{j}-d\bv_{j-1}), P_k  (d\rho_{j}-d\rho_{j-1})\|_{L^1_{[0,T^*_{j-1}]} L^\infty_x}
		\\
		\leq & C   [1+E(0)]^3 (T^*_{N_0})^{\frac34}  \textstyle{\sum}^{j-1}_{k=1} 2^{-\delta_{1}k} (2^{-6\delta_1 (j-1)}+ 2^{-5\delta_{1} (j-1)})
		\\
		\leq & C   [1+E(0)]^3 (T^*_{N_0})^{\frac34}  \textstyle{\sum}^{j-1}_{k=1} 2^{-\delta_{1}k} 2^{-5\delta_{1} (j-1)} \times 2.
	\end{split}
\end{equation}
Inserting \eqref{kz12}-\eqref{kz13} into \eqref{kz11}, we derive that
\begin{equation}\label{kz14}
	\begin{split}
		\|d \bv_j, d \rho_j\|_{L^1_{[0,T^*_{j-1}]} L^\infty_x}
		\leq & 	C[1+E(0)]^3 (T^*_{N_0})^{\frac34}   \textstyle{\sum}_{k=j}^{\infty} 2^{-\delta_{1} k} 2^{-6\delta_{1} j}\times 2
		\\
		& + C   [1+E(0)]^3 (T^*_{N_0})^{\frac34}  \textstyle{\sum}^{j-1}_{k=1} 2^{-\delta_{1}k} 2^{-5\delta_{1} (j-1)} \times 2
		\\
		& + C[1+E(0)]^3 (T^*_{N_0})^{\frac34}  \textstyle{\sum}^{j-1}_{k=1} 2^{-\delta_{1} k} 2^{-7\delta_{1} k}
		\\
		& +  C[1+E(0)]^3 (T^*_{N_0})^{\frac34}  \textstyle{\sum}^{j-2}_{k=1} \textstyle{\sum}_{m=k}^{j-2} 2^{-\delta_{1} k} 2^{-6\delta_{1} m}
		\\
		\leq & 	C(1+E^3(0))(T^*_{N_0})^{\frac34}  [\frac13(1-2^{-\delta_{1}})]^{-2}.
	\end{split}
\end{equation}
By using \eqref{kz14}, \eqref{pp8} and Theorem \ref{bBe}, we can establish
\begin{equation}\label{kz15}
	\begin{split}
		&|\bv_j, \rho_j| \leq 2+C_0,
		\\
		& E(T^*_{j-1}) \leq C_*.
	\end{split}
\end{equation}
\quad \textbf{Case 2:} $T^*_j + (C_*)^{-3}2^{-\delta_{1} j} < T^*_{j-1} $. In this case, we record
\begin{equation*}
	I_2=[T^*_j, t_2], \quad |I_2| = (C_*)^{-3}2^{-\delta_{1} j}.
\end{equation*}
Referring \eqref{kz01} and \eqref{kz02}, we can derive
\begin{equation}\label{kz16}
	\| P_{k} d \bv_j, P_{k} d \rho_j \|_{L^1_{I_2}L^\infty_x}
	\leq  (T^*_{N_0})^{\frac34} \cdot C  (1+C_*)^3 2^{-\delta_{1}k} 2^{-7\delta_{1}j}, \quad k \geq j,
\end{equation}
and
\begin{equation}\label{kz17}
	\|P_k (d{\rho}_{j}-d{\rho}_{j-1}), P_k  (d{\bv}_{j}-d{\bv}_{j-1}) \|_{L^1_{I_2} L^\infty_x}
	\leq    (T^*_{N_0})^{\frac34} \cdot C  (1+C_*)^3 2^{-\delta_{1}k} 2^{-6\delta_1(j-1)}, \quad k<j.
\end{equation}
Similarly, on $I_1 \cup I_2$, we have
\begin{equation*}
	\begin{split}
		\|d\bv_j, d\rho_j\|_{L^1_{ I_1 \cup I_2 } L^\infty_x }
		\leq & 	\|P_{\geq j}d\bv_j, P_{\geq j}d \rho_j\|_{L^1_{I_1 \cup I_2} L^\infty_x }   + \textstyle{\sum}^{j-1}_{k=1} \|P_k d\bv_k, P_k d\rho_k\|_{L^1_{[0,T^*_k]} L^\infty_x }
		\\
		& + \textstyle{\sum}^{j-1}_{k=1} \|P_k  (d\bv_{j}-d\bv_{j-1}), P_k  (d\rho_{j}-d\rho_{j-1})\|_{L^1_{I_1 \cup I_2 } L^\infty_x }
		\\
		& + \textstyle{\sum}^{j-2}_{k=1} \textstyle{\sum}_{m=k}^{j-2}  \|P_k  (d\bv_{m+1}-d\bv_m), P_k  (d\rho_{m+1}-d\rho_m)\|_{L^1_{I_1 \cup I_2} L^\infty_x } .
	\end{split}
\end{equation*}
Noting that $ I_1 \cup I_2 \subseteq T^*_k$ when $k \leq j-1$, we therefore obtain
\begin{equation}\label{kz11A}
	\begin{split}
		& \|d \bv_j, d \rho_j\|_{L^1_{ I_1 \cup I_2 } L^\infty_x }
		\\
		\leq & 	\|P_{\geq j}d\bv_j, P_{\geq j}d \rho_j\|_{L^1_{I_1 \cup I_2} L^\infty_x }   + \textstyle{\sum}^{j-1}_{k=1} \|P_k d\bv_k, P_k d\rho_k\|_{L^1_{I_1 \cup I_2 } L^\infty_x }
		\\
		& + \textstyle{\sum}^{j-1}_{k=1} \|P_k  (d \bv_{j}-d \bv_{j-1}), P_k  (d\rho_{j}-d\rho_{j-1})\|_{L^1_{I_1 \cup I_2 } L^\infty_x }
		\\
		& + \textstyle{\sum}^{j-2}_{k=1} \textstyle{\sum}_{m=k}^{j-2}  \|P_k  (d\bv_{m+1}-d\bv_m), P_k  (d\rho_{m+1}-d\rho_m)\|_{L^1_{[0,T^*_{m+1}]} L^\infty_x }
	\end{split}
\end{equation}
Inserting \eqref{kz01}, \eqref{kz02} and \eqref{kz16} and \eqref{kz17} to \eqref{kz11A}, it follows that
\begin{equation}\label{kz11a}
	\begin{split}		
		\|d \bv_j, d \rho_j\|_{L^1_{ I_1 \cup I_2 } L^\infty_x }
		\leq & 	C[1+E(0)]^3 (T^*_{N_0})^{\frac34} \textstyle{\sum}_{k=j}^{\infty} 2^{-\delta_{1} k} 2^{-6\delta_{1} j}\times 2
		\\
		& + C   [1+E(0)]^3 (T^*_{N_0})^{\frac34} \textstyle{\sum}^{j-1}_{k=1} 2^{-\delta_{1}k} 2^{-5\delta_{1} (j-1)} \times 2
		\\
		& + C[1+E(0)]^3 (T^*_{N_0})^{\frac34} \textstyle{\sum}^{j-1}_{k=1} 2^{-\delta_{1} k} 2^{-7\delta_{1} k}
		\\
		& +  C[1+E(0)]^3 (T^*_{N_0})^{\frac34} \textstyle{\sum}^{j-2}_{k=1} \textstyle{\sum}_{m=k}^{j-2} 2^{-\delta_{1} k} 2^{-6\delta_{1} m}
		\\
		\leq & 	C[1+E(0)]^3 (T^*_{N_0})^{\frac34} [\frac13(1-2^{-\delta_{1}})]^{-2}.
	\end{split}
\end{equation}
Applying \eqref{kz11a} and Theorem \ref{bBe}, we get
\begin{equation*}\label{kz15f}
	\begin{split}
		& |\bv_j,\rho_j| \leq 2+C_0,
		\\
		& E(t_2) \leq C_*.
	\end{split}
\end{equation*}
Therefore, we can repeat the process with a length with $(C_*)^{-1}2^{-\delta_{1} j}$ till extending it to $T^*_{j-1}$. Moreover, on every new time-interval with $(C_*)^{-1}2^{-\delta_{1} j}$, the estimates \eqref{kz16} and \eqref{kz17} hold. Set
\begin{equation}\label{times1}
	X_1= \frac{T^*_{j-1}-T^*_j}{C^{-3}_*2^{-\delta_{1} j}}= (2^{\delta_{1}}-1)C^3_* [E(0)]^{-3}.
\end{equation}
Then we need a maximum of $X_1$-times to reach the time $T^*_{j-1}$ both in case 2 (it's also adapt to case 1 for calculating the times). As a result, we can calculate
\begin{equation*}
	\begin{split}
		\|d \bv_j, d \rho_j\|_{L^1_{[0,T^*_{j-1}]} L^\infty_x}
		\leq & 	\|P_{\geq j}d\bv_j, P_{\geq j}d \rho_j\|_{L^1_{[0,T^*_{j-1}]} L^\infty_x}
		+ \textstyle{\sum}^{j-1}_{k=1} \|P_k d\bv_k, P_k d\rho_k\|_{L^1_{[0,T^*_{k}]} L^\infty_x}
		\\
		+ & \textstyle{\sum}^{j-1}_{k=1} \|P_k  (d\bv_{j}-d\bv_{j-1}), P_k  (d\rho_{j}-d\rho_{j-1})\|_{L^1_{[0,T^*_{j-1}]} L^\infty_x}.
		\\
		& + \textstyle{\sum}^{j-2}_{k=1} \textstyle{\sum}_{m=k}^{j-2}  \|P_k  (d\bv_{m+1}-d\bv_m), P_k  (d\rho_{m+1}-d\rho_m)\|_{L^1_{[0,T^*_{m}]} L^\infty_x}
	\end{split}
\end{equation*}	
Due to \eqref{kz01}, \eqref{kz02}, \eqref{kz16}, \eqref{kz17}, and \eqref{pp8}, we obtain
\begin{equation}\label{kzqt}
	\begin{split}
		& \|d \bv_j, d \rho_j\|_{L^1_{[0,T^*_{j-1}]} L^\infty_x}
		\\
		\leq & C[1+E(0)]^3 (T^*_{N_0})^{\frac34} \textstyle{\sum}_{k=j}^{\infty} 2^{-\delta_{1} k} 2^{-6\delta_{1} j}\times (2^{\delta_{1}}-1)C^3_* [E(0)]^{-3}
		\\
		& + C   [1+E(0)]^3 (T^*_{N_0})^{\frac34} \textstyle{\sum}^{j-1}_{k=1} 2^{-\delta_{1}k} 2^{-5\delta_{1} (j-1)} \times (2^{\delta_{1}}-1)C^3_* [E(0)]^{-3}
		\\
		& + C[1+E(0)]^3(T^*_{N_0})^{\frac34} \textstyle{\sum}^{j-1}_{k=1} 2^{-\delta_{1} k} 2^{-7\delta_{1} k}
		\\
		& +  C[1+E(0)]^3(T^*_{N_0})^{\frac34} \textstyle{\sum}^{j-2}_{k=1} \textstyle{\sum}_{m=k}^{j-2} 2^{-\delta_{1} k} 2^{-6\delta_{1} m}
		\\
		\leq & 	C[1+E(0)]^3 (T^*_{N_0})^{\frac34}[\frac13(1-2^{-\delta_{1}})]^{-2}.
	\end{split}
\end{equation}
Therefore, by using \eqref{kzqt} and Theorem \eqref{bBe}, we get
\begin{equation}\label{kz270}
	\begin{split}
		& \| \bv_j, \rho_j \|_{L^\infty_{ [0,T^*_{j-1}]\times \mathbb{R}^3}} \leq 2+C_0,
		\\
		& E(T^*_{j-1}) \leq C_*.
	\end{split}
\end{equation}
At this stage, both in case 1 or case 2, seeing from \eqref{kz270}, \eqref{kzqt}, \eqref{kz14}, \eqref{kz15}, through a maximum of $X_1=(2^{\delta_{1}}-1)C^3_* [E(0)]^{-3}$ times with each length $C_*^{-3}2^{-\delta_{1}j}$ or $(2^{\delta_{1}}-1)[E(0)]^{-3}2^{-\delta_{1} j}$, we shall extend the solutions $(\bv_j,\rho_j,\bw_j)$ from $[0,T^*_j]$ to $[0,T^*_{j-1}]$, and
\begin{equation}\label{kz27}
	\begin{split}
		& \| \bv_j, \rho_j \|_{L^\infty_{ [0,T^*_{j-1}]\times \mathbb{R}^3}} \leq 2+C_0,, \quad E(T^*_{j-1}) \leq C_*,
		\\
		& \|d \bv_j, d \rho_j\|_{L^1_{[0,T^*_{j-1}]} L^\infty_x}
		\leq  	C[1+E(0)]^3 (T^*_{N_0})^{\frac34}[\frac13(1-2^{-\delta_{1}})]^{-2}.
	\end{split}		
\end{equation}
From \eqref{kz27}, we have extended the solutions $(\bv_m,\rho_m,\bw_m)$ ($m\in[N_0, j-1]$) from $[0,T^*_m]$ to $[0,T^*_{m-1}]$. Moreover, referring \eqref{kz01} and \eqref{kz02}, \eqref{kz27}, and \eqref{times1}, we get
\begin{equation}\label{kz29}
	\begin{split}
		& \| P_{k} d \bv_m, P_{k} d \rho_m \|_{L^1_{[0,T^*_{m-1}]}L^\infty_x}
		\\
		\leq  & \| P_{k} d\bv_m, P_{k} d \rho_m \|_{L^1_{[0,T^*_{m}]}L^\infty_x}
		\\
		& +\| P_{k} d \bv_m, P_{k} d \rho_m \|_{L^1_{[T^*_{m},T^*_{m-1}]}L^\infty_x}
		\\
		\leq  & C[1+E(0)]^3(T^*_{N_0})^{\frac34} 2^{-\delta_{1}k} 2^{-7\delta_{1}m} + C(1+C_*)^3 2^{-\delta_{1}k} 2^{-7\delta_{1}m} \times (2^{\delta_{1}}-1)C^3_* [E(0)]^{-3}
		\\
		\leq  & C[1+E(0)]^3(T^*_{N_0})^{\frac34} 2^{-\delta_{1}k} 2^{-7\delta_{1}m} + C[1+E(0)]^3 2^{-\delta_{1}k} 2^{-6\delta_{1}m} \times (2^{\delta_{1}}-1).
	\end{split}	
\end{equation}
For $k \geq m\geq N_0+1$, due to \eqref{pp8} and \eqref{kz29}, it yields
\begin{equation}\label{kz31}
	\begin{split}
		\| P_{k} d \bv_m, P_{k} d \rho_m \|_{L^1_{[0,T^*_{m-1}]}L^\infty_x}
		\leq & C[1+E(0)]^3(T^*_{N_0})^{\frac34} 2^{-\delta_{1}k} 2^{-5\delta_{1}m} \times 2^{\delta_{1}},
	\end{split}	
\end{equation}
Similarly, if $m\geq N_0+1$, using \eqref{pp8}, for $k<j$, the following estimate holds:
\begin{equation}\label{kz34}
	\begin{split}
		& \|P_k (d{\rho}_{m}-d{\rho}_{m-1}), P_k (d{\bv}_{m}-d{\bv}_{m-1}) \|_{L^1_{[0,T^*_{m-1}]} L^\infty_x}
		\\
		\leq    &  \|P_k (d{\rho}_{m}-d{\rho}_{m-1}), P_k  (d{\bv}_{m}-d{\bv}_{m-1}) \|_{L^1_{[0,T^*_{m}]} L^\infty_x}
		\\
		& +\|P_k (d {\rho}_{m}-d {\rho}_{m-1}), P_k (d {\bv}_{m}-d {\bv}_{m-1}) \|_{L^1_{[T^*_m,T^*_{m-1}]} L^\infty_x}
		\\
		\leq & C  [1+E(0)]^3 (T^*_{N_0})^{\frac34} 2^{-\delta_{1}k} 2^{-6\delta_1(m-1)}
		\\
		& + C  (1+C_*)^3 2^{-\delta_{1}k} 2^{-6\delta_1(m-1)} \times (2^{\delta_{1}}-1)C^3_* [E(0)]^{-3}
		\\
		\leq & C[1+E(0)]^3(T^*_{N_0})^{\frac34} 2^{-\delta_{1}k} 2^{-5\delta_{1}(m-1)} \times 2^{\delta_{1}}.
	\end{split}	
\end{equation}
\quad \textbf{Step 2: Extending time interval $[0,T^*_j]$ to $[0,T^*_{N_0}]$}. Based the above analysis in Step 1, we can give a induction by achieving the goal. We assume the solutions $(\bv_j, \rho_j, w_j)$ can be extended from $[0,T^*_j]$ to $[0,T^*_{j-l}]$ through a maximam $X_l$ times and
\begin{equation}\label{kz41}
	\begin{split}
		X_l=& \frac{T_{j-l}^*- T_{j}^*}{C^{-3}_* 2^{-\delta_{1} j}}
		\\
		=& \frac{E(0)^{-1}( 2^{-\delta_{1}(j-l)} - 2^{-\delta_{1}j} )}{C^{-3}_* 2^{-\delta_{1} j}}
		\\
		=& \frac{C^3_*}{[E(0)]^3} (2^{\delta_{1}l}-1).
	\end{split}	
\end{equation}
Moreover, the following bounds
\begin{equation}\label{kz42}
	\begin{split}
		\| P_{k} d \bv_j, P_{k} d \rho_j \|_{L^1_{[0,T^*_{j-l}]}L^\infty_x}
		\leq & C[1+E(0)]^3 (T^*_{N_0})^{\frac34} 2^{-\delta_{1}k} 2^{-5\delta_{1}j} \times 2^{\delta_{1}l}, \qquad k \geq j,
	\end{split}	
\end{equation}
and
\begin{equation}\label{kz43}
	\begin{split}
		& \|P_k (d{\rho}_{m+1}-d{\rho}_{m}), P_k (d{\bv}_{m+1}-d{\bv}_{m}) \|_{L^1_{[0,T^*_{m-l}]} L^\infty_x}
		\\
		\leq  & C[1+E(0)]^3 (T^*_{N_0})^{\frac34} 2^{-\delta_{1}k} 2^{-5\delta_{1}m } \times 2^{\delta_{1}l}, \quad k<j,
	\end{split}	
\end{equation}
and
\begin{equation}\label{kz44}
	\|d \bv_j, d \rho_j\|_{L^1_{[0,T^*_{j-l}]} L^\infty_x}
	\leq  	C[1+E(0)]^3 (T^*_{N_0})^{\frac34}[\frac13(1-2^{-\delta_{1}})]^{-2}.
\end{equation}
and
\begin{equation}\label{kz45}
	|\bv_j, \rho_j| \leq 2+C_0, \quad E(T^*_{j-l}) \leq C_*.
\end{equation}
In the following, we will check the estimates \eqref{kz41}, \eqref{kz42}, \eqref{kz43}, \eqref{kz44}, and \eqref{kz45} hold when $l=1$, and it also holds when we replace $l$ by $l+1$.

Using \eqref{kz27}, \eqref{kz31}, \eqref{times1}, and \eqref{kz34}, then \eqref{kz42}-\eqref{kz45} hold by taking $l=1$. Let us now check it for $l+1$. In this case, it implies that  $T^*_{j-l} \leq T^*_{N_0}$. Therefore, $j-l\geq N_0+1$ should hold. Starting at the time $T^*_{j-l}$, seeing \eqref{DTJ} and \eqref{kz45}, we shall get an extending time-interval of $(\bv_j,\rho_j, w_j)$ with a length of $(C_*)^{-3}2^{-\delta_{1} j}$. Hence, we can go to the case 2 in step 1, and the length every new time-interval is $C_*^{-3} 2^{-\delta_{1} j}$. Therefore, the times is
\begin{equation}\label{kz46}
	X= \frac{T^*_{j-(l+1)}-T^*_{j-l}}{C^{-3}_*2^{-\delta_{1} j}}= 2^{\delta_{1} l}(2^{\delta_{1}}-1)C^3_* [E(0)]^{-3}.
\end{equation}
Thus, we can deduce that
\begin{equation}\label{kz40}
	X_{l+1}=X_l+X= (2^{\delta_{1}(l+1)}-1)C^3_* [E(0)]^{-3}.
\end{equation}
Moreover, for $k \geq j$, we have
\begin{equation*}\label{kz48}
	\begin{split}
		\| P_{k} d \bv_j, P_{k} d\rho_j \|_{L^1_{[0, T^*_{j-(l+1)}]}L^\infty_x}\leq & \| P_{k} d \bv_j, P_{k} d\rho_j \|_{L^1_{[0,T^*_{j-l}]}L^\infty_x}
		\\
		& +	\| P_{k} d \bv_j, P_{k} d\rho_j \|_{L^1_{[T^*_{j-l}, T^*_{j-(l+1)}]}L^\infty_x}.
	\end{split}
\end{equation*}
Using \eqref{kz01} and \eqref{kz45}
\begin{equation}\label{kz49}
	\begin{split}
		& \| P_{k} d \bv_j, P_{k} d\rho_j \|_{L^1_{[T^*_{j-l}, T^*_{j-(l+1)}]}L^\infty_x}
		\\
		\leq   & C  (1+C_*)^3(T^*_{N_0})^{\frac34} 2^{-\delta_{1}k} 2^{-7\delta_{1}j}\times 2^{\delta_{1} l}(2^{\delta_{1}}-1)C^3_* [E(0)]^{-3}, \quad k\geq j.
	\end{split}
\end{equation}
Due to \eqref{pp8}, it yields
\begin{equation*}
	\begin{split}
		(1+C_*)^3 C^3_* [E(0)]^{-3} [1+E(0)]^{-3}2^{-\delta_{1} N_0} \leq 1.
	\end{split}
\end{equation*}
Hence, from \eqref{kz49} we have
\begin{equation}\label{kz50}
	\begin{split}
		& \| P_{k} d \bv_j, P_{k} d\rho_j \|_{L^1_{[T^*_{j-l}, T^*_{j-(l+1)}]}L^\infty_x}
		\\
		\leq   & C  [1+E(0)]^3(T^*_{N_0})^{\frac34} 2^{-\delta_{1}k} 2^{-5\delta_{1}j}\times 2^{\delta_{1} l}(2^{\delta_{1}}-1), \quad k\geq j.
	\end{split}
\end{equation}
Using \eqref{kz42} and \eqref{kz50}, so we get
\begin{equation}\label{kz51}
	\begin{split}
		\| P_{k} d \bv_j, P_{k} d \rho_j \|_{L^1_{[0,T^*_{j-(l+1)}]}L^\infty_x}
		\leq & C[1+E(0)]^3 (T^*_{N_0})^{\frac34} 2^{-\delta_{1}k} 2^{-5\delta_{1}j} \times 2^{\delta_{1}(l+1)}, \qquad k \geq j.
	\end{split}	
\end{equation}
If $k<j$, we can derive
\begin{equation}\label{kz52}
	\begin{split}
		& \|P_k (d{\rho}_{m+1}-d{\rho}_{m}), P_k (d{\bv}_{m+1}-d{\bv}_{m}) \|_{L^1_{[0,T^*_{m-(l+1)}]} L^\infty_x}
		\\
		\leq  & \|P_k (d{\rho}_{m+1}-d{\rho}_{m}), P_k (d{\bv}_{m+1}-d{\bv}_{m}) \|_{L^1_{[0,T^*_{m-l}]} L^\infty_x}
		\\
		& + \|P_k (d{\rho}_{m+1}-d{\rho}_{m}), P_k (d{\bv}_{m+1}-d{\bv}_{m}) \|_{L^1_{[T^*_{m-l},T^*_{m-(l+1)}]} L^\infty_x}.
	\end{split}	
\end{equation}
When we extend the solutions $(\bv_j, \rho_j, w_j)$ from $[0,T^*_{j-l}]$ to $[0,T^*_{j-(l+1)}]$, then the solutions $(\bv_m, \rho_m, w_m)$ is also extended from $[0,T^*_{m-l}]$ to $[0,T^*_{m-(l+1)}]$. Seeing \eqref{kz02} and \eqref{kz45}, we can obtain
\begin{equation}\label{kz53}
	\begin{split}
		& \|P_k (d{\rho}_{m+1}-d{\rho}_{m}), P_k (d{\bv}_{m+1}-d{\bv}_{m}) \|_{L^1_{[T^*_{m-l},T^*_{m-(l+1)}]} L^\infty_x}
		\\
		\leq  & C(1+C_*)^3 (T^*_{N_0})^{\frac34} 2^{-\delta_{1}k} 2^{-6\delta_{1}m } \times 2^{\delta_{1} l}(2^{\delta_{1}}-1)C^3_* [E(0)]^{-3}.
	\end{split}	
\end{equation}
Hence, for $m-l \geq N_0+1$, using \eqref{pp8}, it yields
\begin{equation*}
	(1+C_*)^3  C^3_* [E(0)]^{-3} [1+E(0)]^{-3} 2^{-\delta_{1}N_0 } \leq 1.
\end{equation*}
Based on the above results, \eqref{kz53} becomes
\begin{equation}\label{kz54}
	\begin{split}
		& \|P_k (d{\rho}_{m+1}-d{\rho}_{m}), P_k (d{\bv}_{m+1}-d{\bv}_{m}) \|_{L^1_{[T^*_{m-l},T^*_{m-(l+1)}]} L^\infty_x}
		\\
		\leq  & C[1+E(0)]^3(T^*_{N_0})^{\frac34} 2^{-\delta_{1}k} 2^{-5\delta_{1}m } \times 2^{\delta_{1} l}(2^{\delta_{1}}-1).
	\end{split}	
\end{equation}
Inserting \eqref{kz43} and \eqref{kz54} to \eqref{kz52}, we have
\begin{equation}\label{kz55}
	\begin{split}
		& \|P_k (d{\rho}_{m+1}-d{\rho}_{m}), P_k (d{\bv}_{m+1}-d{\bv}_{m}) \|_{L^1_{[0,T^*_{m-(l+1)}]} L^\infty_x}
		\\
		\leq  & C[1+E(0)]^3(T^*_{N_0})^{\frac34} 2^{-\delta_{1}k} 2^{-5\delta_{1}m } \times 2^{\delta_{1}(l+1)}, \quad k<j,
	\end{split}	
\end{equation}
Next, we can bound 
\begin{equation}\label{kz56}
	\begin{split}
		& \|d \bv_j, d \rho_j\|_{L^1_{[0,T^*_{j-(l+1)}]} L^\infty_x}
		\\
		\leq & \|P_{\geq j}d\bv_j, P_{\geq j}d \rho_j\|_{L^1_{[0,T^*_{j-(l+1)}]} L^\infty_x}
		\\
		& + \textstyle{\sum}^{j-1}_{k=j-l}\textstyle{\sum}^{j-1}_{m=k} \|P_k  (d\bv_{m+1}-d\bv_{m}), P_k  (d\rho_{m+1}-d\rho_{m})\|_{L^1_{[0,T^*_{j-(l+1)}]} L^\infty_x}
		\\
		& + \textstyle{\sum}^{j-1}_{k=1} \|P_k d\bv_k, P_k d\rho_k\|_{L^1_{[0,T^*_{j-(l+1)}]} L^\infty_x}
		\\
		=& 	\|P_{\geq j}d\bv_j, P_{\geq j}d \rho_j\|_{L^1_{[0,T^*_{j-(l+1)}]} L^\infty_x}
		\\
		& + \textstyle{\sum}^{j-1}_{k=j-l} \|P_k d\bv_k, P_k d\rho_k\|_{L^1_{[0,T^*_{j-(l+1)}]} L^\infty_x}
		+ \textstyle{\sum}^{j-(l+1)}_{k=1} \|P_k d\bv_k, P_k d\rho_k\|_{L^1_{[0,T^*_{j-(l+1)}]} L^\infty_x}
		\\
		& + \textstyle{\sum}^{j-1}_{k=1} \textstyle{\sum}^{j-1}_{m=j-(l+1)} \|P_k  (d\bv_{m+1}-d\bv_{m}), P_k  (d\rho_{m+1}-d\rho_{m})\|_{L^1_{[0,T^*_{j-(l+1)}]} L^\infty_x}
		\\
		& + \textstyle{\sum}^{j-(l+2)}_{k=1} \textstyle{\sum}_{m=k}^{j-(l+2)}  \|P_k  (d\bv_{m+1}-d\bv_m), P_k  (d\rho_{m+1}-d\rho_m)\|_{L^1_{[0,T^*_{j-(l+1)}]} L^\infty_x}
		\\
		=& \Theta_1+  \Theta_2+ \Theta_3+ \Theta_4+ \Theta_5,
	\end{split}
\end{equation}
where
\begin{equation}\label{Theta}
	\begin{split}
		\Theta_1= &  \|P_{\geq j}d\bv_j, P_{\geq j}d \rho_j\|_{L^1_{[0,T^*_{j-(l+1)}]} L^\infty_x} ,
		\\
		\Theta_2= & \textstyle{\sum}^{j-(l+2)}_{k=1}\textstyle{\sum}^{j-(l+2)}_{m=k} \|P_k  (d\bv_{m+1}-d\bv_{m}), P_k  (d\rho_{m+1}-d\rho_{m})\|_{L^1_{[0,T^*_{j-(l+1)}]} L^\infty_x},
		\\
		\Theta_3= & \textstyle{\sum}^{j-1}_{k=1}\textstyle{\sum}^{j-1}_{m=j-(l+1)} \|P_k  (d\bv_{m+1}-d\bv_{m}), P_k  (d\rho_{m+1}-d\rho_{m})\|_{L^1_{[0,T^*_{j-(l+1)}]} L^\infty_x},
		\\
		\Theta_4 =& \textstyle{\sum}^{j-(l+1)}_{k=1}	\|P_{k}d\bv_k, P_{k}d\rho_k\|_{L^1_{[0,T^*_{j-(l+1)}]} L^\infty_x},
		\\
		\Theta_5 =& \textstyle{\sum}^{j-1}_{k=j-l}	\|P_{k}d\bv_k, P_{k}d\rho_k\|_{L^1_{[0,T^*_{j-(l+1)}]} L^\infty_x}.
	\end{split}
\end{equation}
On time-interval $[0,T^*_{j-(l+1)}]$, we note that there is no growth for $\Theta_2$ and $\Theta_4$ in this extending process. For example, considering $\Theta_2$, the existing time-interval of $P_k  (d\bv_{m+1}-d\bv_{m})$ is actually $[0,T^*_{m+1}]$, and $[0,T^*_{j-(l+1)}] \subseteq [0,T^*_{m+1}]$ if $m \geq j-(l+2)$. Therefore, we can use the bounds \eqref{kz01} and \eqref{kz02} to handle $\Theta_2$ and $\Theta_4$. While, considering $\Theta_1, \Theta_3$, and $\Theta_5$, we need to calculate the growth in Strichartz estimates. Based on this idea, let us give a precise analysis on \eqref{Theta}.

According to \eqref{kz51}, we can estimate $\Theta_1$ by
\begin{equation}\label{kz57}
	\begin{split}
		\Theta_1 \leq & C[1+E(0)]^3(T^*_{N_0})^{\frac34} \textstyle{\sum}^{\infty}_{k=j} 2^{-\delta_{1}k} 2^{-5\delta_{1}j} \times 2^{\delta_{1}(l+1)}.
	\end{split}
\end{equation}
Due to \eqref{kz01}, we have
\begin{equation}\label{kz58}
	\begin{split}
		\Theta_2\leq &	\textstyle{\sum}^{j-(l+2)}_{k=1}\textstyle{\sum}^{j-(l+2)}_{m=k} \|P_k  (d\bv_{m+1}-d\bv_{m}), P_k  (d\rho_{m+1}-d\rho_{m})\|_{L^1_{[0,T^*_{m+1}]} L^\infty_x},
		\\
		\leq &  C[1+E(0)]^3 (T^*_{N_0})^{\frac34} \textstyle{\sum}^{j-(l+2)}_{k=1}\textstyle{\sum}^{j-(l+2)}_{m=k} 2^{-\delta_{1}k} 2^{-6\delta_{1}m}.
	\end{split}	
\end{equation}
For $1 \leq k \leq j-1$ and $m \leq j-1$, using \eqref{kz55}, it follows
\begin{equation}\label{kz59}
	\begin{split}
		\Theta_3\leq & \textstyle{\sum}^{j-1}_{k=1}\textstyle{\sum}^{j-1}_{m=j-(l+1)} \|P_k  (d\bv_{m+1}-d\bv_{m}), P_k  (d\rho_{m+1}-d\rho_{m})\|_{L^1_{[0,T^*_{m-(l+1)}]} L^\infty_x}
		\\
		\leq &  C  [1+E(0)]^3 (T^*_{N_0})^{\frac34} \textstyle{\sum}^{j-1}_{k=1}\textstyle{\sum}^{j-1}_{m=j-(l+1)}  2^{-\delta_{1}k} 2^{-5\delta_{1}m} \times 2^{\delta_{1}(l+1)}.
	\end{split}
\end{equation}
Due to \eqref{kz01}, it yields
\begin{equation}\label{kz60}
	\begin{split}
		\Theta_4 \leq & \textstyle{\sum}^{j-(l+1)}_{k=1}	\|P_{k}d\bv_k, P_{k}d\rho_k\|_{L^1_{[0,T^*_{k}]} L^\infty_x}
		\\
		\leq & C  [1+E(0)]^3 (T^*_{N_0})^{\frac34} \textstyle{\sum}^{j-(l+1)}_{k=1} 2^{-\delta_{1}k} 2^{-7\delta_{1}k}.
	\end{split}
\end{equation}
If $j-l \leq k\leq j-1$, then $k+l+1-j \leq l$. By using \eqref{kz42}, we can estimate
\begin{equation}\label{kz61}
	\begin{split}
		\Theta_5 =& \textstyle{\sum}^{j-1}_{k=j-l}	\|P_{k}d\bv_k, P_{k}d\rho_k\|_{L^1_{[0,T^*_{j-(l+1)}]} L^\infty_x}
		\\
		= & \textstyle{\sum}^{j-1}_{k=j-l}	\|P_{k}d\bv_k, P_{k}d\rho_k\|_{L^1_{[0,T^*_{k-(k+l+1-j)}]} L^\infty_x}
		\\
		\leq	& C  [1+E(0)]^3 (T^*_{N_0})^{\frac34} \textstyle{\sum}^{j-1}_{k=j-l} 2^{-\delta_{1}k} 2^{-5\delta_{1}j}2^{\delta_{1}(k+l+1-j)}.
	\end{split}
\end{equation}
Inserting \eqref{kz57}, \eqref{kz58}, \eqref{kz59}, \eqref{kz60}, \eqref{kz61} to \eqref{kz56}, it follows
\begin{equation}\label{kz62}
	\begin{split}
		& \|d \bv_j, d \rho_j\|_{L^1_{[0,T^*_{j-(l+1)}]} L^\infty_x}
		\\
		\leq & C[1+E(0)]^3 (T^*_{N_0})^{\frac34} \textstyle{\sum}^{\infty}_{k=j} 2^{-\delta_{1}k} 2^{-5\delta_{1}j} \times 2^{\delta_{1}(l+1)}
		\\
		& + C[1+E(0)]^3(T^*_{N_0})^{\frac34} \textstyle{\sum}^{j-(l+2)}_{k=1}\textstyle{\sum}^{j-(l+2)}_{m=k} 2^{-\delta_{1}k} 2^{-6\delta_{1}m}
		\\
		&+C  [1+E(0)]^3(T^*_{N_0})^{\frac34} \textstyle{\sum}^{j-1}_{k=1}\textstyle{\sum}^{j-1}_{m=j-(l+1)}  2^{-\delta_{1}k} 2^{-5\delta_{1}m} \times 2^{\delta_{1}(l+1)}
		\\
		& + C  [1+E(0)]^3(T^*_{N_0})^{\frac34} \textstyle{\sum}^{j-(l+1)}_{k=1} 2^{-\delta_{1}k} 2^{-7\delta_{1}k}
		\\
		& + C  [1+E(0)]^3(T^*_{N_0})^{\frac34} \textstyle{\sum}^{j-1}_{k=j-l} 2^{-\delta_{1}k} 2^{-5\delta_{1}k}2^{\delta_{1}(k+l+1-j)}
	\end{split}
\end{equation}
In the case of $j-l \geq N_0+1$ and $j\geq N_0+1$, the estimate \eqref{kz62} yields
\begin{equation}\label{kz63}
	\begin{split}
		& \|d \bv_j, d \rho_j\|_{L^1_{[0,T^*_{j-(l+1)}]} L^\infty_x}
		\\
		\leq & C[1+E(0)]^3(T^*_{N_0})^{\frac34}(1-2^{-\delta_{1}})^{-2} \big\{
		2^{-\delta_{1}j} 2^{\delta_{1}(l+1)}+ 2^{-6\delta_{1}}
		\\
		& \quad +2^{-5\delta_{1}[j-(l+1)]}  2^{\delta_{1}(l+1)}+ 2^{-6\delta_{1}}+ 2^{-5\delta_{1}(j-l)}2^{\delta_{1}(l+1-j)}
		\big\}
		\\
		\leq & C[1+E(0)]^3(T^*_{N_0})^{\frac34}(1-2^{-\delta_{1}})^{-2} \left\{
		2^{-6\delta_{1}N_0} + 2^{-6\delta_{1}} +	2^{-5\delta_{1}N_0} + 2^{-6\delta_{1}}+ 	2^{-6\delta_{1}N_0} 	\right\}
		\\
		\leq & C[1+E(0)]^3(T^*_{N_0})^{\frac34}[\frac13(1-2^{-\delta_{1}})]^{-2}.
	\end{split}
\end{equation}
By using \eqref{pu00}, \eqref{pp8}, \eqref{kz63}, and Theorem \ref{bBe}, we have proved
\begin{equation}\label{kz64}
	\begin{split}
		|\bv_j, \rho_j|\leq 2+C_0, \quad  E(T^*_{j-(l+1)}) \leq C_*.
	\end{split}
\end{equation}
Gathering \eqref{kz40}, \eqref{kz51}, \eqref{kz55}, \eqref{kz63} and \eqref{kz64}, the estimates \eqref{kz41}-\eqref{kz45} hold for $l+1$. Thus, our induction hold \eqref{kz41}-\eqref{kz45} for $l=1$ to $l=j-N_0$. Therefore, we can extend the solutions $(\bv_j,\rho_j,\bw_j)$ from $[0,T^*_j]$ to $[0,T^*_{N_0}]$ when $j \geq N_0$. We denote
\begin{equation}\label{Tstar}
	T^*=\min\{ 1, T^*_{N_0} \}=\min\{ 1,  [E(0)]^{-3}2^{-\delta_{1} N_0} \}.
\end{equation}
Taking $l=j-N_0$ in \eqref{kz44}-\eqref{kz45}, we therefore get
\begin{equation}\label{kz65}
	\begin{split}
		& E(T^*) \leq  C_*, \quad \|\bv_j, \rho_j\|_{L^\infty_{[0,T^*]} L^\infty_x} \leq 2+C_0,
		\\
		& \|d \bv_j, d \rho_j\|_{L^1_{[0,T^*]} L^\infty_x}
		\leq  C[1+E(0)]^3(T^*_{N_0})^{\frac34}[\frac13(1-2^{-\delta_{1}})]^{-2} \leq 2,
	\end{split}
\end{equation}
where $N_0$ and $C_*$ (depending on $C_0, c_0, s, M_0$) are denoted in \eqref{pp8} and \eqref{Cstar}. Similarly, we conclude
\begin{equation}\label{kz66}
	\begin{split}
		\|d \bv_j, d \rho_j\|_{L^4_{[0,T^*]} L^\infty_x}
		\leq & C[1+E(0)]^3 [\frac13(1-2^{-\delta_{1}})]^{-2}.
	\end{split}
\end{equation}
Due to \eqref{Tstar}, \eqref{kz65}, \eqref{kz66}, we have proved \eqref{Duu0}, \eqref{Duu00}, and \eqref{Duu2}.

It still remains for us to prove \eqref{Duu21} and \eqref{Duu22}. We present the proof in the following subsection.
\subsubsection{Strichartz estimates of linear wave equation on time-interval $[0,T^*_{N_0}]$.}\label{finalq}
We still expect the behaviour of a linear wave equation endowed with $g_j=g(\bv_j,\rho_j)$. So we claim a theorem as follows
\begin{proposition}\label{rut}
	For $s-\frac34 \leq r \leq \frac{11}{4}$, there is a solution $f_j$ on $[0,T^*_{N_0}]\times \mathbb{R}^2$ satisfying the following linear wave equation
	\begin{equation}\label{ru01}
		\begin{cases}
			\square_{{g}_j} f_j=0,
			\\
			(f_j,\partial_t f_j)|_{t=0}=(f_{0j},f_{1j}),
		\end{cases}
	\end{equation}
	where $(f_{0j},f_{1j})=(P_{\leq j}f_0,P_{\leq j}f_1)$ and $(f_0,f_1)\in H_x^r \times H^{r-1}_x$. Moreover, for $a\leq r-(s-1) $, we have
	\begin{equation}\label{ru02}
		\begin{split}
			&\|\left< \nabla \right>^{a-1} d{f}_j\|_{L^4_{[0,T^*_{N_0}]} L^\infty_x}
			\leq  C_{M_0}(\|{f}_0\|_{{H}_x^r}+ \|{f}_1 \|_{{H}_x^{r-1}}),
			\\
			&\|{f}_j\|_{L^\infty_{[0,T^*_{N_0}]} H^{r}_x}+ \|\partial_t {f}_j\|_{L^\infty_{[0,T^*_{N_0}]} H^{r-1}_x} \leq  C_{M_0}(\| {f}_0\|_{H_x^r}+ \| {f}_1\|_{H_x^{r-1}}).
		\end{split}
	\end{equation}
\end{proposition}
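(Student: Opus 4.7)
The plan is to derive Proposition \ref{rut} by iterating the short-time linear estimate \eqref{DP33} of Proposition \ref{DDL3} across a partition of $[0,T^*_{N_0}]$ into $\sim 2^{\delta_1(j-N_0)}$ successive subintervals of length $\sim T^*_j$. First I observe that after the rescaling used in Subsection \ref{esess}, the nonlinear background $(\bv_j,\rho_j,w_j)$ on each subinterval of length $T_j^*$ satisfies the smallness hypotheses of Proposition \ref{DDL3}, so its linear conclusion \eqref{DP33} applies. This gives, for any $k$, $1\le r\le s+1$, and $a<r-\tfrac34$,
\begin{equation*}
\|\langle\nabla\rangle^{a} f_j\|_{L^4_{[t_k,t_{k+1}]} L^\infty_x}\le C\bigl(\|f_j(t_k)\|_{H^r_x}+\|\partial_t f_j(t_k)\|_{H^{r-1}_x}\bigr),
\end{equation*}
with $C$ uniform in $k$ because the constants in Proposition \ref{DDL3} depend only on $C_0,c_0,s,M_0$.

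Next I would propagate the energy estimate. Since the background $g_j$ is uniformly controlled on $[0,T_{N_0}^*]$ by \eqref{Duu0}--\eqref{Duu2}, chaining the $H^r\times H^{r-1}$ energy bound across the subintervals costs at most a multiplicative factor that is uniformly bounded in $j$, giving
\begin{equation*}
\|f_j\|_{L^\infty_{[0,T^*_{N_0}]} H^r_x}+\|\partial_t f_j\|_{L^\infty_{[0,T^*_{N_0}]} H^{r-1}_x}\le C_{M_0}\bigl(\|f_0\|_{H^r_x}+\|f_1\|_{H^{r-1}_x}\bigr),
\end{equation*}
which is the second bound in \eqref{ru02}.

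For the Strichartz bound, summing the local $L^4_t L^\infty_x$ estimates over the $N\sim 2^{\delta_1(j-N_0)}$ subintervals and raising to the fourth power yields an overall loss factor of $2^{\delta_1(j-N_0)/4}$, i.e.,
\begin{equation*}
\|\langle\nabla\rangle^{a} f_j\|_{L^4_{[0,T^*_{N_0}]} L^\infty_x}\lesssim 2^{\delta_1(j-N_0)/4}\bigl(\|f_0\|_{H^r}+\|f_1\|_{H^{r-1}}\bigr),\qquad a<r-\tfrac34.
\end{equation*}
To remove this loss and land at the weaker exponent $a\le r-(s-1)$, I would exploit the frequency truncation $(f_{0j},f_{1j})=(P_{\le j}f_0,P_{\le j}f_1)$. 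Decomposing $f_j=\sum_{\lambda\le 2^j} P_\lambda f_j$ modulo high-frequency tails controlled by the energy bound, and applying Bernstein,
\begin{equation*}
\|\langle\nabla\rangle^{a'} P_\lambda f_j\|_{L^4_t L^\infty_x}\lesssim \lambda^{a'-a}\|\langle\nabla\rangle^{a} P_\lambda f_j\|_{L^4_t L^\infty_x},
\end{equation*}
the available gap $(r-\tfrac34)-(r-(s-1))=s-\tfrac74=10\delta_1$ dominates the Strichartz loss of $\delta_1/4$, provided only that $j\ge N_0$. Summing the resulting bound over dyadic $\lambda\le 2^j$ via Cauchy--Schwarz absorbs the loss factor into the constant $C_{M_0}$ and establishes the first bound in \eqref{ru02}.

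The main obstacle will be the careful bookkeeping that trades the loss in Strichartz summation across subintervals against the freely available loss in derivatives, while respecting that the variable-coefficient wave evolution does not preserve frequency support exactly. This is resolved by the Bernstein reduction together with the uniformly controlled energy bound, and parallels the derivative-loss strategy developed by Bahouri--Chemin \cite{BC2}, Tataru \cite{T1}, and Ai--Ifrim--Tataru \cite{AIT}, as well as the induction argument of Subsections \ref{esest}--\ref{finalk} for the nonlinear extension; accordingly, no essentially new ingredient is required beyond adapting that scheme to the linear setting.
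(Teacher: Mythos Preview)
Your Bernstein step does not close. Writing $\|\langle\nabla\rangle^{a'}P_\lambda f_j\|\lesssim \lambda^{-10\delta_1}\|\langle\nabla\rangle^a P_\lambda f_j\|$ and bounding the right side by the lossy Strichartz estimate gives $\lambda^{-10\delta_1}\cdot 2^{\delta_1 j/4}(\|f_0\|_{H^r}+\|f_1\|_{H^{r-1}})$; summing over dyadic $\lambda\le 2^j$ only contributes an $O(1)$ factor from $\sum_\lambda\lambda^{-10\delta_1}$ and leaves the $2^{\delta_1 j/4}$ loss intact. The gain $\lambda^{-10\delta_1}$ is useful only at $\lambda\sim 2^j$ and provides nothing at low frequencies. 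Nor can you localize the lossy Strichartz bound to individual $P_\lambda f_j$: the variable-coefficient evolution does not preserve frequency support, so $P_\lambda f_j(t)$ for $t>0$ is not controlled by $P_\lambda f_{0j}$. A related point: for $r\neq 1$ the $H^r$ energy is itself coupled to a Strichartz-type norm of $df_j$ (the commutator $[\square_{g_j},\langle\nabla\rangle^{r-1}]$ produces a term $\|\langle\nabla\rangle^{r}(g_j-\mathbf m)\|_{L^p}\|df_j\|_{L^q}$; see the paper's \eqref{ru16}--\eqref{ru21}), so your ``uniform chaining'' of the energy across subintervals is not automatic.

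The paper's proof uses precisely the telescoping scheme of Subsections \ref{esest}--\ref{finalk} that you mention, but applied to the whole \emph{family} of linear solutions $\{f_m\}$ (with $\square_{g_m}f_m=0$ and data $(P_{\le m}f_0,P_{\le m}f_1)$), not to the single $f_j$: one writes $df_j=P_{\ge j}df_j+\sum_{k<j}\sum_{m=k}^{j-1}P_k(df_{m+1}-df_m)+\sum_{k<j}P_k df_k$. The decisive decay comes from the \emph{differences of backgrounds}: since $\square_{g_{m+1}}(f_{m+1}-f_m)=(g_{m+1}^{\alpha i}-g_m^{\alpha i})\partial^2_{\alpha i}f_m$ and $2^m\|g_{m+1}-g_m\|_{L^1_tL^\infty_x}\lesssim 2^{-8\delta_1 m}$ (cf.\ \eqref{ru11}), one obtains \eqref{ru12} with an explicit $2^{-8\delta_1 m}$ gain, which then survives the extension (\eqref{ru25}) and sums. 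This background-difference mechanism---not a Bernstein gain applied to $f_j$ itself---is the missing ingredient in your plan.
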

\begin{proof}[Proof of Proposition \ref{rut}.] Firstly, there are some Strichartz estimates for short time intervals. By applying the extension method described in subsection \ref{esest} and summing the short-time estimates corresponding to these intervals, we can derive a specific type of Strichartz estimate that incurs a loss of derivatives. Due to	
\begin{equation}\label{ru060}
	a+9\delta_{1} \leq r-(s-1)+9\delta_{1} < r-(s-1)+(s-\frac74) <r-\frac34,
\end{equation}	
using \eqref{ru01}, \eqref{ru03} and \eqref{ru04}, we therefore have
	\begin{equation}\label{ru06}
		\begin{split}
			\| \left< \nabla \right>^{a-1+9\delta_{1}} df_j\|_{L^4_{[0,T^*_j]} L^\infty_x} \leq & C( \| f_{0j} \|_{H^r_x} + \| f_{1j} \|_{H^{r-1}_x}  )
			\\
			\leq & C ( \| f_{0} \|_{H^r_x} + \| f_{1} \|_{H^{r-1}_x}  ).
		\end{split}
	\end{equation}
Next, we will discuss the cases of high frequency and low frequency. For $k\geq j$, which corresponds to the high frequency, we apply  Bernstein's inequality to obtain
	\begin{equation}\label{ru07}
		\begin{split}
			\| \left< \nabla \right>^{a-1} P_k df_j\|_{L^4_{[0,T^*_j]} L^\infty_x}
			=& C2^{-9\delta_{1} k}\| \left< \nabla \right>^{a-1+9\delta_{1}} P_k df_j\|_{L^4_{[0,T^*_j]} L^\infty_x}
			\\
			\leq & C2^{-\delta_{1} k} 2^{-8\delta_{1} j} \| \left< \nabla \right>^{a-1+9 \delta_{1}} df_j\|_{L^4_{[0,T^*_j]} L^\infty_x}.
		\end{split}
	\end{equation}
	Combining \eqref{ru06} and \eqref{ru07}, for $a \leq r-(s-1)$, we obtain
	\begin{equation}\label{ru070}
		\begin{split}
			\| \left< \nabla \right>^{a-1} P_k df_j\|_{L^4_{[0,T^*_j]} L^\infty_x}
			\leq & C2^{-\delta_{1} k} 2^{-8\delta_{1} j} ( \| f_{0} \|_{H^r_x} + \| f_{1} \|_{H^{r-1}_x}  ), \quad k\geq j.
		\end{split}
	\end{equation}
For $k\leq j$, which corresponds to the low frequency, we need to discuss the difference terms. For any integer $m\geq 1$, we have
	\begin{equation*}
		\begin{cases}
			\square_{{g}_m} f_m=0, \quad [0,T^*_{m}]\times \mathbb{R}^2,
			\\
			(f_m,\partial_t f_m)|_{t=0}=(f_{0m},f_{1m}),
		\end{cases}
	\end{equation*}
	and
	\begin{equation*}
		\begin{cases}
			\square_{{g}_{m+1}} f_{m+1}=0, \quad [0,T^*_{m+1}]\times \mathbb{R}^2,
			\\
			(f_{m+1},\partial_t f_{m+1})|_{t=0}=(f_{0(m+1)},f_{1(m+1)}).
		\end{cases}
	\end{equation*}
Then the difference term $f_{m+1}-f_m$ is satisfied by
	\begin{equation*}\label{ru08}
		\begin{cases}
			\square_{{g}_{m+1}} (f_{m+1}-f_m)=({g}^{\alpha i}_{m+1}-{g}^{\alpha i}_{m} )\partial^2_{\alpha i} f_m, \quad [0,T^*_{m+1}]\times \mathbb{R}^2,
			\\
			(f_{m+1}-f_m,\partial_t (f_{m+1}-f_m))|_{t=0}=(f_{0(m+1)}-f_{0m},f_{1(m+1)}-f_{1m}).
		\end{cases}
	\end{equation*}
Due to \eqref{ru04}, \eqref{ru060}, and Duhamel's principle, which yields
	\begin{equation}\label{ru09}
		\begin{split}
			& \| \left< \nabla \right>^{a-1} P_k (d f_{m+1}-df_m)\|_{L^4_{[0,T^*_{m+1}]} L^\infty_x}
			\\
			\leq & C\| f_{0(m+1)}-f_{0m}\|_{H_x^{r-9\delta_{1}}} + C\|f_{1(m+1)}-f_{1m}\|_{H_x^{r-1-9\delta_{1}}}
			\\
			& \ + C\|({g}_{m+1}-{g}_{m}) \cdot\nabla d f_m\|_{L^1_{[0,T^*_{m+1}]} H^{r}_x}
			\\
			\leq & C2^{-9\delta_{1} m}( \| f_{0} \|_{H^r_x} + \| f_{1} \|_{H^{r-1}_x}  ) + C \| {g}_{m+1}-{g}_{m}\|_{L^1_{[0,T^*_{m+1}]} L^\infty_x} \| \nabla d f_m\|_{L^\infty_{[0,T^*_{m+1}]} H^{r-1}_x} .
		\end{split}
	\end{equation}
By applying energy estimates and the Bernstein inequality, it follows that
	\begin{equation}\label{ru10}
		\begin{split}
			\| \nabla d f_m\|_{L^\infty_{[0,T^*_{m+1}]} H^{r-1}_x} \leq & C\| d f_m\|_{L^\infty_{[0,T^*_{m+1}]} H^{r}_x}
			\\
			\leq & C(\|f_{0m}\|_{H^{r+1}_x}+\|f_{1m}\|_{H^{r}_x})
			\\
			\leq & C2^m (\|f_{0m}\|_{H^r_x}+\|f_{1m}\|_{H^{r-1}_x}) .
		\end{split}
	\end{equation}
By utilizing the Strichartz estimates \eqref{ru04} and Lemma \ref{LD}, we obtain
	\begin{equation}\label{ru11}
		\begin{split}
			& 2^m \| {g}_{m+1}-{g}_{m}\|_{L^1_{[0,T^*_{m+1}]} L^\infty_x}
			\\
			\leq & 2^m (T^*_{m+1})^{\frac34}\| {\bv}_{m+1}-{\bv}_{m},  {\rho}_{m+1}-{\rho}_{m}\|_{L^4_{[0,T^*_{m+1}]} L^\infty_x}
			\\
			\leq & C2^m ( \| {\bv}_{m+1}-{\bv}_{m},  {\rho}_{m+1}-{\rho}_{m}\|_{L^\infty_{[0,T^*_{m+1}]} H^{\frac34+\delta_{1}}_x} + \| {w}_{m+1}-{w}_{m}\|_{L^\infty_{[0,T^*_{m+1}]} H^{\delta_{1}}_x})
			\\
			\leq & C2^{-8\delta_{1}m } \left[  1+E(0) \right]^3.
		\end{split}
	\end{equation}
Above, we also use \eqref{yu0}, \eqref{yu1}, and \eqref{DTJ}. According to \eqref{ru10} and \eqref{ru11}, for $k<j$ and $k\leq m$, \eqref{ru09} tells us that
	\begin{equation}\label{ru12}
		\begin{split}
			\| \left< \nabla \right>^{a-1} P_k (df_{m+1}-df_m)\|_{L^4_{[0,T^*_{m+1}]} L^\infty_x}
			\leq  C2^{-\delta_{1} k}2^{-8\delta_{1} m} \| (f_{0},f_1) \|_{H^r_x\times H^{r-1}_x}  \left[  1+E(0) \right]^3.
		\end{split}
	\end{equation}
Although we have established some estimates for difference terms, we also need to discuss the energy estimates. This is necessary because the key point of the extension method (in subsection \ref{esest}) crucially relies on the uniform bound of the energy. To achieve this goal, we take the operator $\left< \nabla \right>^{r-1}$ on \eqref{ru01}, which yields the following result:
	\begin{equation}\label{ru15}
		\begin{cases}
			\square_{{g}_j} \left< \nabla \right>^{r-1}f_j=[ \square_{{g}_j} ,\left< \nabla \right>^{r-1}] f_j,
			\\
			(\left< \nabla \right>^{r-1}f_j,\partial_t \left< \nabla \right>^{r-1}f_j)|_{t=0}=(\left< \nabla \right>^{r-1}f_{0j},\left< \nabla \right>^{r-1}f_{1j}).
		\end{cases}
	\end{equation}
	To estimate $[ \square_{{g}_j} ,\left< \nabla \right>^{r-1}] f_j$, we will divide it into two cases: $s-\frac34<r\leq s$ and $s<r\leq \frac{11}{4}$.
	
\textbf{Case 1: $s-\frac34<r\leq s$.} In this situation, we note that
	\begin{equation}\label{ru14}
		\begin{split}
			[ \square_{{g}_j} ,\left< \nabla \right>^{r-1}] f_j
			=& [{g}^{\alpha i}_j-\mathbf{m}^{\alpha i}, \left< \nabla \right>^{r-1} \partial_i ] \partial_{\alpha}f_j + \left< \nabla \right>^{r-1}( \partial_i g_j \partial_{\alpha}f_j )
			\\
			= &[{g}_j-\mathbf{m}, \left< \nabla \right>^{r-1} \nabla] df_j + \left< \nabla \right>^{r-1}( \nabla g_j df_j).
		\end{split}
	\end{equation}
	By \eqref{ru14} and Kato-Ponce estimates, we have\footnote{If $r=s$, then $L^{\frac{3}{s-r}}_x=L^\infty_x$.}
	\begin{equation}\label{ru16}
		\| [ \square_{{g}_j} ,\left< \nabla \right>^{r-1}] f_j \|_{L^2_x} \leq C ( \|dg_j\|_{L^\infty_x} \|d f_j \|_{H^{r-1}_x} + \|\left< \nabla \right>^{r} (g_j-\mathbf{m}) \|_{L^{\frac{2}{1+r-s}}_x} \| df_j \|_{L^{\frac{2}{s-r}}_x} )
	\end{equation}
By Sobolev's inequality, it follows that
	\begin{equation}\label{ru17}
		\|\left< \nabla \right>^{r} (g_j-\mathbf{m}) \|_{L^{\frac{2}{1+r-s}}_x} \leq C\|g_j-\mathbf{m}\|_{H^s_x}.
	\end{equation}
Taking advantage of the Gagliardo-Nirenberg and Young's inequalities, which yields
	\begin{equation}\label{ru18}
		\begin{split}
			\| df_j \|_{L^{\frac{3}{s-r}}_x} \leq & C\|\left< \nabla \right>^{r-1} df_j \|^{\frac{3/4}{3/4+(2-s)}}_{L^{2}_x} \|\left< \nabla \right>^{r-(s-\frac34)} df_j \|^{\frac{2-s}{3/4+(2-s)}}_{L^{\infty}_x}
			\\
			\leq & C( \|df_j \|_{H^{r-1}_x}+ \|\left< \nabla \right>^{r-(s-\frac34)} df_j \|_{L^{\infty}_x})
		\end{split}
	\end{equation}
\quad	\textbf{Case 2: $s<r\leq \frac{11}{4}$.} For $s<r\leq \frac{11}{4}$, applying Kato-Ponce estimates, we have
	\begin{equation}\label{ru150}
		\begin{split}
			 \| [ \square_{{g}_j} ,\left< \nabla \right>^{r-1}] f_j \|_{L^2_x}
			= & \| {g}^{\alpha i}_j-\mathbf{m}^{\alpha i},\left< \nabla \right>^{r-1}] \partial^2_{\alpha i}f_j \|_{L^2_x}
			\\
			\leq & C ( \|dg_j\|_{L^\infty_x} \|d f_j \|_{H^{r-1}_x} + \|\left< \nabla \right>^{r-1} (g_j-\mathbf{m}) \|_{L^{\frac{2}{1+s-r}}_x} \|\nabla df_j \|_{L^{\frac{2}{r-s}}_x} ) .
		\end{split}
	\end{equation}
	By Sobolev's inequality, it follows that
	\begin{equation}\label{ru151}
		\|\left<\nabla \right>^{r-1} (g_j-\mathbf{m}) \|_{L^{\frac{2}{1+s-r}}_x} \leq C\|g_j-\mathbf{m}\|_{H^s_x}.
	\end{equation}
	Note that $\frac74<s\leq 2$. By applying the Gagliardo-Nirenberg and Young's inequalities, we derive:
	\begin{equation}\label{ru152}
		\begin{split}
			\| \nabla df_j \|_{L^{\frac{3}{1+s-r}}_x} \leq & C\|\left< \nabla \right>^{r-1} df_j \|^{\frac{3/4}{3/4+(2-s)}}_{L^{2}_x} \|\left< \nabla \right>^{r-(s-\frac34)} df_j \|^{\frac{2-s}{3/4+(2-s)}}_{L^{\infty}_x}
			\\
			\leq & C( \|df_j \|_{H^{r-1}_x}+ \|\left< \nabla \right>^{r-(s-\frac34)} df_j \|_{L^{\infty}_x}).
		\end{split}
	\end{equation}
Therefore, in both cases, for \( s - \frac{3}{4} < r \leq \frac{11}{4} \), using \eqref{ru16}-\eqref{ru18} and  \eqref{ru150}-\eqref{ru152}, we can derive
	\begin{equation*}\label{ru19}
		\begin{split}
			\| [ \square_{{g}_j} ,\left< \nabla \right>^{r-1}] f_j \|_{L^2_x}\|d f_j \|_{H^{r-1}_x} \leq & C  \|dg_j\|_{L^\infty_x} \|d f_j \|^2_{H^{r-1}_x}
			+ C  \| g_j-\mathbf{m} \|_{H^s_x}\|d f_j \|^2_{H^{r-1}_x}
			\\
			& + C  \|g_j-\mathbf{m}\|_{H^s_x} \|\left< \nabla \right>^{r-\frac{s}{2}-1} df_j \|_{L^{\infty}_x} \|d f_j \|_{H^{r-1}_x}
			\\
			\leq & C  ( \|dg_j\|_{L^\infty_x}+ \|g_j-\mathbf{m}\|_{H^s_x}+ \|\left< \nabla \right>^{r-(s-\frac34)} df_j \|_{L^{\infty}_x} ) \|d f_j \|^2_{H^{r-1}_x}
			\\
			&+ C  \|g_j-\mathbf{m}\|^2_{H^s_x}  \|\left< \nabla \right>^{r-(s-\frac34)} df_j \|_{L^{\infty}_x} .
		\end{split}	
	\end{equation*}
The above estimate, when combined with \eqref{ru15}, yields
	\begin{equation*}\label{ru20}
		\begin{split}
			\frac{d}{dt}\|d f_j \|^2_{H^{r-1}_x}
			\leq & C  (\|dg_j\|_{L^\infty_x}+ \|g_j-\mathbf{m}\|_{H^s_x}+ \|\left< \nabla \right>^{r-(s-\frac34)} df_j \|_{L^{\infty}_x} ) \|d f_j \|^2_{H^{r-1}_x}
			\\
			&+ C  \|g_j-\mathbf{m}\|^2_{H^s_x}  \|\left< \nabla \right>^{r-(s-\frac34)} df_j \|_{L^{\infty}_x} .
		\end{split}	
	\end{equation*}
Using Gronwall's inequality, we obtain
	\begin{equation}\label{ru21}
		\begin{split}
			\|d f_j(t) \|^2_{H^{r-1}_x}
			\leq & C  (\|d f_j(0) \|^2_{H^{r-1}_x} + C \int^t_0 \|g_j-\mathbf{m}\|^2_{H^s_x}  \|\left< \nabla \right>^{r-(s-\frac34)} df_j \|_{L^{\infty}_x}d\tau)
			\\
			& \qquad \cdot\exp\left\{\int^t_0  \|dg_j\|_{L^\infty_x}+ \|g_j-\mathbf{m}\|_{H^s_x}+ \|\left< \nabla \right>^{r-(s-\frac34)} df_j \|_{L^{\infty}_x} d\tau \right\} .
		\end{split}	
	\end{equation}
On the other hand, the Newton-Leibniz formula tells us
	\begin{equation}\label{ru22}
		\begin{split}
			\|f_j(t) \|_{L^{2}_x}
			\leq & \| f_j(0) \|_{L^2_x} + \int^t_0 \| \partial_t f_j \|_{L^2_x} d\tau.
		\end{split}	
	\end{equation}
Summarizing the results from \eqref{ru21}, \eqref{ru22} and \eqref{kz65}, for \( t \in [0, T^*_{N_0}] \), we obtain
	\begin{equation}\label{ru23}
		\begin{split}
			& \| f_j(t) \|^2_{H^{r}_x} + \| d f_j(t) \|^2_{H^{r-1}_x}
			\\
			\leq & C  \textrm{e}^{C_*}(\|f_0 \|^2_{H^{r}_x}+\|f_1 \|^2_{H^{r-1}_x} + C_*)
			\\
			& \quad \cdot\exp \left\{\int^t_0   \|\left< \nabla \right>^{r-\frac{s}{2}-1} df_j \|_{L^{\infty}_x} d\tau \cdot 
			\exp \left( \int^t_0   \|\left< \nabla \right>^{r-(s-\frac34)} df_j \|_{L^{\infty}_x} d\tau \right)  \right\} .
		\end{split}	
	\end{equation}
For \( a \leq r - (s - 1) \), based on \eqref{ru070}, \eqref{ru12}, and \eqref{ru23}, and following the extension method outlined in subsection \ref{esest}, we can obtain the following bounds:
	\begin{equation}\label{ru234}
		\begin{split}
			\| \left< \nabla \right>^{a-1} P_k df_j\|_{L^4_{[0,T^*_{N_0}]} L^\infty_x}
			\leq & C2^{-\delta_{1} k} 2^{-8\delta_{1} j} ( \| f_{0} \|_{H^r_x} + \| f_{1} \|_{H^{r-1}_x}  ) \times \left\{ 2^{\delta_{1} (j-N_0)} \right\}^{\frac34}
			\\
			\leq & C2^{-\frac34\delta_{1}N_0} 2^{-\delta_{1} k} 2^{-4\delta_{1} j} ( \| f_{0} \|_{H^r_x} + \| f_{1} \|_{H^{r-1}_x}  ), \quad k \geq j,
		\end{split}
	\end{equation}
	and
	\begin{equation}\label{ru25}
		\begin{split}
			& \| \left< \nabla \right>^{a-1} P_k (df_{m+1}-df_m)\|_{L^4_{[0,T^*_{N_0}]} L^\infty_x}
			\\
			\leq & C2^{-\delta_{1} k}2^{-8\delta_{1} m}( \| f_{0} \|_{H^r_x} + \| f_{1} \|_{H^{r-1}_x}  )(1+E^3(0)) \times \left\{ 2^{\delta_{1} (m+1-N_0)} \right\}^{\frac34}
			\\
			\leq & C2^{-\frac34\delta_{1}N_0} 2^{-\delta_{1} k}2^{-4\delta_{1} m}( \| f_{0} \|_{H^r_x} + \| f_{1} \|_{H^{r-1}_x}  ) \left[  1+E(0) \right]^3 , \quad k<j.
		\end{split}
	\end{equation}
	By decomposing the frequency, we obtain
	\begin{equation*}
		df_j= P_{\geq j} df_j+ \textstyle{\sum}^{j-1}_{k=1} \textstyle{\sum}^{j-1}_{m=k} P_k (df_{m+1}-df_m)+ \textstyle{\sum}^{j-1}_{k=1} P_k df_k.
	\end{equation*}
	By using \eqref{ru234} and \eqref{ru25}, we get
	\begin{equation}\label{ru230}
		\begin{split}
			\| \left< \nabla \right>^{a-1} df_{j} \|_{L^4_{[0,T^*_{N_0}]} L^\infty_x}
			\leq & C( \| f_{0} \|_{H^r_x} + \| f_{1} \|_{H^{r-1}_x}  )\left[  1+E(0) \right]^3  [\frac13(1-2^{-\delta_{1}})]^{-2}.
		\end{split}
	\end{equation}
	Applying \eqref{ru23} and \eqref{ru230}, we obtain
	\begin{equation*}\label{ru26}
		\begin{split}
			\|f_j\|_{L^\infty_{[0,T^*_{N_0}]} H^{r}_x}+ \|d f_j\|_{L^\infty_{[0,T^*_{N_0}]} H^{r-1}_x }
			\leq & A_* \exp ( B_* \mathrm{e}^{B_*} ),
		\end{split}	
	\end{equation*}
	where $C_*$ is as stated in \eqref{Cstar} and 
	\begin{equation*}
		\begin{split}
			A_{*}=& C  \mathrm{e}^{C_*}(\|f_0 \|_{H^{r}_x}+\|f_1 \|_{H^{r-1}_x} + C_*),
			\\
			B_*=& C(\| f_{0} \|_{H^r_x} + \| f_{1} \|_{H^{r-1}_x})\left[  1+E(0) \right]^3 [\frac13(1-2^{-\delta_{1}})]^{-2}.
		\end{split}
	\end{equation*}
At this stage, we have proved \eqref{ru02}.
\end{proof}

\section*{Acknowledgments} The author thanks Daniel Tataru for many hours of discussion. The author is supported by Natural Science Foundation of Hunan Province, China (Grant No. 2025JJ40003) and the Fundamental Research Funds for the Central Universities (Grant No. 531118010867).

\section*{Conflicts of interest and Data Availability Statements}
The authors declared that this work does not have any conflicts of interest. The authors also confirm that the data supporting the findings of this study are available within the article.

\end{sloppypar}

\begin{thebibliography}{4}

\bibitem{AS}
L.Abbrescia, J. Speck. Remarkable localized integral identities for 3D compressible Euler flow and the double-null framework,  Arch. Ration. Mech. Anal., 249(1), Paper No. 10, 137 pp (2025).


\bibitem{AIT}
A. Ai, M. Ifrim, D. Tataru. The time-like minimal surface equation in Minkowski space: low regularity solutions, Invent. Math., 235 (3), 745-891 (2024).


\bibitem{AAR}
P.T. Allen, L. Andersson, A. Restuccia. Local well-posedness for membranes in the light
cone gauge, Comm. Math. Phys., 301, 383-410 (2011).

\bibitem{ACY1}
X.~L. An, H.~Y. Chen, S.~L. Yin. The Cauchy problems for the 2D compressible Euler equations and ideal MHD system are ill-posed in $H^{\frac74}(\mathbb{R}^2)$, 
{\tt arXiv:2206.14003} (2022).

\bibitem{ACY2}
X.~L. An, H.~Y. Chen, S.~L. Yin. Low regularity ill-posedness and shock formation for 3D ideal compressible
MHD, {\tt arXiv:2110.10647} (2021).



\bibitem{AM}
L. Andersson, V. Moncrief. Elliptic-hyperbolic systems and the Einstein equations, Ann. Henri Poincar\'e, 4, 1-34(2003).

\bibitem{AZ}
L. Andersson, H.L. Zhang, Well-posedness for rough solutions of the 3D compressible Euler equations, arXiv:2208.10132 (2023).

\bibitem{Av}
O.N. Avadanei. Counterexamples to Strichartz estimates and gallery waves for the irrotational compressible Euler equation in a vacuum setting, {\tt arXiv:2504.17932} (2025).

\bibitem{BC2}
H. Bahouri and J.Y. Chemin. E\'quations d\'ondes quasilineaires et estimation de Strichartz, Amer. J. Math., 121, 1337-1377 (1999).
\bibitem{BCD}
H. Bahouri, J. Y. Chemin, and R. Danchin. Fourier analysis and nonlinear partial differential equations,
Grundlehren der Mathematischen Wissenschaften, vol. 343, Springer,
Heidelberg, 2011.

\bibitem{BL}
J. Bourgain, D. Li. Strong ill-posedness of the incompressible Euler
equation in borderline Sobolev spaces, Invent. Math., 201:97-157 (2015).

\bibitem{BL2}
J. Bourgain, L. Dong. Strong ill-posedness of the 3D incompressible Euler equation in borderline
spaces, Int. Math. Res. Notices, 2021(16), 1-110 (2019).
\bibitem{Chae}
D. Chae. On the well-posedness of the Euler equations in the Triebel-Lizorkin spaces. Comm. Pure Appl. Math. 55(5), 654-678 (2002).
\bibitem{Ch2}
D. Chae. On the Euler equations in the critical Triebel-Lizorkin spaces, Arch. Ration. Mech. Anal., 170(3), 185-210 (2003).


\bibitem{CM}
D. Christodoulou and S. Miao. Compressible flow and Euler's equations, Surveys of Modern Mathematics,
vol. 9, International Press, Somerville, MA; Higher Education Press, Beijing, 2014.

\bibitem{CLS}
D. Coutand, H. Lindblad, and S. Shkoller. A priori estimates for the free-boundary 3D compressible Euler
equations in physical vacuum, Comm. Math. Phys. 296(2), 559-587 (2010).




\bibitem{DLS}
M. Disconzi, C. Luo, G. Mazzone and J. Speck. Rough sound waves in 3D compressible Euler flow with
vorticity,  Selecta Math., 28(2), Paper No. 41, 153 pp (2022).
\bibitem{EL}
B. Ettinger, H. Lindblad. A sharp counterexample to local existence
 of low regularity solutions to Einstein
 equations in wave coordinates, Ann. Math., 185, 311-330 (2017).

\bibitem{GL}
Z.H. Guo, K.L. Li. Remarks on the well-posedness of the Euler equations in the Triebel-Lizorkin spaces,  J. Fourier Anal. Appl., 27(2), Paper No. 29, 24 pp (2021).

\bibitem{JM}
J. Jang and N. Masmoudi. Well-posedness for compressible Euler equations with physical vacuum singularity, Comm.
Pure Appl. Math., 62(10), 1327-1385 (2009).

\bibitem{HKWX}
F. Huang, J. Kuang, D. Wang, X. Wei. Stability of transonic contact discontinuity for two-dimensional steady compressible Euler flows in a finitely long nozzle, Ann. PDE, 7(2), Paper No. 23, 96 pp (2021).

\bibitem{HKM}
T. Hughes, T. Kato and J.E. Marsden. Well-posed quasi-linear second-order hyperbolic systems with
applications to nonlinear electrodynamics and general relativity, Arch. Rat. Mech. Anal., 63, 273-
294 (1977).

\bibitem{IT}
M. Ifrim, D. Tataru. The compressible Euler equations in a physical vacuum:a comprehensive Eulerian approach, Ann. Inst. H. Poincar$\acute{\textrm{e}}$ C Anal. Non Lin$\acute{\textrm{e}}$aire, 41(2), 405-495 (2024). 
\bibitem{IT1}
M. Ifrim, D. Tataru. Local well-posedness for quasilinear problems:a primer, Bull. Am. Math. Soc., New Ser., 60(2), 167-194 (2023).


\bibitem{Geba}
D.G. Geba. A local well-posedness result for the quasilinear
wave equation in $\mathbb{R}^{2+1}$,  Comm. Part. Diff. Equ., 29, 323-360 (2004).



\bibitem{KP}
T. Kato, G. Ponce. Commutator estimates and the Euler and Navier-Stokes equations, Comm. Pure Appl. Math., 41(7), 891-907 (1988).

\bibitem{KT}
M. Keel, T. Tao. Endpoint Strichartz estimates, Am. J. Math., 120, 955-980 (1998).

\bibitem{KJ}
J. Kim, I. Jeong. A simple ill-posedness proof for incompressible Euler equations in critical Sobolev spaces, J. Funct. Anal., 283(10), Paper No. 109673, 34 pp (2022).

\bibitem{K}
S. Klainerman. A commuting vectorfield approach to Strichartz type inequalities and applications to quasilinear wave equations, Int. Math. Res. Notices, 5, 221-274 (2001).

\bibitem{KR2}
S. Klainerman, I. Rodnianski. Improved local well-posedness for quasilinear wave equations in dimension three, Duke Math. J., 117, 1-124 (2003).
\bibitem{KR}
S. Klainerman, I. Rodnianski. Rough solutions of the Einstein vacuum equations, Ann. Math., 161, 1143-1193 (2005).

\bibitem{KR1}
S. Klainerman, I. Rodnianski and J. Szeftel. The bounded $\textrm{L}^2$ curvature conjecture, Invent. Math., 202(1), 91216 (2015).



\bibitem{LDZ}
Z. Lei, Y. Du, Q.T. Zhang. Singularities of solutions to compressible Euler equations with
vacuum, Math. Res. Lett., 20, 41-50 (2013).

\bibitem{LU}
P. G. LeFloch and S. Ukai. A symmetrization of the relativistic Euler equations in
several spatial variables, Kinet. Relat. Mod., 2, 275–292 (2009).

\bibitem{Li}
T. Li, T. Qin. Physics and Partial Differential Equations (second edition), Higher Education Press, Beijing, (2005).







\bibitem{L}
H. Lindblad. Counterexamples to local existence for quasilinear wave equations, Math. Res. Letters, 5(5), 605-622 (1998).
\bibitem{LS1}
J. Luk, J. Speck. Shock formation in solutions to the 2D compressible Euler equations in the presence of non-zero vorticity, Invent. Math., 214, 1-169 (2018).
\bibitem{LS2}
J. Luk, J. Speck. The hidden null structure of the compressible Euler equations and a prelude to applications,  J. Hyperbolic Differ. Equ., 17(1), 1-60 (2020).

\bibitem{LY1}
T. Luo, P. Yu. On the stability of multi-dimensional rarefaction waves I: the energy estimates, Ann. of Math., 202(2), 631-752 (2025).

\bibitem{LY2}
T. Luo, P. Yu. On the stability of multi-dimensional rarefaction waves II: existence of solutions and applications to the Riemann problem, Ann. of Math., 202(2), 753-855 (2025).


\bibitem{M}
A. Majda. Compressible fluid flow and systems of conservation laws in several space variables, Applied
Mathematical Sciences, 53. Springer, New York, 1984.


\bibitem{MRR}
F. Merle, P. Rapha\"el, I. Rodnianski, J. Szeftel. On smooth self similar solutions to the compressible Euler equations, 	{\tt arXiv:1912.10998} (2019).


\bibitem{ML}
C. Miao, L. Xue. On the global well-posedness of a class of Boussinesq-Navier-Stokes systems, NoDEA Nonlinear Differential Equations Appl., 18 (6), 707-735 (2011).
\bibitem{MSS}
G. Mockenhaupt, A. Seeger, and C. D. Sogge. Local smoothing of Fourier integral
operators and Carleson-Sj\"olin estimates, J. Amer. Math. Soc. 6, 65-130 (1993).

\bibitem{MR}
G. Moschidis, I. Rodnianski. On well-posedness for the timelike minimal surface equation, {\tt arXiv:2504.01244 } (2025).

\bibitem{Olm}
G. Ohlmann, Ill-posedness of a quasilinear wave equation in two dimensions for data in $H^{\frac74}$, Pure Appl. Anal., 5(3), 507-540 (2023).



\bibitem{QX}
P. Qu, Z. Xin. Long time existence of entropy solutions to the one-dimensional non-isentropic Euler equations with periodic initial data, Arch. Ration. Mech. Anal., 216(1), 221-259 (2015).

\bibitem{S}
T.C. Sideris. Formation of singularities in three dimensional
compressible fluids, Commun. Math. Phys., 101, 475-485 (1985).

\bibitem{ST}
H.F. Smith, D. Tataru. Sharp local well-posedness results for the nonlinear wave equation, Ann.
Math., 162, 291-366 (2005).

\bibitem{ST0}
H.F. Smith and D. Tataru. Sharp counterexamples for Strichartz estimates for low regularity metrics,
Mathematical Research Letters, 199-204 (2002).

\bibitem{Sm}
H. F. Smith, A parametrix construction for wave equations with $C^{1,1}$ coefficients, Ann.
Inst. Fourier (Grenoble) 48, 797-835 (1998).

\bibitem{SS}
H. F. Smith and C. D. Sogge, On Strichartz and eigenfunction estimates for low regularity metrics, Math. Res. Lett., 1, 729-737 (1994).


\bibitem{S1}
J. Speck. Shock formation for 2D quasilinear wave systems featuring multiple speeds: Blowup for the fastest wave, with non-trivial interactions up to the singularity, Ann. PDE, 4(1), Paper No. 6, 131 pp (2018).




\bibitem{Tao}
T. Tao. Global regularity of wave maps. $\Pi$. Small energy in two dimensions, Comm. Math. Phys., 224(2), 443-544 (2001).


\bibitem{T1}
D. Tataru. Strichartz estimates for operators with nonsmooth coefficients and the nonlinear wave
equation, Am. J. Math., 122, 349-376 (2000).
\bibitem{T2}
D. Tataru. Strichartz estimates for second order hyperbolic operators with nonsmooth coefficients II,
Am. J. Math., 123, 385-423 (2001).
\bibitem{T3}
D. Tataru. Strichartz estimates for second order hyperbolic operators with nonsmooth coefficients III.
J. Am. Math. Soc., 15, 419-442 (2002).

\bibitem{T4}
D. Tataru. Rough solutions for the wave maps equation. Amer. J. Math., 127(2), 293-377 (2005).


\bibitem{WCB}
C.B. Wang. Sharp local well-posedness for quasilinear wave equations with spherical symmetry, J. Eur. Math. Soc., 25(11), 4459-4520 (2023).

\bibitem{WQRough}
Q. Wang. Rough Solutions of Einstein vacuum equations in CMCSH gauge, Comm. Math. Phys. 328, 1275-1340 (2014).


\bibitem{WQSharp}
Q. Wang. A geometric approach for sharp local well-posedness
of quasilinear wave equations, Ann. PDE, 3:12 (2017).
\bibitem{WQEuler}
Q. Wang. Rough solutions of the 3-D compressible Euler equations, Ann.
of Math., 195(2), 509-654 (2022).

\bibitem{WZ}
S. Wang, Y. Zhou. Global well-posedness for radial extremal hypersurface equation in 
-dimensional Minkowski space-time in critical Sobolev space, {\tt arXiv:2212.08828} (2022).

\bibitem{Yin}
H.C. Yin. Formation and construction of a shock wave for 3-D compressible Euler equations with the spherical
initial data, Nagoya Math. J., 175, 125-164 (2004).

\bibitem{ZH}
D. Zha, K. Hidano. Global solutions to systems of quasilinear wave equations with low regularity data and applications,
J. Math. Pures Appl. 142(9), 146-183 (2020).

\bibitem{Z1}
H.L. Zhang. Local existence theory for 2D compressible Euler equations with low regularity, J. Hyperbolic Differ. Equ., 18(3), 701-728 (2021).

\bibitem{Z2}
H.L. Zhang. Low regularity solutions of two-dimensional compressible Euler equations with dynamic vorticity,  Comm. Anal. Geom., 33(2), 453-529 (2025).

\bibitem{ZL}
Y. Zhou, Z. Lei. Global low regularity solutions of quasi-linear wave equations. Adv. Differential Equations,
13(1-2), 55-104 (2008).

\end{thebibliography}
\end{document}